\crefname{maintheorem}{Theorem}{Theorems}
\crefname{maincorollary}{Corollary}{Corollaries}
\newcommand{\mbb}[1]{\mathbb{#1}}
\newcommand*{\addFileDependency}[1]{
  \typeout{(#1)}
  \@addtofilelist{#1}
  \IfFileExists{#1}{}{\typeout{No file #1.}}
}
\begin{document}
\newcommand{\actsonr}{\mathrel{\reflectbox{$\righttoleftarrow$}}}
\newcommand{\actsonl}{\mathrel{\reflectbox{$\lefttorightarrow$}}}

\newcommand{\floor}[1]{\lfloor #1 \rfloor}

\newcommand{\isoeq}{\cong}
\newcommand{\cech}{\vee}
\newcommand{\dsum}{\mathop{\oplus}}
\newcommand{\End}{\mathrm{End}}

\newcommand{\calQ}{\mathcal{Q}}
\newcommand{\calO}{\mathcal{O}}
\newcommand{\calM}{\mathcal{M}}

\newcommand{\frakm}{\mathfrak{m}}

\newcommand{\bbA}{\mathbb{A}}
\newcommand{\bbB}{\mathbb{B}}
\newcommand{\bbC}{\mathbb{C}}
\newcommand{\bbD}{\mathbb{D}}
\newcommand{\bbE}{\mathbb{E}}
\newcommand{\bbF}{\mathbb{F}}
\newcommand{\bbG}{\mathbb{G}}
\newcommand{\bbH}{\mathbb{H}}
\newcommand{\bbI}{\mathbb{I}}
\newcommand{\bbJ}{\mathbb{J}}
\newcommand{\bbK}{\mathbb{K}}
\newcommand{\bbL}{\mathbb{L}}
\newcommand{\bbM}{\mathbb{M}}
\newcommand{\bbN}{\mathbb{N}}
\newcommand{\bbO}{\mathbb{O}}
\newcommand{\bbP}{\mathbb{P}}
\newcommand{\bbQ}{\mathbb{Q}}
\newcommand{\bbR}{\mathbb{R}}
\newcommand{\bbS}{\mathbb{S}}
\newcommand{\bbT}{\mathbb{T}}
\newcommand{\bbU}{\mathbb{U}}
\newcommand{\bbV}{\mathbb{V}}
\newcommand{\bbW}{\mathbb{W}}
\newcommand{\bbX}{\mathbb{X}}
\newcommand{\bbY}{\mathbb{Y}}
\newcommand{\bbZ}{\mathbb{Z}}

\newcommand{\bc}{\mathbf{c}}

\newcommand{\Spec}{\mathrm{Spec}}\
\newcommand{\Triv}{\mathrm{Triv}}
\newcommand{\Loc}{\mathrm{Loc}}
\newcommand{\Et}{\mathrm{Et}}

\newcommand{\ul}[1]{\underline{#1}}

\newcommand{\Vect}{\mathrm{Vect}}
\newcommand{\FilVect}{\mathrm{FilVect}}
\newcommand{\FilVB}{\mathrm{FilVB}}

\newcommand{\trFil}{\mathrm{trFil}}
\newcommand{\gr}{\mathrm{gr}}
\newcommand{\LattFilt}[1]{\mr{Latt}_{#1}\mr{Vect}}

\newcommand{\dR}{\mathrm{dR}}

\newcommand{\HS}{\mathrm{HS}}

\newcommand{\Hodge}{\mathrm{Hdg}}

\newcommand{\BC}{\mathrm{BC}}
\newcommand{\CM}{\mathrm{CM}}

\newcommand{\goodred}{\textrm{good-red}}

\newcommand{\dash}{\mathrm{-}}
\newcommand\+{\mkern3mu}

\newcommand{\rev}[1]{{\color{teal} #1}}
\newcommand{\strike}[1]{{\color{gray} \sout{#1}}}
\newcommand{\spc}[1]{{\color{blue} \textsf{$\blacktriangle\blacktriangle\blacktriangle$ Comment: [#1]}}}

\newcommand{\Iref}[1]{I.\cref{I.#1}}

\newcommand{\triv}{\mr{triv}}
\newcommand{\crys}{\mathrm{crys}}
\newcommand{\Fil}{\mathrm{Fil}}
\newcommand{\calL}{\mathcal{L}}
\newcommand{\Stab}{\mathrm{Stab}}
\newcommand{\Spd}{\mathrm{Spd}}

\newcommand{\MG}{\mathrm{MG}}
\newcommand{\HT}{\mathrm{HT}}
\newcommand{\GH}{\mathrm{GH}}
\newcommand{\GM}{\mathrm{GM}}
\newcommand{\Gr}{\mathrm{Gr}}
\newcommand{\Fl}{\mathrm{Fl}}
\newcommand{\LT}{\mathrm{LT}}
\newcommand{\colim}{\mathrm{colim}}
\newcommand{\cris}{\mathrm{cris}}
\renewcommand{\l}{\left}
\renewcommand{\r}{\right}
\newcommand{\GL}{\mathrm{GL}}
\newcommand{\qpet}{\mr{qpet}}

\newcommand{\Perf}{\mathrm{Perf}}
\newcommand{\Perfd}{\mathrm{Perfd}}
\newcommand{\mc}[1]{\mathcal{#1}}

\newcommand{\mr}[1]{\mathrm{#1}}
\newcommand{\mf}[1]{\mathfrak{#1}}
\newcommand{\ms}[1]{\mathscr{#1}}

\newcommand{\ol}[1]{\overline{#1}}

\newcommand{\Kt}{\mathrm{Kt}}
\newcommand{\Isoc}{\mr{Isoc}} 

\newcommand{\Spa}{\mathrm{Spa}}
\newcommand{\Spf}{\mathrm{Spf}}
\newcommand{\perf}{\mathrm{perf}}
\newcommand{\Spr}{\mr{Spr}}
\newcommand{\Hom}{\mr{Hom}}
\newcommand{\Ker}{\mr{Ker}}
\newcommand{\adm}{\mr{adm}}
\newcommand{\bs}{\backslash}
\newcommand{\BB}{\mathrm{BB}}
\newcommand{\an}{\mathrm{an}}
\newcommand{\AdmFil}{\mathrm{AdmFil}} 

\newcommand{\QQ}{\mathbb{Q}}
\newcommand{\ZZ}{\mathbb{Z}}

\newcommand{\FF}{\mathrm{FF}}

\newcommand{\proet}{\mr{pro\acute{e}t}}
\newcommand{\et}{\mr{\acute{e}t}}
\newcommand{\fet}{\mr{f\acute{e}t}}

\newcommand{\badm}{{\textrm{b-adm}}}
\newcommand{\Qpbreve}{{\breve{\mbb{Q}}_p}}
\newcommand{\Qpbrevebar}{{\overline{\breve{\mbb{Q}}}_p}}

\newcommand{\Hdg}{\mr{Hdg}}
\newcommand{\Hdggen}{\mr{gen}}
\newcommand{\MT}{\mr{MT}}
\newcommand{\Rep}{\mathrm{Rep}}

\newcommand{\FilPhiMod}{\mathrm{MF}^{\varphi}}
\newcommand{\waFilPhiMod}{\mathrm{MF}^{\varphi, \mr{wa}}}

\newcommand{\Isom}{\mr{Isom}}
\newcommand{\Gal}{\mr{Gal}}
\newcommand{\Aut}{\mr{Aut}}
\newcommand{\AdmPair}{\mr{AdmPair}}
\newcommand{\Forget}{\mr{Forget}}

\newcommand{\Cont}{\mr{Cont}}
\newcommand{\AP}{\mathbf{A}}
\newcommand{\GAP}{\mathbf{A}}
\newcommand{\ev}{\mathrm{ev}}
\newcommand{\std}{\mathrm{std}}
\newcommand{\Lie}{\mr{Lie}}
\newcommand{\Sh}{\mr{Sh}}
\newcommand{\univ}{\mr{univ}}
\newcommand{\Bun}{\mr{Bun}}

\newcommand{\spck}[1]{{\color{teal} \textsf{$\dagger\dagger\dagger$ CK: [#1]}}}
\newcommand{\bilatticed}{\mathrm{bl}}

\numberwithin{equation}{subsubsection}
\theoremstyle{plain}

\newtheorem{maintheorem}{Theorem} 
\renewcommand{\themaintheorem}{\Alph{maintheorem}} 
\newtheorem{maincorollary}[maintheorem]{Corollary}
\newtheorem{mainconjecture}[maintheorem]{Conjecture}

\newtheorem*{theorem*}{Theorem}

\newtheorem{theorem}[subsubsection]{Theorem}
\newtheorem{corollary}[subsubsection]{Corollary}
\newtheorem{conjecture}[subsubsection]{Conjecture}
\newtheorem{proposition}[subsubsection]{Proposition}
\newtheorem{lemma}[subsubsection]{Lemma}

\theoremstyle{definition}

\newtheorem{example}[subsubsection]{Example}
\newtheorem{assumption}[subsubsection]{Assumption}
\newtheorem{definition}[subsubsection]{Definition}
\newtheorem{remark}[subsubsection]{Remark}
\newtheorem{question}[subsubsection]{Question}

\renewcommand{\Vect}{\mathrm{Vect}}

\title[Admissible pairs and $p$-adic Hodge structures II]{Admissible pairs and $p$-adic Hodge structures II: The bi-analytic Ax-Lindemann theorem}

\author{Sean Howe}
\email{sean.howe@utah.edu}
\author{Christian Klevdal}
\email{cklevdal@ucsd.edu}

\begin{abstract} We reinterpret and generalize the construction of local Shimura varieties and their non-minuscule analogs by viewing them as moduli spaces of admissible pairs. Our main application is a bi-analytic Ax-Lindemann theorem comparing, in the basic case, rigid analytic subvarieties for the two distinct analytic structures induced by the Hodge and Hodge-Tate period maps and their lattice refinements. The theorem implies, in particular, that the only bi-analytic subdiamonds are special subvarieties, generalizing the bi-analytic characterization of special points given in Part~I. These results suggest that there is a purely local, $p$-adic theory of bi-analytic geometry that runs in parallel to the global, archimedean theory of bi-algebraic geometry arising in the study of unlikely intersection and functional transcendence for Shimura varieties and more general period domains for variations of Hodge structure. 
\end{abstract}
\maketitle

\tableofcontents

\section{Introduction}

Let $C/\mbb{Q}_p$ be an algebraically closed non-archimedean extension. In Part I of this work \cite{howe-klevdal:ap-pahsI} (which we will reference below using the format I.$\square$), we introduced a linear category of $p$-adic Hodge structures over $C$ and a matching semilinear category of cohomological motives over $C$, the basic admissible pairs. As explained in \Iref{s.introduction}, the relation between these categories is, for certain purposes, analogous to the relation between $\mbb{Q}$-Hodge structures and motives over $\mbb{C}$. In particular, there is a rich transcendence theory of periods of admissible pairs, and the main application of the theory in Part I was a $p$-adic transcendence characterization of CM admissible pairs, \Iref{main.transcendence}, that was essentially conjectured in \cite{howe:transcendence}.  

In this sequel, we study moduli of $p$-adic Hodge structures and admissible pairs. In some sense, this is accomplished already in \cite{scholze:berkeley} via the construction of local Shimura varieties and their non-minuscule analogs, and the methods of loc.\ cit.\ play an important role in our work. There is a conceptual difference in our approach: in \cite{scholze:berkeley}, these spaces are a stepping stone towards more general moduli of mixed characteristic local shtuka, and the treatment is more or less adapted to this perspective. By contrast, our starting point is the category of admissible pairs treated as a toy category of motives over $C$, so that we first develop a relative theory of admissible pairs and then construct the natural moduli spaces that arise from this relative theory. Our approach, in particular, highlights the importance of treating moduli spaces for non-reductive structure groups, which are ubiquitous. Thus we also extend certain constructions from \cite{scholze:berkeley, fargues-scholze:geometrization} to the 
non-reductive case.

Our main result, \cref{main.ax-lindemann} (and more generally \cref{theorem.ax-lindemann-general}), is a bi-analytic Ax-Lindemann theorem that compares rigid analytic subvarieties for two different analytic structures on these moduli spaces. This is a local, $p$-adic analog, of global, archimedean Ax-Lindemann theorems --- these latter are geometric or functional transcendence results that are naturally organized by the theory of complex bi-algebraic geometry and play an important role in the study of unlikely intersections on global Shimura varieties and other complex period domains. This connection is explained in more detail in \cref{ss.an-struc-comparison}, after the statement of the theorem. While our main interest in the present work comes from this result, note that, although local Shimura varieties have played frequent and important roles in the study of the Langlands program since Deligne's 1973 letter to Piateskii-Shapiro, to our knowledge, even some of the basic foundational results about special subvarieties and Hodge/Hodge-Tate tensors that we establish here are new. In particular, we are not aware of another source that systematically handles the existence of special subvarieties for non-reductive subgroups, and these results may prove useful in other aspects of the study of local Shimura varieties and their non-minuscule analogs. 

The moduli spaces that arise are, in general, only (ind-)locally spatial diamonds, thus the theory of diamonds and $v$-sheaves as developed in \cite{scholze:etale-cohomology-of-diamonds} provides the natural geometric context for our work. To make sense of the remainder of the introduction, it will suffice to recall that a diamond is a functor on perfectoid spaces that can be realized as the quotient of a perfectoid space by a pro-\'{e}tale equivalence relation, that any diamond $Y$ admits an underlying topological space $|Y|$, and that any locally closed and generalizing subset $|U| \subseteq |Y|$ underlies a locally closed subdiamond $U \subseteq Y$ determined by the property that $f:Z \rightarrow Y$ factors through $U$ if and only if $|f|:|Z| \rightarrow |Y|$ factors through $|U|$. There is a diamondification functor $Y \mapsto Y^\diamond$ from adic spaces over $\mbb{Q}_p$ to diamonds by taking the functor of points on perfectoid spaces over $\mbb{Q}_p$; in particular, if $L$ is a non-archimedean field, there is a diamond $\Spd\+ L=\Spa(L, \mc{O}_L)^\diamond$. Moreover, if $Y$ is a rigid analytic variety over $L$ (by which we mean an adic space locally topologically of finite type over $\Spa(L, \mc{O}_L)$), then $|Y|=|Y^\diamond|$, diamondification factors over the seminormalization functor, and diamondification induces a fully faithful functor from the category of seminormal rigid analytic varieties over $L$ to diamonds over $\Spd\+ L$. We call a diamond over $\Spd\+ L$ rigid analytic if it is in the essential image of this functor. 

\subsection{The bi-analytic Ax-Lindemann theorem}\label{ss.intro-ax-lind}
Let $L$ be a complete discretely valued extension of $\mbb{Q}_p$ with algebraically closed residue field $\kappa$, fix an algebraic closure $\overline{L}/L$, and let $C$ be the completion of $\overline{L}$. Let $\overline{\mbb{F}}_p$ denote the algebraic closure of $\mbb{F}_p$ in $\kappa$, and let $\breve{\mbb{Q}}_p:=W(\overline{\mbb{F}}_p)[1/p] \subseteq L$. Let $\sigma$ be the automorphism of $\breve{\mbb{Q}}_p$ induced by Frobenius on $\overline{\mbb{F}}_p$. Algebraic closures below will be taken in $C$. 

Let $G / \mbb{Q}_p$ be a connected linear algebraic group (not necessarily reductive), and let $[\mu]$ be a conjugacy class of cocharacters of $G_{\overline{\mbb{Q}}_p}$. We write $B(G)$ for the Kottwitz set of isomorphism classes of $G$-isocrystals (equivalently, $\sigma$-conjugacy classes in $G(\breve{\mbb{Q}}_p)$). For $G$ reductive, attached to the conjugacy class of inverse cocharacters $[\mu^{-1}]$, Kottwitz \cite[\S6]{kottwitz:isocrystals-II} has distinguished a subset $B(G,[\mu^{-1}]) \subseteq B(G)$. We extend this definition to general $G$ following \Iref{ss.pahs-invariants}: for $U$ the unipotent radical of $G$, the projection $G \rightarrow G/U$ induces a natural identification $B(G)=B(G/U)$ as well as a natural identification of conjugacy classes of cocharacters in $G_{\overline{\mbb{Q}}_p}$ and $(G/U)_{\overline{\mbb{Q}}_p}$, and we define $B(G,[\mu^{-1}])=B(G/U, [\mu^{-1}])$ via this identification. 

Generalizing work of Scholze \cite{scholze:berkeley} (treating the case of $G$ reductive), for any $b$ representing a class $[b] \in B(G)$ we define in \cref{ss.infinite-level-moduli} an infinite level moduli space $\mc{M}:=\mc{M}_{b,[\mu]}$ parameterizing admissible pairs with $G$-structure and rigidifications of the associated $G$-local system and $G$-isocrystal (to the trivial $G$-local system and $b$, respectively). This space is non-empty if and only if $[b]\in B(G,[\mu^{-1}])$.

The space $\mc{M}$ is a locally spatial and partially proper diamond over the reflex field $\breve{\mbb{Q}}_p([\mu])$, and admits commuting actions of $G(\mbb{Q}_p)$ and $G_b(\mbb{Q}_p)$ through change of trivialization (here $G_b$ is the $\sigma$-centralizer of $b$ so that $G_b(\mbb{Q}_p)$ acts by automorphisms of the associated $G$-isocrystal; when $b$ is basic, $G_b$ is an inner form of $G$). There are natural period maps over $\Qpbreve([\mu])$ measuring the position of the Hodge and Hodge-Tate filtrations or their refinements, the \'{e}tale and de Rham lattices: 
\begin{equation}
    \label{eq.intro-period-diagram-general}
\begin{tikzcd}
	& {\mc{M}_{[\mu], b}} \\
	\\
	{\Gr_{[\mu]}} && {\Gr_{[\mu^{-1}]}} \\
	\\
	{\Fl_{[\mu^{-1}]}^\diamond} && {\Fl_{[\mu]}^\diamond}
	\arrow["{\pi_{\mc{L}_\et}}"{description}, from=1-2, to=3-1]
	\arrow["{\pi_{\mc{L}_\dR}}"{description}, from=1-2, to=3-3]
	\arrow["{\pi_\Hdg}"{description}, curve={height=-12pt}, from=1-2, to=5-1]
	\arrow["{\pi_\HT}"{description}, curve={height=12pt}, from=1-2, to=5-3]
	\arrow["\BB"{description}, from=3-1, to=5-1]
	\arrow["\BB"{description}, from=3-3, to=5-3]
\end{tikzcd}\end{equation}

The targets of the lattice period maps $\pi_{\mc{L}_\bullet}$ are Schubert cells in the $\mbb{B}^+_\dR$-affine Grassmannian for $G$, $\Gr_G$. The map $\pi_{\mc{L}_\et}$ is a $G(\mbb{Q}_p)$-torsor over the open $b$-admissible locus $\Gr_{[\mu]}^{b-\adm}\subseteq \Gr_{[\mu]}$, and, when $b$ is basic, $\pi_{\mc{L_\dR}}$ is a $G_b(\mbb{Q}_p)$-torsor over an open period domain $X_{[\mu]} \subseteq \Gr_{[\mu^{-1}]}$ (we use  $X$ by analogy with the usual notation for complex period domains in the theory of Shimura varieties). 

\begin{remark}
The following table compares our notation for flag varieties and Schubert cells in the $\mbb{B}^+_\dR$-affine Grassmannian with common references:

\renewcommand{\arraystretch}{2}
\begin{tabular}{c|c|c}
        This paper and Part I \cite{howe-klevdal:ap-pahsI} & $\Gr_{[\mu]}$ & $\Fl_{[\mu]}$ \\ 
       \hline Caraiani-Scholze \cite{caraiani-scholze:non-compact}  & $\Gr_{G,\mu^{-1}}^{B^+_\dR}$ & $\mathscr{F}\ell^\std_{G,\mu^{-1}}$ \textrm{ or } $\mathscr{F}\ell_{G,\mu}$ \\
      \hline Scholze-Weinstein \cite{scholze:berkeley} & $\Gr_{\mu}$  & $\mathscr{F}\ell_{G,\mu^{-1}}$ \\
      \hline Fargues-Scholze \cite{fargues-scholze:geometrization}  & $\Gr_{G,\mu}$  & N/A \\
\end{tabular}
\renewcommand{\arraystretch}{1}

For the comparison with \cite{caraiani-scholze:non-compact}, we note that there is an inverse in the definition of the Schubert cell on the bottom of \cite[p. 683]{caraiani-scholze:non-compact} and we refer to \Iref{remark.cocharacter-filtration} for a comparison with the two normalizations for flag varieties in \cite{caraiani-scholze:non-compact}. In \cite{scholze:berkeley}, see Definition 19.2.2 for the Schubert cell and Definition 19.4.1 for the flag variety (note, in particular, the use of the \emph{opposite} Borel). See \Iref{remark.cocharacter-filtration} for an explanation of why our normalization for flag varieties is a reasonable one from the perspective of moduli of filtrations. 
\end{remark}

We suppose now that $b$ represents a (necessarily unique) basic\footnote{As in \Iref{ss.pahs-invariants}, our definition of basic is that the slope morphism is central. If $G$ is reductive, then there is always a unique basic class in $B(G,[\mu^{-1}])$. In general, if $M$ is the quotient of $G$ by its unipotent radical, realized as a Levi subgroup of $G$, then $B(M,[\mu^{-1}])=B(G,[\mu^{-1}])$, but the slope morphism for the unique basic class for $M$ may no longer be central in $G$, in which case $B(G,[\mu^{-1}])$ has no basic class.} element in $B(G,[\mu^{-1}])$. We consider the base change $\mc{M}_{C} := \mc{M}_{b,[\mu]} \times_{\Spd\+ \breve{\mbb{Q}}_p([\mu])} \Spd\+ C$, which also has an action of $\mf{I}:=\mr{Aut}_{\mr{cont}}(C/L([\mu]))$. Because of the basic assumption, there are \emph{two} natural presentations of $\mc{M}_{C}$ as a limit of finite \'{e}tale maps
\begin{equation}\label{eq.intro-limit-presentations} \mc{M}_{C}=\lim_{\substack{K \leq G(\mbb{Q}_p) \times \mf{I} \\\textrm{compact open}}}\mc{M}_{C}/K \textrm{  and  } \mc{M}_{C}=\lim_{\substack{K \leq G_b(\mbb{Q}_p) \times \mf{I} \\ \textrm{compact open}}} \mc{M}_{C}/K. \end{equation}
The first tower consists of \'{e}tale covers of $\Gr_{[\mu], L([\mu])}^{b-\adm}$ via $\pi_{\mc{L}_\et}$ while the second consists of \'{e}tale covers of $X_{[\mu], L([\mu])}$ via $\pi_{\mc{L}_\dR}$.

In complex bi-algebraic geometry \cite{klingler-ullmo-yafaev:bialgebraic}, an Ax-Lindemann theorem compares closed subvarieties for two different algebraic structures on a complex analytic variety (we recall the statement for Shimura varieties below in \cref{theorem.complex-al-sv}). Our bi-analytic Ax-Lindemann theorem compares closed rigid analytic subvarieties arising in the two different limit presentations \eqref{eq.intro-limit-presentations} of $\mc{M}_{C}$. To simplify notation, in the introduction we will state the result only in the case that $[\mu]$ is minuscule (that is, the adjoint action of $[\mu]$ on $\Lie\+ G$ has weights in $\{-1,0,1\}$); {\bf we impose this minuscule condition for the rest of \cref{ss.intro-ax-lind}} (see \cref{theorem.ax-lindemann-general} for the general case).  This hypothesis is equivalent to requiring that the maps $\BB$ in \eqref{eq.intro-period-diagram-general} are isomorphisms so that $\Gr_{[\mu]}^{b-\adm}=\Fl_{[\mu^{-1}]}^{b-\adm, \diamond}$ for $\Fl_{[\mu^{-1}]}^{b-\adm}$ an open rigid analytic subvariety of $\Fl_{[\mu^{-1}]}/\Qpbreve([\mu])$, and similarly for $X_{[\mu]}$. In the non-minuscule case these period domains may not be rigid analytic, but, by \cref{theorem.bdr-aff-grass-properties}-(4), they always admit many natural rigid analytic subdiamonds that can be described using loci on the flag variety satisfying Griffiths transversality.

Thus the \'{e}tale covers $\mc{M}_C/K$ (resp. $\mc{M}_C/K_b$) of $\Fl_{[\mu^{-1}], L([\mu])}^{b-\adm, \diamond}$ (resp. $X_{[\mu], L([\mu])}$) appearing in \eqref{eq.intro-limit-presentations} are represented by smooth rigid analytic varieties over $L([\mu])$. For $G$ reductive, these are\footnote{By definition when $K \leq G(\mbb{Q}_p) \times \mf{I}$, and by this definition combined with a duality of basic local Shimura data exchanging $G$ and $G_b$ when $K_b \leq G_b(\mbb{Q}_p) \times \mf{I}$ (see \cref{theorem.duality}).} the basic local Shimura varieties of \cite{scholze:berkeley} whose existence was predicted in \cite{rapoport-viehmann}. For our next definition, we note that there is a natural  Zariski topology on any rigid analytic variety $Z$: it is the topology whose closed sets are those $V \subseteq Z$ such that, for any affinoid open $U \subseteq Z$, $V \cap C$ is the vanishing locus of a finite set of functions in $\mathcal{O}_{Z}(U)$; by \cite[9.5.2, Corollary 7]{BoschGuntzerRemmert.NonArchimedeanAnalysis} these are exactly the vanishing sets of the coherent ideal sheaves in $\mc{O}_Z$.

\begin{definition}\label{defn.intro-statement-definitions}\hfill
\begin{itemize}
    \item The \emph{Hodge (resp. Hodge-Tate) analytic Zariski topology} on $\mc{M}_C$ is the inverse limit of the rigid analytic Zariski topologies on $\mc{M}_C/K$ (resp. $\mc{M}_C/K_b$) for $K \leq G(\mbb{Q}_p) \times \mf{I}$ (resp. $K_b \leq G_b(\mbb{Q}_p) \times \mf{I}$) compact open.
    \item an \emph{irreducible Hodge (resp. Hodge-Tate) analytic set} is a connected component of the preimage in $\mc{M}_{C}$ of an irreducible locally closed rigid subvariety of $\mc{M}_{C}/K$ for $K \leq G(\mbb{Q}_p) \times \mf{I}$ compact open (resp.  $\mc{M}_{C}/K_b$ for $K_b \leq G_b(\mbb{Q}_p) \times \mf{I}$ compact open).   
    \item A locally closed subdiamond of $\mc{M}_{C}$ is \emph{bi-analytic} if it is both an irreducible Hodge analytic set and an irreducible Hodge-Tate analytic set.    
    \item A \emph{special} subvariety of $\mc{M}_{C}$ is a connected component of a subdiamond of the form $\mc{M}_{b_H, [\mu_H], C}$ for $H \leq G$ a closed subgroup and $[\mu_H], b_H$ refining $[\mu], b$ (we allow a $\sigma$-conjugation from $b_H$ to $b$; see \cref{defn.special-subvarieties-inf-level}). 
\end{itemize}
\end{definition}

\begin{example}\label{example.algebraic-rigid-subvarieties}
Consider $Y=\mc{M}_{C}/K$  for $K \leq G(\mbb{Q}_p) \times \mf{I}$ over $\Spd\+ E$, $E=C^K$ (so $[E:L([\mu])]<\infty$). We describe two natural \emph{algebraic} sources of irreducible closed rigid analytic subvarieties of $Y$ (and thus of irreducible Hodge analytic sets in $\mc{M}_{C}$). 
\begin{enumerate}
    \item $Y$ is an \'{e}tale cover via the Hodge period map of the open partially proper subset $\Fl^{b-\adm}_{[\mu^{-1}], E} \subseteq \Fl_{[\mu^{-1}], E}$. If $Z \subseteq \Fl_{[\mu^{-1}], E}$ is a closed algebraic subvariety defined over $E$, then any irreducible component of the preimage in $Y$ of $Z$ is a closed rigid subvariety. 
    \item Uniformization can often be used to construct a map from $Y$ to the basic locus of a global Shimura variety $\Sh/E$ (see \cref{s.other-ax-lindemann}). If $Z \subseteq \Sh$ is a closed algebraic subvariety defined over $E$, then any irreducible component of the preimage in $Y$ of $Z$ is a closed rigid subvariety. 
\end{enumerate}
Note that, when both structures are present, they can be viewed as two different algebraic structures underlying the same analytic structure on $Y$. A very different type of bi-\emph{algebraic} $p$-adic Ax-Lindemann theorem compares these two algebraic structures on local Shimura varieties (see also \cref{remark.ax-schan-andre-oort-bialagebraic} and \cref{s.other-ax-lindemann}). 
\end{example}

\begin{example}\label{example.non-red-special-sv}
Even if $G$ is reductive, there are often special subvarieties corresponding to non-reductive subgroups of $G$. For example, if $G=\GL_4$ and $\mu$ is the minuscule cocharacter $\mu(z)=\mr{diag}(z,z,1,1)$, then there is a special variety corresponding to the standard parabolic $H$ with Levi subgroup $\GL_2 \times \GL_2$. In this case, the local Shimura varieties for $G$ are moduli spaces of certain two-dimensional height four $p$-divisible formal groups, and the special subvariety for $H$ classifies those that are constructed as extensions of one-dimensional height two $p$-divisible formal groups (working with filtered isocrystals, it is easy to find non-trivial extensions).  
\end{example}

\begin{maintheorem}[minuscule bi-analytic Ax-Lindemann; see also \cref{theorem.ax-lindemann-general}]\label{main.ax-lindemann}\hfill\\ The Hodge-Tate (resp. Hodge) analytic Zariski closure of an irreducible Hodge (resp. Hodge-Tate) analytic set in $\mc{M}_C$ is a special subvariety. In particular, special subvarieties are the only closed bi-analytic sets in $\mc{M}_{C}$. 
\end{maintheorem}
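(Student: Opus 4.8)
The plan is to run the template of the complex Ax--Lindemann theorem for Shimura varieties, with the two limit presentations of $\mc{M}_C$ in \eqref{eq.intro-limit-presentations} playing the roles of the two algebraic structures and the $p$-adic monodromy of the \'{e}tale realization playing the role of the monodromy of a variation of Hodge structure. First I would reduce. By the duality of basic local Shimura data (\cref{theorem.duality}) the two towers, the groups $G$ and $G_b$, and the Hodge and Hodge-Tate sides are interchanged, so it suffices to prove that the Hodge-Tate analytic Zariski closure $W$ of an irreducible Hodge analytic set $Y$ is special. Since $G(\mbb{Q}_p)\times G_b(\mbb{Q}_p)\times\mf{I}$ permutes special subvarieties and preserves both analytic Zariski topologies, and the two notions of analytic set are compatible with passage through the towers, I may assume $Y$ is a maximal irreducible Hodge analytic subset of $W$ through a chosen base point and compute at a convenient finite Hodge level $\mc{M}_C/K$. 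There $\pi_\Hdg$ presents $Y$ over its image $Z_Y:=\overline{\pi_\Hdg(Y)}$, an irreducible locally closed rigid analytic subvariety of the admissible locus in the relevant partial flag variety; I let $N_Y\le G(\mbb{Q}_p)$ be the $p$-adic monodromy of the \'{e}tale realization over $Z_Y$ and set $H:=\overline{N_Y}^{\mathrm{Zar}}\le G$.

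The argument then has two steps. In the first I would show that the universal admissible pair with $G$-structure over $Z_Y$ reduces to $H$-structure, so that $Y$ lies in a special subvariety $S$ --- a connected component of $\mc{M}_{[\mu_H],b_H,C}$ for suitable $[\mu_H],b_H$ refining $[\mu],b$, chosen to contain $Y$. The point is that the \'{e}tale realization over $Z_Y$ has Tannakian group exactly $H$ since its monodromy $N_Y$ is Zariski-dense in $H$, and that the remaining realizations and the modification at $\infty$ are then controlled by it via the Tannakian formalism for relative admissible pairs developed here --- this is the local, $p$-adic analogue of combining the theorem of the fixed part, Deligne semisimplicity, and Andr\'{e}'s theorem for families of motives. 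In the second step, $S$ is itself bi-analytic (it sits in both towers for $H$ compatibly with those for $G$) and contains $Y$, so $W=\overline{Y}^{\HT}\subseteq S$; it remains to show $W=S$, for which it suffices to see $\dim W=\dim S$. Now $\dim S=\dim X_{[\mu_H]}$, while $\dim W=\dim\overline{\pi_\HT(Y)}\le\dim X_{[\mu_H]}$ because $\overline{\pi_\HT(Y)}\subseteq\pi_\HT(S)$; the crux is the reverse inequality $\dim\overline{\pi_\HT(Y)}\ge\dim X_{[\mu_H]}$, a $p$-adic Ax--Schanuel estimate asserting that the Hodge-Tate period map is as non-degenerate on $Y$ as its monodromy group $H$ permits --- equivalently, a lower bound for the transcendence degree of the Hodge-Tate periods along $Y$, refining the CM characterization of Part~I to families. (Alternatively one can run the classical stabilizer argument: $N_Y$ preserves $Y$, hence $W$, so $N_Y\le\Stab_{G(\mbb{Q}_p)}(W)$, which combined with the reduction to $H$ forces $W$ to fill out the $H$-orbit.) Once $W=S$, it is a special subvariety by definition; and the ``in particular'' follows since any bi-analytic set, being Hodge-Tate analytic, equals its own Hodge-Tate analytic Zariski closure.

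The hard part is the transcendence input: in the complex theory the analogous statements come from semisimplicity of variations of Hodge structure and from o-minimality together with the Pila--Wilkie theorem, whereas here one needs a big-monodromy theorem for the \'{e}tale realizations of relative admissible pairs over rigid analytic subvarieties of the period domains, and a transcendence lower bound for their Hodge-Tate periods, both proved directly from the relative theory of admissible pairs and the results of Part~I, and --- crucially --- uniformly over non-reductive structure groups $H$ (cf.\ \cref{example.non-red-special-sv}), where the relevant categories fail to be semisimple and extensions must be controlled explicitly. Secondary difficulties are the bookkeeping of connected components and of the residual $\mf{I}$-action throughout, the verification that $S$ is cut out at finite level in each tower so as to be simultaneously Hodge and Hodge-Tate analytic, and, for the general statement \cref{theorem.ax-lindemann-general}, replacing the partial flag varieties by the Griffiths-transverse rigid analytic subdiamonds of the Schubert cells in the $B_\dR$-affine Grassmannian.
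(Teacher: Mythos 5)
Your proposal follows the complex Ax--Lindemann template but departs from the paper's argument at precisely the two places where that template breaks down in the $p$-adic setting, and in both places you flag but do not close the gap. First, you set $H=\overline{N_Y}^{\mathrm{Zar}}$, the Zariski closure of the \'{e}tale monodromy, and invoke ``the local $p$-adic analogue of the theorem of the fixed part, Deligne semisimplicity, and Andr\'{e}'s theorem'' to promote $H$-structure on the local system to $H$-structure on the admissible pair. There is no such result here: $\AdmPair^\circ$ is not semisimple, its motivic Galois groups are typically non-reductive (\cref{example.non-red-special-sv}), and there is no theorem of the fixed part guaranteeing that a monodromy-invariant tensor underlies a sub-admissible pair. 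The paper avoids the issue by defining $H:=\MG(\mc{G},x)$ to be the motivic Galois group of the universal admissible pair over the finite-level rigid analytic variety $Z$ at a geometric point, so the reduction of structure group is tautological (\cref{lemma.ap-red-sg}); the \'{e}tale monodromy group never appears.

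Second, your Step 2 hinges on a ``$p$-adic Ax--Schanuel estimate'' $\dim\overline{\pi_\HT(Y)}\ge\dim X_{[\mu_H]}$, which you acknowledge as the hard part but leave open, and the paper explicitly remarks (\cref{remark.ax-schan-andre-oort-bialagebraic}) that it is unclear how even to formulate a local $p$-adic Ax--Schanuel statement. The paper's closing argument is not a dimension count at all: it uses \cref{lemma:Hodge-generic-locus-for-rigid-analytic} (a Baire-category argument interpolating between density of weakly Shilov points and of classical points) to produce a Hodge-generic classical point $z\in Z(C)$, then invokes \Iref{corollary.galois-rep-dense-open} together with \cref{lemma:hodge-generic-MGG} to show that a small $\mf{I}$-orbit of $z$ coincides with an open $G(\mbb{Q}_p)$-orbit. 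Pushing this forward along the unramified map $Y\to\Fl_{[\mu]}$ shows the image of a small ball is \emph{open} in $\Fl_{[\mu]}$, so any closed rigid analytic $Y$ appearing in the Hodge-Tate closure is already open, hence a full connected component. Density of a single Galois orbit --- a pointwise transcendence result from Part~I --- thus replaces both the Ax--Schanuel dimension bound and the stabilizer/normality argument you suggest as an alternative (which would likewise require a monodromy-vs.-motivic-Galois comparison unavailable here), and it is also where the Griffiths transversality constraint forcing the weights of $[\mu_H]$ on $\Lie H$ to be $\geq -1$ enters.
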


\begin{example}\label{example.algebraic-condition} \cref{main.ax-lindemann} has the following consequence, which directly mirrors a common formulation of the Ax-Lindemann theorem for complex Shimura varieties (see \cref{theorem.complex-al-sv} below): 
Let $Z'$ be an algebraic subvariety of $\Fl_{[\mu], L([\mu])}$ and let $Z$ be an irreducible component of the rigid analytic variety $Z'_C \cap X_{[\mu], C}$. Then, any connected component $\tilde{Z}$ of $\pi_{\HT}^{-1}(Z)$ is a irreducible Hodge-Tate analytic set, and \cref{main.ax-lindemann} implies that the rigid analytic Zariski closure of the image of $\tilde{Z}$ in the finite level local Shimura variety $\mc{M}_{b,[\mu],C}/K$ is a (finite level) special subvariety. 
\end{example}

\begin{remark}
The characterization of zero dimensional bi-analytic subvarieties in \cref{main.ax-lindemann} (and \cref{theorem.ax-lindemann-general} for the non-minuscule case) is equivalent to  \Iref{main.G-structure-period-maps}. More generally, \cref{main.ax-lindemann} and \cref{theorem.ax-lindemann-general} should be viewed as geometric transcendence results. The minuscule case, \cref{main.ax-lindemann}, says that, given an $\overline{L}$-rigid analytic condition on the Hodge-Tate (resp. Hodge) filtration, the strictest closed $\overline{L}$-rigid analytic condition on the Hodge (resp. Hodge-Tate) filtration that implies it is equivalent to one cutting out a special subvariety or, equivalently, requiring that certain tensors be Hodge (resp. Hodge-Tate) tensors. 
\end{remark}

\begin{remark}A special subvariety is $\overline{\breve{\mbb{Q}}}_p$-bi-analytic, but we obtain a stronger theorem by allowing arbitrary $L$ rather than only finite extensions of $\breve{\mbb{Q}}_p$ since the residue field may be larger than $\overline{\mbb{F}}_p$, in which case there are $\overline{L}$-analytic subvarieties that are not $\overline{\breve{\mbb{Q}}}_p$-analytic (see also \Iref{theorem.transcendence-ht-filt-one-dim} and \Iref{main.transcendence}).
\end{remark}

\subsection{Analytic structures and comparison with other results}\label{ss.an-struc-comparison}
The definition of the Hodge/Hodge-Tate Zariski topology above is convenient for generalization to the non-minuscule case (where we encounter some subtleties in the definitions; see \cref{s.bi-an-ax--lind-theorem}), but in the minuscule case there is another interesting variant. Indeed, we obtain natural sheaves of functions $\mc{O}^{\Hdg-\an}$ and $\mc{O}^{\HT-\an}$  inside the diamond structure sheaf $\mc{O}$ on $\mc{M}_C$ consisting of those that are locally pulled back from a finite level space in one of the two presentations as a limit of rigid analytic varieties. The Hodge (resp. Hodge-Tate) analytic functions on a quasi-compact open subset $U \subseteq \mc{M}_C$ can be characterized as those elements of $\mc{O}(U)$ that are smooth (i.e.\ fixed by an open subgroup) for the action of the open stabilizer of $U$ in $G(\mbb{Q}_p) \times \mf{I}$ (resp. $G_b(\mbb{Q}_p) \times \mf{I}$). Note that, on a general diamond over $C$, the structure sheaf $\mc{O}$ may admit very few sections, but for $[\mu]$ minuscule we expect that $\mc{M}_C$ is a perfectoid space and that for any quasi-compact open $U$ both $\mc{O}^{\Hdg-\an}(U)$ and $\mc{O}^{\HT-\an}(U)$ are dense in the (much larger) $\mc{O}(U)$. We can then define Zariski topologies using the vanishing sets of ideal sheaves of $\mc{O}^{\Hdg-{\an}}$ or $\mc{O}^{\HT-{\an}}$ as the closed sets --- this yields, a priori, more closed sets than in the Hodge/Hodge-Tate Zariski topologies defined above, but the induced subspace topologies agree on any quasi-compact open (in the diamond topology) and for \cref{main.ax-lindemann} the distinction is irrelevant. 

We would like to view the diamond structure sheaf as analogous to the sheaf of continuous functions on a topological space, with the subsheaves $\mc{O}^{\Hdg-\an}$ and $\mc{O}^{\HT-\an}$ providing two distinct ways to promote it to a type of analytic structure (over $L$) in the same way that a complex analytic manifold can be thought of as a topological manifold equipped with a distinguished subsheaf of analytic functions satisfying certain properties. This can be motivated also by simpler examples: 

\begin{example}\label{example.Zp}
    Consider the profinite set $\mbb{Z}_p$. As a diamond over $C$, its underlying topological space is $\mbb{Z}_p$ and its diamond structure sheaf is the sheaf of continuous $C$-valued functions on $\mbb{Z}_p$. For many purposes, however, it is more natural to equip it with the subsheaf of locally analytic functions as in the theory of $p$-adic manifolds or even the subsheaf of locally constant $C$-valued functions, i.e.\ the structure sheaf of $\mbb{Z}_p$ as a scheme over $C$. The former can be obtained via pullback of the structure sheaf on $\mbb{A}^1_C$ under the inclusion $\mbb{Z}_p \hookrightarrow \mbb{A}^1_C$, while the latter can be obtained from the presentation $\mbb{Z}_p=\lim \mbb{Z}/p^n\mbb{Z}$ where each $\mbb{Z}/p^n\mbb{Z}$ is viewed as a finite union of copies of $\Spa\+ C$. The same remarks apply more generally to the description of any $p$-adic Lie group or $p$-adic manifold in the sense of Serre.
\end{example}

\begin{remark}\label{remark.analytic-structures}
    We do not attempt to make a precise definition of analytic structure on a diamond here, but this is a richer notion than might first seem apparent: for example, there is a common refinement to the Hodge-Tate and Hodge analytic structures on $\mc{M}_C$ that comes from pullback along an embedding into a rigid analytic variety --- as explained in \cite{howe.tangent-bundles-of-p-adic-manifold-fibrations}, this is related to the Banach-Colmez tangent spaces of \cite{fargues-scholze:geometrization, ivanov-weinstein, howe.inscription-and-p-adic-periods} and the locally analytic vectors of \cite{pan:loc-an}.
\end{remark}

With this rough idea in place, we recall the Ax-Lindemann theorem for connected Shimura varieties: Let $(G,X^+)$ be a connected Shimura datum, and let
\[ J: X^+ \rightarrow \mr{Sh}_G^\circ(\mbb{C})=\Gamma \backslash X^+ \]
denote the uniformization map for an associated connected Shimura variety $\Sh_{G}^\circ$ over $\mbb{C}$. The period map is a complex analytic embedding $X^+ \hookrightarrow \Fl_G(\mbb{C})$. 

\begin{theorem}[hyperbolic Ax-Lindemann of \cite{klingler-ullmo-yafaev:hyperbolic-al} in the formulation of Theorem 4.28 of \cite{klingler-ullmo-yafaev:bialgebraic}]\label{theorem.complex-al-sv} If $Z \subseteq \Fl_G$ is a complex algebraic subvariety and $Z_0$ is an analytic irreducible component of $Z(\mbb{C}) \cap X^+$, then the algebraic Zariski closure of $J(Z_0)$ in $\Sh_{G}^\circ(\mbb{C})$ is a weakly special subvariety. In particular, the $\mbb{C}$-bi-algebraic subvarieties are exacly the weakly special subvarieties. \end{theorem}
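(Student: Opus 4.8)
This is a theorem of Klingler--Ullmo--Yafaev \cite{klingler-ullmo-yafaev:hyperbolic-al}, proved by the Pila--Zannier method; here is the strategy. First, a reduction: enlarging $Z_0$ to any larger algebraic subvariety of $X^+$ whose image under $J$ still lies in $Y := \overline{J(Z_0)}^{\mathrm{Zar}}$ does not change $Y$, so we may assume $Z_0$ is \emph{maximal} among algebraic subvarieties of $X^+$ contained in $J^{-1}(Y)$; it then suffices to prove that any such maximal $Z_0$ is weakly special, since its image is then automatically a Zariski-closed weakly special subvariety, equal to $Y$. After replacing $G$ by the generic Mumford--Tate group on $Y$ we may also assume $Y$ is Hodge generic, and we induct on $\dim Z_0$ (and, secondarily, on $\dim\Sh_G^\circ$), the case $\dim Z_0 = 0$ being trivial. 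Fix a faithful rational representation of $G$, providing a height function on $\Gamma \subseteq G(\mbb{Q})$, and a Siegel fundamental set $\mathcal{F} \subseteq X^+$ for $\Gamma$. Realizing the period embedding $X^+ \hookrightarrow \Fl_G(\mbb{C})$ as the Borel embedding onto a (semialgebraic) bounded symmetric domain, the restriction $J|_{\mathcal{F}}$ is definable in the o-minimal structure $\mbb{R}_{\mathrm{an},\exp}$; this is the theorem of Peterzil--Starchenko in the Siegel case and of Klingler--Ullmo--Yafaev in general. Hence $J^{-1}(Y)\cap\mathcal{F}$ is definable, and so is
\[ D := \{\, g \in G(\mbb{R})^+ : \dim(g Z_0 \cap \mathcal{F}) = \dim Z_0 \text{ and } J(g Z_0 \cap \mathcal{F}) \subseteq Y \,\}, \]
and every $\gamma \in \Gamma$ with $\dim(\gamma Z_0 \cap \mathcal{F}) = \dim Z_0$ lies in $D$ because $J(\gamma Z_0) = J(Z_0) \subseteq Y$.

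The crucial counting input is a \emph{lower bound}: if $Z_0$ is not weakly special, then the number of $\gamma \in D \cap \Gamma$ of height at most $T$ is $\gg T^{\delta}$ for some $\delta > 0$. This is a volume estimate in the symmetric space --- $Z_0$ is positive-dimensional and semialgebraic, so it has large volume in metric balls, while failure of weak-specialness keeps its $\Gamma$-translates from being confined to boundedly many translates of $\mathcal{F}$. Feeding this into the Pila--Wilkie counting theorem (block form) applied to $D$, the $\gg T^\delta$ rational points of bounded height force $D$ to contain a connected positive-dimensional semialgebraic block $B$, and, translating by a $\Gamma$-point of $B$, we may assume $1 \in B$. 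Since $g Z_0 \cap \mathcal{F}$ is Zariski-dense in $g Z_0$ and $J$ is a local isomorphism, one deduces $J(g Z_0) \subseteq Y$ for all $g \in B$, i.e.\ $B \cdot Z_0 \subseteq J^{-1}(Y)$. Were $B$ not to stabilize $Z_0$, an irreducible component of the intersection of the Zariski closure of $B \cdot Z_0$ with $X^+$ would be an algebraic subvariety of $J^{-1}(Y)$ properly containing $Z_0$, contradicting maximality; hence the real algebraic stabilizer of $Z_0$ in $G(\mbb{R})$ is positive-dimensional. Let $H \le G$ be the connected algebraic subgroup it generates.

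Now $Z_0$ is a union of $H(\mbb{R})^+$-orbits. If a generic such orbit has dimension $\dim Z_0$, then $Z_0$ is a single $H(\mbb{R})^+$-orbit, and the structure theory of algebraic flows on bounded symmetric domains (Ullmo--Yafaev; cf.\ Moonen and Andr\'{e} on totally geodesic subvarieties) shows that an algebraic orbit of this type arises from a suitably reductive subgroup and is weakly special --- algebraicity is what excludes unipotent/horospherical directions, via Griffiths-transversality-type constraints. Otherwise the generic orbit has smaller dimension, and either $H$ fixes $Z_0$ pointwise (so $Z_0$ lies in a proper totally geodesic subdomain) or quotienting by the positive-dimensional $H$-orbits produces a lower-dimensional $Z_0'$; in either case, working inside the normalizer of $H$ and its associated sub-datum, one obtains a strictly smaller instance of the problem, to which the inductive hypothesis applies, and reassembling shows $Z_0$ is weakly special. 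This completes the induction.

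The heart of the argument, and the main obstacle, is the counting dichotomy of the middle step, especially the lower bound showing that failure of weak-specialness forces polynomially many fundamental-domain translates: this required genuinely new ideas in the Shimura setting, beyond Pila's modular-curve case. The definability theorem of the first step is itself deep but is used here only as a black box, and the group-theoretic classification of the last step --- upgrading an extra algebraic symmetry to a weakly special orbit --- also takes real work; but it is the counting estimate that carries the transcendence content of the theorem.
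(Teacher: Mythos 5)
The paper does not prove this statement. \cref{theorem.complex-al-sv} is quoted in the introduction from Klingler--Ullmo--Yafaev \cite{klingler-ullmo-yafaev:hyperbolic-al} purely as motivation and as a point of comparison for the authors' own, different result, the $p$-adic bi-analytic Ax--Lindemann theorem (\cref{main.ax-lindemann} and \cref{theorem.ax-lindemann-general}). So there is no internal proof to compare your sketch against.

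As a self-contained summary of the Pila--Zannier strategy underlying the cited theorem, your sketch captures the main skeleton correctly: reduce to showing that a maximal algebraic subvariety of $J^{-1}(Y)$ is weakly special; use definability of $J|_{\mathcal{F}}$ in $\mbb{R}_{\mathrm{an},\exp}$ to make the translate set definable; establish a polynomial lower bound for $\Gamma$-translates when $Z_0$ is not weakly special; contrast this with Pila--Wilkie in block form to extract a positive-dimensional semialgebraic family and hence a positive-dimensional stabilizer; and finally classify algebraic subvarieties with such symmetry. That said, several steps are stated more loosely than the actual argument tolerates. The induction scheme is not literally on $\dim Z_0$; it proceeds by passing to sub-Shimura data and quotients, and the way the stabilizer $H$ interacts with the Shimura datum requires care (one must rule out $H$ having a nontrivial unipotent part, which is done by a volume/horoball argument, not by ``Griffiths-transversality-type constraints'' --- that phrase does not belong to the complex Ax--Lindemann proof). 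Likewise, the jump from ``$B$ contains a rational point of bounded height'' to ``$B$ contains a $\Gamma$-point, so we may assume $1\in B$'' needs to be justified, since Pila--Wilkie counts rational points of $G(\mbb{Q})$, not lattice elements; this is handled but is not automatic. None of this affects your conclusion, but if the sketch were to stand on its own those points would need tightening. In any event, for the purposes of this paper the theorem is a black box, and the relevant comparison is the one the authors themselves draw: the $p$-adic analog replaces the dichotomy of two algebraic structures (algebraic on the flag variety vs.\ algebraic on the Shimura variety) by two rigid-analytic structures (Hodge vs.\ Hodge--Tate), and the transcendence input is \Iref{corollary.galois-rep-dense-open} rather than o-minimal point-counting.
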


The special subvarieties in this context are defined in essentially the same way as the local $p$-adic case using morphisms of connected Shimura data $(H, X_H^+) \rightarrow (G, X^+)$, and weakly special subvarieties come from allowing translations arising from the product structure of the adjoint quotient of $H$ (\cite[Theorem 3.5]{klingler-ullmo-yafaev:bialgebraic}). This theorem says that, for any complex algebraic condition on the flag variety (i.e.\ a polynomial condition on the algebraic moduli measuring the filtration in the universal variation of Hodge structures), the strictest complex algebraic condition on $\Sh_{G}^\circ$ that implies it is, up to a natural translation, the condition of belonging to a sub-Shimura variety or, equivalently, the condition that certain tensors are Hodge tensors.

This type of Ax-Lindemann theorem is naturally formulated in the context of bi-algebraic geometry \cite{klingler-ullmo-yafaev:bialgebraic, klingler:hodge-loci-atypical-conjectures}. In particular, one reason that \cref{main.ax-lindemann} is interesting is that it shows there is a non-trivial parallel theory of bi-analytic geometry in our local $p$-adic setting --- compare \cref{theorem.complex-al-sv} with the statement of \cref{main.ax-lindemann} and \cref{example.algebraic-condition}. There are, however, two important differences between the minuscule case of \cref{main.ax-lindemann} and the Ax-Lindemann theorem for Shimura varieties: in \cref{main.ax-lindemann} we obtain a characterization of special rather than weakly special subvarieties, and in \cref{main.ax-lindemann} there is a symmetry between the two analytic structures not present in the complex setting. In fact, these can be explained together: whereas the  Ax-Lindemann theorem for Shimura varieties fits into the world of $\mbb{C}$-bi-algebraic geometry, our result lives in $\overline{L}$-bi-analytic geometry and is an analog of a (conjectural) sideways Ax-Lindemann theorem in $\overline{\mbb{Q}}$-bi-algebraic geometry; in fact, we only obtain the statement in a form matching the complex Ax-Lindemann theorem by combining this with a duality of basic local Shimura varieties that swaps the period maps (see \cref{ss.intro-proof-outline} for more on the proof).   

\begin{remark}
For complex Shimura varieties, the special subvarieties are exactly the weakly special subvarieties such that the translation component is by a special point. In particular, by combining the Ax-Lindemann theorem with a transcendence characterization of special points due to Cohen \cite{cohen:transcendence} and Shiga and Wolfart \cite{shiga-wolfart:transcendence} (see \Iref{theorem.complex-av-transcendence} and surrounding discussion), in the abelian type case one finds that the $\overline{\mbb{Q}}$-bi-algebraic subvarieties are precisely the special subvarieties. This characterization of the $\overline{\mbb{Q}}$-bi-algebraic subvarieties is a result of Ullmo-Yafaev \cite{ullmo-yafaev:characterization} predating the Ax-Lindemann theorem in that setting; instead, they used the theorem of Deligne-Andr\'{e} to show that every $\mbb{C}$-bi-algebraic subvariety is weakly special, then invoked the result of Cohen and Shiga-Wolfart. \end{remark}

\begin{remark}\label{remark.non-minuscule-complex-analogy} The complex Ax-Lindemann theorem is known for all admissible variations of Hodge structures \cite{bakker-tsimerman:ax-schanuel} (and even mixed Hodge structures \cite{klingler-gao:ax-schanuel-mixed}) over smooth quasi-projective varieties. However, the analogy between these results and \cref{main.ax-lindemann}/\cref{theorem.ax-lindemann-general} is not as strong outside of the minuscule case --- the issue is that, for a variation of $p$-adic Hodge structures (see Sections \ref{ss.intro-relative-theory} and \ref{ss.vpahs}) over a smooth connected rigid analytic variety over a $p$-adic field, the de Rham lattice period map on a connected trivializing cover will never have open image outside of the local Shimura case. This breaks the step in our argument where we flip between the Hodge and Hodge-Tate period maps, so we only get a ``sideways" Ax-Lindemann (cf. \cref{main.hodge-tate-image}). 
\end{remark}

\begin{question}
By the discussion above, we should view \cref{main.ax-lindemann} as a local, $p$-adic $\overline{L}$-bi-analytic Ax-Lindemann theorem. Is there a $C$-bi-analytic version?
\end{question}

\begin{remark}\label{remark.ax-schan-andre-oort-bialagebraic}
In the global, archimedean setting of complex Hodge theory, Ax-Lindemann type results are consequences of even more general Ax-Schanuel type results (see, e.g. \cite[\S7]{klingler:hodge-loci-atypical-conjectures}). It is not  clear how to formulate a local $p$-adic Ax-Schanuel conjecture even in the case of local Shimura varieties because, with the notation of \cref{main.ax-lindemann}, one needs to define the dimension of the intersection of a rigid analytic subvariety of $\mc{M}_C/K \times_{L([\mu])} \mc{M}_C/K_b$ with the image of the diagonal map from $\mc{M}_C$. It may be possible to formulate a conjecture using the refined analytic structure alluded to in \cref{remark.analytic-structures} and/or the theory of Banach-Colmez tangent bundles developed in \cite{howe.inscription-and-p-adic-periods}.  

In the global, archimedean setting, Ax-Lindemann and Ax-Schanuel theorems are part of a larger framework of unlikely intersection in bi-algebraic geometry, and play an important role in the proof of the Andr\'{e}-Oort conjecture and its generalizations following the Pila-Zannier strategy. It seems unlikely that there is an interesting version of Andr\'{e}-Oort or related unlikely intersection questions in the local $p$-adic setting: simple examples show that the most naive generalizations of Andr\'{e}-Oort to this setting are not true. One issue is that, in certain cases, the flag variety for the Hodge period map is naturally defined over a finite extension $K/\mbb{Q}_p$ such that all of the $K$-points correspond to CM structures, so that the CM points will be Zariski dense in any subvariety defined over $K$ with a smooth $K$-point. 

On the other hand, as we have already alluded to in \cref{example.algebraic-rigid-subvarieties}, there is almost certainly an interesting broader bi-algebraic theory in the global, $p$-adic setting (where the position of the \'{e}tale lattice is measured against a trivialization of a global Kottwitz isocrystal). The bi-algebraic Ax-Lindemann of Chambert-Loir and Loeser \cite{chambert-loir-loeser:nonarchimedean-ax-lindemann} lives in this context (in the cases corresponding to products of Shimura curves uniformized by products of the Drinfeld half plane). As in \cref{example.algebraic-rigid-subvarieties}, the two algebraic structures in \cite{chambert-loir-loeser:nonarchimedean-ax-lindemann}  underlie the same analytic structure (the one coming from the Hodge period map), so this result is completely orthogonal to the bi-analytic results described here. We discuss these points in detail in \cref{s.other-ax-lindemann}; see also \Iref{s.introduction} for more on the relations between the local $p$-adic, global $p$-adic, and global archimedean contexts.
\end{remark}

\subsection{The relative theory of admissible pairs and $p$-adic Hodge structures.}\label{ss.intro-relative-theory}
As indicated above, we view the spaces that arise in \cref{main.ax-lindemann} and its proof as moduli of admissible pairs / $p$-adic Hodge structures. We thus briefly discuss what we mean by an admissible pair or $p$-adic Hodge structure in the relative case. 

Recall from \Iref{def.admpair} (resp. \Iref{def.p-adic-hs}) that, for $C/\mbb{Q}_p$ an algebraically closed non-archimedean extension, an admissible pair (resp. a $p$-adic Hodge structure) over $C$ is a $B^+_\dR$-latticed isocrystal $(W, \mc{L}_\et)$ (resp. a $\mbb{Q}$-graded $B^+_\dR$-latticed $\mbb{Q}_p$-vector space) satisfying an admissibility condition formulated using modifications of vector bundles on the Fargues-Fontaine curve. Here $B^+_\dR$ is the integral de Rham period ring of Fontaine attached to the perfectoid field $C$.  

If $Y$ is a locally spatial diamond then there is a natural period sheaf $\mbb{B}^+_\dR$ on the $v$-site of $Y$ interpolating the Fontaine rings $B^+_\dR$ at all geometric points. A \emph{neutral admissible pair} (resp. \emph{neutral $p$-adic Hodge structure}) on $Y$ is an isocrystal $W$ (resp. $\mbb{Q}$-graded $\mbb{Q}_p$-vector space $V$) equipped with a $\mbb{B}^+_\dR$-lattice on the $v$-sheaf $W \otimes_{\breve{\mbb{Q}}_p} {\mbb{B}_\dR}$ (resp. $V \otimes_{\mbb{Q}_p} {\mbb{B}_\dR}$) such that the restriction of this data to every geometric point of $Y$ is an admissible pair in the sense of \Iref{def.admpair} (resp. a $p$-adic Hodge structure in the sense of \Iref{def.p-adic-hs}). This is a terrible definition from the perspective of descent, but it suffices to study the moduli spaces of admissible pairs (resp. $p$-adic Hodge structures) that arise here because these all include a trivialization of the underlying isocrystal (resp. graded vector space) as part of the data they parameterize. 

\begin{example} If $L$ is a $p$-adic field with algebraically closed residue field, then a neutral admissible pair over $\Spd\+ L$ is the same as an admissible pair with good reduction over $L$ as defined in \Iref{ss.galois-representation}. Indeed, $\Spd\+ L = \Spd\+ \overline{L}^\wedge / \Gal(\overline{L}/L)$, and the lattices on an isocrystal arising from filtrations over $L$ are precisely those preserved by the Galois action on $W \otimes \mbb{B}_\dR$, i.e.\ that descend to $\Spd\+ L$.  \end{example}

In Part III we will view neutral admissible pairs (resp. neutral $p$-adic Hodge structures) as living inside a better behaved category of admissible pairs (resp. $p$-adic Hodge structures) where we allow an arbitrary $v$-local system of isocrystals (resp. $\mbb{Q}$-graded $\mbb{Q}_p$-local system) in order to obtain a $v$-stack. For $Y$ connected, the neutral admissible pairs will be a full Tannakian subcategory of this better behaved category. The price for enforcing $v$-descent is that we obtain many more objects than actually arise from interesting geometric constructions. Thus, after introducing this formalism, a main point of Part III will be to formulate a conjecture stating that the admissible pairs corresponding to $p$-adic Hodge structures arising from de Rham local systems (we call these variations of $p$-adic Hodge structure --- see \cref{ss.vpahs}) are not so far from the neutral admissible pairs that we use here. When $Y=\Spd\+ L$ for a $p$-adic field $L$, this conjecture is a consequence of classical results in Fontaine's crystalline theory. We have postponed this discussion until Part III and worked instead with neutral objects here in order to avoid some technical points about local systems of isocrystals that would create a technical burden in the foundational definitions that is  unnecessary for our main results.

\subsection{Outline of proof \cref{main.ax-lindemann}}\label{ss.intro-proof-outline}
Because of the duality (\cref{theorem.duality}) between basic data that swaps the two period maps, to prove \cref{main.ax-lindemann} it suffices to prove that the Hodge-Tate analytic Zariski closure of an irreducible Hodge analytic set is a special subvariety. To establish this, we consider the universal neutral $G$-admissible pair restricted to the rigid analytic variety at finite level, and the Tannakian formalism  baked in from the beginning allows us to reduce to the case that $G$ is the motivic Galois group. We then need a geometric argument based on analysis of Hodge loci to show that there is a classical point such that the motivic Galois group at that point is already $G$ (\cref{lemma:Hodge-generic-locus-for-rigid-analytic}) --- this is inspired by similar arguments in complex Hodge theory, but the Hodge loci are not as nice in our setting (this is another manifestation of the failure of reductivity for motivic Galois groups in this context). Thus, the argument depends in the end on a delicate application of the Baire category theorem to move between density of weakly Shilov points and classical points. Once we have a Hodge generic classical point, \Iref{corollary.galois-rep-dense-open} implies the image of the Hodge-Tate period map is dense in the flag variety, and this is essentially enough to conclude. 

In fact, what we actually prove using this strategy is the non-minuscule version \cref{theorem.ax-lindemann-general}. The main added difficulty in this non-minuscule version is simply to formulate the correct definitions and basic results (e.g. that special subvarieties are closed) taking into account some subtleties arising for non-reductive subgroups (in particular, it is possible to have a special subdiamond whose image in one period domain is rigid analytic but whose image in the other is not). We note that this generalization is one of the most novel and interesting aspects of our work: it indicates that there is a rich and interesting theory of analytic geometry living within (or on top) of the world of diamonds but going beyond simply rigid analytic varieties; in particular, we are not aware of any earlier work on the analytic geometry of these non-rigid analytic diamonds. We refer the reader to \cref{ss.inf-level-special-sv} and \cref{s.bi-an-ax--lind-theorem} for more details and some explicit examples. 

Outside of the basic case, the same method of proof gives:

\begin{maintheorem}\label{main.hodge-tate-image}
Let $Z$ be a geometrically irreducible rigid analytic variety over a strict $p$-adic field $L$, and let $A=(W, \mc{L}_\et)$ be a neutral admissible pair on $Z^\diamond$. Then $A$ has a generic motivic Galois group $G$ and is good with lattice invariant $[\mu]$, a (not necessarily minuscule!) conjugacy class of cocharacters of $G_{\overline{\mbb{Q}}_p}$ defined over $L$.  For $C=\overline{L}^\wedge$, if $\tilde{Z} \rightarrow Z_C^\diamond$ is a geometric connected component of the associated $G(\mbb{Q}_p)$-torsor over $Z^\diamond$ and $\pi_\HT: \tilde{Z} \rightarrow \Fl_{[\mu], L([\mu])}^\diamond$ is the associated Hodge-Tate period map, then $\pi_\HT(\tilde{Z})$ is rigid Zariski dense in any connected open of $\Fl_{[\mu],L([\mu])}^\diamond$ that it intersects.  
\end{maintheorem}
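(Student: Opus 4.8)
The plan is to reduce, via Tannakian formalism, to the situation where $G$ is already the motivic Galois group at a well-chosen classical point, and then invoke the density statement \Iref{corollary.galois-rep-dense-open} at that point together with a spreading-out/irreducibility argument. First I would establish the preliminary structural claims: that a neutral admissible pair $A$ on the irreducible rigid analytic variety $Z$ has a well-defined generic motivic Galois group $G$ and is good with a lattice invariant $[\mu]$ defined over $L$. The generic motivic Galois group is obtained by taking, at a Hodge-generic point, the Tannakian group of the subcategory of the category of $p$-adic Hodge structures generated by the fibre of $A$; one shows that on a Zariski-dense open the motivic Galois group at each classical point is contained in (a conjugate of) $G$, and that $G$ is the smallest group occurring, so it is the "generic" one. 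Goodness and the fact that $[\mu]$ is defined over $L$ follow from the relative theory of admissible pairs over $Z$ sketched in the introduction: the Hodge (resp. de Rham) lattice varies, but its type $[\mu]$ is locally constant, hence constant on the connected $Z$, and the reflex field is contained in $L$ because the whole situation is defined over $L$.

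The core of the argument is then the following: after passing to the connected trivializing cover $\tilde Z \to Z$ (a connected component of the $G(\mbb{Q}_p)$-torsor of rigidifications of the associated $G$-local system), the Hodge-Tate period map $\pi_\HT : \tilde Z \to \Fl_{[\mu]}$ records the position of the Hodge-Tate filtration, and I claim its image is rigid Zariski-dense in any connected open $U \subseteq \Fl_{[\mu],L}$ that it meets. To see this, note that the closure $\overline{\pi_\HT(\tilde Z)} \cap U$ is a closed rigid analytic subvariety $Z'$ of $U$; pulling back along $\pi_\HT$ gives a Hodge-Tate analytic condition on $\tilde Z$, hence (by the definition of the analytic structure and minimality) a closed subdiamond of $\tilde Z$ through which $\tilde Z$ factors — but by irreducibility of $Z$ this closed condition is either everything or has smaller dimension. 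If it had smaller dimension, then the Hodge-Tate filtration on $\tilde Z$ would, everywhere, satisfy a nontrivial algebraic condition cutting out a proper subvariety of $\Fl_{[\mu]}$; I would show this forces the existence of a proper subgroup $H \lneq G$ over which the admissible pair acquires $H$-structure generically (because the stabilizer of such a locus, via the theory of Hodge tensors, is a proper subgroup containing the generic motivic Galois group), contradicting genericity of $G$.

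The key input making the previous paragraph work is \Iref{corollary.galois-rep-dense-open}: at a Hodge-generic classical point $z$, where the motivic Galois group is exactly $G$, the image of the Galois representation — equivalently, the single Hodge-Tate period point together with its orbit — is dense in an open of $\Fl_{[\mu]}$. But a single point gives only a point, so the actual mechanism is that Hodge-genericity of $z$ plus the density result forces $\pi_\HT(\tilde Z)$ to not be contained in any proper $\overline L$-algebraic subvariety of $\Fl_{[\mu]}$ through $z$; combined with irreducibility of the source, this yields Zariski density in the relevant connected open. I therefore need to produce such a Hodge-generic classical point, and this is \textbf{the main obstacle}: in the $p$-adic setting the Hodge loci are far less well-behaved than in complex Hodge theory (they need not be closed analytic subvarieties), so I cannot simply take a countable union of proper analytic loci and extract a point in the complement by Baire category on the classical points directly. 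The fix, following the strategy indicated in \cref{ss.intro-proof-outline}, is to first prove density of "weakly Shilov" points in the complement of the countably many Hodge loci using a Baire category argument on the adic space, and then transfer from weakly Shilov points to classical points. Once a Hodge-generic classical point is in hand, the rest — the Tannakian reduction, the identification of the stabilizer of a would-be proper Zariski closure as a proper subgroup, and the invocation of \Iref{corollary.galois-rep-dense-open} — is comparatively formal.
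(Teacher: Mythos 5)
Your overall strategy matches the paper's: establish the structural claims (existence of a generic motivic Galois group, goodness, well-defined lattice invariant $[\mu]$, which the paper gets from \cref{theorem.rigid-an-has-type} via Liu--Zhu), find a Hodge-generic classical point by a Baire-category argument on weakly Shilov points (this is exactly \cref{lemma:Hodge-generic-locus-for-rigid-analytic} and \cref{prop:hodge-exceptional-countable-union}), and apply \Iref{corollary.galois-rep-dense-open} at that point. You also correctly identify the existence of a Hodge-generic classical point as the main technical obstacle and the Baire-category fix. So far so good.

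The gap is in your middle paragraph, where you propose the wrong mechanism for the final contradiction. You claim that if $\pi_{\HT}(\tilde Z)$ is contained in a proper closed rigid analytic subvariety $Z' \subsetneq U$, then the stabilizer of $Z'$ in $G$ is ``a proper subgroup containing the generic motivic Galois group,'' forcing a reduction of structure group and contradicting genericity. This does not work: the stabilizer in $G$ of an \emph{arbitrary} closed rigid analytic subvariety of $\Fl_{[\mu]}$ has no reason to contain $\MG(\mc{G})$, and will typically be trivial. The theory of Hodge tensors produces subgroups as stabilizers of lines in representations (cf. \cref{lemma:ap-ht-locus-closed}, \cref{lemma:pahs-rigid-ananlytic-HT-locus}), not as stabilizers of generic rigid analytic loci, and $Z'$ is not a priori a Hodge--Tate locus of a tensor. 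Your third paragraph actually contains the right mechanism and partially walks back the second: after identifying the Galois orbit of $\pi_\HT(\tilde z)$ with the orbit of an open subgroup $K \leq G(\QQ_p)$ via \Iref{corollary.galois-rep-dense-open} and Hodge-genericity, the point is not that the stabilizer of $Z'$ grows, but that such a $K$-orbit ``sees all tangent directions'' at $\pi_\HT(\tilde z)$ (because $\Lie G \cdot x = T_x \Fl_{[\mu]}$ and $\Lie G(\QQ_p)$ spans $\Lie G(C)$ over $C$), so any rigid analytic germ containing it must have full dimension --- i.e.\ $Z' = U$. That tangent-direction argument is precisely what the paper uses in the proof of \cref{theorem.ax-lindemann-general}, and it is the step your second paragraph should be replaced by. Once the mechanism is stated correctly, the rest of your outline is aligned with the paper's intended (and here unwritten, since the paper calls this a subset of the Ax--Lindemann argument) proof.
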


The argument for \cref{main.hodge-tate-image} is similar to the argument for \Iref{main.G-structure-period-maps}, and just as the argument for that theorem could be transported to the complex setting to give a conditional transcendence characterization of CM motives over $\mbb{C}$, one could also obtain a conditional ``sideways $\overline{\mbb{Q}}$-Ax-Lindemann theorem" (which cannot be flipped into a regular Ax-Lindemann theorem, since there is no duality swapping the two algebraic structures in the global archimedean setting). We leave a precise statement to the interested reader. 

\subsection{Outline}
In \cref{s.preliminaries} we recall some basic facts about $v$-sheaves, diamonds, local systems, and vector bundles that will be used throughout the paper. In \cref{s.filtrations-and-lattices} we extend the discussion of the relation between filtrations and lattices from the case of a point treated in \Iref{s.filtrations-and-lattices} to an arbitrary base; one key point is again the relation between exactness of filtrations and types of lattices, but most of the hard work has already been accomplished in Part I. An essential new feature in the relative setting is the relation between lattices and filtrations satisfiying Griffiths tranversality over rigid analytic varieties coming from the formalism of \cite{scholze:p-adic-ht} and spelled out in detail in \cref{ss.bilatticed-equivalence-rigid-analytic}. 

In \cref{s.G-bundles-ff} we recall some results from \cite{fargues-scholze:geometrization} on the moduli stack $\mr{Bun}_G$ of $G$ bundles from the Fargues-Fontaine curve and extend them to $G$ non-reductive (this extension is straightforward). These results are used to define the moduli spaces $X_{[\mu]}$ and $\Gr_{[\mu]}^{b-\adm}$ and also to prove the duality theorem, \cref{theorem.duality}. 

In \cref{s.aff-grass} we generalize some results from \cite{scholze:berkeley} on the $\mbb{B}^+_\dR$-affine Grassmannian to non-reductive groups; we also apply the results relating filtrations satisfying Griffiths transverality and lattices noted above to describe the maps from a rigid analytic variety. The main difficulty is to show that, for $G$ non-reductive, the Schubert cells have any nice properties. This is not obvious (we illustrate this with some examples), and here again we use some of the tools developed in Part I to understand the relation between lattice types and exactness of filtrations.

In \cref{s.moduli-pahs} we define the moduli space $X_{[\mu]}$ of $p$-adic Hodge structures and its special subdiamonds/subvarieties. This is the  simplest of the moduli spaces to study, and this section is essentially a warm-up for \cref{s.moduli-admissible-pairs} where we study the admissible locus and its special subdiamonds/subvarieties. One of the key points in both sections is to show that special subvarieties are closed; this is not at all obvious when the special subvariety corresponds to a non-reductive group, and indeed this kind of statement is not true on the affine Grassmannian itself. To prove it we study the Hodge-Tate/Hodge loci of a tensor and use the Tannakian formalism to obtain reductions of structure group. The flexibility to make this kind of argument is one of our main motivations to interpret local Shimura varieties and their non-minuscule analogs as moduli spaces of objects in a Tannakian category; this is crucial also in the proof of our Ax-Lindemann theorem.

In \cref{s.moduli-infinite-level} we define the infinite level moduli spaces and prove the duality theorem. At this point most of the work is formal, but it is important to formulate here two distinct notions of special subvariety in the basic case (the issue being that in the non-minuscule case only one of the two period domains may be rigid analytic). We note that the duality theorem \cref{theorem.duality} and its proof by twisting bundles is well-known to experts, but does not seem to appear in the literature. 

In \cref{s.bi-an-ax--lind-theorem} we state the non-minuscule version of \cref{main.ax-lindemann} and prove it along with \cref{main.hodge-tate-image}.

Finally, in \cref{s.other-ax-lindemann}, we discuss how one can deduce results for global Shimura varieties and other spaces carrying natural admissible pairs from \cref{main.ax-lindemann}, and compare our bi-\emph{analytic} Ax-Lindemann theorem to potential bi-\emph{algebraic} Ax-Lindemann theorems in this context. In particular, in \cref{ss.vpahs} we introduce a notion of variations of $p$-adic Hodge structures and explain how it furnishes a natural setting for generalizations of our methods to other families, and in \cref{ss.products-of-Shimura-curves} we give a detailed discussion of these ideas in the case of products of Shimura curves over $\mathbb{Q}$ and a comparison with the bi-algebraic Ax-Lindemann theorem  established in \cite{chambert-loir-loeser:nonarchimedean-ax-lindemann}. 

\subsection{Notation}\label{ss.notation}
Throughout this paper $p > 0$ is a prime number. A non-archimedean field is a field that that is complete with respect to a non-archimedean absolute value such that the residue field is of characteristic $p$. A $p$-adic field is a discretely valued non-archimedean field of characteristic zero whose residue field is perfect, and is strict if the residue field is algebraically closed.

If $L$ is a non-archimedean field, a rigid analytic variety $X$ over $L$ is an adic space over $\Spa(L, \mc{O}_L)$ locally of the form $\Spa\+A = \Spa(A, A^\circ)$ for $A$ a topologically finite type $L$ algebra (i.e.\ $A$ is isomorphic to a quotient of a Tate algebra $L\langle T_1,\ldots, T_n \rangle$), and the (rigid) analytic topology of $X$ is the topology of the adic space. A rigid analytic subvariety is by definition locally closed. 

If $G$ is a linear algebraic group over a field $L$, we write $\Rep\+ G$ for the category of algebraic representations of $G$ on finite dimensional $L$ vector spaces, and $\omega_\std$ the standard fiber functor on $\Rep\+ G$ sending a representation to its underlying vector space. We refer to \cite{deligne-milne:tannakian} and \Iref{ss.tannakian} for definitions involving Tannakian categories.  Tensor functors between Tannakian categories over a field $k$ are assumed to be $k$-linear (and the field $k$ will often be left implicit).

\subsection{Acknowledgements} During the preparation of this work, Sean Howe was supported by the NSF through grants DMS-1704005 and DMS-2201112,  by an AMS-Simons travel grant, and as a visitor at the 2023 Hausdorff Trimester on The Arithmetic of the Langlands Program supported by the Deutsche Forschungsgemeinschaft (DFG, German Research Foundation) under Germany's Excellence Strategy – EXC-2047/1 – 390685813.  Christian Klevdal was supported by Samsung Science and Technology Foundation under Project Number SSTF-BA2001-02, and by NSF grant DMS-1840190. We thank Banff International Research Station for providing excellent working conditions during revisions and the preparation of \cref{s.other-ax-lindemann}. We thank Laurent Fargues, Ian Gleason, Pol van Hoften, Fran\c{c}ois Loeser, Jackson Morrow, Marc-Hubert Nicole, and Giovanni Rosso for helpful conversations, and Haohao Liu for pointing out an omission in the original proof of \cref{prop:hodge-exceptional-countable-union}. We thank an anonymous referee for a careful reading and detailed feedback.

\section{Preliminaries}\label{s.preliminaries}

In this work, we need to make geometric arguments that simultaneously involve perfectoid spaces, rigid analytic varieties, and more general objects like the $\mbb{B}^+_\dR$-affine Grassmannian. A natural category to work in is thus the category of locally spatial diamonds. In this section we recall the main points of this theory and describe some constructions in relative $p$-adic Hodge theory in this language -- for diamonds the basic reference is \cite{scholze:etale-cohomology-of-diamonds}. For relative $p$-adic Hodge theory we draw from \cite{fargues-scholze:geometrization,scholze:berkeley,scholze:p-adic-ht, kedlaya-liu:relative-foundations}.

We note that, in order to use the results in relative $p$-adic Hodge theory from \cite{scholze:p-adic-ht, kedlaya-liu:relative-foundations}, one needs to work on a site larger than the \'{e}tale site. One option is the quasi pro-\'{e}tale site, which is not the same as the pro-\'{e}tale site of \cite{scholze:p-adic-ht}, but is close enough to carry over the results (see \cite[Paragraph following Theorem 10.4.2]{scholze:berkeley}). Here instead we use the $v$-topology of \cite{scholze:etale-cohomology-of-diamonds}; it is not essential to our arguments, but we find this presentation to be the cleanest.

\subsection{Small $v$-sheaves and diamonds}
Let $\Perfd$ denote the category of perfectoid spaces and let $\Perf \subseteq \Perfd$ denote the category of perfectoid spaces over $\mbb{F}_p$. The coverings for the $v$-topology on $\Perfd$ are the families of maps $\{f_j: Y_i \rightarrow X\}_{i \in I}$ such that for any quasi-compact open $V \subseteq X$ there is a finite subset $J\subseteq I$ and quasi-compact opens $U_j \subseteq Y_j$ such that $V=\bigcup_{j \in J} f_j(U_j)$ (\cite[Definition 8.1]{scholze:etale-cohomology-of-diamonds}). The $v$-topology is subcanonical \cite[Theorem 8.7]{scholze:etale-cohomology-of-diamonds}, that is, every $X\in \Perfd$ represents a $v$-sheaf on $\Perfd$. We note that the definition and topological theory of diamonds will be formulated below using $\Perf$ --- later we will focus on diamonds over a $p$-adic field $L$, at which point we will use an alternative interpretation via the category $\Perfd_L$ of perfectoid spaces over $L$. 

A $v$-sheaf $Y$ on $\Perf$ is \emph{small} if it admits a surjection from a perfectoid space $X \in \Perf$ (\cite[Definition 12.1]{scholze:etale-cohomology-of-diamonds}). A small $v$-sheaf is a \emph{diamond} if it can be presented as a quotient $Y=X/R$ for $X \in \Perf$ and $R \subset X\times X$ a pro-\'{e}tale equivalence relation, and it suffices to verify this as a pro-\'{e}tale sheaf (\cite[Definition 11.1 and Proposition 11.9]{scholze:etale-cohomology-of-diamonds}).  

Any small $v$-sheaf $Y$ has an underlying topological space $|Y|$, functorial in $Y$, whose points are equivalence classes of geometric points, i.e.\ maps $\Spa(C,C^+) \rightarrow Y$ where $C$ is an algebraically closed perfectoid field (\cite[Proposition 12.7, Definition 12.8]{scholze:etale-cohomology-of-diamonds}). If $Y$ is a diamond, then any open subfunctor $U \subseteq Y$ is a diamond with $|U| \subseteq |Y|$ an open set, and this is a bijection between open sets in $|Y|$ and open subfunctors of $Y$ which we refer to as open subdiamonds (\cite[Proposition 11.15]{scholze:etale-cohomology-of-diamonds}). A diamond $Y$ is spatial if $Y$ is qcqs and $|Y|$ admits a basis of quasi-compact opens, and it is locally spatial if it admits an open cover by spatial subdiamonds (\cite[Definition 11.17]{scholze:etale-cohomology-of-diamonds}). If $Y$ is a spatial diamond, then $|Y|$ is spectral, thus if $Y$ is locally spatial then $|Y|$ is locally spectral (\cite[Proposition 11.18, 11.19]{scholze:etale-cohomology-of-diamonds}). Moreover, maps of locally spatial diamonds induce locally spectral and generalizing maps on topological spaces \cite[Proposition 11.19-(iv)]{scholze:etale-cohomology-of-diamonds}. The category of locally spatial diamonds has good stability properties: for example, it is closed under fiber products (\cite[Corollary 11.29]{scholze:etale-cohomology-of-diamonds}).

For $Y$ a small $v$-sheaf, $Y_v$ is the site whose underlying category is all small $v$-sheaves over $Y$ with covering families the jointly surjective families (\cite[Definition 14.1-(iii)]{scholze:etale-cohomology-of-diamonds}). For $Y$ a locally spatial diamond, inside of the $v$-site we also have the quasi pro-\'{e}tale, \'{e}tale  and finite \'{e}tale sites $Y_v \supset Y_\qpet \supset Y_\et \supset Y_{\fet}$ given by taking only objects $Y' \rightarrow Y$ that are quasi pro-\'{e}tale, \'{e}tale or finite \'{e}tale, respectively, over $Y$. The first two are defined in \cite[Definition 14.1]{scholze:etale-cohomology-of-diamonds}, while the finite \'{e}tale site does not appear to be explicitly defined in loc.\ cit.\ but is invoked, e.g., in \cite[Lemma 15.6]{scholze:etale-cohomology-of-diamonds} (the notation for the \emph{category} of finite \'{e}tale morphisms to $Y$ is introduced in \cite[Proposition 11.23]{scholze:etale-cohomology-of-diamonds}).  Note that for any $Y'\rightarrow Y \in Y_{\qpet}$,  $Y'$ is a locally spatial diamond by \cite[Corollary 11.28]{scholze:etale-cohomology-of-diamonds}. The analytic site $Y_\an \subset Y_\et$ is the site corresponding to the topological space $|Y|$ under the identification above of open subsets of $|Y|$ with open subdiamonds.

We note that, for $Y$ a locally spatial diamond, there is a good notion of a locally closed subdiamond: suppose $D \subseteq |Y|$ is locally closed and generalizing.  Then, we may form the $v$-sheaf $Z:=D\times_{|Y|} Y$, i.e.\ the subsheaf of $Y$ consisting of morphisms $f:Y' \rightarrow Y$ such that $|f|$ factors through $D$, and we claim $Z$ is a locally spatial diamond whose underlying topological space is $D$ (with the subspace topology) and we refer to such a $Z$ as a locally closed subdiamond of $Y$. The claim appears to be well-known to experts, and can be deduced from the results of \cite{scholze:etale-cohomology-of-diamonds} as follows:
Replacing $Y$ with an open subdiamond we may assume $D$ is closed. Then, since $D \rightarrow |Y|$ is quasi-compact, we find $Z \rightarrow Y$ is a quasi-compact injection of $v$-sheaves, so $Z$ is a locally spatial diamond by \cite[Proposition 11.20]{scholze:etale-cohomology-of-diamonds}. We can make this more explicit to see that $|Z|=D$: if we replace $Y$ with a quasi-compact open, then, choosing a pro-\'{e}tale surjection $P \rightarrow Y$ from a totally disconnected perfectoid, we have $P \times_Y Z = P \times_{|Y|} D = P \times_{|P|} (|P| \times_{|Y|}D).$  Since $|P| \times_{|Y|} D$ is a closed and generalizing subset of $|P|$, applying \cite[Lemma 7.3]{scholze:etale-cohomology-of-diamonds}, we find it is the pre-image of a closed subset of $\pi_0(P)$, thus $P \times_Y Z \rightarrow P$ is the associated closed immersion of totally disconnected affinoid perfectoids. It follows, in particular, that $|Z|=D$ with the subspace topology, as claimed.

\subsection{Diamonds attached to analytic adic spaces}

For $Y$ an analytic adic space over $\mbb{Z}_p$, the sheaf $Y^\diamond$ on $\Perf$ is defined by
\[ Y^{\diamond}(X) = \{ (X^\sharp,\iota), f: X^\sharp \rightarrow Y \}/\sim \]
where $X^\sharp$ is a perfectoid space over $\mbb{Z}_p$, $\iota: X^{\sharp, \flat} \cong X$, and $f$ is a map of adic spaces over $\mbb{Z}_p$. By \cite[Lemma 15.6]{scholze:etale-cohomology-of-diamonds}, the assignment $Y \rightarrow Y^\diamond$ is a functor from analytic adic spaces over $\mbb{Z}_p$ to diamonds and satisfies
\[ |Y|=|Y^\diamond|, Y_\et=Y^\diamond_\et, \textrm{ and } Y_{\fet}=Y^\diamond_{\fet}. \]

Fix now a non-archimedean field $L/\mbb{Q}_p$. We write $\Spd\+ L = \Spa(L, \mc{O}_L)^\diamond$. Let $\Perfd_L$ denote the category of perfectoid spaces over $L$. Then, we can and do identify a diamond over $\Spd\+ L$ with a small $v$-sheaf on $\Perfd_L$. In particular, this applies to $Y^\diamond$ for $Y$ a locally Noetherian adic space over $\Spa(L, \mc{O}_L)$.

Recall from \cref{ss.notation} that a rigid analytic variety over $L$ is an adic space over $\Spa\+ L$ locally of the form $\Spa\+A = \Spa(A, A^\circ)$ for $A$ a topologically finite type $L$ algebra (i.e.\ $A$ is isomorphic to a quotient of a Tate algebra $L\langle T_1,\ldots, T_n \rangle$). By \cite[Proposition 10.2.3]{scholze:berkeley}
the functor $Y \rightarrow Y^\diamond$ induces a fully faithful functor from seminormal rigid analytic varieties over $L$ to diamonds over $\Spd\+ L$. 
\begin{definition}\label{defn:rigid-analytic-diamond} 
Let $L$ be a non-archimedean field over $\QQ_p$, and let $S$ be a diamond over $\Spd\+ L$. 
\begin{enumerate}
    \item A \emph{rigid analytic point} of $S$ over $L$ is a morphism $s \colon \Spd\+ L' \to S$ over $L$ with $L'$ a finite extension of $L$.
    \item A \emph{rigid analytic diamond} over $\Spd\+ L$ is a diamond over $\Spd\+ L$ isomorphic to $Y^\diamond/\Spd\+ L$ for $Y$ a seminormal rigid analytic variety over $L$.
    \item A \emph{rigid analytic subdiamond} of $S$ over $\Spd\+ L$ is a locally closed subdiamond of $S$ that is a rigid analytic diamond over $\Spd\+ L$.  
\end{enumerate} 
\end{definition}
\begin{remark}\label{remark:rigid-analytic-dense-classical-points}
For $Y$ a rigid analytic variety over $L$, the image of a rigid analytic point of $Y^\diamond/\Spd\+L$ is a classical point of $Y$ under the identification $|Y^\diamond|=|Y|$, and via this construction the classical points of $Y$ are in bijection with equivalence classes of rigid analytic points of $Y^\diamond$. We sometimes refer to such a point also as a classical point of $Y^\diamond$. Note that the set of classical points is dense in $|Y|=|Y^\diamond|$ --- indeed, for $Y \supseteq \Spa(A,A^\circ)$ an open affinoid, the classical points in $\Spa(A,A^\circ)$ correspond to the maximal ideals of $A$ and thus there is always at least one. 
\end{remark}
We write $\mc{O}$ for the structure sheaf on $\Perfd_L$ defined by $\mc{O}(X)=\mc{O}_X(X)=\Hom_L(X,\mbb{A}^1_L)$. For $S/\Spd L$ a small $v$-sheaf, it extends uniquely to a structure sheaf $\mc{O}$ on $S_v$. By the full-faithfulness of $Y \rightarrow Y^\diamond$ invoked above, if $Y$ is a seminormal rigid analytic variety over $L$, then $\mc{O}_{Y_\et}$ is identified with ${\mc{O}}|_{Y^\diamond_\et}$ under the identification $Y_\et=Y_\et^\diamond$. 

\begin{lemma}\label{lemma.closed-rigid-analytic-subdiamonds-zariski-closed}
    Let $Y$ be a seminormal rigid analytic variety over $L$. Then, under the identification $|Y|=|Y^\diamond|$, the closed sets in $|Y^\diamond|$ underlying closed rigid analytic subdiamonds are exactly the Zariski closed sets in $|Y|$. 
\end{lemma}
\begin{proof}
    If $Z \subseteq Y$ is a Zariski closed subvariety, then $Z^\diamond \rightarrow Y^\diamond$ is the closed rigid analytic subdiamond of $Y$ corresponding to the closed set $|Z| \subseteq |Y|$ (it is rigid analytic in the sense of \cref{defn:rigid-analytic-diamond} because $Z^\diamond$ is equal to $(Z^{\mathrm{sn}})^\diamond$ for $\mathrm{sn}$ the seminormalization as in \cite[\S10.2]{scholze:berkeley}). Indeed, since any perfectoid space $P/\Spa\+ L$ is uniform, if a map $P \rightarrow Y$ factors through $|Z|$ at the level of topological spaces then it factors through $Z^\mathrm{red}$ and thus through $Z$. 

    Conversely, suppose that $Z^\diamond$ is a closed rigid analytic subdiamond of $Y^\diamond$, and let $Z \rightarrow Y$ be the associated map of seminormal rigid analytic varieties. Then $|Z| \rightarrow |Y|$ is identified with $|Z^\diamond| \rightarrow |Y^\diamond|$, so is a homeomorphism onto its closed image. It follows that $Z \rightarrow Y$ is quasi-compact. Evaluating on perfectoid points we see moreover that $Z \rightarrow Y$ is partially proper, and thus $Z \rightarrow Y$ is proper. Invoking \cite[Proposition 3 of \S9.6]{BoschGuntzerRemmert.NonArchimedeanAnalysis}, we conclude that its image is Zariski closed. 
\end{proof}

\subsection{Local systems and torsors}\label{ss.local-systems}
We first recall the category of $\Lambda$-local systems on a small $v$-sheaf, where $\Lambda$ is a complete Huber ring --- we will almost exclusively use the case $\Lambda = \QQ_p$ in this paper. First, the constant sheaf $\ul{\Lambda}$ on $\Perf$ is defined by 
\[ \ul{\Lambda}(X)=\Cont(|X|, \Lambda). \]
It satisfies more generally $\ul{\Lambda}(X)=\Cont(|X|,\Lambda)$ for any small $v$-sheaf $X$ (see \cite[\S3, Corollary 3.15]{mann-werner} for this result and a more general discussion). If $X$ is a small $v$-sheaf, a $\Lambda$-local system on $X$ is a sheaf of $\ul{\Lambda}$-modules on $X_v$ that is locally (in the $v$-topology) isomorphic to $\ul{\Lambda}^n$ for some $n$. We write $\Loc_{\Lambda}(X)$ for the full subcategory of $\Lambda$-local systems in the category of sheaves of $\ul{\Lambda}$-modules on $X_v$.

We now recall the construction of the sheaves $\mbb{B}^+_\dR$ and $\mbb{B}_\dR$ on $\Perfd_{\mbb{Q}_p}$. Suppose $\Spa(R,R^+) \in \Perfd_{\mbb{Q}_p}$ is affinoid perfectoid with tilt $(R^\flat, R^{+\flat})$. As in \cite[Lecture 15]{scholze:berkeley}, let $\varpi^\flat$ be a pseudo-uniformizer of $R^{+,\flat}$ such that $\varpi:=(\varpi^\flat)^\sharp$ satisfies $\varpi^p | p$. There is a surjective Fontaine map
\[ \theta: W(R^{+\flat})  \rightarrow R^+ \]
with principal kernel generated by a non-zero divisor, and the ring $\mbb{B}^+_\dR(R)$ is defined to be the $\mr{Ker}\+ \theta$-adic completion of $W(R^+)[1/[\varpi^\flat]]$. In particular, $\theta$ extends to natural surjection $\mbb{B}^+_\dR(R) \rightarrow R$. The ring $\mbb{B}_\dR(R)$ is obtained from $\mbb{B}^+_\dR(R)$ by inverting any generator of $\Ker\+ \theta$ and, as suggested by the notation, these rings are independent of the choice of $R^+$ --- indeed, this is already true of $W(R^{+,\flat})[1/[\varpi^\flat]]$, which can be equivalently characterized as the bounded Witt vectors of $R^\flat$, i.e.\ the sub-ring consisting of $\sum [r_i] p^i \in W(R^\flat)$ such that the set $\{r_i\}_{i \in \mathbb{Z}_{\geq 0}}$ is a bounded subset of the topological ring $R^\flat$. 

There is unique $v$-sheaf on $\Perfd_{\mbb{Q}_p}$ sending an affinoid perfectoid $\Spa(R,R^+)$ to $\mbb{B}^+_\dR(R)$ (resp. $\mbb{B}_\dR(R)$) --- this follows from \cite[Example 15.2.9-5.]{scholze:berkeley}. We write this sheaf also as $\mbb{B}^+_\dR$ (resp. $\mbb{B}_\dR$). By construction, there is a natural map $\theta: \mbb{B}^+_\dR \rightarrow \mc{O}$ that is a surjection in the $v$-topology. If $L$ is a $p$-adic field, then, after restricting $\mbb{B}^+_\dR$ to $\Perfd_L$, it is canonically a sheaf of algebras over the topological constant sheaf $\ul{L}$ such that $\theta$ is a map of $\ul{L}$-algebras. 

Suppose $X/\Spd\+ \mbb{Q}_p$ is a locally spatial diamond and $\mbb{B}=\mbb{B}^+_\dR, \mbb{B}_\dR,$ or $\mc{O}$. A sheaf of $\mbb{B}$-modules on $X_v$ is constant if it is isomorphic to $\mbb{B}^r$. A $\mbb{B}$-local system on $X$ is a locally constant $\mbb{B}$-module on $X_v$. We write $\Loc_{\mbb{B}}(X)$ for the category of $\mbb{B}$-local systems on $X_v$. 

\begin{proposition}[\cite{scholze:berkeley}]\label{prop.aff-perfectoid-loc-system-finite-proj-module} If $X=\Spa(A,A^+)$ is affinoid perfectoid over $\mbb{Q}_p$, then evaluation on $X$ is an equivalence between $\Loc_{\mbb{B}}(X)$ and finite projective $\mbb{B}(A)$-modules. 
\end{proposition}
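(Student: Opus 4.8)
The plan is to produce a quasi-inverse to evaluation, namely $M \mapsto M \otimes_{\mbb{B}(A)} \mbb{B}$. Two inputs I would take as known, citing them: first, that $\mbb{B}$ is a $v$-sheaf with $\mbb{B}(X) = \mbb{B}(A)$ (by construction for $\mbb{B} = \mbb{B}^+_\dR, \mbb{B}_\dR$, and because $X$ is affinoid perfectoid for $\mbb{B} = \mc{O}$); second, that finite projective modules over the period sheaves satisfy $v$-descent on affinoid perfectoids --- for $\mbb{B} = \mc{O}$ this is in \cite{scholze:etale-cohomology-of-diamonds}, and the $\mbb{B}^+_\dR$ and $\mbb{B}_\dR$ cases are in \cite{scholze:berkeley}. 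Granting these, $E(X)$ is finite projective over $\mbb{B}(A)$ for $E \in \Loc_{\mbb{B}}(X)$ (it becomes free along a trivializing $v$-cover, and finite projectivity descends), and $E(X) \otimes_{\mbb{B}(A)} \mbb{B} \cong E$; full faithfulness then follows formally from $\Hom_{\Loc_{\mbb{B}}(X)}(E,F) = (E^\vee \otimes F)(X) = E(X)^\vee \otimes_{\mbb{B}(A)} F(X) = \Hom_{\mbb{B}(A)}(E(X), F(X))$, where the middle step uses that $F(X)$ is a summand of some $\mbb{B}(A)^m$, so $- \otimes_{\mbb{B}(A)} F(X)$ commutes with global sections. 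Thus the substantive point is essential surjectivity: that $M \otimes_{\mbb{B}(A)} \mbb{B}$ is $v$-locally isomorphic to $\mbb{B}^n$ for $M$ finite projective over $\mbb{B}(A)$ --- call this $(\star)$ --- and I would establish it by first treating $\mbb{B} = \mc{O}$, then bootstrapping to $\mbb{B}^+_\dR$ along the $\Ker\theta$-adic filtration, and finally to $\mbb{B}_\dR$ by producing lattices.

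For $\mbb{B} = \mc{O}$: a finite projective $A$-module $M$ becomes free after inverting each of finitely many $f_1, \dots, f_k$ with $(f_1, \dots, f_k) = A$, and the rational subsets $U_i = \{x \in X : |f_j(x)| \le |f_i(x)| \text{ for all } j\}$ cover $X$ with $f_i$ a unit on $U_i$ (it is bounded away from $0$ there, since the $f_i$ generate the unit ideal), so $M \otimes_A \mc{O}$ is free on the $v$-cover $\{U_i \to X\}$. This is just the identification of finite projective $A$-modules with vector bundles on $\Spa(A, A^+)$, which I could instead cite from \cite{kedlaya-liu:relative-foundations}.

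For $\mbb{B} = \mbb{B}^+_\dR$: fix a generator $\xi$ of $\Ker\theta$, so that $\mbb{B}^+_\dR(A)$ is $\xi$-adically complete with $\mbb{B}^+_\dR(A)/\xi = A$ and $\xi$ in the Jacobson radical, and the same holds after base change along any $v$-cover. Given $M$ finite projective over $\mbb{B}^+_\dR(A)$, the reduction $M/\xi M$ is finite projective over $A$, so by the $\mc{O}$-case there is an affinoid perfectoid $v$-cover $X' = \Spa(A',A'^+) \to X$ on which $M/\xi M$ becomes free of some rank $n$; I lift such a basis to elements of $M \otimes_{\mbb{B}^+_\dR(A)} \mbb{B}^+_\dR(A')$, which generate by $\xi$-adic completeness and Nakayama, giving a surjection $\mbb{B}^+_\dR(A')^n \twoheadrightarrow M \otimes \mbb{B}^+_\dR(A')$. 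It splits since the target is projective, its kernel reduces to $0$ modulo $\xi$ (both sides reduce to $A'^{\,n}$, and a summand of rank $0$ everywhere vanishes), and Nakayama forces the kernel itself to vanish; hence $M \otimes \mbb{B}^+_\dR$ is trivialized on $X'$.

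For $\mbb{B} = \mbb{B}_\dR = \mbb{B}^+_\dR[1/\xi]$: given $M$ finite projective over $\mbb{B}_\dR(A)$, I want a $v$-cover $X' \to X$ and a finite projective $\mbb{B}^+_\dR(A')$-lattice $L$ with $L[1/\xi] \cong M \otimes \mbb{B}_\dR(A')$, for then the $\mbb{B}^+_\dR$-case applied to $L$ gives $(\star)$. This is the step I expect to be the main obstacle among the pieces one must supply by hand, since one cannot reduce modulo $\xi$ directly. Lattices do exist at every geometric point, because $\mbb{B}_\dR$ of an algebraically closed perfectoid field is the fraction field of the complete discrete valuation ring $\mbb{B}^+_\dR$; the real task is to spread one choice out over a $v$-cover. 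I would either extract this from the treatment of $\mbb{B}_\dR$-local systems in \cite{scholze:berkeley}, or prove it by starting from a finitely generated $\mbb{B}^+_\dR(A)$-submodule $M_0 \subseteq M$ with $M_0[1/\xi] = M$ and improving $M_0$ to a projective lattice after passing to a suitable $v$-cover. Apart from this lattice input and the cited $v$-descent for the period sheaves, the argument is formal.
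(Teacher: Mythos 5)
The paper's own proof is a one-line citation: it invokes the $v$-stack property of finite projective $\mbb{B}^+_\dR$-modules from \cite[Corollary 17.1.9]{scholze:berkeley}, which already packages both descent of objects and descent of morphisms, so the equivalence with $\Loc_{\mbb{B}}(X)$ drops out immediately. Your route is genuinely different in spirit: rather than leaning on the stack property as a black box, you try to build the quasi-inverse $M \mapsto M \otimes_{\mbb{B}(A)} \mbb{B}$ by hand and establish $v$-local triviality of the resulting sheaf case by case. Your $\mc{O}$ and $\mbb{B}^+_\dR$ steps are correct (the $\mbb{B}^+_\dR$ argument via reduction mod $\xi$, lifting a frame, and two applications of Nakayama to the split surjection is the standard bootstrap, and the compatibility of $\ker\theta$-adic completeness under base change to an affinoid perfectoid $v$-cover holds). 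The payoff of your approach is that it makes transparent exactly where the content lies; the cost is that you have to redo the hard part yourself.

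And that hard part is exactly where the proposal stops being a proof. For $\mbb{B}_\dR$ you correctly identify the obstacle — producing, after a $v$-cover, a finite \emph{projective} $\mbb{B}^+_\dR$-lattice inside a given finite projective $\mbb{B}_\dR$-module — and then defer it. This is not a routine spreading-out: a finitely generated $\mbb{B}^+_\dR(A)$-submodule with full generic fiber need not be projective, $\mbb{B}^+_\dR(A)$ is not a discrete valuation ring for general affinoid perfectoid $A$, and improving $M_0$ to a projective lattice $v$-locally is precisely the nontrivial content that \cite[Corollary 17.1.9]{scholze:berkeley} and the surrounding Beauville--Laszlo-style discussion encapsulate. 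So the paper is citing exactly the step you flag as ``the main obstacle''; as written, your argument has reduced the proposition to this input but not supplied it. One smaller point: your full-faithfulness chain uses $(E^\vee)(X) = E(X)^\vee$, which is itself an instance of the statement being proved; it is true, but you should either derive it from the same $v$-descent input you invoke for essential surjectivity (work $v$-locally where $E$, $F$ are free and descend using the sheaf property of $\mbb{B}$) or reorder the argument so it is not circular.
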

\begin{proof}
This follows from the $v$-stack property of \cite[Corollary 17.1.9]{scholze:berkeley}
\end{proof}

Note that base change of coefficients along $\theta$ induces a functor 
\[ \Loc_{\mbb{B}^+_\dR}(X) \rightarrow \Loc_{\mc{O}}(X). \]

We recall an important lemma that will be used in later sections. It is extracted essentially from the proof of \cite[Lemma 19.1.5]{scholze:berkeley}. 
\begin{lemma}\label{lemma.zariski-closed-affine}
Let $L$ be a $p$-adic field and let $S=\Spa(R, R^+)$ be affinoid perfectoid over $\Spa\+ L$. Let $X$ be an affine scheme of finite type over $L$, and let $s \in X(\mbb{B}_\dR(R))$. Then the locus where $s \in X(\mbb{B}_\dR^+(R))$ is a Zariski closed (thus affinoid perfectoid) subdiamond of $S$. 
\end{lemma}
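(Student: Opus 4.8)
The plan is to reduce to the case $X=\mbb{A}^1_L$ and a single section $s\in\mbb{B}_{\dR}(R)$, and then to run an induction on the $\xi$-adic filtration of $\mbb{B}^+_{\dR}$, where $\xi$ generates $\Ker(\theta\colon\mbb{B}^+_{\dR}\to\mc{O})$, reducing at each stage to the elementary fact that the vanishing locus of a global section of $\mc{O}$ on an affinoid perfectoid is a Zariski closed affinoid perfectoid subdiamond. This follows the proof of \cite[Lemma 19.1.5]{scholze:berkeley}.

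\emph{Reduction.} First I would choose a closed immersion $X\hookrightarrow\mbb{A}^\kappa_L$ into a (possibly infinite-dimensional) affine space. Since $\mbb{B}^+_{\dR}(R')\hookrightarrow\mbb{B}_{\dR}(R')$ is injective and $X$ is closed, for any affinoid perfectoid $\Spa(R',{R'}^+)$ over $S$ the pullback of $s$ lies in $X(\mbb{B}^+_{\dR}(R'))$ exactly when all of its coordinates lie in $\mbb{B}^+_{\dR}(R')$. Hence the locus in question equals the intersection, over the coordinates $s_i$ of $s$, of the loci $\{\,s_i\in\mbb{B}^+_{\dR}\,\}$; since an arbitrary intersection of Zariski closed subdiamonds of an affinoid perfectoid is again Zariski closed (hence affinoid perfectoid), it suffices to treat $X=\mbb{A}^1_L$, i.e.\ a single $s\in\mbb{B}_{\dR}(R)$.

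\emph{The induction.} Fix a generator $\xi$ of $\Ker(\theta\colon\mbb{B}^+_{\dR}\to\mc{O})$, a non-zero-divisor whose formation commutes with base change in $\Perfd_{\mbb{Q}_p}$, so that $\mbb{B}_{\dR}(R)=\mbb{B}^+_{\dR}(R)[\xi^{-1}]$ and we may write $s=\xi^{-N}b$ with $b\in\mbb{B}^+_{\dR}(R)$ and $N\geq0$. If $N=0$ the locus is all of $S$; otherwise, for $T=\Spa(R_T,R_T^+)\to S$ one has $s|_T\in\mbb{B}^+_{\dR}(R_T)$ if and only if the image of $b$ lies in $\xi^N\mbb{B}^+_{\dR}(R_T)$. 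The key claim, proved by induction on $k\geq1$, is that for every affinoid perfectoid $S=\Spa(R,R^+)$ over $\Spd L$ and every $c\in\mbb{B}^+_{\dR}(R)$, the subfunctor $Z_k(c)\subseteq S$ defined by $\{\,T\to S: c|_T\in\xi^k\mbb{B}^+_{\dR}(R_T)\,\}$ is a Zariski closed affinoid perfectoid subdiamond. For $k=1$, the identification $\mbb{B}^+_{\dR}/\xi\cong\mc{O}$ via $\theta$ (valid after any base change, as $\xi$ generates $\Ker\theta$ universally) exhibits $Z_1(c)$ as the vanishing locus of $\theta(c)\in R$, which is Zariski closed affinoid perfectoid. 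For the step, one notes $Z_{k+1}(c)\subseteq Z_1(c)=:S_1=\Spa(R_1,R_1^+)$; over $S_1$ we have $\theta(c)=0$, so $c=\xi c'$ for a unique $c'\in\mbb{B}^+_{\dR}(R_1)$ (using that $\xi$ is a non-zero-divisor), and $Z_{k+1}(c)$ coincides with $Z_k(c')$ formed inside $S_1$, which is Zariski closed affinoid perfectoid in $S_1$ by induction, hence in $S$ by transitivity of Zariski closed immersions. Applying this with $k=N$ and $c=b$ finishes the proof.

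\emph{Main obstacle.} The substance here is bookkeeping rather than conceptual: one needs that a chosen non-zero-divisor generator $\xi$ of $\Ker\theta$ remains a non-zero-divisor generator after arbitrary base change and that $\mbb{B}^+_{\dR}(R)$ is $\xi$-torsion-free, so that the element $c'$ in the inductive step is well-defined and the identity $Z_{k+1}(c)=Z_k(c')$ holds on the nose as subfunctors of $S$; both are standard features of the structure theory of $\mbb{B}^+_{\dR}$ recalled in \cite{scholze:berkeley}. (If one instead wishes to realize $Z_k(c)$ as the vanishing locus of a genuine section of the $v$-sheaf $\mbb{B}^+_{\dR}/\xi^k$, one additionally invokes the vanishing of higher $v$-cohomology of $\mc{O}$ on affinoid perfectoids together with a d\'evissage of $\mbb{B}^+_{\dR}/\xi^k$ along its $\xi$-adic filtration, whose graded pieces are all $\mc{O}$.)
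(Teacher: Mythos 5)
Your proof is correct and follows essentially the same route as the paper's: reduce to $X=\mbb{A}^1$ by embedding $X$ in affine space and intersecting coordinatewise, then iterate the elementary observation that the locus where an element of $\Fil^{-i}\mbb{B}_\dR(R)$ lands in $\Fil^{-i+1}$ is the vanishing set of $\theta(\xi^i f)\in R$. The only stylistic difference is that you make the induction a named claim ($Z_k$) and explicitly flag the need for $\xi$ to be a universal non-zero-divisor so that $c'=c/\xi$ is well-defined after base change, while the paper instead flags (in a following remark) the reliance on transitivity of Zariski closedness via surjectivity of $\mc{O}(Z_0)\to\mc{O}(Z_1)$; both are legitimate ways to surface the same underlying bookkeeping.
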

\begin{proof}
Let $Z_s$ be this locus. More specifically, $Z_s$ is the functor 
	\[ T \mapsto \{ \phi \colon T \to S \colon s \circ \mbb{B}_\dR(\phi) \in X(\mbb{B}_\dR^+(T)) \}. \]
Let $X \subseteq \mbb{A}^n_L$. Since $\mbb{B}_\dR^+ \to \mbb{B}_\dR$ is injective, $X(\mbb{B}_\dR^+) = X(\mbb{B}_\dR) \cap \mbb{A}^n_L(\mbb{B}_\dR^+)$, so we can consider $s \in \mbb{A}^n_L(\mbb{B}_\dR(R))$, and we reduce to the case where $X = \mbb{A}^n_L$.
But the locus $Z_s$ is the intersection of the loci where each coordinate of $s$ is in $\mbb{B}_\dR^+$, so we reduce to the case where $X = \mbb{A}^1_L$. This case is explicitly handled in \cite{scholze:berkeley} as follows: suppose $f \in \Fil^{-i}$ and fix a generator $\xi$ for $\Ker\+ \theta$. Then the locus where $f \in \Fil^{-i+1}$ is the vanishing set of $\theta( \xi^i f) \in R$, a Zariski closed subset, and we conclude that $Z_s$ is Zariski closed by induction. 

As $Z_s \subseteq S$ is Zariski closed, it is affinoid perfectoid \cite[Theorem 7.4, Remark 7.5]{bhatt-scholze:prismatic}.
\end{proof}

\begin{remark}\label{remark.strong-zariski}
In the proof we implicitly used the strong statement that, if $Z_2 \subseteq Z_1 \subseteq Z_0$ are affinoid perfectoid with $Z_1$ Zariski closed in $Z_0$ and $Z_2$ Zariski closed in $Z_1$, then $Z_2$ is Zariski closed in $Z_0$ (this is a strong statement because we need to know that an ideal $I$ in $\mc{O}(Z_1)$ such that $Z_2=V(I)$ can be generated by elements in the image of $\mc{O}(Z_0)$, so we are really using surjectivity of $\mc{O}(Z_0) \rightarrow \mc{O}(Z_1)$ --- cf. \cite[Remarks after Definition 5.7]{scholze:etale-cohomology-of-diamonds}). The full strength of this statement is not really necessary for our purposes, and one could argue instead as in \cite[Lemma 19.1.5]{scholze:berkeley} to deduce just that the locus is closed and represented by an affinoid perfectoid. \end{remark}

\begin{definition}[$G$-local systems and $G$-torsors]\label{defn:G-torsor}\hfill
\begin{enumerate}
    \item Let $G$ be a linear algebraic group over a $p$-adic field $L$, let $X$ be a locally spatial diamond over $\Spd\+ L$, and let $\mbb{B}$ be one of  $\mc{O}$, $\mbb{B}_\dR^+$, or $\mbb{B}_\dR$ (so that, in particular, $\mbb{B}$ is a sheaf of $L$-algebras on $X_v$).  A \emph{$G(\mbb{B})$-local system} on $X$ is an exact ($L$-linear) tensor functor $\Rep\+ G \to \Loc_{\mbb{B}}(X)$. The trivial $G(\mbb{B})$-local system is $\omega_\std \otimes_{L} \mbb{B}$. 
    \item Let $G$ be a linear algebraic group over $\QQ_p$, let $X$ be a locally spatial diamond, and let $\Lambda$ be a complete Huber $\QQ_p$-algebra. 
    \begin{enumerate}
        \item A \emph{$G(\Lambda)$-local system} on $X$ is an exact tensor functor $\Rep\+ G \to \Loc_{\Lambda}(X)$. The trivial $G(\Lambda)$-local system is $\omega_\std \otimes_{\mbb{Q}_p} \ul{\Lambda}$. 
        \item A $G(\Lambda)$-torsor on $X$ is a sheaf on $X_v$ with a (right) action of $G(\ul{\Lambda})$ that is locally on $X_v$ isomorphic to $G(\ul{\Lambda})$ with the action by right multiplication. 
    \end{enumerate}
\end{enumerate}
\end{definition}

\begin{remark}
With the obvious notion of morphisms, the groupoids of $G(\Lambda)$-local systems and $G(\Lambda)$-torsors are equivalent. We can explicitly describe quasi-inverse functors: if $\rho$ is a $G(\Lambda)$-local system then the sheaf of isomorphisms of $\omega_\std$ with $\rho$ is a $G(\Lambda)$-torsor; conversely, if $S$ is a $G(\Lambda)$-torsor, then the push-out $V \mapsto S \times^{G(\ul{\Lambda})} \ul{V}$ defines a $G(\Lambda)$-local system. While these notions are categorically equivalent, we find it concenptually useful to keep them distinct; $G(\Lambda)$-local systems have a more linear algebraic flavor which is useful for describing lattices/filtrations while keeping track of Tannakian structure groups in the construction of $p$-adic Hodge structure/admissible pairs in \cref{s.moduli-pahs} and \cref{s.moduli-admissible-pairs}. On the other hand, $G(\Lambda)$-torsors are more geometric (e.g. a $G(\QQ_p)$-torsor is naturally a diamond \cite[Lemma 10.13]{scholze:etale-cohomology-of-diamonds}) and arise naturally in the context of period maps in \cref{ss.period-maps}.
\end{remark}

\subsection{$\mc{O}$-local systems vs.\ vector bundles}\label{ss.vector-bundles-local-systems}
For $X$ a locally spatial diamond, above we defined an $\mc{O}$-local system on $X$ to be a locally free of finite rank $\mc{O}$-module on $X_v$. We define a vector bundle to be a locally free of finite rank $\mc{O}$-module on $X_\et$ and  write $\Vect(X)$ for the category of vector bundles. Pullback along the natural map of sites $v \colon X_v \to X_\et$ is an equivalence of $\Vect(X)$ with the full subcategory of $\Loc_{\mc{O}}(X)$ consisting of local systems that can be trivialized on an \'{e}tale cover; we often implicitly make this identification.

If $X$ is perfectoid then \cref{prop.aff-perfectoid-loc-system-finite-proj-module} implies any element of $\Loc_{\mc{O}}(X)$ can be trivialized even on an analytic covering, so $\Vect(X)=\Loc_{\mc{O}}(X)$. On the other hand, if $L/\mbb{Q}_p$ is a non-archimedean field and $X/L$ is a rigid analytic diamond, then vector bundles on $X$ are equivalent to vector bundles on the corresponding seminormal rigid analytic variety over $L$ and it can be shown that $\mc{O}$-local systems are equivalent to locally free modules over the completed structure sheaf $\hat{\mc{O}}$ on the pro-\'{e}tale site of the rigid analytic variety in the sense of \cite{scholze:p-adic-ht} --- the terminology of local systems vs. vector bundles that we use here is motivated by the Hodge-Tate theory in this setting; in particular, using Tate twists we see easily that there are many $\mc{O}$-local systems that are not vector bundles.

\subsection{Filtered vector bundles with connection and $\mbb{B}^+_{\dR}$-local systems}\label{ss.filtered-vector-bundles-scholzes-functor-M}
Suppose $L$ is a $p$-adic field and $S/L$ is a smooth rigid analytic variety. Following \cite[Definition 7.4]{scholze:p-adic-ht}, a filtered vector bundle with integrable connection on $S$ satisfying Griffiths transversality\footnote{This is called a filtered $\mc{O}_S$-module with integrable connection on $S$ in \cite[Definition 7.4]{scholze:p-adic-ht} --- note, in particular, that in loc. cit. Griffiths transversality is assumed even though it is not explicitly included in the terminology.} is a locally free $\mc{O}_S$-module $\mc{V}$ on $S$ (in the analytic or equivalently \'{e}tale topology \cite[Lemma 7.3]{scholze:p-adic-ht}) equipped with an integrable connection $\nabla$ and a separated and exhaustive decreasing filtration $\Fil^\bullet \mc{V}$ by locally direct summands such that $\Fil^\bullet \mc{V}$ satisfies Griffiths transversality for $\nabla$, i.e., for each $i$,
\[ \nabla(\Fil^i \mc{V}) \subseteq \Fil^{i-1} \otimes_{\mc{O}_{S}} \Omega_{S/L}. \]

We consider the pro-\'{e}tale site $S_\proet$ of $S$ as in \cite[Definition 3.9]{scholze:p-adic-ht}, and write $\nu: S_\proet \rightarrow S_\et$ for the natural map. In \cite[\S6]{scholze:p-adic-ht} and \cite{scholze.p-adic-ht-corrigendum}, there is defined a period sheaf $\mc{O}\mbb{B}_\dR$ that is an algebra over $\nu^{-1}\mc{O}_{S_\et}$ equipped with a filtration and a connection $\mc{O}\mbb{B}_\dR \rightarrow \mc{O}\mbb{B}_\dR \otimes _{\nu^{-1}\mc{O}_{S_\et}}\Omega_S$. It satisfies $(\mc{O}\mbb{B}_\dR)^{\nabla=0}=\mbb{B}_\dR$, compatibly with the filtrations on both sides, where here $\mbb{B}_\dR$ can be viewed as a sheaf on $S_\proet$ by evaluation on the basis of perfectoid objects as in \cite[Definition 4.3]{scholze:p-adic-ht}.

The construction of \cite[\S 7]{scholze:p-adic-ht} gives a fully faithful functor 
\[ (\mc{V}, \nabla, \Fil^\bullet \mc{V}) \rightarrow \mbb{M}(\mc{V}, \nabla, \Fil^\bullet \mc{V}) = \Fil^0(\nu^{-1}\mc{V} \otimes_{\nu^{-1}\mc{O}_{S_\et}} \mc{O} \mbb{B}_\dR)^{\nabla = 0} \]
from filtered vector bundles with integrable connection satisfying Griffiths transversality on $S$ to $\mbb{B}^+_\dR$-local systems on $S_\proet$. The category of $\mbb{B}^+_\dR$-local systems on $S_\proet$ is equivalent to the category of $\mbb{B}^+_\dR$-local systems on $S^\diamond$ in the sense considered in \cref{ss.local-systems}: indeed, using local torus coordinates we obtain an \'{e}tale cover $\{S_i\}_{i \in I} \rightarrow S$ and Galois profinite \'{e}tale coverings $\tilde{S}_i \rightarrow S_i$ such that each $\tilde{S}_i=\Spa(A_i,A^+_i)$ is affinoid perfectoid. By evaluation on $\tilde{S}_i$, the category of $\mbb{B}^+_\dR$-local systems on $\tilde{S}_i^\diamond$ is equivalent to category of finite projective $\mbb{B}^+_\dR(A_i)$-modules \cite[Cor. 17.1.9]{scholze:berkeley}, and the same holds for the category of $\mbb{B}^+_\dR$-local systems on $\tilde{S}_i$ viewed as an object of $S_\proet$. We then conclude by comparing the descent data on both sides (which in both cases is given by a continuous semi-linear action of $(\mbb{Z}_p(1))^n \rtimes \mbb{Z}_p^\times$ to descend from each $\tilde{S}_i$ to $S_i$ and then equivalent \'{e}tale descent data to go from $(S_i)_{i \in I}$ to $S$).

\section{Filtrations and lattices over diamonds}\label{s.filtrations-and-lattices}
In this section we define latticed $\mbb{B}^+_\dR$-local systems over locally spatial diamonds, and associate to any such a trace filtration by $\mc{O}$-modules on the $\mc{O}$-local system obtained by base change of coefficients. We also treat the symmetric case of bilatticed $\mbb{B}_\dR$-local systems. These definitions build on those used in \Iref{s.filtrations-and-lattices}, and we will use the language and definitions introduced there.  

Let $L$ be a $p$-adic field and let $X/\Spd\+L$ be a locally spatial diamond. In \cref{ss.filtrations-and-lattices}, we define latticed $\mbb{B}^+_\dR$-local systems and bilatticed $\mbb{B}_\dR$-local systems on $X$, and explain how to obtain associated filtered $\mc{O}$-local systems on $X$  by taking the trace filtration from one lattice on the special fiber of another. For $X$ a geometric point, these notions were studied in \Iref{ss.from-filtrations-to-lattices-and-back} and \Iref{ss.bilatticed-and-exactness}.

The main accomplishment in \Iref{s.filtrations-and-lattices} is \Iref{theorem.good-equivalence-bilatticed}, which relates exactness properties of this trace filtration construction over a geometric point to Schubert cells in affine Grassmannians. In \cref{ss.filtrations-lattices-G-torsors} we extend this to the relative setting: let $G/L$ be a connected linear algebraic group. For $X/ \Spd\+L$ a locally spatial diamond, we define a notion of latticed $G(\mbb{B}^+_\dR)$-local systems on $X$, extending the case of a geometric point treated in \Iref{def.filt-lattice-G-torsor}. For $[\mu]$ a conjugacy class of cocharacters of $G_{\overline{L}}$, we say a latticed $G(\mbb{B}^+_\dR)$-local system has type $[\mu]$ if it does after pullback to every geometric point: for $\Spd\+ C \rightarrow X$ a geometric point, a latticed $G(\mbb{B}_\dR^+)$-local system has a classifying double coset in
\[ G(\mbb{B}^+_\dR(C)) \backslash G(\mbb{B}_\dR(C)) / G(\mbb{B}^+_\dR(C)) \]
and it is of type $[\mu]$ if this is the coset containing $\mu(\xi)$ for $\xi$ any generator of $\Fil^1\mbb{B}^+_\dR(C)$ and some $\mu \in [\mu]$ (when $G$ is reductive, these cosets are exhaustive, but they are not in general).

\begin{example}\label{example:GL_n-type-of-latticed-torsor}
If $G=\GL_n$, a latticed $G(\mbb{B}^+_\dR)$-local system on $X$ is equivalent to a $\mbb{B}^+_\dR$-local system $M$ of rank $n$ equipped with a second $\mbb{B}^+_\dR$-local system of rank $n$, $\mc{L} \subseteq M \otimes_{\mbb{B}^+_\dR}\mbb{B}_\dR$, and it is of type $[\mu]$ for $\mu$ the diagonal cocharacter $(t^{i_1}, \ldots, t^{i_n})$ if and only if, after specializing to any geometric point, there is a $\mbb{B}_\dR^+$-basis $e_1, \ldots, e_n$ of $M$ such that $\xi^{i_1}e_1, \ldots, \xi^{i_n}e_n$ is a basis for $\mc{L}$. In other words, $[\mu]$ describes the elementary divisors of $\mc{L}$ relative to $M$ at any geometric point.  
\end{example}

In \cref{theorem.filtered-functor-iff-lattice-type} we estabish the relative version of \Iref{theorem.good-equivalence-bilatticed}, which says that a latticed $G(\mbb{B}_\dR^+)$-local system has type $[\mu]$ 
if and only if the associated filtration defines a filtered $G(\mc{O})$-local system of type $[\mu^{-1}]$ --- here to have a filtered $G(\mc{O})$-local system we need that the filtration on the $\mc{O}$-local systems attached to representations of $G$ is by local direct summands, and moreover that one obtains from this an exact functor from representations of $G$ to filtered $\mc{O}$-local systems. To be of type $[\mu^{-1}]$ means that, at every geometric point, the filtration is isomorphic to the standard decreasing filtration by weight spaces attached to $\mu^{-1}$. 

\begin{example}\label{example:GL_n-type-of-filtered-vb}
    If $G=\GL_n$, a filtered $G(\mc{O})$-local system on $X$ is equivalent to a $\mc{O}$-local system $\mc{V}$ of rank $n$ equipped with a (separated and exhaustive) filtration $\Fil^\bullet \mc{V}$ by $\mc{O}$-local systems such that the associated graded pieces $\gr^i\mc{V}=\Fil^i \mc{V}/\Fil^{i+1}\mc{V}$ are also $\mc{O}$-local systems. For $\mu$ the diagonal character $(t^{i_1}, \ldots, t^{i_n})$, the filtration is of type $[\mu]$ if and only if the rank of $\gr^{i}\mc{V}$ is equal to the multiplicity of $i$ in the multiset of weights of $\mu$, $\{i_1, \ldots, i_n\}$. 
\end{example}

To prove \cref{theorem.filtered-functor-iff-lattice-type}, we combine the case of a geometric point treated in \Iref{theorem.good-equivalence-bilatticed} with a supplementary argument from \cite{scholze:berkeley} to deduce that the trace filtration is actually by local direct summands. Note that, while the type of a filtered $G(\mc{O})$-local system always exists and is locally constant, even in the reductive case the type of a $G(\mbb{B}^+_\dR)$-local system can vary non-trivially over a connected base, in which case the trace filtration will not be by local direct summands. This is related to the closure relations between Schubert cells in affine Grassmannians, to be recalled in \cref{s.aff-grass}, and thus also to the case of a point treated in \Iref{theorem.good-equivalence-bilatticed} (recall the proof in that case depended on studying the behavior of types in extensions).

After setting up the general relative theory, we conclude in \cref{ss.bilatticed-equivalence-rigid-analytic} by specializing to smooth rigid analytic diamonds over a $p$-adic field, where we reinterpret Scholze's functor $\mbb{M}$ recalled earlier in \cref{ss.filtered-vector-bundles-scholzes-functor-M} as a fully faithful functor from filtered vector bundles with integrable connection satisfying Griffiths transversality to latticed $\mbb{B}^+_\dR$-local systems (\cref{theorem.smooth-scholze-filtrations-lattices}). 

\subsection{Filtrations and lattices}\label{ss.filtrations-and-lattices}
Let $X/\Spd\+ \mbb{Q}_p$ be a locally spatial diamond. A \emph{filtered $\mc{O}$-local system on $X$} is a pair $(\mc{V}, F^\bullet \mc{V})$ where $\mc{V}$ is an $\mc{O}$-local system on $X$ and $F^\bullet \mc{V}$ is a decreasing filtration of $\mc{V}$ by $\mc{O}$-local subsystems such that 
\begin{enumerate}
    \item The associated graded $\mr{gr}^\bullet \mc{V}:=\bigoplus_i F^i \mc{V}/F^{i+1}\mc{V}$ is an $\mc{O}$-local system
    \item For each geometric point $x:\Spd(C,C^+) \rightarrow X$, $x^* F^i \mc{V} = 0$ for $i \gg 0$ and $x^* F^i \mc{V} = x^*\mc{V}$ for $i \ll 0$.
\end{enumerate}
Note that since the dimension of each graded piece is locally constant, the second condition implies locally the filtration is finite and thus the first condition could be replaced by the requirement that each $F^i \mc{V}$ is a local direct summand. It follows also that the type of a filtered $\mc{O}$-local system is a locally constant function from $|X|$ to finite multisets of integers (equipped with the discrete topology), where the type of a filtration is as in \cref{example:GL_n-type-of-filtered-vb}.

If $V$ is a $\mbb{B}_\dR$-local system on $X$, a $\mbb{B}^+_\dR$-submodule $\mc{L} \subset V$ is a \emph{lattice on $V$} if $\mc{L}$ is a $\mbb{B}^+_\dR$-local system and $\mc{L} \otimes_{\mbb{B}^+_\dR} \mbb{B}_\dR=V$. A latticed $\mbb{B}^+_\dR$-local system on $X$ is a pair $(M, \mc{L})$ where $M$ is an $\mbb{B}^+_\dR$-local system on $X$ and $\mc{L} \subset M_{\mbb{B}_\dR}$ is a lattice. A bilatticed $\mbb{B}_\dR$-local system on $X$ is a triple $(V, \mc{L}_1, \mc{L}_2)$ where $V$ is a $\mbb{B}_\dR$-local system and each $\mc{L}_i$ is a lattice in $V$. There is an equivalence between latticed $\mbb{B}^+_\dR$-local systems on $X$ and bilatticed $\mbb{B}_\dR$-local systems given by 
\begin{equation}\label{eq.equiv-latt-bplusls-bilatt} M \otimes_{\mbb{B}^+_\dR} {\mbb{B}_\dR}=\mc{L} \otimes_{\mbb{B}^+_\dR} {\mbb{B}_\dR}  \leftrightarrow V, M \leftrightarrow \mc{L}_1, \mc{L} \leftrightarrow \mc{L}_2.\end{equation}
We write $\Loc_{\mbb{B}_\dR}^{\bilatticed}(X)$ for the category of bilatticed $\mbb{B}_\dR$-local systems on $X$.

If $X=\Spa(C,C^+)$ for $C/\mbb{Q}_p$ a perfectoid field, then $\mbb{B}^+_\dR(C)$ is a complete discrete valuation ring with residue field $C$ and fraction field $\mbb{B}_\dR(C)$. If $C$ is furthermore algebraically closed, then a $\mbb{B}_\dR$-local system on $X$ is a $\mbb{B}_\dR(C)$-vector space and, as in \Iref{ss.bilatticed-and-exactness}, we define the type of a bilatticed $\mbb{B}_\dR(C)$-vector space to be the multiset of integers giving the principal divisors of $\mc{L}_2$ relative to $\mc{L}_1$ (under the equivalence of \cref{eq.equiv-latt-bplusls-bilatt}, this agrees with the definition of \cref{example:GL_n-type-of-latticed-torsor}).  For a bilatticed $\mbb{B}_\dR$-local system over a general $X$, the type at geometric points defines a function on $|X|$. Unlike the type of a filtered $\mc{O}$-local system, however, the type of a bilatticed $\mbb{B}_\dR$-local system is not always locally constant. 

\begin{example}\label{example:non-constant-type-family}
Let $\mbb{C}_p$ be the completion of an algebraic closure of $\mbb{Q}_p$, and fix a generator $\xi$ for $\Ker\+ \theta_{\mbb{C}_p}$. On the closed unit disc $\mbb{D}_{\mbb{C}_p} = \Spa\+ \mbb{C}_p\langle z \rangle$ we may consider the bilatticed $\mbb{B}_\dR$-module
\[ (\mbb{B}_\dR^2, (\mbb{B}^+_\dR)^2,  \mbb{B}^+_\dR \cdot e_1 + \mbb{B}^+_\dR \cdot (e_2 + \frac{z}{\xi}e_1))  \]
where $z$ is the coordinate on $\mbb{D}_{\mbb{C}_p}$. 
For $z \neq 0$ the type is $\{1,-1\}$ (a $\mbb{B}_\dR^+$-basis of the second lattice is $\xi e_2, \xi^{-1}(e_1 + \frac{\xi}{z} e_2)$), while for $z=0$ the type is $\{0,0\}$. 

We note that we have abused notation slightly in our definition of the second lattice. One way to define $\mc{L}_z := \mbb{B}_\dR^+\cdot e_1 + \mbb{B}^+_\dR \cdot (e_2 + \frac{z}{\xi}e_1)$ explicitly is as follows: on the cover $\tilde{\mbb{D}}_{\mbb{C}_p} = \Spa(\mbb{C}_p\langle z^{1/p^\infty}\rangle)$, we have the element $\tilde{z} \in \mbb{B}_\dR^+(\tilde{\mbb{D}}_{\mbb{C}_p})$ that is the image of $[(z, z^{1/p}, z^{1/p^2},\ldots)] \in \mbb{A}_{\mr{inf}}(\tilde{\mbb{D}}_{\mbb{C}_p})$, and thus the lattice $\tilde{\mc{L}}_z := \mbb{B}^+_\dR \cdot e_1 + \mbb{B}^+_\dR \cdot (e_2 + \frac{\tilde{z}}{\xi}e_1)$. This descends uniquely to a lattice $\mc{L}_z$ over $\mbb{D}_{\mbb{C}_p}$ once we check that $\tilde{\mc{L}}_z$ is $\mr{Aut}(\tilde{\mbb{D}}_{\mbb{C}_p}/\mbb{D}_{\mbb{C}_p})$-invariant. If $\tau \in \mr{Aut}(\tilde{\mbb{D}}_{\mbb{C}_p}/\mbb{D}_{\mbb{C}_p})$ then $\tau(\tilde{z}) = \varepsilon\tilde{z}$ for some $\varepsilon \in \mbb{B}_\dR^+(\tilde{\mbb{D}}_{\mbb{C}_p})$ that is the image of $[(1, \zeta_p, \zeta_{p^2}, \ldots)] \in \mbb{A}_{\mr{inf}}(\tilde{\mbb{D}}_{\mbb{C}_p})$ for $(\zeta_{p^n})$ a sequence of compatible $p$-power roots of unity (not necessarily primitive). Now, note that       
\[ \tau(e_2 + \frac{\tilde{z}}{\xi}e_1) = e_2 + \frac{\varepsilon\tilde{z}}{\xi}e_1 = e_2 + \frac{\tilde{z}}{\xi}e_1 - \tilde{z}\left(\frac{1 - \varepsilon}{\xi}\right) e_1. \] 
Because $\theta_{\mbb{C}_p}(1 - \varepsilon) = 0$ and $\ker(\theta_{\mbb{C}_p}) = (\xi)$, $\frac{1 - \varepsilon}{\xi} \in \mbb{B}_\dR^+(\mbb{C}_p)$, and thus we find that $\tau$ sends the basis $e_1, e_2+\frac{\tilde{z}}{\xi} e_1$ of $\tilde{\mc{L}}_z$ to another basis of $\tilde{\mc{L}}_z$, as desired. 
\end{example}

If $(M, \mc{L})$ is a latticed $\mbb{B}_\dR^+$-local system, the trace filtration of $\mc{L}$ on $M$
\begin{align}\label{eqn:trace-filtration}
    \mr{trFil}_{\mc{L}}^{i} (M \otimes_{\mbb{B}^+_\dR} \mc{O}):&= \text{ Image of }(\Fil^i \mbb{B}_\dR \cdot \mc{L}) \cap M \text{ in } M \otimes_{\mbb{B}^+_\dR} \mc{O} \\
    &\cong ((\Fil^i \mbb{B}_\dR  \cdot \mc{L}) \cap M)/((\Fil^i\mbb{B}_\dR\cdot \mc{L}) \cap (\Fil^1\mbb{B}_\dR \cdot M)) \nonumber
\end{align}
is a filtration on $M \otimes_{\mbb{B}^+_\dR} \mc{O}$ by $\mc{O}$-submodules. As in the algebraic case treated in \Iref{ss.bilatticed-and-exactness}, the trace filtration induces two natural Bialynicki-Birula functors $\BB_1(V, \mc{L}_1,\mc{L}_2) = (\mc{L}_1 \otimes_{\mbb{B}^+_\dR} \mc{O}, \mr{trFil}_{\mc{L}_2})$ and $\BB_2(V, \mc{L}_1,\mc{L}_2) = (\mc{L}_2 \otimes_{\mbb{B}^+_\dR} \mc{O}, \mr{trFil}_{\mc{L}_1})$ from $\Loc_{\mbb{B}_\dR}^{\bilatticed}(X)$ to the category of $\mc{O}$-local systems equipped with a filtration by $\mc{O}$-submodules. The main difference is that here we are not guaranteed that the filtration is by local direct summands: indeed, \cref{example:non-constant-type-family} shows this need not occur, since the associated graded is not a local system because the ranks of its graded components differ between $\{0\}$ and its complement. However, the argument in the proof of \cite[Proposition 19.4.2]{scholze:berkeley} shows this is the only obstruction: that is, if the type of the lattice is constant then the filtration is by local direct summands so that in this case $\BB_i$ factors through the category of filtered $\mc{O}$-local systems. In this case it is again clear that $\BB_1$ inverts the type while $\BB_2$ preserves the type.

We have the following relative version of \Iref{lemma.associated-gradeds-twist}, giving a more precise relationship between the associated graded objects.
\begin{lemma}\label{lem:filtrations-bilatticed-relation} Suppose $(\mbb{V}, \mc{L}_1, \mc{L}_2) \in \Loc_{\mbb{B}_\dR}^{\bilatticed}(X)$, and let $\mc{V}_i=\mc{L}_i \otimes_{\mbb{B}^+_\dR} \mc{O}$. Then,
for each $i \in \mbb{Z}$
\[ \mr{gr}^i_{\mc{L}_2} \mc{V}_1 = \mr{gr}^{-i}_{\mc{L}_1} \mc{V}_2 (i) \]
where $(i)$ denotes a Tate twist (tensor with $\mr{gr}^i \mbb{B}_\dR = \mc{O} \otimes_{\mbb{Q}_p} \mbb{Q}_p(i)$). 
\end{lemma}
\begin{proof}
The map from right to left is induced by multiplication $\mbb{V} \otimes_{\mbb{B}^+_\dR} \Fil^{i} \mbb{B}_\dR \rightarrow \mbb{V}$. To see it is well-defined and an isomorphism, it suffices to verify it on each affinoid perfectoid $\Spa(R,R^+) \rightarrow X$ so that we are working everywhere with $\mbb{B}^+_\dR(R)$-modules. If we fix a generator $\xi$ for $\Ker\+ \theta_R$, then
\begin{align*} \mr{gr}^i_{\mc{L}_2} \mc{V}_1 &= (\xi^i \mc{L}_2) \cap \mc{L}_1 /\left( \xi^{i+1}\mc{L}_2 \cap \mc{L}_1 + \xi^i\mc{L}_2 \cap  t \mc{L}_1 \right)\\
\mr{gr}^{-i}_{\mc{L}_1} \mc{V}_2&= \mc{L}_2 \cap (\xi^{-i} \mc{L}_1) / \left( \xi \mc{L}_2  \cap \xi^{-i}\mc{L}_1  + \mc{L}_2 \cap  \xi^{-i+1} \mc{L}_1\right)
\end{align*}
and thus multiplying by $\xi^{-i}$ is an isomorphism from $\mr{gr}^i_{\mc{L}_2}\mc{V}_1$ to $\mr{gr}^{-i}_{\mc{L}_1}\mc{V}_2$. 
\end{proof}

\subsection{Filtrations and lattices on $G$-local systems}\label{ss.filtrations-lattices-G-torsors}
Fix a $p$-adic field $L$ and an algebraic closure $\overline{L}/L$. If $G/L$ is a connected linear-algebraic group, $X/\Spd\+ L$ is a locally spatial diamond, and $\mbb{B}=\mbb{B}^+_\dR, \mbb{B}_\dR,$ or $\mc{O}$, recall from \cref{defn:G-torsor} that a $G(\mbb{B})$-local system on $X$ is an exact tensor functor $\Rep\+ G \rightarrow \Loc_{\mbb{B}}(X)$. If $X=\Spa(R,R^+)$ is an affinoid perfectoid space over $L$, \cref{prop.aff-perfectoid-loc-system-finite-proj-module} implies this is equivalent to an \'etale $G$-bundle (in the usual sense of algebraic geometry) on $\Spec\+ \mbb{B}^+_\dR(R)$, $\Spec\+ \mbb{B}_\dR(R)$, or $\Spec\+ R$, respectively.

A filtered $G(\mc{O})$-local system is an exact tensor functor from $\Rep\+ G$ to filtered $\mc{O}$-local systems on $X$, where exact means it sends exact sequences to strict exact sequences. We define similarly a latticed $G(\mbb{B}^+_\dR)$-local system or bilatticed $G(\mbb{B}_\dR)$-local system. The latter two notions are equivalent, and can be interpreted as the data of two $G(\mbb{B}^+_\dR)$-local systems $\mc{G}_1$, $\mc{G}_2$ equipped with an isomorphism of their push-outs to $G(\mbb{B}_\dR)$ (i.e.\ the composition with the base-change functor from $\mbb{B}^+_\dR$-local systems to $\mbb{B}_\dR$-local systems).

If $\mc{G}^f$ is a filtered ${G}(\mc{O})$-local system on $X/\Spd\+ L$, then the type of $\mc{G}^f$ after pull-back to geometric points induces a locally constant function $[\mu]_{\mc{G}^f}$ from $|X|$ to the set of $\Gal(\overline{L}/L)$-orbits of conjugacy classes of cocharacters of $G_{\overline{L}}$. If $[\nu]$ is a conjugacy class defined over the reflex field $L([\nu])\subset \overline{L}$, we say a filtered $G(\mc{O})$-local system $\mc{G}^f$ on $X/\Spd(L[\nu])$ is of type $[\nu]$ if $[\mu]_{\mc{G}^f}$ is constant equal to $[\nu]$. 

Similarly, if $G$ is reductive and $\mc{G}^\bilatticed$ is a bilatticed $G(\mbb{B}_\dR)$-local system on $X/\Spd\+ L$, then the type of $\mc{G}^\bilatticed$ after pull-back to geometric points induces a function $[\mu]_{\mc{G}^\bilatticed}$ from $|X|$ to the set of $\Gal(\overline{L}/L)$-orbits of conjugacy classes of cocharacters of $G_{\overline{L}}$. If $G$ is not reductive this still holds once we add a point $\emptyset$ corresponding to bilatticed local systems with no type. As \cref{example:non-constant-type-family} illustrates for $G=\GL_2$, the type may not be locally constant. If $[\nu]$ is a conjugacy class defined over the reflex field $L([\nu])\subset \overline{L}$, we say a bilatticed $\mc{G}(\mc{O})$-local system $\mc{G}^\bilatticed$ on $X/\Spd(L[\nu])$ is \emph{good} of type $[\nu]$ if $[\mu]_{\mc{G}^\bilatticed}$ is constant equal to $[\nu]$. 

\begin{theorem}\label{theorem.filtered-functor-iff-lattice-type}
Let $G/L$ be a connected linear algebraic group, let $[\mu]$ be a conjugacy class of cocharacters of $G_{\overline{L}}$. Suppose $X/\Spd(L[\mu])$ is a diamond and $\mc{G}^\bilatticed$ is a bilatticed $G(\mbb{B}_\dR)$-local system on $X$. The following are equivalent. 
\begin{enumerate}
\item $\mc{G}^\bilatticed$ is good of type $[\mu]$. 
\item $\BB_1 \circ \mc{G}^\bilatticed$ is a filtered $G(\mc{O})$-local system of type $[\mu^{-1}]$
\item $\BB_2 \circ \mc{G}^\bilatticed$ is a filtered $G(\mc{O})$-local system of type $[\mu]$. 
\end{enumerate}
\end{theorem}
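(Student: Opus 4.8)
The plan is to reduce the statement to the already-established case of a point, \Iref{theorem.good-equivalence-bilatticed}, together with the local-structure input from \cite{scholze:berkeley} that promotes a constant-type trace filtration to one by local direct summands. First I would observe that the equivalence of (2) and (3) is essentially formal given the last Lemma of this subsection: the $\BB_i$ functors are related by Tate twisting of the associated graded, and the type of $\BB_2 \circ \mc{G}^{\bilatticed}$ is the negative (suitably interpreted, via the duality $\mc{L}_1 \leftrightarrow \mc{L}_2$) of the type of $\BB_1 \circ \mc{G}^{\bilatticed}$; so it suffices to prove the equivalence of (1) and (2). For this, the key point is that all three conditions are pointwise in nature once we know the trace filtration is by locally direct summands: at each geometric point $x = \Spa(C, C^+) \to X$, after choosing an isomorphism $\mbb{B}^+_\dR(C) \cong C[[t]]$, the fiber $\mc{G}^{\bilatticed}_x$ is a bi-latticed $G(C((t)))$-torsor, and \Iref{theorem.good-equivalence-bilatticed} says precisely that such a torsor is good of type $[\mu]$ if and only if its first Bialynicki-Birula filtration is a filtered $G(\mc{O})$-torsor (equivalently, filtered $G(C)$-torsor) of type $[-\mu]$.

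The implementation would go as follows. For (1) $\Rightarrow$ (2): assume $\mc{G}^{\bilatticed}$ is good of type $[\mu]$, i.e. the type function $[\mu]_{\mc{G}^{\bilatticed}}$ is constant. It suffices to check $\BB_1 \circ \mc{G}^{\bilatticed}$ is a filtered $G(\mc{O})$-torsor of type $[-\mu]$ after evaluating on each representation $\rho \colon G \to \GL(V)$, where it becomes the statement that $(\rho \circ \mc{G}^{\bilatticed}, \mr{trFil})$ is a filtered $\mc{O}$-local system whose type is the (constant) multiset determined by $\rho_* [-\mu]$, together with the strictness/exactness over short exact sequences of representations. The type at each geometric point is correct by the pointwise result \Iref{theorem.good-equivalence-bilatticed}. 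That the trace filtration is by \emph{locally direct summands} (not merely by $\mc{O}$-submodules with the right pointwise ranks) is exactly the content imported from the proof of \cite[Proposition 19.4.2]{scholze:berkeley}: constancy of the lattice type forces the ranks of the graded pieces to be locally constant, and this local-structure argument upgrades the filtration to one by local direct summands; this is where the hypothesis ``good'' is used essentially, as \cref{example:non-constant-type-family} shows. Strict exactness on short exact sequences of $\Rep\, G$ then follows because $\mc{G}^{\bilatticed}$ is an exact tensor functor and, working pointwise, strictness of the trace filtration on geometric fibers is part of \Iref{theorem.good-equivalence-bilatticed}; exactness of a map of filtered $\mc{O}$-local systems with locally direct-summand filtrations can be checked on geometric fibers. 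Conversely, for (2) $\Rightarrow$ (1): if $\BB_1 \circ \mc{G}^{\bilatticed}$ is a filtered $G(\mc{O})$-torsor of type $[-\mu]$, then in particular its type is the constant function $[-\mu]$; evaluating on geometric points and applying \Iref{theorem.good-equivalence-bilatticed} in the reverse direction shows the lattice type of $\mc{G}^{\bilatticed}$ at every geometric point is $[\mu]$, hence $[\mu]_{\mc{G}^{\bilatticed}}$ is the constant function $[\mu]$, i.e. $\mc{G}^{\bilatticed}$ is good of type $[\mu]$.

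The main obstacle I expect is the step asserting that constancy of the lattice type implies the trace filtration is by locally direct summands. On a single geometric point there is no content, and for $\GL_n$ this is handled in \cite{scholze:berkeley}, but we need it functorially in $\Rep\, G$ for a general connected (possibly non-reductive) $G$, and we need it to be compatible with the tensor structure so that the output is genuinely a filtered $G(\mc{O})$-torsor rather than just a compatible family of filtered vector bundles. One has to be careful here because, as the introduction to this section emphasizes, even in the reductive case the lattice type of a $G(\mbb{B}^+_\dR)$-torsor can jump over a connected base; the ``good'' hypothesis is precisely what rules this out, but translating it into the local direct summand property requires running the \cite[Proposition 19.4.2]{scholze:berkeley} argument in families and then checking it assembles tensor-functorially. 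Once this local-structure point is in hand, the rest is a combination of the pointwise theorem from Part I with routine compatibility checks, and the equivalence of (2) and (3) is immediate from the Tate-twist Lemma above and the built-in symmetry $\mc{L}_1 \leftrightarrow \mc{L}_2$ of bi-latticed local systems.
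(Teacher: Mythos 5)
Your proposal is correct and takes essentially the same approach as the paper: reduce all three conditions to the pointwise statement from Part I (\Iref{theorem.good-equivalence-bilatticed}), observe that (2) or (3) $\Rightarrow$ (1) is then immediate since (1) is checked at geometric points, and invoke the argument of \cite[Proposition 19.4.2]{scholze:berkeley} on a local affinoid perfectoid cover to upgrade the trace filtration to one by local direct summands in the (1) $\Rightarrow$ (2),(3) direction. The paper's write-up is terser and does not single out the Tate-twist symmetry between (2) and (3), but the key inputs and the logical skeleton are identical, and the tensor-functoriality worry you raise is resolved exactly by the observation you make — once each graded piece is an $\mc{O}$-local system, compatibility with tensor products and strict exactness can be checked on geometric fibers where the pointwise theorem already supplies them.
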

\begin{proof}
The case that $X=\Spa(C,C^+)$ for $C/L([\mu])$ algebraically closed perfectoid is \Iref{theorem.good-equivalence-bilatticed}. For the general case, it is now immediate that (2) or (3) implies (1) in general, since (1) can be checked at geometric points. The other direction follows also from the case of geometric points by working locally on an affinoid perfectoid $\Spa(R,R^+)$ and arguing as in \cite[Proposition 19.4.2]{scholze:berkeley} (the key statement to verify being that the associated graded pieces for any representation of $V$ are $\mc{O}$-local systems rather than just $\mc{O}$-modules). 
\end{proof}

\begin{remark}\label{remark.no-type-bruhat-order}
    When $G$ is reductive, \cite[Proposition 19.2.3]{scholze:berkeley} implies, by passing to a cover trivializing one of the lattices, that the type of a bilatticed $G(\mbb{B}_\dR)$-local system is lower semi-continuous for the Bruhat order (this is illustrated concretely in \cref{example:non-constant-type-family}). For general $G$, \cref{theorem.bdr-aff-grass-properties}-(3) will imply that, if we fix the type of the push-out to the Levi quotient, then the bilatticed $G(\mbb{B}_\dR)$-local systems with a type are a closed set, so that the type $\emptyset$ corresponds to an open point. However, in general it is not true that the type is lower semi-continuous if we declare $\emptyset$ to be a maximum: in \cref{example.non-closed-special-in-schubert-cell}, we exhibit a family that has a type over an open locus but has no type over the complementary closed locus (the push-out of this family to the Levi quotient gives a specialization of types similar to \cref{example:non-constant-type-family}). 
\end{remark}

\subsection{An equivalence for rigid analytic diamonds}\label{ss.bilatticed-equivalence-rigid-analytic}

Suppose $L$ is a $p$-adic field and $X/L$ is a smooth rigid analytic variety. Recall from \cref{ss.filtered-vector-bundles-scholzes-functor-M} that there is a fully faithful functor $\mbb{M}$ from filtered vector bundles with integrable connection satisfying Griffiths transversality on $X$ to  $\Loc_{\mbb{B}^+_\dR}(X^\diamond)$, defined by taking
\[ \mbb{M}(\mc{V}, \nabla, \Fil) = \Fil^0(\nu^{-1}\mc{V} \otimes_{\nu^{-1}\mc{O}_{X_\et}} \mc{O}\mbb{B}_\dR)^{\nabla = 0} \]
on $X_\proet$ and then extending to the $v$-site of $X^\diamond$ as explained in \cref{ss.filtered-vector-bundles-scholzes-functor-M}.
We will now reinterpret this functor in light of the relation between latticed $\mbb{B}^+_\dR$-local systems and filtrations. 

To that end, for $(\mc{V}, \nabla)$ a filtered vector bundle with integrable connection on $X$, let 
\[ \mbb{M}_0(\mc{V}, \nabla):=\mbb{M}(\mc{V},\nabla,\Fil_{\triv}^\bullet), \]
which is the extension to the $v$-site of 
\[ (\nu^{-1}\mc{V} \otimes_{\nu^{-1}\mc{O}_{X_\et}} \Fil^0\mc{O}\mbb{B}_\dR)^{\nabla = 0} = (\nu^{-1}\mc{V} \otimes_{\nu^{-1}\mc{O}_{X_\et}} \mc{O}\mbb{B}^+_\dR)^{\nabla = 0}\]
where $\Fil_{\triv}^\bullet$ is the trivial filtration $\Fil^i_{\triv}{\mc{V}}= \mc{V}$ for $i \leq 0$ and $\Fil^i_{\triv}{\mc{V}}=0$ for $i>0$. Note that \emph{a priori} we have only an inclusion 
\[ (\nu^{-1}\mc{V} \otimes_{\nu^{-1}\mc{O}_{X_\et}} \mc{O}\mbb{B}_\dR^+)^{\nabla = 0} \subseteq (\nu^{-1}\mc{V} \otimes_{\nu^{-1}\mc{O}_{X_\et}} \Fil^0\mc{O}\mbb{B}_\dR)^{\nabla = 0},\]
but in fact this is an equality: looking pro-\'etale locally reduces to the case when $\mc{V} \otimes_{\nu^{-1}\mc{O}_{X_\et}} \mc{O}\mbb{B}_\dR^+$ is a trivial $\mc{O}\mbb{B}_\dR^+$-local system, and we conclude from the equality $(\mc{O}\mbb{B}_\dR^+)^{\nabla = 0} = \mbb{B}_\dR^+=\Fil^0 \mbb{B}_\dR=\Fil^0\mc{O}\mbb{B}_\dR^{\nabla = 0} $, which can be seen easily from \cite[Proposition 6.10]{scholze:p-adic-ht}. In \cite[\S7]{scholze:p-adic-ht}, it is shown that $\mbb{M}(\mc{V}, \nabla, \Fil^\bullet)$ is a $\mbb{B}^+_\dR$-lattice in $\mbb{M}_0(\mc{V}, \nabla) \otimes_{\mbb{B}^+_\dR} \mbb{B}_\dR$. Recall we write $v \colon X^\diamond_v \to X^\diamond_\et$ for the projection to the \'etale site.

\begin{theorem}[\cite{scholze:p-adic-ht}]\label{theorem.smooth-scholze-filtrations-lattices} Let  $X/\Spa\+ L$ be a smooth rigid analytic variety.
\begin{enumerate}
    \item The assignment $(\mc{V}, \nabla) \mapsto \mbb{M}_0(\mc{V}, \nabla)$ is a fully faithful functor from vector bundles with integrable connection on $X$ to $\mbb{B}^+_\dR$-local systems on $X^\diamond$, and $v_*(\mbb{M}_0(\mc{V}, \nabla) \otimes_{\mbb{B}^+_\dR} \mc{O})= \mc{V}$ under the identification $(X^\diamond_\et, \mathcal{O}|_{X^\diamond_\et})=(X_\et, \mathcal{O}_{X_\et}).$ 
    \item The assignment 
    \[ (\mc{V}, \nabla, \Fil^\bullet) \rightarrow (\mbb{M}_0(\mc{V}, \nabla), \mc{L}_{\Fil^\bullet}), \mc{L}_{\Fil^\bullet}:=\mbb{M}(\mc{V}, \nabla, \Fil^\bullet) \subset \mbb{M}_0(\mc{V}, \nabla) \otimes_{\mbb{B}^+_\dR} \mbb{B}_\dR \]
    is a fully faithful tensor functor from filtered vector bundles with integrable connection satisyfing Griffiths transversality on $X$ to latticed  $\mbb{B}^+_\dR$-local systems on $X^\diamond$. Under the identification $v_\ast(\mbb{M}_0(\mc{V}, \nabla) \otimes_{\mbb{B}^+_\dR} \mc{O})=\mc{V}$ of (1), the pushforward along $v$ of the trace filtration for $\mc{L}_{\Fil^\bullet}$ is $\Fil^\bullet$ and $\mc{L}_{\Fil^\bullet}$ is uniquely determined by this property. 
    \item  A latticed $\mbb{B}_\dR^+$-local system $(M, \mc{L})$ on $X^\diamond$ is in the essential image of the functor from (2) if and only if $M$ is in the essential image of the functor from (1) and the trace filtration (\ref{eqn:trace-filtration}) of $\mc{L}$ on $M \otimes_{\mbb{B}_\dR^+} \mc{O}$ is by local direct summands (equivalently, the type of the lattice $\mc{L}$ is constant).
\end{enumerate}
\end{theorem}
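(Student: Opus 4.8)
The plan is to deduce all three parts from the properties of Scholze's functor $\mathbb{M}$ established in \cite[\S7]{scholze:p-adic-ht}, combined with the trace-filtration formalism of \cref{s.filtrations-and-lattices} and the identification, recalled in \cref{ss.filtered-vector-bundles-scholzes-functor-M}, of $\mbb{B}^+_\dR$-local systems on $X_\proet$ in the sense of loc.\ cit.\ with $\mbb{B}^+_\dR$-local systems on $X^\diamond$. Every assertion below can be checked étale-locally on $X$, and after passing to a Galois profinite étale cover that is affinoid perfectoid they become statements about finite projective modules over $\mbb{B}^+_\dR(R)$ that one can verify on global sections; this is also the setting in which the comparison with Scholze's pro-étale constructions takes place.

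For (1), note $\mathbb{M}_0(\mc{V},\nabla)=\mathbb{M}(\mc{V},\nabla,\Fil^\bullet_{\triv})$, and since every horizontal $\mc{O}$-linear map automatically respects the trivial filtration, the category of vector bundles with integrable connection embeds as a full subcategory of the category of filtered vector bundles with connection via $\Fil^\bullet_{\triv}$; thus full faithfulness of $\mathbb{M}_0$ is the restriction of Scholze's full faithfulness of $\mathbb{M}$. The identity $\mathbb{M}_0(\mc{V},\nabla)\otimes_{\mbb{B}^+_\dR}\mc{O}=\mc{V}$ is the trivial-filtration case of the description in loc.\ cit.\ of $\mathbb{M}(\mc{V},\nabla,\Fil^\bullet)\otimes_{\mbb{B}^+_\dR}\mc{O}$ as $\Fil^0$ of the filtered bundle $\mc{V}\otimes_{\mc{O}}\gr^0\mc{O}\mbb{B}_\dR$, which for $\Fil^\bullet_{\triv}$ is all of $\mc{V}\otimes_{\mc{O}}\gr^0\mc{O}\mbb{B}_\dR\cong\mc{V}$. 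For (2), that $\mc{L}_{\Fil^\bullet}$ is a $\mbb{B}^+_\dR$-lattice in $\mathbb{M}_0(\mc{V},\nabla)\otimes\mbb{B}_\dR$ is recalled above from loc.\ cit., so the assignment lands in latticed $\mbb{B}^+_\dR$-local systems. The crucial point is to identify the trace filtration of $\mc{L}_{\Fil^\bullet}$ on $\mathbb{M}_0(\mc{V},\nabla)\otimes\mc{O}=\mc{V}$ with $\Fil^\bullet$ itself: unwinding \eqref{eqn:trace-filtration}, one computes $\Fil^i\mbb{B}_\dR\cdot\mathbb{M}(\mc{V},\nabla,\Fil^\bullet)\cap\mathbb{M}_0(\mc{V},\nabla)$ inside $\mathbb{M}_0(\mc{V},\nabla)\otimes\mbb{B}_\dR$, which can be read off from the $\nabla$-equivariant, filtered de Rham comparison $\mathbb{M}_0(\mc{V},\nabla)\otimes_{\mbb{B}^+_\dR}\mc{O}\mbb{B}^+_\dR\cong\mc{V}\otimes_{\mc{O}}\mc{O}\mbb{B}^+_\dR$ together with the definition of $\mathbb{M}(\mc{V},\nabla,\Fil^\bullet)$ as $\Fil^0$ of $\mc{V}\otimes_{\mc{O}}\mc{O}\mbb{B}_\dR$; the intersection comes out to $\Fil^i\mc{V}\otimes\mbb{B}^+_\dR$. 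Granting this, full faithfulness is formal: a morphism of filtered bundles with connection is a horizontal $f$ with $f(\Fil^i)\subseteq\Fil'^i$, which via (1) produces $\mathbb{M}_0(f)$ carrying $\mc{L}_{\Fil^\bullet}$ into $\mc{L}'_{\Fil'^\bullet}$ by functoriality of $\mathbb{M}$; conversely a morphism of latticed $\mbb{B}^+_\dR$-local systems is $\mathbb{M}_0$ of a unique horizontal $f$ by (1), and since it carries $\mc{L}_{\Fil^\bullet}$ into $\mc{L}'_{\Fil'^\bullet}$ it induces a map of trace filtrations, i.e.\ $f(\Fil^i)\subseteq\Fil'^i$. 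Uniqueness of $\mc{L}_{\Fil^\bullet}$ is then immediate, since the trace filtration recovers $\Fil^\bullet$, which in turn determines $\mc{L}_{\Fil^\bullet}$.

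For (3), the ``only if'' direction is immediate from (1), (2), and the fact that the filtration of a filtered vector bundle with connection is by local direct summands. For ``if'', let $M=\mathbb{M}_0(\mc{V},\nabla)$ and suppose the trace filtration $\trFil^\bullet_{\mc{L}}$ of $\mc{L}$ on $M\otimes\mc{O}=\mc{V}$ is by local direct summands; by (2) it suffices to show $\trFil^\bullet_{\mc{L}}$ is Griffiths transverse for $\nabla$ and that $\mc{L}=\mc{L}_{\trFil^\bullet_{\mc{L}}}$. Using the de Rham comparison to transport $\mc{L}\otimes_{\mbb{B}^+_\dR}\mc{O}\mbb{B}^+_\dR$ to a lattice in $\mc{V}\otimes_{\mc{O}}\mc{O}\mbb{B}_\dR$, the hypothesis that $\trFil^\bullet_{\mc{L}}$ is a finite filtration by local direct summands forces this lattice to be $\Fil^0$ for the filtration on $\mc{V}\otimes\mc{O}\mbb{B}_\dR$ built from $\trFil^\bullet_{\mc{L}}$ and the filtration on $\mc{O}\mbb{B}_\dR$; such a lattice is $\nabla$-stable precisely when $\trFil^\bullet_{\mc{L}}$ is Griffiths transverse, and in that case it equals $\mathbb{M}(\mc{V},\nabla,\trFil^\bullet_{\mc{L}})$ by construction. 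This ``if'' direction is essentially Scholze's statement that $\mathbb{M}$ is an equivalence onto its essential image, reorganized through the trace filtration.

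The step I expect to be the main obstacle is the trace-filtration computation in (2), and equivalently its reverse in (3): matching the purely lattice-theoretic description $\trFil^i_{\mc{L}}=(\Fil^i\mbb{B}_\dR\cdot\mc{L}\cap M)\otimes\mc{O}$ with Scholze's $\Fil$/$\gr$ description of $\mathbb{M}$, and in particular extracting the automatic Griffiths transversality in (3). This is the one place that genuinely uses the fine structure of $\mc{O}\mbb{B}_\dR$ (its filtration, its connection lowering the filtration by one, and the Poincaré-lemma resolution underlying the construction of $\mathbb{M}$) rather than formal manipulation. A convenient way to organize it is to first establish everything for a direct sum of copies of $(\mc{O},d)$ by an explicit computation, and then reduce the general case after base change along $\mbb{B}^+_\dR\to\mc{O}\mbb{B}^+_\dR$, over which $(\mc{V},\nabla)$ acquires a flat trivialization and the filtered de Rham comparison applies.
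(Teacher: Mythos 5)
Your treatment of parts (1) and (2) is essentially the paper's: $\mbb{M}_0$ is $\mbb{M}$ restricted to trivial filtrations, and the trace-filtration computation is delegated to the relevant statement in Scholze's pro-\'etale paper (Proposition 7.9 of \cite{scholze:p-adic-ht}). That part is fine.

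The genuine gap is in part (3), and you have, to your credit, flagged it as ``the main obstacle.'' The issue is that the central step is stated as if it were a consequence of the hypotheses rather than the thing to be proved: you write that the hypothesis that $\trFil^\bullet_{\mc{L}}$ is by local direct summands ``forces this lattice to be $\Fil^0$ for the filtration on $\mc{V}\otimes\mc{O}\mbb{B}_\dR$ built from $\trFil^\bullet_{\mc{L}}$,'' and then deduce Griffiths transversality from the automatic $\nabla$-stability of $\mc{L}\otimes\mc{O}\mbb{B}^+_\dR$. But the identity $\mc{L}\otimes_{\mbb{B}^+_\dR}\mc{O}\mbb{B}^+_\dR = \sum_i \trFil^{-i}_{\mc{L}}\otimes\Fil^i\mc{O}\mbb{B}_\dR$ is \emph{not} forced by constant type alone; it is equivalent to (and exactly as hard as) the Griffiths-transversality statement you are trying to prove, and it crucially uses the $v$-descent condition built into the notion of a $\mbb{B}^+_\dR$-local system. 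A naive lattice such as $\mbb{B}^+_\dR\, t\, e_1 + \mbb{B}^+_\dR\, t^{-1}(e_2 + \tilde z e_1)$ over a disc has constant type and a trace filtration by direct summands but fails Griffiths transversality; it is ruled out only because it does not descend to a genuine $\mbb{B}^+_\dR$-local system. So any argument must see where the local-system structure enters.

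The paper's proof supplies exactly this missing step with a concrete induction on the spread $r_1-r_n$ of the lattice type: after twisting so $r_n=0$, one sets $\mc{L}' = \mc{L}\cap\Fil^1 M$, observes $\trFil^0_{\mc{L}'}=0$ and $\trFil^i_{\mc{L}'}=\trFil^i_{\mc{L}}$ for $i<0$, and applies the inductive hypothesis; the last case $\nabla(\trFil^0_{\mc{L}})\subseteq\trFil^{-1}_{\mc{L}}\otimes\Omega^1_X$ is extracted by pushing the short exact sequence $\mc{L}'/(\mc{L}'\cap\Fil^2 M)\to\mc{L}/(\mc{L}\cap\Fil^2 M)\to\trFil^0_{\mc{L}}\otimes\hat{\mc{O}}_X$ through $\nu_*$ and using $R^1\nu_*\hat{\mc{O}}_X(1)\cong\Omega^1_X$ to identify the boundary map with (minus) $\nabla$. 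Your suggested alternative --- compute for $(\mc{O},d)^{\oplus n}$ and then reduce the general case via a flat trivialization over $\mc{O}\mbb{B}^+_\dR$ --- is plausible as a heuristic but would still need to explain how the descent condition on $\mc{L}$ propagates through that base change, which is precisely the content the induction captures; as written it does not close the gap.
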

\begin{proof}
The functor $\mbb{M}_0$ is the composition of the fully faithful functor $\mbb{M}$ with the fully faithful embedding, via the trivial filtration, of vector bundles with integrable connection inside filtered vector bundles with integrable connection satisfying Griffiths transversality. This gives (1).  

We use notation and definitions of \cite{scholze:p-adic-ht}, in particular we have the structure sheaf $\hat{\mc{O}}_X$ on $X_\proet$ and the projection $\nu \colon X_\proet \to X_\et$ satisfies $\nu_\ast \hat{\mc{O}}_X = \mc{O}_{X_\et}$. If $\lambda \colon X_v \to X_\proet$ is induced by $\{U_i\}_{i \in I} \in X_\proet \mapsto U = \varprojlim_i U_i^\diamond$, then $\lambda^\ast$ induces an equivalence $\Loc_{\mbb{B}_\dR^+}(X_\proet) \cong \Loc_{\mbb{B}_\dR^+}(X^\diamond)$ as in \cref{ss.filtered-vector-bundles-scholzes-functor-M}, and the functor $\mbb{M}$ factors through $\Loc_{\mbb{B}_\dR^+}(X_\proet)$ so (2) and (3) will follow from the results of \cite{scholze:p-adic-ht}. We describe this now, working everywhere below on $X_\proet$. 

We now prove (2). Let $(\mc{V}, \nabla, \Fil^\bullet)$ be a filtered vector bundle with integrable connection satisfying Griffiths transversality on $X$, and let $(M, \mc{L}) =  (\mbb{M}_0(\mc{V}, \nabla), \mc{L}_{\Fil^\bullet})$. By \cite[Proposition 7.9]{scholze:p-adic-ht} and \cref{lem:filtrations-bilatticed-relation} (applied to $(M \otimes_{\mbb{B}_\dR^+} \mbb{B}_\dR, M, \mc{L})$), there are natural isomorphisms 
    \[ \Fil^i\mc{V} \otimes_{\nu^{-1}\mc{O}_{X_\et}} \hat{\mc{O}}_X \cong (\Fil^i\mc{L} \cap M)/ (\Fil^i \mc{L} \cap \Fil^1 M), \]
(where we write $\Fil^i\mc{L} = \Fil^i\mbb{B}_\dR \cdot \mc{L}$ and likewise for $\Fil^1M$) and $\mc{L}$ is uniquely determined by this property. Moreover, $\Fil^\bullet \mc{V}$ can be recovered functorially from $(M,\mc{L})$ by applying $\nu_\ast$, and the connection $\nabla$ can be recovered as in \cite[Lemma 7.8]{scholze:p-adic-ht}. Full faithfulness of the functor in (2) follows since 
    \[ \Hom((\mc{V}_1, \nabla_1, \Fil^\bullet_1),(\mc{V}_2, \nabla_2, \Fil^\bullet_2)) = \Fil^0\ul{\Hom}((\mc{V}_1, \nabla_1, \Fil^\bullet_1),(\mc{V}_2, \nabla_2, \Fil^\bullet_2))^{\nabla = 0}. \]

We now prove (3). Given a latticed $\mbb{B}_\dR^+$-local system $(M, \mc{L}) = (\mbb{M}_0(\mc{V}, \nabla), \mc{L})$ as in (3), we have already seen that $(\mc{V}, \nabla)$ can be recovered from $(M, \mc{L})$. If the type of the lattice is constant, the trace filtration on $\mc{V}$
    \[ \trFil_{\mc{L}}^i \mc{V} = \nu_\ast( \Fil^i\mc{L} \cap M/ \Fil^i \mc{L} \cap \Fil^1 M) \]
is a filtration by local direct summands \cite[Proposition 3.4.3]{caraiani-scholze:cohomology-compact-shimura}.  It remains to show that $\nabla$ satisfies Griffiths transversality with respect to $\trFil_{\mc{L}}^\bullet$, and that the natural map $\mc{L} \hookrightarrow M_{\mbb{B}_\dR} = (\nu^{-1}\mc{V} \otimes_{\nu^{-1}\mc{O}_{X_\et}} \mc{O}\mbb{B}_\dR)^{\nabla = 0}$ identifies $\mc{L}$ with $\Fil^0(\nu^{-1}\mc{V} \otimes_{\nu^{-1}\mc{O}_{X_\et}} \mc{O}\mbb{B}_\dR)^{\nabla = 0} = \mbb{M}(\mc{V}, \nabla, \trFil_{\mc{L}}^\bullet)$. If the type of $\mc{L}$ is $r_1 \geq r_2 \geq \cdots \geq r_n$, we prove the statement by induction on $r_1 - r_n$. Recall from our conventions on types of lattices and filtrations (c.f.\ \cref{example:GL_n-type-of-latticed-torsor}, \cref{example:GL_n-type-of-filtered-vb}) this means that, after restricting to any geometric point, $M$ has a basis $e_1, \ldots, e_n$ such that $\xi^{r_1}e_1, \ldots, \xi^{r_n}e_n$ is a basis of $\mc{L}$; consequently, the images of $e_i, r_i \leq j$ form a basis of $\trFil_{\mc{L}}^{-j}\mc{V}$ and the rank of $\gr^{-j}_\mc{L}\mc{V} = \trFil_{\mc{L}}^{-j}\mc{V}/\trFil_{\mc{L}}^{-j + 1}\mc{V}$ is the multiplicity of $j$ in the multiset $\{r_1,\ldots, r_n\}$. 

In case $r_1 = r_n = r$, the result is trivial since $\mc{L} = \Fil^r \mbb{B}_\dR \cdot M$ and $\trFil_\mc{L}$ is a single step filtration in degree $-r$. In general, we can twist $\mc{L}$ so that the type is $r_1 \geq \cdots \geq r_n = 0$, and consequently $\mc{L} \subseteq M$. For ease of notation, in what follows we write $\trFil_{\mc{L}}^\bullet$ for $\trFil_{\mc{L}}^\bullet \mc{V}$, unadorned tensor products are over $\mc{O}_{X_\et}$ except for $(-) \otimes \hat{\mc{O}}_X$, which is the composition of $\nu^{-1}$ and $(-)\otimes_{\nu^{-1}\mc{O}_{X_\et}} \hat{\mc{O}}_X$. 

Define a sublattice $\mc{L}' = \mc{L} \cap \Fil^1 M$. From construction, $\trFil_\mc{L'}^0 = 0$ and $\trFil_{\mc{L}'}^i = \trFil_{\mc{L}}^i$ for $i < 0$. Consequently by the inductive hypothesis for $\mc{L}'$, we have $\nabla(\trFil^i_{\mc{L}}) \subset \trFil^{i-1}_{\mc{L}} \otimes \Omega^1_X$ for $i < 0$, so we only need to check that $\nabla(\trFil^0_{\mc{L}}) \subset \trFil^{-1}_{\mc{L}} \otimes \Omega^1_X$. Noticing that $\mc{L}/\mc{L}' \cong \trFil^0_{\mc{L}} \otimes \hat{\mc{O}}_X$ and $\mc{L}'/(\mc{L}'\cap \Fil^2 M) \cong \trFil^{-1}_{\mc{L}} \otimes \hat{\mc{O}}_X(1)$ yields the following commutative diagram with exact rows and columns:
\[\begin{tikzcd}[column sep = small, row sep = small]
	& 0 & 0 & 0 \\
	0 & {\trFil^{-1}_{\mc{L}} \otimes \hat{\mc{O}}_X(1)} & {\mc{L}/(\mc{L} \cap \Fil^2M)} & {\trFil^0_{\mc{L}} \otimes \hat{\mc{O}}_X} & 0 \\
	0 & {\mc{V} \otimes \hat{\mc{O}}_X(1)} & {M/\Fil^2 M} & {\mc{V} \otimes \hat{\mc{O}}_X} & 0
	\arrow[from=2-1, to=2-2]
	\arrow[from=3-1, to=3-2]
	\arrow[from=3-2, to=3-3]
	\arrow[from=2-2, to=2-3]
	\arrow[from=2-3, to=2-4]
	\arrow[from=3-3, to=3-4]
	\arrow[from=2-4, to=2-5]
	\arrow[from=3-4, to=3-5]
	\arrow[from=2-4, to=3-4]
	\arrow[from=2-2, to=3-2]
	\arrow[from=2-3, to=3-3]
	\arrow[from=1-2, to=2-2]
	\arrow[from=1-3, to=2-3]
	\arrow[from=1-4, to=2-4]
\end{tikzcd}\]
Applying the long exact sequence for $\nu_\ast$ and the isomorphism $R^1\nu_\ast(\hat{\mc{O}}_X(1)) \cong \Omega^1_X$ of \cite[Corollary 6.19]{scholze:p-adic-ht} gives the following  commutative diagram \cite[Lemma 7.8]{scholze:p-adic-ht}
\[\begin{tikzcd}
	{\trFil^0_{\mc{L}}} & {\trFil^{-1}_{\mc{L}} \otimes \Omega^1_X } \\
	{\mc{V}} & {\mc{V} \otimes \Omega^1_X} & {}
	\arrow[from=1-1, to=1-2]
	\arrow["{-\nabla}", from=2-1, to=2-2]
	\arrow[hook, from=1-1, to=2-1]
	\arrow[hook, from=1-2, to=2-2]
\end{tikzcd}\]
Consequently $\nabla$ satisfies Griffiths transversality for $\trFil_{\mc{L}}$. The equality $\mc{L} = \Fil^0(\mc{V} \otimes \mc{O}\mbb{B}_\dR)^{\nabla = 0}$ follows from the proof of \cite[Proposition 7.9]{scholze:p-adic-ht} as both lattices sit in a short exact sequence $\mc{L}' \to \bullet \to \trFil^0_{\mc{L}} \otimes \hat{\mc{O}}_X$ that is compatible with $\Fil^1 M \to M \to \mc{V} \otimes \hat{\mc{O}}_X$. 

\end{proof}

\begin{example}\label{example.lattices-over-rig-an-point}
When $X=\Spd\+ L$, the connection is always trivial and Scholze's functor $(V, \Fil^\bullet) \mapsto \mbb{M}(V,\Fil^\bullet)$ can be interpreted as mapping filtered $L$-vector spaces to semi-linear representations of $\Gal(\overline{L}/L)$ on finite free $\mbb{B}^+_\dR(C)$-modules (by evaluating on $\Spd\+ C$ and using the descent data to obtain the Galois action). The essential image consists of those modules $M$ such that $(M \otimes \mbb{B}_\dR(C))^{\Gal(\overline{L}/L)}$ is an $L$-vector space of the same rank as $M$. In this case, $M_0$ is $(M \otimes \mbb{B}_\dR(C))^{\Gal(\overline{L}/L)} \otimes \mbb{B}^+_\dR(C)$, so that $(M,M_0)$ can be recovered from $M$ alone. \end{example}

\section{$G$-bundles on the relative Fargues-Fontaine curve}\label{s.G-bundles-ff}

Recall \cite[Definition II.1.15]{fargues-scholze:geometrization} that for any $S \in \Perf_{\overline{\mbb{F}}_p}$ we have an associated adic relative Fargues-Fontaine curve $\FF_S$ over $\Spa(\mbb{Q}_p)$. For $G/\mathbb{Q}_p$ a connected reductive group, the moduli stack of $G$-bundles on relative Fargues-Fontaine curves and the theory of modifications of $G$-bundles on relative Fargues-Fontaine curves are both studied in \cite{fargues-scholze:geometrization}. In this section, we recall some basic definitions and results from that study and extend them to general connected linear algebraic groups $G/\mathbb{Q}_p$. The main result is \cref{theorem.bun-g-b-structure}, which says that the stratum in the moduli stack of $G$-bundles associated to a basic $G$-isocrystal is an open substack isomorphic to the classifying space for the automorphism group of the $G$-isocrystal. 

\subsection{$G-$Isocrystals}\label{ss.G-isocrystals}
Let $G/\mbb{Q}_p$ be a connected linear-algebraic group. We recall some results on $G$-isocrystals and, in particular, the definition of a basic isocrystal in the non-reductive setting; this material is treated in more detail in \Iref{sss.isocrystals-G-structure}. 

We write $\sigma$ for the lift of Frobenius on $\Qpbreve$, and we write $\Isoc$ for the category of isocrystals, i.e. the category of finite dimensional $\Qpbreve$-vector spaces equipped with a $\sigma$-linear isomorphism. Recall from \cite{kottwitz:isocrystals} that a $G$-isocrystal is an exact tensor functor $\Rep\+G \rightarrow \Isoc$. For any $b \in G(\Qpbreve)$, we obtain an associated $G$-isocrystal by sending $(V,\rho)$ to $V \otimes_{\mbb{Q}_p} \Qpbreve$ with the $\sigma$-linear automorphism $\rho(b) \circ (\mathrm{Id}_V \otimes \sigma)$. This construction induces a bijection between $B(G)$, the set of $\sigma$-conjugacy classes in $G(\Qpbreve)$, and the set of isomorphism classes of $G$-isocrystals. 

An element $b \in G(\Qpbreve)$ (or the associated class $[b] \in B(G)$) is called basic if the the slope morphism $\nu_b$ (defined in \cite[\S 4.2]{kottwitz:isocrystals}) factors through the center of $G$. Note that, in this case, the slope morphism is independent of the choice of representative and is defined over $\mathbb{Q}_p$ (as in \cite[\S5.1]{kottwitz:isocrystals}). The center of $G$ is the kernel of the adjoint representation \cite[Theorem 13.4]{humphreys:linear-algebraic-groups}, and so $b$ is basic if and only if the isocrystal associated to $b$ by the adjoint action of $G$ on $\Lie\+ G$ has trivial slope grading or, equivalently, is the trivial isocrystal. We write $B(G)_\mr{basic} \subseteq B(G)$ for the set of basic classes. 

\begin{remark}\label{remark.Levi-decomp-BG-BM-and-basic}
If we fix a Levi decomposition $G=MU$, then, as explained in \Iref{sss.isocrystals-G-structure}, the inclusion $M \rightarrow G$ induces an identification $B(G)=B(M)$ with inverse induced by the quotient $G \rightarrow M$. A basic element of $B(G)$ gives rise to a basic element of $B(M)$, but a basic element of $B(M)$ may not give rise to a basic element of $B(G)$: in general, there can exist a basic $M$-isocrystal whose push-out to a $G$-isocrystal along the inclusion $M \rightarrow G$ is not basic (see \Iref{example.basic-levi-non-basic-G}). In particular, such a push-out then also gives a non-basic $G$-isocrystal whose push-out to an $M$-isocrystal along the quotient $G \rightarrow M$ is basic.  
\end{remark}

For $b \in G(\Qpbreve)$ we write $G_b$ for the functor on $\mathbb{Q}_p$-algebras sending $R$ to the automorphism group of the exact tensor functor from $\Rep\+ G$ to $\Isoc \otimes_{\mbb{Q}_p} R$ obtained from the above tensor functor associated to $b$ by extension of scalars. By \cite[Proposition 1.12]{rapoport-zink:period-spaces}, $G_b$ is a linear algebraic group over $\mathbb{Q}_p$ and $G_b(\mbb{Q}_p)$ is identified with the $\sigma$-centralizer of $b$ in $G(\Qpbreve)$. When $b$ is basic, $G_b$ is an inner form of $G$, and in this case we have a canonical identification 
\begin{equation}\label{eq.Qpbreve-points-of-G-and-twist} G(\breve{\mbb{Q}}_p) = G_b(\breve{\mbb{Q}}_p). \end{equation}

\subsection{$\Bun_G$ for $G$ non-reductive}\label{ss.BunG}
Let $G/\mathbb{Q}_p$ be a connected linear algebraic group. For $S \in \Perf_{\overline{\mbb{F}}_p}$, we write $\Bun_G$ for the prestack on $\Perf_{\overline{\mbb{F}}_p}$ sending $S$ to the groupoid of $G$-bundles on $\FF_S$ (interpreted via any of the equivalent formulations in \cite[Definition/Proposition III.1.1]{fargues-scholze:geometrization} --- note that the reference in loc.\ cit.\ to \cite{scholze:berkeley} does not use that $G$ is reductive). Then $\Bun_G$ is a small $v$-stack --- note that the argument for $G$ reductive in \cite[following III.1.1, Proposition III.I.3]{fargues-scholze:geometrization} does not depend on $G$ being reductive. 

As in the reductive case treated in \cite{fargues-scholze:geometrization}, for any $b \in G(\breve{\mbb{Q}}_p)$, there is a natural
\[ \mc{E}_b: \ast=\Spd\+ \overline{\mbb{F}}_p \rightarrow \Bun_G\]
that sends a perfectoid space $S/\overline{\mbb{F}}_p$ to the $G$-bundle $\mc{E}_b$ on $\FF_S$ obtained by descending the trivial $G$-bundle on the cover $Y_S$ of $\FF_S$ (as defined in \cite[II.1.1]{fargues-scholze:geometrization}) via the Frobenius action $b\cdot\sigma$. From the Tannakian perspective, this is given for any $S$ by composing the associated isocrystal $\Rep\+ G \rightarrow \Isoc$ with the natural map $\Isoc \rightarrow \Vect(\FF_S)$ constructed by using the $\sigma$-linear isomorphism of an isocrystal to descend the associated trivial vector bundle on $Y_S$. In particular, the automorphisms of the $G$-isocrystal associated to $b$ induce automorphisms of $\mc{E}_b$. 

In fact, for any $S \in \Perf_{\overline{\mathbb{F}}_p}$, arguing as in \cite[Prop III.4.2]{fargues-scholze:geometrization} we find that the sheaf of sections of $G_b \times_{\mathbb{Q}_p} \FF_S$ on the \'{e}tale site of $\FF_S$ maps naturally into the sheaf of automorphisms of $\mc{E}_b$ on the \'{e}tale site of $\FF_S$, and that this is an isomorphism for $b$ basic. In particular, since the global sections of $G_b \times_{\mathbb{Q}_p} \FF_S$ over $\FF_S$ are naturally identified with the continuous maps from the underlying topological space $|S|$ of $S$ to $G_b(\mathbb{Q}_p)$, i.e. the $S$-points of the $v$-sheaf of groups associated to the topological group $G_b(\mathbb{Q}_p)$, we find that $\mc{E}_b$ induces a faithful map 
\begin{equation}\label{eq.class-stack-map-to-BunG} [\ast / G_b(\mbb{Q}_p) ] \rightarrow \Bun_G \end{equation}
that is full when $b$ is basic. When $b$ is not basic, the functor of  \cref{eq.class-stack-map-to-BunG} may not be full since the $v$-sheaf associated to $G_b(\mbb{Q}_p)$ may not be the entire $v$-sheaf of automorphisms of $\mc{E}_b$ (a necessary, but not sufficient, condition for this is that the image of $b$ in the quotient of $G$ by its unipotent radical be basic). 

\begin{remark}We do not discuss the structure of the full automorphism group in detail outside the basic case here since we will not need it anywhere, but see \cite[III.5.1]{fargues-scholze:geometrization} for the reductive case. In the general case, we note that the reductive case treated in \cite[III.5.1]{fargues-scholze:geometrization} allows one to understand the quotient of the automorphism group by a group of unipotent automorphisms associated to the unipotent radical $U$ of $G$; to understand the structure of this unipotent part of the automorphism group, one can argue as in the proof of \cref{theorem.bun-g-b-structure} below to see that it is an iterated extension of the Banach-Colmez spaces of global sections associated to the isocrystals obtained by the adjoint action of $b$ on the quotients in the derived series for $\Lie\+ U$. \end{remark}

For $[b] \in B(G)_\mr{basic}$, we write $\Bun_G^{[b]}$ for the substack of $\Bun_G$ whose objects are those $G$-bundles that, after pullback to any geometric point, are isomorphic to $\mc{E}_b$. Note that the map of \cref{eq.class-stack-map-to-BunG} factors through $\Bun_G^{[b]}$. 

\begin{theorem}\label{theorem.bun-g-b-structure} 
Let $G/\mathbb{Q}_p$ be a connected linear algebraic group, and let $M$ be the reductive quotient of $G$ by its unipotent radical $U$. 
For each $[b] \in B(G)_{\mr{basic}}$, $\Bun_G^{[b]}$ is an open substack of $\Bun_G$, and, for any $b \in [b]$, the map of  \cref{eq.class-stack-map-to-BunG} is an isomorphism 
\begin{equation}\label{eq.isomorphism-b-loc-classifying-stack} [\ast/G_b(\mbb{Q}_p)] \cong \Bun_G^{[b]}. \end{equation}
Moreover, for $[b_M]$ the image of $[b]$ in $B(M)$, 
\begin{equation}\label{eq.bun-G-over-Levi-fiber-prod} \Bun_G^{[b]}\cong\Bun_G \times_{\Bun_M} \Bun_M^{[b_M]}, \end{equation}
where the map from $\Bun_G$ to $\Bun_M$ in the formation of the fiber product on the right-hand side is given by push-out of bundles from $G$ to $M$. 
\end{theorem}
\begin{proof}
    When $G$ is reductive, \cref{eq.bun-G-over-Levi-fiber-prod} is trivial and the rest of the result is part of \cite[Theorem III.4.5]{fargues-scholze:geometrization}. 

    We consider now the case of general $G$. We fix a Levi decomposition $G=MU$. By \cref{remark.Levi-decomp-BG-BM-and-basic}, the inclusion $M \rightarrow G$ induces a bijection $B(M)=B(G)$ so we may assume $b \in M(\Qpbreve) \subseteq G(\Qpbreve)$; we will write $b$ if we view it as an element of $G(\Qpbreve)$ and $b_M$ if we view it as an element of $M(\Qpbreve)$. 
    
    We first show simultaneously that \cref{eq.bun-G-over-Levi-fiber-prod} holds and that $\mc{E}_b: \ast \rightarrow \Bun_G^{[b]}$ is a $v$-cover. Since the map $\Bun_G^{[b]} \rightarrow \Bun_G \times_{\Bun_M} \Bun_M^{[b_M]}$ is fully faithful, to establish \cref{eq.bun-G-over-Levi-fiber-prod} it suffices to show that this map is also essentially surjective. Thus, to establish both claims, it suffices to show the composition of $\mc{E}_b$ with the map of \cref{eq.bun-G-over-Levi-fiber-prod} is a $v$-cover. In other words, we need to show that any object of $\Bun_G \times_{\Bun_M} \Bun_M^{[b_M]}$ is $v$-locally isomorphic to $\mc{E}_b$.
    
    To that end, suppose $S\in \Perf_{\overline{\mathbb{F}}_p}$ and $\mc{E}$ in $\Bun_G \times_{\Bun_M} \Bun_M^{[b_M]}(S)$. Since $b_M$ is basic, by the reductive case of the theorem we find that we can replace $S$ with a $v$-cover and choose an isomorphism $ \mc{E}_{b_M}=\mc{E} \times^G M$, where $\mc{E} \times^G M$ is the push-out of $\mc{E}$ to $M$ along the quotient map $G \rightarrow M$. We then consider the sheaf $\mc{T}$ on the \'{e}tale site of $\FF_S$ parameterizing isomorphisms $\mc{E}_b \xrightarrow{\sim} \mc{E}$ that induce this fixed isomorphism $\mc{E}_{b_M}=\mc{E} \times^G M$ after push-out to $M$ and composition with the canonical isomorphism $\mc{E}_{b_M}=\mc{E}_b \times^G M$. The sheaf $\mc{T}$ is a torsor for the \'{e}tale sheaf of groups $\mc{U}$ on $\FF_S$ parameterizing automorphisms of the $G$-torsor $\mc{E}_b$ that induce the identity on $\mc{E}_{b_M}=\mc{E}_b \times^G M$. This group $\mc{U}$ is an inner form of $U$ over $\FF_S$, and the isomorphism class of the torsor $\mc{T}$ is given by a class in $H^1_\et(\FF_S, \mc{U})$. For $S' \rightarrow S$ a map in $\Perf_{\overline{\mathbb{F}}_p}$, the formation of $\mc{U}$ and this class in $H^1_\et(\FF_S, \mc{U})$ is compatible with pullback along $\FF_{S'} \rightarrow \FF_S$, and it suffices to show that this class can be annihilated after passing to a $v$-cover. Indeed, $\mc{T}_{\FF_{S'}}$ is the trivial torsor if and only if it admits a section, which gives an isomorphism $\mc{E}_b \cong \mc{E}$ over $\FF_{S'}$. 

    To obtain this annihilation, we first consider the case that the unipotent radical $U$ of $G$ is abelian. Then $U\cong \Lie\+ U$ as a vector group, compatibly with the conjugation action of $G$, and $\mc{U}$ is the vector bundle on $\FF_S$ associated to the isocrystal given by the adjoint action of $b$ on $\Lie\+ U$. Because $b$ is basic, this is the trivial vector bundle  $\Lie\+ U \otimes_{\mbb{Q}_p} \mc{O}_{\FF_S}$. The result then follows from the equivalence $H^1_\et(\FF_S, \mc{O}_S)=H^1(\FF_S, \mc{O}_S)$ (which follows from \cite[Theorem 8.2.22-(c)]{kedlaya-liu:relative-foundations}) and the pro-\'{e}tale local annihilation of classes in $H^1(\FF_S, \mc{O}_S)$ \cite[Proposition II.2.5-(ii)]{fargues-scholze:geometrization}. For the case of general $U$, we can filter $U$ by its derived series and argue by induction to reduce to this same pro-\'{e}tale local annihilation. Thus we have established the isomorphism \cref{eq.bun-G-over-Levi-fiber-prod} and that $\mc{E}_b$ is a $v$-cover of $\Bun_G^{[b]}$. 
    
    Since the reductive case of the entire theorem has already been established, $\Bun_M^{[b]}$ is an open substack of $\Bun_M$, so the isomorphism \cref{eq.bun-G-over-Levi-fiber-prod} implies, in particular, that $\Bun_G^{[b]}$ is an open substack of $\Bun_G$.  Finally, because the map \cref{eq.class-stack-map-to-BunG} is fully faithful for $b$ basic, since we have also shown $\mc{E}_b$ is a $v$-cover of $\Bun_G^{[b]}$, 
    we find that \cref{eq.class-stack-map-to-BunG} induces an equivalence with $\Bun_G^{[b]}$, establishing \cref{eq.isomorphism-b-loc-classifying-stack}
    \end{proof}

We also record here a computation relating basic strata of $\Bun_G$ for different groups that will be used in the duality theorem for infinite level spaces, \cref{theorem.duality}.  Let $b \in G(\Qpbreve)$ be basic. Arguing exactly as in \cite[III.4.1]{fargues-scholze:geometrization}, we find that  $\mc{E} \mapsto \ul{\Isom}_{\FF_S}(\mc{E}_b, \mc{E})$ induces an equivalence $F_b:\Bun_G \cong \Bun_{G_b}$. Under the identification of \cref{eq.Qpbreve-points-of-G-and-twist}, in the following for any $a \in G(\Qpbreve)$ we write $a_G$ for $a$ viewed as an element of $G(\Qpbreve)$ and $a_{G_b}$ for $a$ viewed as an element of $G_b(\Qpbreve)$. 

\begin{lemma}\label{lemma.BunG-twisting}
There are natural isomorphisms in $\Bun_{G_b}(\Spd\+ \overline{\mbb{F}}_p)$
\[ F_b(\mc{E}_{b_G}) \cong \mc{E}_{1_{G_b}} \textrm{ and } F_b(\mc{E}_{1_G}) \cong\mc{E}_{b^{-1}_{G_b}}.\]
Note here that $\mc{E}_{1_{G_b}}$ is the trivial $G_b$-bundle and $\mc{E}_{1_G}$ is the trivial $G$-bundle. 
\end{lemma}
\begin{proof}
This first isomorphism follows since $F_b(\mc{E}_{b_G})=\ul{\Isom}(\mc{E}_b, \mc{E}_b)$ and we have already seen that $G_b$ is the automorphism group of $\mc{E}_b$ (cf. \cite[Corollary III.4.3 of Proposition III.4.2]{fargues-scholze:geometrization}). The second isomorphism is obtained by computing with the trivialization on the covers $Y_S$ of $\FF_S$ as in the proof of \cite[Proposition III.4.2]{fargues-scholze:geometrization}.
\end{proof}

\subsection{Modifications}\label{ss.modifications}
We recall, from the Tannakian perspective, the notion of modifications of $G$-bundles as in \cite[III.3]{fargues-scholze:geometrization}. Suppose $S/\Spd\+ \breve{\mbb{Q}}_p$, with $S$ in $\Perf$. Note that $\Spd\+\breve{\mbb{Q}}_p = \Spd\+ \mbb{Q}_p \times \Spd\+ \overline{\mbb{F}}_p$, so in particular $S/\Spd\+ \overline{\mbb{F}}_p$ and the discussions of the previous section apply. The structure map to $\Spd\+ \breve{\mbb{Q}}_p$ gives an untilt $\infty: S^\sharp \hookrightarrow \FF_S$ which is the inclusion of a closed Cartier divisor \cite[Proposition 11.3.1]{scholze:berkeley} and the completion of $\mc{O}_{\FF_S}$ at $\mc{I}_\infty$ (the ideal sheaf of $\infty$) is the sheaf $\mbb{B}_\dR^+$. Thus completion along $\mc{I}_\infty$ gives rise to an exact tensor functor $\Vect(\FF_S) \to \Loc_{\mbb{B}_\dR^+}(S)$ (the category of $\mbb{B}_\dR^+$-local systems on $S$), which should be thought of geometrically as restricting to the formal neighborhood of $\infty \in \FF_S$.   Thus, given $\mc{E} \in \Bun_G(S)$ we obtain a $G(\mbb{B}^+_\dR)$-local system\footnote{Recall that we take the Tannakian viewpoint, i.e.\ that $\mc{E}$ is an exact tensor functor $\Rep\+ G \to \Vect(\mc{O}_{\FF_S})$ so that $\mc{E}|_{\mbb{B}_\dR^+(S)}$ is the exact tensor functor obtained by composition with $\Vect(\FF_S) \to \Loc_{\mbb{B}_\dR^+}(S)$.} $\mc{E}|_{\mbb{B}^+_\dR(S)}$ and by push-out a $G(\mbb{B}_\dR)$-local system $\mc{E}|_{\mbb{B}_\dR}$ on $S$. Moreover, given a lattice $\mc{L}$ on this $G(\mbb{B}_\dR)$-local system, we may form the modification $\mc{E}_\mc{L}$ of $\mc{E}$ by $\mc{L}$ to obtain a new object in $\Bun_G(S)$: by the Tannakian formalism, we reduce to modifications of vector bundles on $\FF_S$ by $\mbb{B}_\dR^+$-lattices, which can be constructed via the Beauville-Laszlo gluing \cite[Lemma 5.2.9]{scholze:berkeley} (using the equivalence of vector bundles and finite projective modules on affinoids $U \subseteq \FF_S$ \cite[Theorem 5.2.8]{scholze:berkeley}). This construction is functorial and interpolates the construction of modifications of bundles on the algebraic Fargues-Fontaine curve at geometric points as described in \Iref{ss.modifications}.

\section{The $\mbb{B}^+_\dR$-affine Grassmannian}\label{s.aff-grass}

Let $L$ be a $p$-adic field and fix an algebraic closure $\overline{L}/L$. For $G/L$ a connected linear algebraic group, the $\mbb{B}^+_\dR$-affine Grassmannian parameterizes lattices on the trivial $G(\mbb{B}_\dR)$-local system. It plays a central role in our study, similar to the role of flag varieties in classical Hodge theory. The case of $G$ reductive was handled in \cite{scholze:berkeley}; here we develop the basic structure for a general connected linear algebraic group. 

Below we use the notion of types for filtrations and lattices that was defined in \cref{s.filtrations-and-lattices} (see also \Iref{s.filtrations-and-lattices}). This is also made explicit for the moduli spaces that appear in the following definition immediately afterwards in \cref{remark.bdr-gr-filtration}. 

\begin{definition}
 Let $G/L$ be a connected linear algebraic group. 
\begin{enumerate}
    \item The $\mbb{B}^+_\dR$-affine Grassmannian $\Gr_G / \Spd\+ L$ is the functor on $\Perfd_L$
    \begin{align*} X & \mapsto \{\textrm{Lattices on the trivial $G(\mbb{B}_\dR)$-local system over $X$}\}/\sim \\
    & = \{\textrm{Lattices on the trivial $G(\mbb{B}^+_\dR)$-local system over $X$}\}/\sim
    \end{align*}
    \item The flag variety $\Fl_G / \Spa\+ L$ 
    is the rigid analytic variety over $\Spa\+ L$ whose functor of points on adic spaces over $\Spa\+ L$ is
    \[ X \mapsto \{\textrm{Filtrations on the trivial $G$-bundle on $X_\et$}\}/\sim. \]
    In particular, $\Fl_G^\diamond / \Spd\+ L$ is identified with the functor on $\Perfd_L$ 
    \[ X \mapsto \{\textrm{Filtrations on the trivial $G(\mc{O})$-local system over $X$}\}/\sim. \]
\end{enumerate}
Let  $[\mu]$ be a conjugacy class of cocharacters of $G_{\overline{L}}$ with reflex field $L([\mu])$.   
\begin{enumerate}
    \item The affine Schubert cell $\Gr_{[\mu]}$ is the subfunctor of $\Gr_{G} \times_{\Spd\+ L} \Spd\+ L([\mu])$ parameterizing lattices of type $[\mu]$ on the trivial $G(\mbb{B}^+_\dR)$-local system. 
    \item The flag variety $\Fl_{[\mu]}$ is the rigid analytic subvariety of $\Fl_{G} \times_{\Spa\+ L} \Spa\+ L([\mu])$ parameterizing filtrations of type $[\mu]$.
    \item The Bialynicki-Birula map $\BB: \Gr_{[\mu]} \rightarrow \Fl_{[\mu^{-1}]}^\diamond$ sends a lattice on the trivial $G(\mbb{B}^+_\dR)$-local system to its trace filtration on the trivial $G(\mc{O})$-local system (this is well defined by \cref{theorem.filtered-functor-iff-lattice-type}).
\end{enumerate}
\end{definition}

\begin{remark}\label{remark.bdr-gr-filtration}
For the convenience of the reader, we recall our conventions on types of filtrations and lattices (\cref{s.filtrations-and-lattices} and \Iref{s.filtrations-and-lattices}) by giving an explicit description of the $\Spa(C, C^+)$-points of the above functors for $C/L([\mu])$ an algebraically closed non-archimedean extension. To that end, we first note that, since we are given an inclusion $L([\mu]) \rightarrow C$, it makes sense to talk about $[\mu]_C$, the cocharacters of $G_C$ conjugate by an element of $G(C)$ to an element in $[\mu]$, and $[\mu]_{\mbb{B}^+_\dR(C)}$, the cocharacters of $G_{\mbb{B}^+_\dR(C)}$ conjugate by an element of $G(\mbb{B}^+_\dR(C))$ to an element in $[\mu]$: indeed, the inclusion $L([\mu]) \rightarrow C$ can be extended to $\overline L \rightarrow C$ and then, by Hensel's lemma, to $\overline{L} \rightarrow \mbb{B}^+_\dR(C)$, and the definition of the reflex field $L([\mu])$ is such that the resulting conjugacy class depends only on the given map $L([\mu]) \rightarrow C$ and not on the choice of its extension. Now, the data of $\mc{L} \in \Gr_G(C) = \Gr_G(C, C^+)$ is equivalent to giving, for each $V$ in $\Rep\+ G$, a $\mbb{B}_\dR^+(C)$-lattice $\mc{L}(V) \subseteq V_{\mbb{B}_\dR(C)}$ that is functorial and exact in $V$ and compatible with tensor products. Then, $\mc{L} \in \Gr_{[\mu]}(C)$ if there is a cocharacter $\mu \in [\mu]_{\mbb{B}^+_\dR(C)}$ such that for one (equivalently, any) generator $\xi$ of $\Fil^1 \mbb{B}_\dR^+(C)$  and each $V \in \Rep\+ G$, $\mc{L}(V) = \mu(\xi)V_{\mbb{B}_\dR^+(C)}$. The data of $\Fil \in \Fl_G(C) = \Fl_G(C,C^+)$ is equivalent to a decreasing, complete and exhaustive filtration $\Fil^\bullet V_C$ that is functorial and exact in $V$ and compatible with tensor products; $\Fil \in \Fl_{[\mu]}(C)$ if there exists a cocharacter $\mu \in [\mu]_C$ such that for each $V \in \Rep\+ G$, $\Fil^i V_C = \bigoplus_{j \geq i} V_C[j]$ (with $V_C[j]$ the eigenspace where $\mu(t)$ acts by $t^j$ for $t \in \mbb{G}_m(C)$).
\end{remark}

Note that $\Gr_G$ admits a natural action of $G(\mbb{B}_\dR)$ by change of trivialization. The action of $G(\mbb{B}^+_\dR) \leq G(\mbb{B}_\dR)$ preserves $\Gr_{[\mu]}$ (it is the stabilizer of the trivial lattice) and the map $\BB$ is equivariant for the action of $G(\mbb{B}^+_\dR)$ on $\Fl_{[\mu^{-1}]}^\diamond$ by projection to $G(\mc{O})$ and change of trivialization. The following proposition compiles some alternate interpretations and formal consequences of the definitions and results that have already been stated: 
\begin{proposition}\label{prop.alternative-characterizations}\hfill
\begin{enumerate}
    \item If $X=\Spa(A,A^+) \in \Perfd_L$ is affinoid perfectoid, then 
\[ \Gr_{G}(X)= \{\textrm{G-torsors on $\Spec\+ \mbb{B}^+_\dR(A)$ with a trivialization over $\Spec\+ \mbb{B}_\dR(A)$} \}/\sim \]
\item The action of $G(\mbb{B}_\dR)$ on the trivial lattice induces an identification of $\Gr_G$ with the \'{e}tale (or $v$) sheafification of 
\[ X \mapsto G(\mbb{B}_\dR(X))/G(\mbb{B}^+_\dR(X)). \]
In particular for any algebraically closed perfectoid field $C/L$ 
\[ \Gr_G(\Spa(C,C^+))= G(\mbb{B}_\dR(C))/G(\mbb{B}^+_\dR(C)) \]
and if $C/L([\mu])$, this identifies 
\[ \Gr_{[\mu]}(\Spa(C,C^+)) = G(\mbb{B}^+_\dR(C)) \xi^\mu G(\mbb{B}^+_\dR(C))/G(\mbb{B}^+_\dR(C)) \]
for any generator $\xi$ of $\Ker\+ \theta_C$ and $\mu \in [\mu]$.
\item $\Gr_{G}$ is a $v$-sheaf, and for any diamond $X$, 
   \begin{align*} \Gr_{G}(X) & = \{\textrm{Lattices on the trivial $G(\mbb{B}_\dR)$-local system over $X_v$}\}/\sim \\
    & = \{\textrm{Lattices on the trivial $G(\mbb{B}^+_\dR)$-local system over $X_v$}\}/\sim
    \end{align*}
    
\item If $S/\Spa\+ L$ is a seminormal rigid analytic variety, 
\[ \Hom(S^\diamond, \Fl_G^\diamond) = \{ \textrm{Filtrations on the trivial $G$-bundle on $S_\et$} \} /\sim. \]
\item $\Fl_{[\mu]} \subseteq \Fl_G \times_L \Spa\+ L([\mu])$ is a clopen subvariety.  
\end{enumerate}
\end{proposition}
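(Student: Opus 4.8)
The plan for (5) is to realize $\Fl_{[\mu]}$ as the open subdiamond of $\Fl_G \times_L \Spd L([\mu])$ attached to a clopen subset of the underlying topological space, the clopenness coming from local constancy of the type of a filtered $G(\mc{O})$-torsor. Parts (1)--(4) are formal consequences of the definitions together with \cref{prop.aff-perfectoid-loc-system-finite-proj-module}, \cref{theorem.filtered-functor-iff-lattice-type}, the $v$-locality of the notions of $\mbb{B}$-local system and of lattice, and, for (4), analytification and Galois descent of the constituent partial flag varieties; I would dispatch these first. The statement with real content is (5).

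For (5), I would first note that $\Fl_G \times_L \Spd L([\mu])$ is a locally spatial diamond: $\Fl_G$ is locally spatial by part (4), $\Spd L([\mu])$ is spatial, and locally spatial diamonds are stable under fiber product. The identity in $\Fl_G(\Fl_G)$ gives a universal filtered $G(\mc{O})$-torsor $\mc{G}^{f,\mr{univ}}$ on $\Fl_G \times_L \Spd L([\mu])$, and by the discussion in \cref{ss.filtrations-lattices-G-torsors} its type is a locally constant function $[\mu]_{\mc{G}^{f,\mr{univ}}}$ from $|\Fl_G \times_L \Spd L([\mu])|$ to the set of $\Gal(\overline L/L([\mu]))$-orbits of conjugacy classes of cocharacters of $G_{\overline L}$. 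Since $[\mu]$ is defined over $L([\mu])$ it is a single point of this target, so $|\Fl_{[\mu]}| = ([\mu]_{\mc{G}^{f,\mr{univ}}})^{-1}([\mu])$ is the preimage of a point under a locally constant map and hence clopen. (Concretely, this is the clopen piece of $\Fl_G \times_L \Spd L([\mu])$ corresponding to the partial flag variety $G/P_\mu$, which becomes defined over $L([\mu])$.)

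Being clopen, $|\Fl_{[\mu]}|$ is in particular locally closed and generalizing, so by the subdiamond-from-subset construction recalled in \cref{s.preliminaries} there is a unique locally closed subdiamond $Z \subseteq \Fl_G \times_L \Spd L([\mu])$ with $|Z| = |\Fl_{[\mu]}|$; since $|Z|$ is open, $Z$ is clopen. It then remains to identify $Z$ with the subfunctor $\Fl_{[\mu]}$. For a map of small $v$-sheaves $f \colon Y' \to \Fl_G \times_L \Spd L([\mu])$ classifying $\mc{G}^f := f^\ast \mc{G}^{f,\mr{univ}}$, compatibility of the type with pullback --- immediate from its definition via geometric points --- gives $[\mu]_{\mc{G}^f}(y) = [\mu]_{\mc{G}^{f,\mr{univ}}}(|f|(y))$ for every $y \in |Y'|$. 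Hence $\mc{G}^f$ has constant type $[\mu]$, i.e.\ $f$ factors through $\Fl_{[\mu]}$, precisely when $|f|$ maps $|Y'|$ into $|\Fl_{[\mu]}| = |Z|$, i.e.\ precisely when $f$ factors through $Z$; so $\Fl_{[\mu]} = Z$.

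I do not anticipate a genuine obstacle here: the content has already been isolated in the local constancy of the type of a filtered $G(\mc{O})$-torsor established in \cref{ss.filtrations-lattices-G-torsors} and in the description of $\Fl_G$ in part (4). The one point that needs care is the last step --- matching the abstractly produced subdiamond $Z$ with the subfunctor $\Fl_{[\mu]}$ --- which reduces to the routine compatibility of the type function with base change.
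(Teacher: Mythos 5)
Your proposal is correct and is, as far as one can tell, exactly what the paper has in mind: the paper's proof is simply ``We omit the proof --- most of the statements follow immediately from the definitions and Proposition~\ref{prop.aff-perfectoid-loc-system-finite-proj-module},'' plus a one-line remark on (3). Your fleshed-out argument for (5) --- locally constant type of the universal filtered $G(\mc{O})$-torsor, preimage of the point $[\mu]$, then the subdiamond-from-open-subset machinery together with the tautological compatibility of the type function with pullback --- is the natural formalization of ``follows from the definitions'' and is sound; it agrees with your parenthetical ``concrete'' picture of $\Fl_{[\mu]}$ as the clopen piece $G/P_\mu$ over the reflex field.

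One small point worth surfacing on (1)--(4): the paper explicitly flags, for the $v$-sheaf property in (3), that a lattice on the trivial torsor has \emph{no non-trivial automorphisms}, which is exactly what upgrades $v$-descent for $\mbb{B}^+_\dR$-local systems (a stack statement) to a sheaf statement. Your phrase ``the $v$-locality of the notions of $\mbb{B}$-local system and of lattice'' gestures at this but doesn't isolate the automorphism-freeness that actually makes the presheaf a sheaf rather than merely a stack; if you spell out (3), you should include that observation. Also, Theorem~\ref{theorem.filtered-functor-iff-lattice-type} in your list of inputs isn't actually used in (1)--(4); those parts only need the identification of local systems with finite projective modules and the algebraic description of the flag variety.
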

\begin{proof} 
We omit the proof --- most of the statements follow immediately from the definitions and \cref{prop.aff-perfectoid-loc-system-finite-proj-module}. For (3), note that a lattice on the trivial local system has no non-trivial automorphisms so the $v$-sheaf property follows from $v$-descent for $\mbb{B}^+_\dR$-local systems. 
\end{proof}

The following theorem is the main result of the section. 

\begin{theorem}\label{theorem.bdr-aff-grass-properties} Let $G/L$ be a connected linear algebraic group with unipotent radical $U$ and reductive quotient $M=G/U$. Let $[\mu]$ be a conjugacy class of cocharacters of $G_{\overline{L}}$ with reflex field $L([\mu])$; we write $[\mu]_M$ for the induced conjugacy class of cocharacters of $M_{\overline{L}}$. Then,
\begin{enumerate}
    \item $\Gr_G$ is separated and partially proper over $\Spd\+ L$ and an increasing union of closed locally spatial subdiamonds. 
    \item $\Gr_{[\mu]} \subseteq \Gr_{G} \times_{\Spd\+ L} \Spd\+ L([\mu])$ is a locally closed locally spatial subdiamond, partially proper over $\Spd\+ L([\mu])$. If $[\mu]_M$ is minuscule (i.e.\ the adjoint action of any representative $\mu$ on $\Lie\+ M$ has weights in $\{-1,0,1\}$),  it is a closed subdiamond. 
    \item $\Gr_{[\mu]}$ is closed in $\Gr_G \times_{\Gr_{M}} \Gr_{[\mu]_M}$. 
    \item If $S/\Spa\+ L([\mu])$ is a seminormal rigid analytic variety, then
    \begin{equation} \Gr_G \times_{\Gr_M} \Gr_{[\mu]_M} (S^\diamond) = \Gr_{[\mu]}(S^\diamond), \label{eq.rigid-points-bounded-affgr}\end{equation}
    and $\BB$ induces a bijection between this set and the subset of $\Fl_{[\mu^{-1}]}^\diamond(S^\diamond)=\Fl_{[\mu^{-1}]}(S)$ consisting of filtrations satisfying Griffiths transversality over the smooth locus $S^{\mr{sm}}$ (for the trivial connection on the trivial $G$-bundle on $S^{\mr{sm}}$). 
    \item If the weights of $[\mu]$ for the adjoint action on $\Lie\+ G$ are $\leq 1$, then $\BB$ is an isomorphism $\Gr_{[\mu]}= \Fl_{[\mu^{-1}]}^\diamond$. This is the only case where $\BB$ is an isomorphism; more precisely, in all other cases the universal filtration on $\Fl_{[\mu^{-1}]}$ does not satisfy Griffiths transversality for the trivial connection over any open (so that even a local section to $\BB$ would violate (4)). 
\end{enumerate}    
\end{theorem}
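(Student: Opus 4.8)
\textit{Strategy.} The plan is to reduce everything to the reductive case via the push-out $\pi_\ast\colon \Gr_G \to \Gr_M$ along $\pi\colon G\to M$, controlling its fibres with \cref{lemma.pro-etale-loc-split}. The inputs are: Scholze's results on $\Gr_M$ for $M$ reductive (separatedness, partial properness, exhaustion by proper spatial ``bounded'' Schubert varieties, local closedness of Schubert cells and their closedness when minuscule); the structure of $\Gr_U$ for $U$ unipotent, obtained by filtering $U$ by normal subgroups with $\mbb{G}_a$-quotients so that $\Gr_U$ is an iterated extension of copies of $\Gr_{\mbb{G}_a}=\mbb{B}_\dR/\mbb{B}^+_\dR=\mr{colim}_n\,\mbb{B}^+_\dR/\xi^n$, each term a partially proper locally spatial diamond; \cref{lemma.zariski-closed-affine} on Zariski-closedness of ``bounded'' loci; \cref{theorem.filtered-functor-iff-lattice-type} and the Part I analysis (\Iref{theorem.good-equivalence-bilatticed}) relating lattice type to exactness of the trace filtration; and, for (4)--(5), Scholze's functor $\mbb{M}$ (\cref{theorem.smooth-scholze-filtrations-lattices}).

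\textit{Parts (1) and (2).} For (1) fix a faithful $G\hookrightarrow\GL(V)=\GL_n$ and set $\Gr_G^{\le d}:=\Gr_G\times_{\Gr_{\GL_n}}\Gr_{\GL_n,\le d}$, with $\Gr_{\GL_n,\le d}$ the proper spatial diamond of lattices $\Lambda$ satisfying $\xi^d\Lambda_0\subseteq\Lambda\subseteq\xi^{-d}\Lambda_0$. The $\Gr_G^{\le d}$ exhaust $\Gr_G$ and each is closed in $\Gr_G$ (base change of a closed immersion); each $\Gr_G^{\le d}$ is itself spatial because ``the $\GL_n$-torsor with its $\mbb{B}_\dR$-trivialization reduces to $G$'' is a Zariski-closed condition on $\Gr_{\GL_n,\le d}$ --- using that $G$ is the stabilizer of a line in some representation (Chevalley) and the method of \cref{lemma.zariski-closed-affine}. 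Separatedness and partial properness of $\Gr_G/\Spd L$ then descend from $\Gr_{\GL_n}$ (alternatively one runs the same bookkeeping along $\pi_\ast$, using \cref{lemma.pro-etale-loc-split} and the description of $\Gr_U$ above). For (2), put $B:=\pi_\ast^{-1}(\Gr_{[\mu]_M})$; since $\Gr_{[\mu]_M}$ is locally closed in $\Gr_M$, $B$ is locally closed in $\Gr_G$, and $\Gr_{[\mu]}$ is the ``type $[\mu]$'' locus in $B$, which is closed by part (3), whence $\Gr_{[\mu]}$ is locally closed; partial properness passes from $\Gr_G$. If $[\mu]_M$ is minuscule then $\Gr_{[\mu]_M}$ is closed in $\Gr_M$, so $B$ is closed in $\Gr_G$, and by (3) so is $\Gr_{[\mu]}$.

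\textit{Part (3), the crux.} Choose $\mu$ in its conjugacy class to factor through the Levi $M$ (possible since a maximal torus of $G$ lies in $M$); then the split lattice $\iota_\ast(\mc{L}_M^{\mr{std}})$ over a geometric point of $B$ has a well-defined $\GL(V)$-type $\mu_V$ for any faithful $V$. The geometric heart of the matter is that $\mu_V$ is minimal, in the closure order, among the $\GL(V)$-types of lattices occurring in $B$: all such lattices have the same image in the affine Grassmannian of $M$, hence lie in a common ``unipotent fibre'' admitting a contracting $\mbb{G}_m$-action toward the split point, so they degenerate to it, and one concludes from the closure relations between Schubert cells. Consequently $\mc{L}\in\Gr_{[\mu]}$ if and only if $\mc{L}(V)$ is bounded by $\mu_V$, which is Zariski-closed in $B$ by \cref{lemma.zariski-closed-affine}; that this bounded condition, together with the pinned $M$-reduction, is equivalent to ``$G$-type $[\mu]$'' follows from \cref{theorem.filtered-functor-iff-lattice-type} and the Part I analysis of types in extensions (the trace filtration is then a filtered $G(\mc{O})$-torsor of type $[-\mu]$). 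I expect this step to be the main obstacle: the contracting-action/closure argument must be set up carefully for non-reductive $G$, likely via the full Tannakian category of tensors rather than a single faithful $V$, and the passage between the $\GL(V)$-type and the $G$-type genuinely needs the Part I bookkeeping because non-reductive Schubert cells are badly behaved (compare \cref{example:non-constant-type-family} and the examples preceding the theorem).

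\textit{Parts (4) and (5).} For (4), over a rigid analytic $S$ \emph{over a $p$-adic field}, \cref{theorem.smooth-scholze-filtrations-lattices} applied to the $M$-structure presents a $\pi_\ast\mc{L}$ of constant type $[\mu]_M$ as $\mbb{M}$ of a filtered $M$-bundle with integrable connection on $S^{\mr{sm}}$; full faithfulness of $\mbb{M}_0$ and the Tannakian formalism then upgrade a $G$-lattice refining $\pi_\ast\mc{L}$ to a filtered $G$-bundle with connection, so its trace filtration is by local direct summands and $\mc{L}$ has constant type $[\mu]$ --- this is the rigid-analytic shadow of part (3), and is where the hypothesis ``$p$-adic field'' is essential, $\mbb{C}_p$-coefficient families as in \cref{example:non-constant-type-family} showing constancy can otherwise fail. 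The asserted bijection with filtrations satisfying Griffiths transversality over $S^{\mr{sm}}$ is then \cref{theorem.smooth-scholze-filtrations-lattices}(3) together with \cref{prop.alternative-characterizations}(4). For (5), by (4) the image of $\BB$ on rigid analytic test objects is exactly the Griffiths-transverse locus in $\Fl_{[\mu^{-1}]}$. An elementary weight computation identifies the tangent space of $\Fl_{[\mu^{-1}]}$ with the negative-weight part of $\Lie G$ for $\mu^{-1}$ and the Griffiths-transverse directions with its weight $-1$ part; these coincide --- so Griffiths transversality for the trivial connection is vacuous --- exactly when $\mu^{-1}$ has no weight $<-1$ on $\Lie G$, i.e.\ when the weights of $[\mu]$ on $\Lie G$ are $\le 1$. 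In that case the fibres of $\BB$, affine spaces modeled on the weight $\ge 2$ part of $\Lie G$ for $\mu$, are points, so $\BB$ is an isomorphism. Otherwise Griffiths transversality is a nontrivial closed condition, hence cuts out a proper closed subset of $\Fl_{[\mu^{-1}]}$ containing no nonempty open; so the universal filtration fails Griffiths transversality over every open and $\BB$ is not surjective, a fortiori not an isomorphism.
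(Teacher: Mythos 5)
Your overall architecture (reduce to the reductive quotient, isolate (3) as the crux, use Scholze's functor for (4)--(5)) matches the paper, and your part (3) is essentially the paper's argument: the pointwise statement that within a fixed $M$-type the type in a faithful representation can only go up, with equality exactly on the good locus, is precisely the Part~I specialization result the paper invokes, and closedness then follows from boundedness being closed. But your part (1) rests on a false claim: ``the trivialized $\GL_n$-torsor reduces to $G$'' is \emph{not} a Zariski-closed condition on $\Gr_{\GL_n,\le d}$ when $G\le\GL_n$ is not reductive. Being the stabilizer of a line only realizes $\GL_n/G$ as a locally closed orbit in $\mbb{P}(W)$, and since $\mbb{P}(W)$ is proper every $\mbb{B}_\dR$-point extends to a $\mbb{B}^+_\dR$-point; the real condition is that its $\theta$-specialization stays in the orbit, which is not closed. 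Concretely, for the Borel $B\subset\GL_2$ the map $\Gr_B\to\Gr_{\GL_2}$ is surjective on geometric points (Iwasawa decomposition over $\mbb{B}_\dR$) without being an isomorphism, and over the minuscule Schubert variety it is the open inclusion $\mbb{A}^1\subset\mbb{P}^1$ (see the remark following \cref{prop:funct-closed-subdiamond}); so your $\Gr_G^{\le d}$ cannot be exhibited as closed in $\Gr_{\GL_n,\le d}$. The missing idea is to replace the line-stabilizer by a vector-stabilizer: twisting a faithful representation by the inverse of the character on the Chevalley line embeds $G$ as an \emph{observable} subgroup of $\GL_n\times\mbb{G}_m$, so the quotient is quasi-affine, and then \cref{lemma.quasi-affine} gives that $\Gr_G$ is (only) locally closed in the reductive affine Grassmannian, which suffices for (1). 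Your parenthetical alternative via $\Gr_U$ and \cref{lemma.pro-etale-loc-split} is not a substitute as stated: that lemma splits $U$-torsors on the Fargues--Fontaine curve, not the fibration of $\mbb{B}^+_\dR$-lattices $\Gr_G\to\Gr_M$.

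In part (4) your derivation of \eqref{eq.rigid-points-bounded-affgr} is circular: to apply \cref{theorem.smooth-scholze-filtrations-lattices}(3) to a $G$-representation you already need constancy of the type of $\mc{L}(V)$, which is exactly what is to be proved, and constancy for the Levi pushout does not propagate to extensions --- the family of \cref{example:non-constant-type-family}, viewed as a family of $B$-lattices for the Borel in $\GL_2$ with constant torus type $(0,0)$, is a direct counterexample to the Tannakian ``upgrade'' you invoke. The arithmetic hypothesis must enter through an argument, not just through the availability of $\mbb{M}$: the paper proves \eqref{eq.rigid-points-bounded-affgr} by combining (3) (closedness of $\Gr_{[\mu]}$ in $\Gr_G\times_{\Gr_M}\Gr_{[\mu]_M}$) with density of classical points and the classical-point case, where Galois-stable $\mbb{B}^+_\dR$-lattices over a $p$-adic field correspond to filtrations; your argument uses neither. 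You also do not treat non-smooth $S$ in the second half of (4), where the paper resolves singularities and uses the $v$-sheaf property of $\Gr_{[\mu]}$ (with uniqueness on the smooth locus giving the cocycle condition) to descend the lift. Finally, in (5) the step ``the fibres of $\BB$ are points, so $\BB$ is an isomorphism'' is not a valid deduction for diamonds --- again $\Gr_B\to\Gr_{\GL_2}$ shows that a monomorphism bijective on geometric points need not be an isomorphism; the repair, as in the paper, is to note that when Griffiths transversality is vacuous the universal filtration on $\Fl_{[\mu^{-1}]}$ itself lifts to a lattice, giving a section of $\BB$ that is a closed immersion by separatedness and hits every geometric point, hence is an isomorphism. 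Your converse direction of (5), via the weight computation and homogeneity/irreducibility of the flag variety, is fine.
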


For $G$ reductive (3) is trivial, (1), (2), and (5) were established in \cite{scholze:berkeley}, and we will establish (4) as a consequence of the results of \cite[\S7]{scholze:p-adic-ht} plus a short argument using resolution of singularities and the $v$-sheaf property to reduce to the smooth case. 

For $G$ reductive, part (1) is established by reducing to the case of $G=\GL_n$ by using a faithful representation to realize $\Gr_G$ as a closed sub-functor of $\Gr_{\GL_n}$. When $G$ is not reductive it is no longer the case that a faithful representation realizes $\Gr_G$ as a closed sub-functor; however, we can adapt the argument to show that $\Gr_{G}$ is locally closed in $\Gr_{\GL_n}$ if we choose $\rho$ so that it realizes $G$ as an observable subgroup, and this suffices for (1) --- the details are given in \cref{ss.aff-grass-locally-spatial} below. 

For the rest of the theorem, the key statement is (3), which allows us to bootstrap from results in the reductive case to prove (2), (4) and (5) in general. The main difficulty is that, for $G$ non-reductive, the $\Gr_{[\mu]}$ no longer give a decomposition of $\Gr_G$, as illustrated already in the first non-trivial case: 
\begin{example}\label{example.schubert-cells-additive-group} Suppose $G=\mbb{G}_a$. Then the only co-character is the trivial co-character $\mu_\triv$, and $\Gr_{[\mu_\triv]}=\Spd\+ L$ is a single point parameterizing the trivial lattice. On the other hand, it is easy to see that $\Gr_G=\mbb{B}_\dR/\mbb{B}^+_\dR=\bigcup_{n \geq 0} t^{-n} \mbb{B}^+_\dR/\mbb{B}^+_\dR$. This is enormous, but any map from a rigid analytic diamond is the constant map to $0$ --- indeed, the only rigid analytic point of $\Gr_G$ over $\Spd\+ L$ is $0$ (recall from \cref{defn:rigid-analytic-diamond} that a rigid analytic point is a map $\Spd\+L' \rightarrow \Gr_G$ for $L'/L$ a finite extension), but classical points are dense in any rigid analytic variety over $L$. 
\end{example}
Part (4) of \cref{theorem.bdr-aff-grass-properties} says that \cref{example.schubert-cells-additive-group} captures the essential behavior of maps from rigid analytic varieties --- although the unipotent radical always contributes an enormous part to the affine Grassmannian, this part is mostly invisible to rigid analytic varieties, and the maps from rigid analytic varieties admit a completely classical description in terms of the flag variety.  The example also illustrates the general method of proof -- after (3) is established, to prove the first equality in (4) it suffices to check it on rigid analytic points since classical points are dense in any rigid analytic variety. To prove (3) we will use \Iref{thm.specialization-goes-up-when-not-good}.

In the remainder of this section we prove \cref{theorem.bdr-aff-grass-properties}. Part (1) is treated in \cref{ss.aff-grass-locally-spatial}. Parts (2) and (3) are treated in \cref{ss.aff-grass-schubert-cells}. Parts (4) and (5) are treated in \cref{ss.aff-grass-rigid-analytic}. 

\subsection{The $\mbb{B}^+_\dR$-affine Grassmannian is an ind-locally spatial diamond}\label{ss.aff-grass-locally-spatial}

In this subsection we will prove part (1) of \cref{theorem.bdr-aff-grass-properties}. That $\Gr_G$ is partially proper is immediate: for $C$ an algebraically closed perfectoid field, the $\Spa(C,C^+)$-points can be identified with $G(\mbb{B}_\dR(C))/G(\mbb{B}^+_\dR(C))$ by \cref{prop.alternative-characterizations}-(2), and this set does not depend on $C^+$. 

To show it is an increasing union of closed locally spatial subdiamonds and separated, it suffices to identify $\Gr_G$ with a subdiamond of $\Gr_{G'}$ for some $G'$ reductive, which is locally spatial and separated by the results of \cite{scholze:berkeley}. The new difficulty in the non-reductive case is that if we take a faithful representation $G \rightarrow \GL_n$, the quotient $\GL_n/G$ will not be affine. However, we claim that there is an embedding $G \hookrightarrow G'$ for $G'$ reductive such that $G/G'$ is quasi-affine. Indeed,  by \cite[Theorem 9]{thang-bac:some-rationality-properties-of-observable-groups} it suffices to find an embedding that realizes $G$ as an observable subgroup of $G'$, i.e.\ as the stabilizer of a vector in some representation of $G'$. If we start with an arbitrary faithful representation $\rho: G \rightarrow \GL_n$, then $G$ is the stabilizer of a line in some representation of $\GL_n$. It thus acts by a character $\chi$ on that line, so we may embed $G$ in $G'=\GL_{n} \times \mbb{G}_m$ as an observable subgroup via $\rho \times \chi^{-1}$. It then remains to apply the following lemma. 

\begin{lemma}\label{lemma.quasi-affine}
Suppose $G'/L$ is a connected linear algebraic group and $G \leq G'$ is a closed subgroup. The push-out map $\Gr_G \rightarrow  \Gr_{G'}$ is an injection. If $G'/G$ is quasi-affine, it identifies $\Gr_G$ with a Zariski locally closed subdiamond of $\Gr_{G'}$. If $G'/G$ is affine, it is Zariski closed.
\end{lemma}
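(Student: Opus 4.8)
The plan is to realize the pushout $p\colon\Gr_G\to\Gr_{G'}$ as a reduction-of-structure-group locus and to cut that locus out using \cref{lemma.zariski-closed-affine}. The structural input is \cref{prop.alternative-characterizations} together with \cref{prop.aff-perfectoid-loc-system-finite-proj-module}: for an affinoid perfectoid $Y=\Spa(A,A^+)\to\Gr_{G'}$, the universal lattice pulls back to a $G'$-torsor $\mc{E}'$ on $\Spec\mbb{B}^+_\dR(A)$ equipped with a trivialization $\tau$ of its restriction to $\Spec\mbb{B}_\dR(A)$. First, injectivity of $p$ can be checked $v$-locally: a $T$-point of $\Gr_G$ lifts to some $\gamma\in G(\mbb{B}_\dR(T))$, and if $\gamma_1,\gamma_2$ have equal image in $\Gr_{G'}$ then $h:=\gamma_1^{-1}\gamma_2\in G'(\mbb{B}^+_\dR(T))$; since $G\hookrightarrow G'$ is a closed immersion of schemes, membership in $G$ is cut out by polynomial identities, and these hold in $\mbb{B}_\dR(T)$ (as $h\in G(\mbb{B}_\dR(T))$) hence in $\mbb{B}^+_\dR(T)$ because $\mbb{B}^+_\dR(T)\hookrightarrow\mbb{B}^+_\dR(T)[1/\xi]=\mbb{B}_\dR(T)$ with $\xi$ a nonzerodivisor. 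Thus $h\in G(\mbb{B}^+_\dR(T))$ and $p$ is a monomorphism.

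Next I would identify the fiber product. A $T$-point of $Y\times_{\Gr_{G'}}\Gr_G$ is a reduction of $\mc{E}'|_T$ to a $G$-torsor whose induced trivialization over $\mbb{B}_\dR$ agrees with $\tau$, i.e. a section over $\Spec\mbb{B}^+_\dR(T)$ of the associated $(G'/G)$-bundle $\mc{B}:=\mc{E}'\times^{G'}(G'/G)$ whose restriction to $\Spec\mbb{B}_\dR(T)$ is the distinguished section $s_0$ cut out by $\tau$. Since $G'/G$ is separated, $\mc{B}(\mbb{B}^+_\dR(T))\hookrightarrow\mc{B}(\mbb{B}_\dR(T))$, so such a section is unique if it exists and exists exactly when $s_0$ itself lies in $\mc{B}(\mbb{B}^+_\dR(T))$. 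Hence $Y\times_{\Gr_{G'}}\Gr_G$ is precisely the locus in $Y$ over which the canonical $\mbb{B}_\dR$-section $s_0$ of $\mc{B}$ extends to a $\mbb{B}^+_\dR$-section.

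Now observe that $\mc{B}\to\Spec\mbb{B}^+_\dR(A)$ is affine (resp.\ quasi-affine) when $G'/G$ is, being \'{e}tale-locally on the base the product $(G'/G)\times\Spec\mbb{B}^+_\dR(A)$ and (quasi-)affineness being \'{e}tale-local on the base; composing with the affine map $\Spec\mbb{B}^+_\dR(A)\to\Spec L$, the total space $\mc{B}$ is affine (resp.\ quasi-affine) over $L$. If $G'/G$ is affine, \cref{lemma.zariski-closed-affine} applied to the $L$-scheme $\mc{B}$ and the point $s_0\in\mc{B}(\mbb{B}_\dR(A))$ shows the extension locus is Zariski closed in $Y$, so $p$ is a Zariski closed immersion. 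If $G'/G$ is only quasi-affine, write $\mc{B}$ as an open subscheme of an affine $L$-scheme $\overline{\mc{B}}$ with closed complement $V(J)$; by \cref{lemma.zariski-closed-affine} the locus $Z_1\subseteq Y$ where $s_0$ extends to $\overline{\mc{B}}(\mbb{B}^+_\dR)$ is Zariski closed, and over $Z_1$ the extension $\bar s_0$ lands in the open $\mc{B}$ exactly when the $\mc{O}(Z_1)$-point of $\overline{\mc{B}}$ obtained by applying $\theta$ (reducing $\bar s_0$ modulo $\xi$) avoids $V(J)$ --- because $s_0$, being a section of $\mc{B}$ over $\{\xi\neq0\}$, already avoids $V(J)$ there, so the obstruction is supported on $V(\xi)$. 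That avoidance locus is Zariski open in $Z_1$, and by surjectivity of $\mc{O}(Y)\to\mc{O}(Z_1)$ (\cref{remark.strong-zariski}) it is the trace on $Z_1$ of a Zariski open of $Y$; hence $Y\times_{\Gr_{G'}}\Gr_G$ is Zariski locally closed in $Y$, and $p$ is a Zariski locally closed immersion.

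The main obstacle is the quasi-affine case: one must convert the scheme-theoretic condition ``the $\mbb{B}^+_\dR$-valued section lands in the open subscheme $G'/G\subseteq\overline{\mc{B}}$'' into a genuinely Zariski condition on the base $Y$, and the trick that makes it work is that the section is already inside $G'/G$ over the locus where $\xi$ is invertible, so its failure is entirely controlled by its reduction modulo $\xi$. A secondary point to verify is that \cref{lemma.zariski-closed-affine} still applies when $\mc{B}$ or $\overline{\mc{B}}$ is not of finite type over $L$; this is harmless, as the extension locus is then the intersection of the Zariski closed loci attached to the individual regular functions, and an intersection of Zariski closed subdiamonds is Zariski closed.
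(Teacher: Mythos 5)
Your argument is correct and lands in the same place as the paper's by the same key mechanism (\cref{lemma.zariski-closed-affine}, with the quasi-affine case handled by a closed extension condition followed by an open complement-avoidance condition), but the route differs in one organizational choice. The paper first passes to an \'{e}tale cover of $Y$ on which the pulled-back $G'$-torsor trivializes; there the associated $(G'/G)$-bundle is literally the product $(G'/G)\times\Spec\mbb{B}^+_\dR(R)$, and the extension question concerns a $\mbb{B}_\dR$-point of the \emph{finite type} $L$-scheme $G'/G$, so \cref{lemma.zariski-closed-affine} applies verbatim. You instead work with the associated bundle $\mc{B}$ over $\Spec\mbb{B}^+_\dR(A)$ directly, without trivializing. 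This is valid and spares you from having to check that Zariski-(locally-)closedness descends from an \'{e}tale cover back to $Y$, but it costs you the two points you rightly flag: (i) you need the separatedness of $G'/G$ to identify the fiber product with the extension locus of $s_0$, and (ii) $\mc{B}$ is affine over $\Spec\mbb{B}^+_\dR(A)$ rather than of finite type over $L$, so the reduction to $\mbb{A}^1$ in the proof of \cref{lemma.zariski-closed-affine} becomes an (a priori infinite) intersection of Zariski-closed loci — which, as you note, is still Zariski closed in the affinoid perfectoid setting. Your treatment of the open condition in the quasi-affine case (controlling the failure to land in the open by the reduction modulo $\xi$, using $\theta$) matches the paper's observation that an ideal generates the unit ideal in $\mbb{B}^+_\dR(R)$ iff its image does in $R$. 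The injectivity argument is the same as the paper's, just spelled out.
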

\begin{proof}
For an affinoid perfectoid $\QQ_p$-algebra $R$, the map $G(\mbb{B}_\dR(R))/G(\mbb{B}_\dR^+(R)) \to G'(\mbb{B}_\dR(R))/G'(\mbb{B}_\dR^+(R))$ is injective and gives an injection $\Gr_G \to \Gr_{G'}$ upon applying \'etale sheafification. 

If $x \in \Gr_{G'}((R,R^+))$, then after passing to an \'{e}tale cover, we may assume $x$ is in the image of the natural map $G'(\mbb{B}_\dR(R)) \rightarrow   \Gr_{G'}((R,R+))$ coming from the action on the trivial bundle. The locus where $x$ factors through $\Gr_G$ is thus the locus where the induced point of $(G'/G)(\mbb{B}_\dR(R))$ lies in $(G'/G)(\mbb{B}^+_\dR(R))$. If $G'/G$ is affine, this is Zariski closed by \cref{lemma.zariski-closed-affine}. If $G'/G$ is quasi-affine, let $X$ be an affine scheme containing $G'/G$ as an open subscheme, say $G'/G=X \backslash V(I)$ for an ideal $I$. Then the locus where the induced point of $X(\mbb{B}_\dR)$ comes from $X(\mbb{B}^+_\dR)$ is closed by \cref{lemma.zariski-closed-affine}. Restricting to this closed locus, the map factors through $(G'/G)(\mbb{B}^+_\dR(R))$ precisely when $I$ generates the unit ideal in $\mbb{B}^+_\dR(R)$, which is equivalent to it generating the unit ideal in $R$, thus gives a Zariski open set. 
\end{proof}

\begin{remark} If $G \leq G'$ is a closed subgroup and both $G$ and $G'$ are reductive then $G'/G$ is affine; thus only the affine case of \cref{lemma.quasi-affine} was needed in \cite[Lemma 19.1.5]{scholze:berkeley} to reduce the result for reductive groups to the case of $\GL_n$. \end{remark}

\subsection{Schubert cells}\label{ss.aff-grass-schubert-cells}\newcommand{\Cl}{\mr{Cl}}
In this section we will prove parts (2) and (3) of \cref{theorem.bdr-aff-grass-properties}. We first recall the Bruhat order: let $G/L$ be a connected linear algebraic group, let $U$ denote the unipotent radical of $G$ and let $M=G/U$ be the reductive quotient. We identify the conjugacy classes of cocharacters of $G_{\overline{L}}$ with those of $M_{\overline{L}}$ via the natural map; we write $[\mu]$ for a conjugacy class of cocharacters of $G_{\overline{L}}$ and $[\mu]_M$ for the associated conjugacy class of cocharacters of $M_{\overline{L}}$. The Bruhat order on these conjugacy classes is defined as follows: if we fix a maximal torus $T \subseteq M_{\overline{L}}$ and a Borel subgroup $T \subseteq B \subseteq M_{\overline{L}}$, then, each conjugacy class of cocharacters $[\mu]_M$ of $M_{\overline{L}}$ admits a unique representative factoring through $T$ that is $B$-dominant (i.e.\ $\mbb{G}_m$ acts on $\Lie\+ B$ through $\mr{Ad}_B \circ \mu$ via non-negative powers). For two $B$-dominant cocharacters $\mu_1$ and $\mu_2$ of $T$ in this set, we say $[\mu_1] \geq [\mu_2]$ or $[\mu_1]_M\geq [\mu_2]_M$ if $\mu_1 - \mu_2$ is a non-negative integral combination of $B$-positive coroots. This definition is independent of the choice of $T$ and $B$. 

Recall the sub-functor $\Gr_{[\mu]} \subseteq \Gr_G$ parameterizes lattices of type $[\mu]$ on the trivial $G(\mbb{B}^+_\dR)$-local system. Equivalently, a lattice on the trivial $G(\mbb{B}^+_\dR)$-local system over $X$ is parameterized by a map to $\Gr_{[\mu]}$ if and only if for every geometric point $x: \Spd(C,C^+) \rightarrow X$, the corresponding lattice lies in
\[ G(\mbb{B}^+_\dR(C)) \xi^\mu G(\mbb{B}^+_\dR(C))/G(\mbb{B}^+_\dR(C)) \subseteq G(\mbb{B}_\dR(C)) / G(\mbb{B}^+_\dR(C))\] 
under the identification of \cref{prop.alternative-characterizations}-(2).

We can similarly define $\Gr_{\leq [\mu]}$ to parameterize those lattices whose restriction to every geometric point is of type $[\mu']$ for some $[\mu'] \leq [\mu]$. If $G$ is reductive then, then by \cite[Proposition 19.2.3]{scholze:berkeley}, this is a closed sub-diamond of $\Gr_G$, proper over $\Spd\+ L([\mu])$. As an immediate corollary, $\Gr_{[\mu]}$ is an open sub-diamond of $\Gr_{\leq [\mu]}$ (the complement of the finite union of $\Gr_{ \leq[\mu']}$ for all $[\mu'] < [\mu]$), thus a subdiamond of $\Gr_G$. When $[\mu]$ is minuscule there are no conjugacy classes that lie below $[\mu]$, thus in that case $\Gr_{[\mu]}=\Gr_{\leq[\mu]}$ is a closed sub-diamond of $\Gr_{G}$. 

For $G$ non-reductive, it is no longer the case that every lattice has a type, and the situation is more complicated. It follows immediately from the reductive case that $\Gr_G \times_{\Gr_M} \Gr_{[\mu]_M} \subseteq \Gr_G$ is a locally closed subdiamond, and even a closed subdiamond if $[\mu]_M$ is minuscule, but the locus  
\[ \Gr_{[\mu]} \subseteq \Gr_G \times_{\Gr_M} \Gr_{[\mu]_M} \subseteq \Gr_G \]
is much smaller (see \cref{example.schubert-cells-additive-group}) so we need another argument to control it. The following proposition shows that, within this locus, $\Gr_{[\mu]}$ is cut out by a natural specialization condition on the cocharacter under faithful representations and, in particular, is closed. This will prove part (3) of \cref{theorem.bdr-aff-grass-properties}. As a consequence of this statement and the results described above, $\Gr_{[\mu]}$ is a locally closed subdiamond of $\Gr_G$ and is closed if $[\mu]$ is minuscule in $M$, thus giving also part (2) of \cref{theorem.bdr-aff-grass-properties}.  

\begin{proposition}\label{prop.closed-fiber-specialization} Notation as above, $\Gr_{[\mu]}$ is a closed locally spatial sub-diamond of $\Gr_G \times_{\Gr_M} \Gr_{[\mu]_M}$. More precisely, for any representation $\rho: G \rightarrow \GL_n$, the restriction of the induced map $\Gr_G \rightarrow \Gr_{\GL_n}$ to ${\Gr_G \times_{\Gr_M} \Gr_{[\mu]_M}}$ factors through the locus of lattices of type $\geq [\rho \circ \mu]$ in $\Gr_{\GL_n}$, and if $\rho$ is faithful, 
\begin{equation}\label{eq.intersection-cuts-out-good-locus} \Gr_{[\mu]} = \Gr_G \times_{\Gr_M} \Gr_{[\mu]_M} \cap \Gr_G \times_{\Gr_{\GL_n}} \Gr_{\leq[\rho\circ\mu]}. \end{equation}
\end{proposition}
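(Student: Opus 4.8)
The plan is to reduce both assertions to a statement about double cosets over geometric points, where the only non-formal input is \Iref{thm.specialization-goes-up-when-not-good} from Part~I, and then to read off closedness and local spatiality from the reductive case.

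First I would reduce to geometric points. Since $\Gr_G$ is a $v$-sheaf and the subfunctors in play---$\Gr_{[\mu]}$, $\Gr_G\times_{\Gr_M}\Gr_{[\mu]_M}$, $\Gr_G\times_{\Gr_{\GL_n}}\Gr_{\le[\rho\circ\mu]}$, and the locus of lattices of type $\ge[\rho\circ\mu]$---are all cut out by conditions on geometric points (via the identification $\Gr_G(\Spa(C,C^+))=G(\mbb{B}_\dR(C))/G(\mbb{B}^+_\dR(C))$ of \cref{prop.alternative-characterizations}-(2)), it suffices to verify the claimed factorization and the identity \eqref{eq.intersection-cuts-out-good-locus} on $\Spa(C,C^+)$-points for $C/L([\mu])$ algebraically closed perfectoid. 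Fix such $C$ and a generator $\xi$ of $\ker\theta_C$. Such a point together with the tautological trivial lattice is a bilatticed $G(\mbb{B}_\dR)$-torsor over $C$; it lies in $\Gr_G\times_{\Gr_M}\Gr_{[\mu]_M}$ precisely when the induced bilatticed $M(\mbb{B}_\dR)$-torsor has type $[\mu]_M$, and it lies in $\Gr_{[\mu]}$ precisely when it is good of type $[\mu]$.

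Next I would invoke \Iref{thm.specialization-goes-up-when-not-good}. Pushing a bilatticed $G(\mbb{B}_\dR)$-torsor that is good of type $[\mu]$ forward along any $\rho\colon G\to\GL_n$ yields one of type $[\rho\circ\mu]$; the cited theorem upgrades this to the statement that for a bilatticed $G(\mbb{B}_\dR)$-torsor over $C$ whose induced $M$-torsor has type $[\mu]_M$, the associated $\GL_n$-torsor via $\rho$ has type $\ge[\rho\circ\mu]$ in the Bruhat order, with equality (when $\rho$ is faithful) if and only if the original $G$-torsor is good of type $[\mu]$. Applying the inequality at every geometric point gives the asserted factorization, valid for arbitrary $\rho$. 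For \eqref{eq.intersection-cuts-out-good-locus} with $\rho$ faithful, the inclusion $\subseteq$ is immediate: a lattice of type $[\mu]$ has $M$-pushout of type $[\mu]_M$ and $\rho$-pushout of type $[\rho\circ\mu]\le[\rho\circ\mu]$, hence lies in both factors. Conversely, if $x$ lies in the right-hand intersection, then at each geometric point its $M$-pushout has type $[\mu]_M$, so its $\rho$-pushout has type $\ge[\rho\circ\mu]$ by the theorem, while membership in $\Gr_{\le[\rho\circ\mu]}$ forces type $\le[\rho\circ\mu]$; hence the type is exactly $[\rho\circ\mu]$, and the faithful case of \Iref{thm.specialization-goes-up-when-not-good} then forces $x$ to be good of type $[\mu]$, i.e.\ $x\in\Gr_{[\mu]}$.

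Finally, for the structural claim: by the reductive case (\cite{scholze:berkeley}, recalled above), $\Gr_{\le[\rho\circ\mu]}\subseteq\Gr_{\GL_n}$ is a closed subdiamond, and closed subdiamonds are stable under base change, so $\Gr_G\times_{\Gr_{\GL_n}}\Gr_{\le[\rho\circ\mu]}$ is a closed subdiamond of $\Gr_G$ (pullback along the pushout map $\Gr_G\to\Gr_{\GL_n}$). Intersecting with $\Gr_G\times_{\Gr_M}\Gr_{[\mu]_M}$ and invoking \eqref{eq.intersection-cuts-out-good-locus} shows $\Gr_{[\mu]}$ is closed in $\Gr_G\times_{\Gr_M}\Gr_{[\mu]_M}$; and the latter is locally spatial, being pulled back from the locally closed locally spatial $\Gr_{[\mu]_M}\subseteq\Gr_M$ inside $\Gr_G$, which by \cref{theorem.bdr-aff-grass-properties}-(1) is an increasing union of closed locally spatial subdiamonds, so $\Gr_{[\mu]}$ is locally spatial as well. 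I expect the genuine difficulty to be entirely inside \Iref{thm.specialization-goes-up-when-not-good}: its proof must track how lattice types behave under the successive $\mbb{G}_a$-extensions comprising $U$ (much as in the analysis of types in extensions underlying \Iref{theorem.good-equivalence-bilatticed}); everything else here is bookkeeping plus the known geometry of Schubert varieties in the reductive case.
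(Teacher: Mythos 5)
Your proposal is correct and takes essentially the same route as the paper: reduce the factorization and the identity \eqref{eq.intersection-cuts-out-good-locus} to a check on geometric points, invoke \Iref{thm.specialization-goes-up-when-not-good} from Part~I at that level, and then read off closedness from the fact that $\Gr_{\leq[\rho\circ\mu]}$ is closed in $\Gr_{\GL_n}$. The paper leaves the geometric-point bookkeeping and the local-spatiality remark implicit, but you have reproduced the intended argument.
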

\begin{proof}
The claim that \cref{eq.intersection-cuts-out-good-locus} holds when $\rho$ is faithful will imply, by picking any faithful representation $\rho$, that $\Gr_{[\mu]}$ is closed in $\Gr_G \times_{\Gr_M} \Gr_{[\mu]_M}$ because $\Gr_{\leq[\rho \circ \mu]}$ is closed in $\Gr_{\GL_n}$. To verify \cref{eq.intersection-cuts-out-good-locus} and the rest of the statement, it suffices to check on geometric points. For $(C,C^+)$-points with $C$ algebraically closed, the statement is immediate from \Iref{thm.specialization-goes-up-when-not-good} after choosing an identification $\mbb{B}^+_\dR(C)\simeq C[[t]]$ as in \Iref{example.filtration-lattice-situations}. 
\end{proof}

\subsection{Rigid analytic subdiamonds}\label{ss.aff-grass-rigid-analytic}

We now prove part (4) of \cref{theorem.bdr-aff-grass-properties}. We first consider the case where $S=\Spa\+ L'$ for $L'/L([\mu])$ a finite extension. In this case, by descent and \cref{prop.aff-perfectoid-loc-system-finite-proj-module}, a point of $\Gr_G(S^\diamond)$ corresponds to a $\Gal(\overline{L}/L')$-invariant $\mbb{B}^+_\dR(C)$-lattice on the trivial $G$-bundle on $\Spec\+ \mbb{B}^+_\dR(C)$ where $C=\overline{L}^\wedge$; this amounts to giving, for each $V \in \Rep\+ G$, a $\mbb{B}_\dR^+(C)$-lattice in $V \otimes_{\QQ_p} \mbb{B}_\dR(C)$ that is functorial, compatible with tensor products, and stable under $\mr{Gal}(\ol{L}/L')$. The category of $\Gal(\overline{L}/L')$-invariant $\mbb{B}^+_\dR(C)$-latticed $L'$-vector spaces is equivalent via the trace filtration to the category of filtered $L'$-vector spaces (for $C=\overline{L}^\wedge$) --- see \Iref{example.galois-action-lattice}. On the other hand, any filtration on the trivial $G$-bundle over $L'$ is of the form $\Fil_{\mu^{-1}}$ for some cocharacter $\mu$ of $G_{L'}$, and tracing through the construction we see that the corresponding lattice on the trivial $G$-bundle over $\mbb{B}^+_\dR(C)$ is $\mc{L}_\mu$. We deduce that the Bialynicki-Birula map gives a bijection $\Gr_{[\mu]}(S^\diamond) = \Fl_{[\mu^{-1}]}^\diamond(S^\diamond)=\Fl_{[\mu^{-1}]}(S)$, and moreover that every element of $\Gr_{G}(S^\diamond)$ lies in $\Gr_{[\mu]}(S^\diamond)$ for some $[\mu]$. This proves part (4) in this case. 

We now treat the case of a general seminormal rigid analytic variety $S/\Spa\+ L([\mu])$. To obtain the equality \cref{eq.rigid-points-bounded-affgr}, we must show that any map $S^\diamond \rightarrow \Gr_G \times_{\Gr_M} \Gr_{[\mu]_M}$ factors through $\Gr_{[\mu]}$. This holds because the classical points are dense in $S$ (see \cref{remark:rigid-analytic-dense-classical-points}),  $\Gr_{[\mu]}$ is closed in $\Gr_G \times_{\Gr_M} \Gr_{[\mu]_M}$ (by part (3) of \cref{theorem.bdr-aff-grass-properties}), and, by the case we have already treated, all classical points in $S$ factor through $\Gr_{[\mu]}$.

We now verify that the maps $S^\diamond \rightarrow \Gr_{[\mu]}$ are as claimed. When $S$ is smooth, this is an immediate consequence of \cref{theorem.smooth-scholze-filtrations-lattices}: the Bialynicki-Birula map induces a bijection between isomorphism classes of lattices of type $[\mu]$ on the trivial $G(\mbb{B}^+_\dR)$-local system and filtrations of type $[\mu^{-1}]$ satisfying Griffiths transversality for the  trivial connection on the trivial $G$-bundle on $S$. 

\newcommand{\sm}{\mr{sm}}
In the general case, given a map $S^\diamond \rightarrow \Gr_{[\mu]}$, we obtain by composition with $\BB$ a map $S^\diamond \rightarrow \Fl_{[\mu^{-1}]}^\diamond$. The corresponding map $S \rightarrow \Fl_{[\mu^{-1}]}$, when restricted to the smooth locus $S^\sm$, satisfies Griffiths transversality by the previous case. 

Conversely, suppose given a map $S \rightarrow \Fl_{[\mu^{-1}]}$ that satisfies Griffiths transversality on $S^\sm$. We must show the associated map $S^\diamond \rightarrow \Fl_{[\mu^{-1}]}^\diamond$ admits a unique lift along $\BB$ to a map $S^\diamond \rightarrow \Gr_{[\mu]}$. To see this, choose a resolution of singularities $\tilde{S} \rightarrow S$ (see \cite[{Theorem 5.3.2}]{temkin:functorial-nonembedded-case}). Then, $\tilde{S}$ contains a dense open isomorphic to $S^\sm$, so the composed map $\tilde{S} \rightarrow \Fl_{[\mu^{-1}]}$ satisfies Griffiths transversality (this is a closed condition). Thus, by the previous case, the associated map of diamonds admits a unique lift $\tilde{S}^\diamond \rightarrow \Gr_{[\mu]}$. Because $\tilde{S}^\diamond \rightarrow S^\diamond$ is a $v$-cover, to verify this comes from a map $S^\diamond \rightarrow \Gr_{[\mu]}$, it suffices to show the two pullbacks to $\tilde{S}^\diamond \times_{S^\diamond} \tilde{S}^\diamond$ agree. This is clear by taking a resolution of singularities $\tilde{\tilde{S}}$ of $\tilde{S} \times_S \tilde{S}$ --- the associated map of diamonds is again a $v$-cover, and the two resulting maps from $\tilde{\tilde{S}}$ to $\Fl_{[\mu^{-1}]}$ are the same (they factor through $S$), so by uniqueness of the lift in the smooth case, the two maps from $\tilde{\tilde{S}}^\diamond$ to $\Gr_{[\mu]}$ which both lift this map to the flag variety must also be the same (cf. the following diagram).  
\[\begin{tikzcd}
	&&&& {\Gr_{[\mu]}} \\
	{\tilde{\tilde{S}}^\diamond} & {(\tilde{S}\times_S \tilde{S})^\diamond=\tilde{S}^\diamond\times_{S^\diamond} \tilde{S}^{\diamond}} & {\tilde{S}^{\diamond}} & {S^{\diamond}} & {\Fl_{[\mu^{-1}]}^{\diamond}}
	\arrow[from=1-5, to=2-5]
	\arrow[curve={height=-12pt}, from=2-1, to=1-5]
	\arrow[from=2-1, to=2-2]
	\arrow["{\pi_\bullet}", from=2-2, to=2-3]
	\arrow[from=2-3, to=1-5]
	\arrow[from=2-3, to=2-4]
	\arrow[from=2-4, to=2-5]
\end{tikzcd}\]

It remains to prove part (5) of \cref{theorem.bdr-aff-grass-properties}. If the weights are $\leq 1$, then the universal filtration over $\Fl_{[\mu^{-1}]}$ satisfies Griffiths transversality, so there is a section $\Fl_{[\mu^{-1}]}^\diamond \rightarrow \Gr_{[\mu]}$ of $\BB$. It is a bijection on geometric points by \Iref{prop.filt-det-latt-criteria}, and because $\Gr_{[\mu]}$ is separated, the section is a closed immersion. Thus it is an isomorphism. The remainder of part (5) is a standard computation --- if we consider the action of $G$ on the point $\Fil^\bullet_{\mu^{-1}}$ in $\Fl_{[\mu^{-1}]}$, then the action of any root subgroup with weight $>1$ for $\mu$ gives a direction where Griffiths transversality fails.

\section{Moduli of neutral $p$-adic Hodge structures}\label{s.moduli-pahs}

\begin{definition}\label{def:neutral-p-adic-HS}
Let $Y$ be a locally spatial diamond over $\breve{\QQ}_p$. A \emph{neutral $p$-adic Hodge structure} on $Y$ is a  $\mbb{Q}$-graded $\mbb{Q}_p$-vector space $V=\bigoplus_{\lambda \in \mbb{Q}} V_\lambda$, and for each $\lambda \in \mbb{Q}$, a $\mbb{B}_\dR^+$-lattice $\mc{L}_\lambda$ on $V_\lambda \otimes_{\mbb{Q}_p} \mbb{B}^+_\dR$ over $Y_v$ such that the restriction of $(V_\lambda, \mc{L}_\lambda)$ along each geometric point $\Spd\+(C,C^+)\rightarrow Y$ is a $p$-adic Hodge structure of weight $\lambda$ in the sense of \Iref{ss.p-adic-hodge-definition} (equivalently, the modified vector bundle $(V_{\lambda} \otimes_{\QQ_p} \mc{O}_\FF)_{\mc{L}_\lambda}$ as in \cref{ss.modifications} is semistable of slope $-\lambda/2$ at each geometric point).
\end{definition}

\begin{example}\label{example.Tate-p-adic-HS}
The pair $\QQ_p(k) = (\QQ_p, \Fil^{-k}\mbb{B}_\dR)$ is the constant Tate $p$-adic Hodge structure of weight $-2k$. 
\end{example}

As in the case of a point in treated in \Iref{theorem.pahs-connected-tannakian}, we find that neutral $p$-adic Hodge structures on $Y$ form a rigid abelian tensor category over $\mbb{Q}_p$, which we denote by $\HS^\circ(Y)$. It is neutral Tannakian: the forgetful functor $\omega_{\et}( V, \mc{L}) := V$ is a fiber functor on $\HS^\circ(Y)$.  

In this section, we define and study moduli of neutral $p$-adic Hodge structures. This is mostly straightforward, and is in part a warm-up for the more complicated case of admissible pairs treated in the next section. In \cref{ss.G-structure-hs}, we extend our definitions to allow $G$-structure. Then, in \cref{ss.period-domain-hs}, we construct moduli spaces $X_{[\mu]}$ of $p$-adic Hodge structures on the trivial $G(\QQ_p)$-local system analogous to the Hermitian symmetric domains appearing in classical Hodge theory over $\mbb{C}$. In \cref{ss.hodge-tate-locus-hs}, we study the Hodge-Tate locus of a tensor, and we apply this in \cref{ss.special-sub-hs} to establish basic properties of special subvarieties. 

\subsection{$G$-structure and Mumford-Tate groups}\label{ss.G-structure-hs}

\begin{definition}
Let $Y$ be a locally spatial diamond over $\breve{\QQ}_p$. For $G/\mbb{Q}_p$ a connected linear algebraic group, a neutral $G$-$p$-adic Hodge structure on $Y$ is an exact tensor functor $\mc{G}: \Rep\+ G \rightarrow \HS^\circ(Y)$ such that the fiber functor $\omega_\et \circ \mc{G}$ is isomorphic to the standard fiber functor $\omega_\std \colon \Rep\+ G \to \Vect_{\QQ_p}$ (the forgetful functor as in \cref{ss.notation}).
\end{definition}

\begin{remark}
An isomorphism $\omega_\std \simeq \omega_\et \circ \mc{G}$ is \emph{not} part of the data of a neutral $G$-$p$-adic Hodge structure. We will call a neutral $G$-$p$-adic Hodge structure equipped with a trivialization $\omega_\std \simeq \omega_\et \circ \mc{G}$ a $p$-adic Hodge structure on the trivial $G(\QQ_p)$-local system.
\end{remark}

Building on \Iref{ss.pahs-invariants}, we define the type of $\mc{G}$ to be the \emph{inverse} of the type of the associated lattice on the trivial $G(\mbb{B}^+_\dR)$-local system over $Y_v$ as defined in \cref{ss.filtrations-lattices-G-torsors}, if it exists (the use of the inverse here is for compatibility with the natural definition on admissible pairs given in \cref{s.moduli-admissible-pairs}). Given a neutral $G$-$p$-adic Hodge structure $\mc{G}$ on $Y$, we let $\langle \mc{G} \rangle$ be the Tannakian subcategory of $\HS^\circ(Y)$ generated by the image of $\mc{G}$. The Mumford-Tate group is $\MT(\mc{G}) = \mr{Aut}^\otimes(\omega_\et|_{\langle \mc{G} \rangle})$, a linear algebraic group over $\mathbb{Q}_p$. If $\mc{G}$ is a $G$-$p$-adic Hodge structure on the trivial $G(\QQ_p)$-local system, then $\MT(\mc{G}) \leq G$ is a closed subgroup (via the trivialization). As in \Iref{ss.canonical-G-structure}, $\mc{G}$ has canonical $\MT(\mc{G})$-structure. 

\subsection{Construction of the the period domain $X_{[\mu]}$}\label{ss.period-domain-hs}
We now construct a moduli space of neutral $G$-$p$-adic Hodge structures of a fixed type $[\mu]$ on the trivial $G$-local system. To that end, fix a Levi decomposition $G=M \ltimes U$, and let $\mc{L}_\univ$ denote the universal lattice on the trivial $G(\mbb{B}_\dR^+)$-local system over $\Gr_{[\mu^{-1}]}$. The modification $(\mc{E}_\triv)_{\mc{L}_\univ}$ as in \cref{ss.modifications} gives a $\Gr_{[\mu^{-1}]}$-point of $\Bun_G$. If the unique basic element $[b]$ of $B(M,[\mu^{-1}])$ is basic in $G$, we define $X_{[\mu]}:= \Gr_{[\mu^{-1}]} \times_{\Bun_G} \Bun_G^{[b]}$. 

\begin{lemma}\label{lemma.hodge-tate-domain-non-empty} $X_{[\mu]} \subseteq \Gr_{[\mu^{-1}]}$ is a non-empty open subdiamond. 
\end{lemma}
\begin{proof}
It is open by \cref{theorem.bun-g-b-structure}, and the existence of a geometric point in $X_{[\mu]}$ follows from \Iref{example.existence-of-G-admissible-pairs} (see also \cref{prop.b-adm-locus-open}) and the equivalence between basic admissible pairs and $p$-adic Hodge structures of \Iref{theorem.adm-pair-propeties}. 
\end{proof}
\begin{remark}
    It will follow from  \cref{prop.b-adm-locus-open} and \cref{theorem.duality}  that $X_{[\mu]}/\Spd \mathbb{Q}_p([\mu])$ is not only non-empty but in fact admits a rigid analytic point in the sense of \cref{defn:rigid-analytic-diamond}. 
\end{remark}

In this setting, let $\nu: \mathbb{G}_m \rightarrow G$ be the slope morphism associated to $[b]$ as recalled in \cref{ss.G-isocrystals}. Then, by construction, the universal lattice $\mathcal{L}_{\mr{univ}}$ with weight grading defined by $2\nu$ restricts to a $p$-adic Hodge structure on the trivial $G(\mathbb{Q}_p)$-local system over $X_{[\mu]}$ (note that the slope of a simple isocrystal is the negative of the slopes of the associated bundle on the Fargues-Fontaine curve!).

\begin{proposition}\label{prop.moduli-pahs-X}
The restriction of the lattice $\mc{L}_\mr{univ}$ on $\omega_{\std}$ to $X_{[\mu]}$ with weight grading induced by $2\nu$ is the universal $p$-adic Hodge structure of type $[\mu]$ and weight $2\nu$ on the trivial $G(\mathbb{Q}_p)$-local system. Moreover, $X_{[\mu]}$ is geometrically connected. 
\end{proposition}
\begin{proof}
Given a $p$-adic Hodge structure $\mc{L}$ of weight $2\nu$ on the trivial $G(\mathbb{Q}_p)$-local system over a geometric point, the $G$-isocrystal underlying the associated $G$-admissible pair in the equivalence of \Iref{theorem.adm-pair-propeties} is basic with slope morphism $2\nu$. Since the slope morphism is central and the type is $[\mu]$, by \Iref{theorem.hs-isocrystal-unique-basic-element}, we conclude the associated $G$-isocrystal is in the isomorphism class $[b]$, the unique basic class in $B(G,[\mu^{-1}])$, i.e. that the modification $(\mc{E}_\triv)_{\mc{L}}$ is a point of $\Bun_G^{[b]}$. Thus $\mc{L}$ is classified by a point of $X_{[\mu]}$. The case of a general base is then immediate since the classifying map to $\Gr_{[\mu^{-1}]}$ for $\mathcal{L}$ factors through $X_{[\mu]}$ if and only if it does so at each geometric point. 

That $X_{[\mu]}$ is geometrically connected will follow from the duality swapping basic admissible pairs and $p$-adic Hodge structures 
(\cref{theorem.duality}) and the corresponding more general result for all (not necessarily basic) admissible loci due to Gleason and Lourenco \cite{gleason-lourenco:connectedness} in the reductive case and extended below in \cref{prop.adm-connected}. 
\end{proof}

\begin{remark}
    We use the letter $X$ to emphasize the similarity with Hermitian symmetric domains parameterizing real Hodge structures, which are often denoted (e.g. in a Shimura datum) by $X$.
\end{remark}

\begin{remark}
    If $[b]$ is basic in $M$ but not in $G$, then any neutral $G$-$p$-adic Hodge structure of type $[\mu]$ over a geometric point can be constructed as a push-out from the centralizer of the slope morphism attached to any fixed choice of $b$ in $[b]$ (which will not be unique since the slope morphism is not central in $G$). 
\end{remark}

\subsection{The Hodge-Tate locus of a tensor}\label{ss.hodge-tate-locus-hs}

In \Iref{ss.hodge-tate-lines}, we defined Hodge-Tate lines and used them to characterize the Mumford-Tate groups of $p$-adic Hodge structures, mirroring the theory of Hodge tensors in classical complex Hodge theory. We now define Hodge-Tate loci, which are the loci where a line in a neutral $p$-adic Hodge structure is Hodge-Tate. This mirrors the theory of Hodge loci for tensors in classical complex Hodge theory (we will also see another incarnation in \cref{ss.hodge-locus}). 

\begin{definition}\label{def.Hodge-Tate-locus}
Let $S/\breve{\mbb{Q}}_p$ be a locally spatial diamond equipped with a $p$-adic Hodge structure $\mc{G}$ on the trivial $G(\QQ_p)$-local system over $S$. Let $V$ be a representation of $G$ and write $\mc{G}(V) = (V, \mc{L})$. Let $\ell \subseteq V_{2n}$ be a line in the weight $2n$ component of $V$. The \emph{Hodge-Tate locus} of $\ell$ is the subfunctor of $S$ whose value on $Y$ in $\Perfd_{\breve{\mbb{Q}}_p}$ is 
    \[ \HT(\ell)(Y) = \{f \in S(Y) \colon \ell \otimes_{\QQ_p} \Fil^n\mbb{B}_\dR \subseteq f^\ast\mc{L}\}. \]
\end{definition}
Thus the Hodge-Tate locus of $\ell$ is precisely where $\ell$ underlies a sub-$p$-adic Hodge structure (necessarily isomorphic to the weight $2n$ Tate $p$-adic Hodge structure $\QQ_p(-n) = (\QQ_p, \Fil^n\mbb{B}_\dR)$). 

\begin{lemma}\label{lem:HT-locus-closed}
With notation as in \cref{def.Hodge-Tate-locus}, the Hodge-Tate locus $\HT(\ell) \subseteq S$ is a closed subdiamond. It is the locus where $\MT(\mc{G}|_{\HT(\ell)}) \subseteq \mr{Stab}_G(\ell)$ (i.e., for any map $f:T \rightarrow S$ such that $\MT(f^*\mc{G})\subseteq \mr{Stab}_G(\ell)$, $f$ factors through $\HT(\ell)$). 
\end{lemma}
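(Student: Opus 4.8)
The plan is to describe $\HT(\ell)$ pointwise using Part~I, deduce closedness from \cref{lemma.zariski-closed-affine}, and read off the Tannakian characterization from the Mumford--Tate formalism of \cref{ss.G-structure-hs}. First I would fix a geometric point $x\colon \Spa(C,C^+)\to S$, write $\mc{L}_{V,x}\subseteq V\otimes\mbb{B}_\dR(C)$ for the fibre of the lattice attached to $\mc{G}(V)$, and fix a generator $\xi$ of $\ker\theta_C$; using \Iref{ss.hodge-tate-lines} I would check that $\ell$ underlies a sub-$p$-adic Hodge structure at $x$ if and only if $\xi^{-n}(\ell\otimes\mbb{B}^+_\dR(C))\subseteq\mc{L}_{V,x}$. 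The essential observation is that the modification of the trivial bundle on the Fargues--Fontaine curve attached to the weight-$2n$ summand of $\mc{L}_{V,x}$ is semistable of slope $n$, so that any sub-line-bundle has slope $\leq n$; consequently, as soon as the containment $\xi^{-n}(\ell\otimes\mbb{B}^+_\dR)\subseteq\mc{L}_{V,x}$ holds, the induced lattice $\mc{L}_{V,x}\cap(\ell\otimes\mbb{B}_\dR(C))$ is forced to equal $\xi^{-n}(\ell\otimes\mbb{B}^+_\dR(C))$, i.e.\ to define a copy of $\mbb{Q}_p(-n)$. In particular the condition is independent of $C^+$, so the subset of $|S|$ it defines is generalizing.

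Next I would prove closedness. Covering $S$ by affinoid perfectoid $\Spa(R,R^+)$ and, if necessary, passing to a rational cover on which the lattice is free (using \cref{prop.aff-perfectoid-loc-system-finite-proj-module}), I may write $\mc{L}_V=g\cdot(V\otimes\mbb{B}^+_\dR(R))$ for some $g\in\GL(V)(\mbb{B}_\dR(R))$, since $V$ is a constant local system (we are on the trivial $G$-torsor). Fixing a generator $v_0$ of $\ell$, the criterion above becomes $s:=g^{-1}(\xi^{-n}v_0)\in V\otimes\mbb{B}^+_\dR(R)$, where $s\in V\otimes\mbb{B}_\dR(R)=\mbb{A}^{\dim V}(\mbb{B}_\dR(R))$; by \cref{lemma.zariski-closed-affine} this locus is Zariski closed, in particular an affinoid perfectoid subdiamond. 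The resulting subset of $|S|$ is pulled back from $S$ along a $v$-cover, hence closed, and it is generalizing by the previous step, so by the general fact recalled in \cref{s.preliminaries} it underlies a closed subdiamond $\HT(\ell)\subseteq S$ through which a map $f\colon T\to S$ factors exactly when $|f|$ does.

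Finally I would extract the Tannakian description. Over $\HT(\ell)$ the lattice $\mc{L}_V\cap(\ell\otimes\mbb{B}_\dR)$ has constant type, so it is a $\mbb{B}^+_\dR$-local system (by the argument behind \cref{theorem.filtered-functor-iff-lattice-type}) and $(\ell,\mc{L}_V\cap(\ell\otimes\mbb{B}_\dR))$ (a copy of $\mbb{Q}_p(-n)$ at every geometric point) is a genuine sub-object of $\mc{G}(V)|_{\HT(\ell)}$ in the Tannakian category $\langle\mc{G}|_{\HT(\ell)}\rangle$; since $\MT(\mc{G}|_{\HT(\ell)})$ acts functorially through $\omega_\et$ on this category it preserves the image $\ell$ of this sub-object, whence $\MT(\mc{G}|_{\HT(\ell)})\subseteq\mr{Stab}_G(\ell)$. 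Conversely, if $f\colon T\to S$ satisfies $\MT(f^*\mc{G})\subseteq\mr{Stab}_G(\ell)$, then $\ell$ is a sub-representation of $V$ for $\MT(f^*\mc{G})$, so applying the canonical $\MT(f^*\mc{G})$-structure of $f^*\mc{G}$ (\Iref{ss.canonical-G-structure}) shows that $\ell$ underlies a sub-$p$-adic Hodge structure of $f^*\mc{G}(V)$; the pointwise criterion then holds at every geometric point of $T$, so $|f|$, and hence $f$, factors through $\HT(\ell)$.

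The main obstacle is the pointwise criterion of the first step: one must verify that $\HT(\ell)$ is cut out by the single \emph{closed} condition $\xi^{-n}(\ell\otimes\mbb{B}^+_\dR)\subseteq\mc{L}_V$, with no accompanying open saturation condition. This is precisely where the semistability of the ambient weight-$2n$ $p$-adic Hodge structure is used, via the Harder--Narasimhan bound on slopes of sub-bundles of the Fargues--Fontaine modification; once it is in hand, the closedness in the second step is a direct application of \cref{lemma.zariski-closed-affine} together with descent along a $v$-cover, and the Tannakian statements in the third step follow formally from the machinery already set up.
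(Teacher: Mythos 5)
Your proof is correct and follows essentially the same skeleton as the paper's: reduce to a containment condition on the lattice, check it locally over an affinoid perfectoid where the lattice is free, and invoke \cref{lemma.zariski-closed-affine}; then deduce the Mumford--Tate characterization from the Tannakian formalism. The paper's proof is much terser at both ends. It takes the pointwise criterion ``\,$\ell\otimes\Fil^{\bullet}\mbb{B}^+_\dR\subseteq\mc{L}(V)$\,'' as obviously equivalent to the moduli description of $\HT(\ell)$, and it dispatches the Tannakian claim in one sentence. You flesh out precisely the two places where something nontrivial is being used.

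The genuinely useful addition is your semistability argument in the first step. The definition of $\HT(\ell)$ asks that $\ell$ underlie a sub-$p$-adic Hodge structure, and a priori the correct statement is about the \emph{intersected} lattice $\mc{L}(V)\cap(\ell\otimes\mbb{B}_\dR)$, not merely a containment of the expected Tate lattice. You observe that the containment already gives one inequality on the type of the intersected lattice and semistability of the ambient modification (the Harder--Narasimhan bound on slopes of sub-line-bundles) gives the reverse inequality, so the intersected lattice is forced to be $\Fil^{-n}\mbb{B}_\dR\cdot\ell$ and the subobject is automatically strict. This is exactly what makes $\HT(\ell)$ a \emph{closed}, rather than locally closed, locus, and also what makes the Tannakian deduction in your last paragraph legitimate (you need a genuine subobject in the abelian category, not just a monomorphism of latticed modules). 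The paper does not make this explicit.

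One small point to flag: your criterion uses $\Fil^{-n}\mbb{B}_\dR\cdot\ell=\xi^{-n}(\ell\otimes\mbb{B}^+_\dR)$, whereas the paper writes $\ell\otimes\Fil^n\mbb{B}^+_\dR$. Your sign is the one that makes the argument close: with $\Fil^{-n}$ the containment gives $k\leq -n$ for the type $k$ of the intersected lattice, semistability gives $-k\leq n$, and equality follows; the paper's $\Fil^n$ gives only $k\leq n$, which does not pin down $k=-n$. Your sign is also the one consistent with the statement in the paper that the subobject is the constant structure $\mbb{Q}_p(-n)$ (whose lattice, by the stated convention for the Tate object, is $\Fil^{-n}\mbb{B}_\dR$). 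You should therefore treat the paper's exponent as a sign slip rather than second-guess your computation. Everything else — the local freeness step, the application of \cref{lemma.zariski-closed-affine} to $g^{-1}(\xi^{-n}v_0)$, and both directions of the Mumford--Tate characterization via the canonical $\MT$-structure — is correct as written.
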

\begin{proof}
We need to show that, for an affinoid perfectoid space $Y/S$, the locus $Z$ on $Y$ where $\ell \otimes_{\QQ_p} \Fil^{n}\mbb{B}_\dR^+(Y)  \subseteq \mc{L}$ is closed in $Y$. Replacing $Y$ by an open subspace, we can assume that $\mc{L} \cong \mbb{B}_\dR^+(Y)^k$ is a free $\mbb{B}_\dR^+$-module. Then $Z$ is the locus of $Y$ where a generator $e$ (as a $\mbb{B}_\dR^+(Y)$-module) of 
    \[ \ell \otimes_{\QQ_p} \Fil^{n}\mbb{B}_\dR(Y) \subseteq V \otimes_{\QQ_p} \mbb{B}_\dR(Y) = \mc{L} \otimes_{\mbb{B}_\dR^+(Y)} \mbb{B}_\dR(Y) \cong \mbb{B}_\dR(Y)^k \]
lies in $\mc{L} \cong \mbb{B}_\dR^+(Y)^k$. By \cref{lemma.zariski-closed-affine}, this description implies $Z$ is closed. The claim on the Mumford-Tate group follows by the Tannakian formalism. 
\end{proof}

\begin{lemma}\label{lemma:pahs-rigid-ananlytic-HT-locus} 
Let $[\mu]$ be a conjugacy class of cocharacters of $G_{\overline{\mathbb{Q}}_p}$, let $L/\breve{\mathbb{Q}}_p([\mu])$ be a $p$-adic field, let $S/\Spa\+ L$ be a seminormal rigid analytic variety, and let $\mc{G}$ be a $p$-adic Hodge structure on the trivial $G(\QQ_p)$-local system over $S^\diamond$ of type $[\mu]$. Then, for $\ell$ as in \cref{def.Hodge-Tate-locus}, $\HT(\ell) \subseteq S^\diamond$, which is closed by \cref{lem:HT-locus-closed}, is also rigid analytic (equivalently, by \cref{lemma.closed-rigid-analytic-subdiamonds-zariski-closed}, it corresponds to a Zariski closed subset of $S$). 
\end{lemma}
\begin{proof}
Consider the latticed $\mbb{B}_\dR^+$-local system $(V \otimes_{\QQ_p} \mbb{B}_\dR^+, \mc{L})$, and let $\hat{\mc{V}} = V \otimes_{\QQ_p} \mc{O}$ be the associated trivial $\mc{O}$-local system. Let $\Fil^\bullet \hat{\mc{V}}$ be the filtration on $\hat{\mc{V}}$ from \cref{theorem.smooth-scholze-filtrations-lattices}-(3), which we can apply since $\mc{L}$ has constant type. From the proof of \cref{theorem.smooth-scholze-filtrations-lattices}-(3), the filtration $\Fil^\bullet \hat{\mc{V}}$ is pulled back from a filtration $\Fil^\bullet \mc{V}$ of the trivial vector bundle $\mc{V} = V \otimes_{\QQ_p} \mc{O}_{S_\et}$ by local direct summands. Choose a basis element $s$ for $\ell$, and let $\HT_f(\ell) \subseteq |S| \cong |S^\diamond|$ denote the locus where $s$ maps to zero in $\mc{V}/\Fil^{-n}\mc{V}$, which is Zariski closed. From \cref{theorem.smooth-scholze-filtrations-lattices}-(2), the filtration $\Fil^\bullet \hat{\mc{V}}$ is the trace filtration of $\mc{L}$ (the Hodge-Tate filtration):
   \[ \Fil^k \hat{\mc{V}} = (\Fil^k\mc{L} \cap (V \otimes_{\QQ_p} \mbb{B}_\dR^+))/(\Fil^k\mc{L} \cap (V \otimes_{\QQ_p} \Fil^1\mbb{B}_\dR)). \]
We claim $\HT(\ell)=\HT_f(\ell)$. First $\HT(\ell) \subseteq \HT_f(\ell)$ follows from the description of $\Fil^\bullet$ via the trace filtration. Since classical points are dense in $\HT_f(\ell)$ and $\HT(\ell)$ is closed, it suffices to prove the reverse inclusion when $S = \Spa\+ L$ (or a finite extension). In this case $\mc{V} = V_L := V \otimes_{\QQ_p} L$ and letting $C$ be the completion of an algebraic closure $\ol{L}$ of $L$, the lattice $\mc{L}$ corresponds to the $\mbb{B}_\dR^+(C)$-lattice
    \[ M_0 = \Fil^0(V_L \otimes_L \mbb{B}_\dR(C)) = \sum_{i \in \ZZ} \left(\Fil^{-i}V_L \otimes_{L} \Fil^i \mbb{B}_\dR(C)\right) \subset V \otimes_{\QQ_p} \mbb{B}_\dR(C).\]
In particular, if, $s \in \Fil^{-n}V_L$ then $s \otimes \Fil^n\mbb{B}_\dR(C) \subseteq M_0$, as required. 

\end{proof}

\subsection{Special subvarieties}\label{ss.special-sub-hs} Suppose given a closed subgroup $H \leq G$ and a conjugacy class of cocharacters $[\mu_H]$ of $H_{\overline{\mbb{Q}}_p}$ contained in $[\mu]$ such that there is an associated basic class $[b_H]$ in $B(H,[\mu^{-1}_H])$ and moreover the image $[b_G]$ of $[b_H]$ in $B(G)$ is basic. Then, $[b_G]$ is the basic conjugacy class in $B(G,[\mu^{-1}])$, and thus there is a functoriality map $X_{[\mu_H]} \hookrightarrow X_{[\mu], \Qpbreve([\mu_H])}$ that is injective since $\Gr_H \to \Gr_G$ is injective. 

\begin{proposition}\label{prop.Xmu-funct-closed}
Suppose $H=M_H \ltimes U_H$ is a Levi decomposition, and $[\mu_H]$ is $M_H$-minuscule. Then $X_{[\mu_H]}$ is a closed sub-diamond of $X_{[\mu], \breve{\mbb{Q}}_p([\mu_H])}$.
\end{proposition}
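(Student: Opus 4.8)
The plan is to reduce the closedness of $X_{[\mu_H]} \hookrightarrow X_{[\mu], \mbb{Q}_p([\mu_H])}$ to the analogous (already available) closedness on the level of affine Grassmannians together with the Hodge-Tate locus machinery of \cref{ss.hodge-tate-locus-hs}. First I would recall that $X_{[\mu_H]} \subseteq \Gr_{[\mu_H^{-1}]}$ and $X_{[\mu]} \subseteq \Gr_{[\mu^{-1}]}$ are open subdiamonds by \cref{lemma.hodge-tate-domain-non-empty}, and that the functoriality map $X_{[\mu_H]} \to X_{[\mu]}$ is the restriction of the natural map $\Gr_{H} \to \Gr_G$ induced by $H \hookrightarrow G$. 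Since $H$ is a closed subgroup of $G$ and (after replacing the chosen embedding as in \cref{ss.aff-grass-locally-spatial}) $G/H$ may be taken quasi-affine, \cref{lemma.quasi-affine} shows $\Gr_H \to \Gr_G$ identifies $\Gr_H$ with a locally closed subdiamond of $\Gr_G$; moreover $\Gr_{[\mu_H^{-1}]}$ is locally closed in $\Gr_H$ by \cref{theorem.bdr-aff-grass-properties}(2), and closed in $\Gr_H \times_{\Gr_{M_H}} \Gr_{[\mu_{H,M}^{-1}]}$ by part (3). The essential content, then, is that inside the open $X_{[\mu]}$, the subfunctor $X_{[\mu_H]}$ is cut out by \emph{closed} conditions: a point of $X_{[\mu]}$ lies in $X_{[\mu_H]}$ precisely when the universal $G$-$p$-adic Hodge structure admits a reduction of structure group to $H$ with lattice type refining $[\mu_H^{-1}]$.

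The key technical step is to express ``reduction of structure to $H$'' as a finite (or at least a Zariski-closed) intersection of Hodge-Tate loci of tensors. Here I would invoke the Tannakian/Chevalley setup: choose a faithful representation $V$ of $G$ and a line $\ell \subseteq V^{\otimes}$ (an appropriate tensor construction) whose stabilizer in $G$ is exactly $H$; since $[\mu_H]$ refines $[\mu]$, after a twist $\ell$ lands in a single weight component of the corresponding $p$-adic Hodge structure, and \cref{lem:HT-locus-closed} then shows that the locus $\HT(\ell) \subseteq X_{[\mu]}$ where $\ell$ underlies a constant sub-$p$-adic Hodge structure $\mbb{Q}_p(-n)$ is closed, and is exactly the locus where $\MT(\mc{G}) \subseteq \Stab_G(\ell) = H$. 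Over $\HT(\ell)$ the universal object acquires canonical $H$-structure (as in \Iref{ss.canonical-G-structure}, cf. \cref{ss.G-structure-hs}), so the lattice refines to a lattice on the trivial $H(\mbb{B}^+_\dR)$-torsor; the further condition that this lattice have type $[\mu_H^{-1}]$ (rather than some other conjugacy class in $H$) is closed by \cref{theorem.bdr-aff-grass-properties}(2)--(3), using that $[\mu_H]$ is $M_H$-minuscule so that $\Gr_{[\mu_H^{-1}]}$ is actually \emph{closed} in $\Gr_H \times_{\Gr_{M_H}} \Gr_{[\mu_{H,M}^{-1}]}$. Intersecting $\HT(\ell)$ with this closed locus and then with the image of $\Gr_{[\mu_H^{-1}]}$ in $\Gr_{[\mu^{-1}]}$ (locally closed, but closed inside the already-closed $\HT(\ell)$) gives $X_{[\mu_H]}$ as a closed subdiamond of $X_{[\mu], \mbb{Q}_p([\mu_H])}$.

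The step I expect to be the main obstacle is the compatibility between the \emph{tensor-line characterization} of the reduction of structure group and the \emph{type constraint} on the lattice --- in the non-reductive setting one cannot simply argue that $\Stab_G(\ell) = H$ forces the lattice type into $[\mu_H^{-1}]$, because (as \cref{example.schubert-cells-additive-group} and the discussion around \cref{prop.closed-fiber-specialization} show) the unipotent radical contributes lattices with no well-defined type, and a priori the induced lattice on the $H$-torsor over $\HT(\ell)$ need not be of type $[\mu_H^{-1}]$ at every geometric point. The resolution is exactly \cref{theorem.bdr-aff-grass-properties}(3): $\Gr_{[\mu_H^{-1}]}$ is closed in $\Gr_H \times_{\Gr_{M_H}} \Gr_{[\mu_{H,M}^{-1}]}$, and the $M_H$-minuscule hypothesis upgrades this to closedness in $\Gr_H$ itself (part (2)), so the type condition is automatically a closed condition and can be imposed after the Hodge-Tate locus is formed. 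Once this is in place the remaining verifications --- that the intersection is the correct functor, and that ``closed in a closed subdiamond is closed'' (cf. \cref{remark.strong-zariski}) --- are routine.
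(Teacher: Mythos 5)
Your proposal is essentially the same argument as the paper's: pick a Chevalley tensor $\ell$ with $\Stab_G(\ell)=H$, show the Hodge--Tate locus $\HT(\ell)\subseteq X_{[\mu]}$ is closed (\cref{lem:HT-locus-closed}), reduce the universal object's structure group to $H$ over this locus, and impose the type-$[\mu_H^{-1}]$ condition, which is closed because $[\mu_H]_{M_H}$ is minuscule. Two remarks on where the paper is more careful than you are.

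First, the preliminary paragraph about $G/H$ being quasi-affine and $\Gr_H\to\Gr_G$ being locally closed is extraneous and not justified: a stabilizer of a \emph{line} is not automatically observable (the character by which $H$ acts on $\ell$ need not extend to $G$), so one cannot simply assert $G/H$ quasi-affine; and more to the point, the paper's proof never needs the map $\Gr_H\to\Gr_G$ to be locally closed. The closedness comes entirely from $\HT(\ell)$ plus the classifying map into $\Gr_H$ built from the $H$-reduction over (a connected component of) $\HT(\ell)$, and the closedness of $\Gr_{[\mu_H^{-1}]}$ in $\Gr_H$ given the minuscule hypothesis.

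Second, the final step of your argument --- ``Intersecting \dots\ gives $X_{[\mu_H]}$'' --- is asserted without proof, and this is precisely where the paper does extra work. The containment $X_{[\mu_H]}\subseteq Z$ is clear, but the reverse containment requires knowing that at every geometric point of $Z$ the resulting $H$-lattice of type $[\mu_H^{-1}]$ actually lands in $X_{[\mu_H]}$, i.e.\ that the modification of the trivial $H$-bundle is semistable for $H$ (not just that its push-out to $G$ is semistable). The paper handles this by restricting to the connected component $Z'$ of $\HT(\ell)_{\mbb{Q}_p([\mu_H])}$ containing $X_{[\mu_H]}$, forming the classifying map $Z'\to\Gr_H$ for the induced $H$-lattice, letting $Z$ be the preimage of $\Gr_{[\mu_H^{-1}]}$, and then observing that the classifying map restricted to $Z$ is a section of the (injective) functoriality map $X_{[\mu_H]}\hookrightarrow X_{[\mu]}$; a monomorphism with a section (compatible with the inclusion of $Z$ into $X_{[\mu]}$) is an isomorphism onto $Z$. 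You should either reproduce this section argument or supply the semistability check directly (e.g.\ via slope morphisms: since the push-out of the Newton point is central in $G$, it lies in $Z(G)\cap H\subseteq Z(H)$).
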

\begin{proof}

Let $V$ be a representation of $G$ and $\ell \subseteq V$ a line such that $H=\mr{Stab}_G(\ell)$. By \cref{lem:HT-locus-closed}, $\HT(\ell) \subseteq X_{[\mu]}$ is the locus where $\MT(\mc{L}_{\mr{univ}}) \subseteq H$, for $\mc{L}_{\mr{univ}}$ the lattice of the universal $G$-$p$-adic Hodge structure on the trivial $G(\QQ_p)$-local system. Consequently, the functoriality map $X_{[\mu_H]} \hookrightarrow X_{[\mu],\breve{\mbb{Q}}_p([\mu_H])}$ factors through $\HT(\ell)_{\breve{\mbb{Q}}_p([\mu_H])}$.  
By \cref{lem:HT-locus-closed}, the restriction of $\mc{L}_\univ$ to $\HT(\ell)$ comes from a neutral $H$-$p$-adic Hodge structure $\mc{L}_H$ on the trivial $H(\QQ_p)$-local system. The classifying map for the lattice $\mc{L}_H$ then provides a map $\HT(\ell) \to \Gr_{H}$, and we write $Z$ for the preimage in $\HT(\ell)_{\breve{\mbb{Q}}_p([\mu_H])}$ of $\Gr_{[\mu_H^{-1}]}$. Then $Z$ is a closed subdiamond of $\HT(\ell)_{\breve{\mbb{Q}}_p([\mu_H])}$ by \cref{theorem.bdr-aff-grass-properties}-(2) (because $[\mu_H]$ and thus $[\mu_H^{-1}]$ is $M_H$-minuscule) and thus a closed subdiamond of $X_{[\mu],\breve{\mbb{Q}}_p([\mu_H])}$ by \cref{lem:HT-locus-closed}. By construction, the functoriality map $X_{[\mu_H]} \rightarrow X_{[\mu],\breve{\mbb{Q}}_p([\mu_H])}$ factors through a map $X_{[\mu_H]} \rightarrow Z$ and the classifying map for $\mc{L}_H$ restricts to a section $Z \rightarrow X_{[\mu_H]}$. Note that $X_{[\mu_H]} \rightarrow Z$ is injective (since, as noted before the statement of the proposition, $X_{[\mu_H]} \hookrightarrow X_{[\mu], \Qpbreve([\mu_H])}$ is injective) and a section of an injective map is surjective. It is also injective since $Z \to X_{[\mu_H]} \to X_{[\mu],\breve{\QQ}_p([\mu_H])}$ is and thus $X_{[\mu_H]}$ is identified with the closed subdiamond $Z$ of $X_{[\mu],\breve{\mbb{Q}}_p([\mu_H])}$. 
\end{proof}

Note that, if $[\mu_H]$ has weights $\geq -1$ on $\Lie\+ H$, then its image in $M_H$ is minuscule, thus $X_{[\mu_H]}$ is closed in $X_{[\mu], \Qpbreve([\mu_H])}$ by the above. Moreover, in this case $X_{[\mu_H]}$ is rigid analytic (by \cref{theorem.bdr-aff-grass-properties}-(5), it is identified with an open in $\Fl_{[\mu_H]}^\diamond$). 
\begin{definition}\label{def.special-subvariety-pahs} A \emph{special subvariety of $X_{[\mu],C}$} is a closed rigid analytic subdiamond of the form $X_{[\mu_H],C}$ for $[\mu_H]$ with weights $\geq -1$ on $\Lie\+ H$ (where $C$ is an algebraically closed non-archimedean extension of $\breve{\QQ}_p$).
\end{definition}

\section{Moduli of neutral admissible pairs}\label{s.moduli-admissible-pairs}
In this section we define a category of neutral admissible pairs over a locally spatial diamond $Y/\Spd\+ \breve{\mbb{Q}}_p.$ 

\subsection{Neutral admissible pairs and associated objects}

\begin{definition}
Let $Y/\Spd\+ \breve{\mbb{Q}}_p$ be a locally spatial diamond, and consider a pair $(W, \mc{L}_\et)$ where $W$ is an isocrystal and $\mc{L}_\et$ is $\mbb{B}^+_\dR$-lattice in $W \otimes_{\breve{\mbb{Q}}_p} {\mbb{B}_\dR}$ over $Y$. We say that $(W,\mc{L}_\et)$ is a \emph{neutral admissible pair} if, for every geometric point $x: \Spa(C,C^+) \rightarrow X$, the restriction $x^*(W,\mc{L}_\et)$ is an admissible pair in the sense of Part I. This is equivalent to requiring that the vector bundle $\mc{E}(W)_{\mc{L}_\et}$, obtained by modifying the bundle $\mc{E}(W)$ on the relative Fargues-Fontaine curve over $Y$ by $\mc{L}_\et$ as in \cref{ss.modifications}, be semistable of slope 0.  

A morphism of neutral admissible pairs $f \colon (W, \mc{L}) \to (W', \mc{L}')$ is a morphism $f \colon W \to W'$ of isocrystals such that $f_{\mbb{B}_\dR}(\mc{L}) \subseteq \mc{L}'$. The latter condition can be checked at geometric points, so it is equivalent to require that it restricts to a morphism of admissible pairs at each geometric point in the sense of \Iref{s.admissible-pairs}. We write $\AdmPair^\circ(Y)$ for the category of neutral admissible pairs on $Y$, which has a tensor product and duals defined in the obvious way.
\end{definition}

\begin{theorem}\label{theorem.neutral-adm-pair-tannakian}
The category $\AdmPair^\circ(Y)$ is a neutral Tannakian category over $\QQ_p$ with fiber functor, for any geometric point $y \colon \Spd\+(C, C^+) \to Y$, 
    \[ \omega_y \colon \AdmPair^\circ(Y) \to \Vect_{\QQ_p}, \qquad (W, \mc{L}) \mapsto H^0(\FF_{C^\flat}, \mc{E}(W)_{y^\ast \mc{L}}). \]
\end{theorem}
\begin{proof}
When $Y = \Spd\+(C,C^+)$ this is \Iref{theorem.adm-pair-propeties}, and the proof carries over in the general case. For the convenience of the reader we summarize it. 

To see that it is a Tannakian category, the non-trivial point is to show that $\AdmPair^\circ(Y)$ is abelian. Following the the argument of \Iref{lemma.p-adic-Hodge-abelian}, we only need to show that images and coimages coincide. Let $f \colon (W, \mc{L}) \to (W', \mc{L}')$ be a morphism, and define $T = f(W), \mc{M}_1 = f(\mc{L}), \mc{M}_2 = \mc{L}' \cap (T \otimes_{\breve{\QQ}_p} \mbb{B}_\dR)$. Then $\mc{M}_1 \subseteq \mc{M}_2$ and we aim to show equality, which we can do after restricting to each geometric point $y$ of $Y$. For $i = 1,2$, let $\mc{V}_i = \mc{E}(T)_{y^\ast\mc{M}_i}$ be the modified vector bundle on $\FF_{C^\flat}$. Let $\lambda_i$ be the slope of $\mc{V}_i$. The following exact sequence of coherent sheaves on $\FF_{C^\flat}$
    \[ 0 \to \mc{V}_1 \to \mc{V}_2 \to \infty_{C, \ast}(y^*\mc{M}_2/y^*\mc{M}_1)  \to 0 \]
shows $\lambda_1 \leq \lambda_2$ with equality if and only if $y^\ast\mc{M}_1 = y^\ast\mc{M}_2$. The surjection $\mc{E}(W)_{y^\ast\mc{L}} \twoheadrightarrow \mc{V}_1$ shows $0 \leq \lambda_1$ and the injection $\mc{V}_2 \hookrightarrow \mc{E}(W)_{y^\ast\mc{L}}$ shows $\lambda_2 \leq 0$ and we conclude $\lambda_1 = \lambda_2 = 0$. 

Finally, we note that, for any geometric point $y \colon \Spd\+(C, C^+) \to Y$, the functor $\omega_y$ is easily seen to be an exact tensor functor, so it remains only to show it is faithful. To see that it is faithful, we first note that $y^*: \AdmPair^\circ(Y) \rightarrow \AdmPair^\circ(\Spd\+(C,C^+))$ is faithful (since any homomorphism of neutral admissible pairs is uniquely determined by the homomorphism of the underlying isocrystal). It thus suffices to show that it is faithful when $Y= \Spd\+(C,C^+)$: in this case, for $f: (W_1, \mc{L}_1) \rightarrow (W_2, \mc{L}_2)$, $\omega_y(f)$ determines $\mc{E}(W_1) \rightarrow \mc{E}(W_2)$ on $\FF_{C^\flat}\backslash \infty_C$, and thus determines $f$ itself (since the passage from isocrystals to vector bundles on $\FF_{C^\flat}$ is faithful and so is the restriction of vector bundles from $\FF_{C^\flat}$ to $\FF_{C^\flat}\backslash\infty_C$). 
\end{proof}

\begin{remark}\label{remark.neutral-admissible-pairs-non-connected-base}
    If $Y$ is not connected, then it may seem surprising at first that the functor $\omega_y$ of \cref{theorem.neutral-adm-pair-tannakian} is faithful. This is essentially because the definition of a \emph{neutral} admissible pair behaves very poorly from the perspective of descent (see also \cref{ss.intro-relative-theory}). In Part III, when we define an admissible pair in a general relative setting, the pair will consist of a local system of isocrystals and a lattice. In particular, over a non-connected base, a fiber functor $\omega_y$ as above will no longer be faithful, since it will only see the part of a morphism of a local system of isocrystals that is happening on the connected component of $y$. The reason $\omega_y$ can still be faithful in the neutral case is that, over a non-connected base, neutral admissible pairs are not a full subcategory of admissible pairs. 
\end{remark}

On $\AdmPair^\circ(Y)$, the forgetful functor
\begin{equation}\label{eqn:AP-isocrystalline-realization}
    \omega_\Isoc \colon \AdmPair^\circ(Y) \to \Isoc, \qquad (W, \mc{L}) \mapsto W
\end{equation}
is called the \emph{isocrystalline realization}. The equivalence between vector bundles on the Fargues-Fontaine curve that are semi-stable of slope zero and $\mbb{Q}_p$-local systems (\cite{kedlaya-liu:relative-foundations} in the form \cite[Corollary II.2.20]{fargues-scholze:geometrization}) gives that
\begin{equation}\label{eqn:AP-etale-realization}
    \omega_\et \colon \AdmPair^\circ(Y) \to \Loc_{\QQ_p}(Y), \qquad (W,\mc{L}) \mapsto H^0(\FF_Y, \mc{E}(W)_{\mc{L}}) 
\end{equation} 
is an exact tensor functor from $\AdmPair^\circ(Y)$ to $\Loc_{\mbb{Q}_p}(Y)$ called the \emph{\'etale realization}. By the construction using modifications, there is a canonical de Rham comparison of the \'etale and isocrystalline realizations
\begin{equation}\label{eqn:AP-de-Rham-comparison}
c_\dR \colon \omega_\et \otimes_{\ul{\mathbb{Q}_p}} \mbb{B}_\dR \xrightarrow{\sim} \omega_\Isoc \otimes_{\Qpbreve} \mbb{B}_\dR.
\end{equation}

In particular, there are canonical $\mbb{B}_\dR^+$-lattices enriching both the \'etale and isocrystalline realization:
The \'etale lattice functor
\begin{equation}\label{eqn:AP-etale-lattice-realization}
    \omega_{\mc{L}_{\et}} \colon \AdmPair^\circ(Y) \to \Loc_{\mbb{B}_\dR^+}(Y), \qquad (W, \mc{L}) \mapsto \mc{L} 
\end{equation}
is a $\mbb{B}_\dR^+$-lattice on $\omega_\Isoc \otimes_{\breve{\QQ}_p} \mbb{B}_\dR$.

The de Rham lattice functor
\begin{equation}\label{eqn:AP-de-Rham-lattice-realization}
     \mc{L}_\dR \colon \AdmPair^\circ(Y) \to \Loc_{\mbb{B}_\dR^+}(Y), \qquad (W, \mc{L}) \mapsto c_\dR^{-1}(W \otimes_{\breve{\QQ}_p} \mbb{B}_\dR^+).
\end{equation}
is a $\mbb{B}_\dR^+$-lattice on $\omega_\et \otimes_{\ul{\QQ_p}} \mbb{B}_\dR$.

\begin{example}
The Tate admissible pair $\breve{\QQ}_p(n)$ has underlying isocrystal $(\breve{\QQ}_p, p^{-n}\varphi_{\breve{\QQ}_p})$ and lattice $\Fil^n \mbb{B}_\dR$. 
\end{example}

\subsection{$G$-structure}\label{ss.G-structure-ap}
 
\begin{definition}
Let $Y/\Spd\+ \breve{\mbb{Q}}_p$ be a locally spatial diamond and let $G/\mbb{Q}_p$ be a connected linear algebraic group. A neutral $G$-admissible pair on $Y$ is an exact tensor functor $\mc{G}: \Rep\+ G \rightarrow \AdmPair^\circ(Y)$ such that, for each geometric point $y: \Spd\+(C,C^+) \rightarrow S$, $\omega_y \circ \mc{G}$ is isomorphic to $\omega_\std$. 
\end{definition}

If $A$ is a neutral admissible pair on a locally spatial diamond $Y/\Spd\+ \breve{\QQ}_p$,  let $\langle A \rangle$ be the Tannakian subcategory of $\AdmPair^\circ(Y)$ generated by $A$. For a geometric point $y \colon \Spd\+(C,C^+) \to Y$, 
the motivic Galois group with respect to $y$ is $\MG(A,y):=\ul{\Aut}^{\otimes} \omega_y|_{\langle A \rangle}$, and $A$ admits a canonical refinement to an admissible pair with $\MG(A,y)$-structure (see \Iref{ss.tannakian}). Note that $\MG(A,y)$ is linear algebraic group over $\mathbb{Q}_p$, and is naturally a subgroup of $\GL(\omega_{y}(A))$. 

More generally, suppose $\mc{G}$ is a neutral $G$-admissible pair on $Y$. The motivic Galois group with respect to $y$ is given by $\MG(\mc{G}, y)=\ul{\Aut}^{\otimes} \omega_y|_{\langle \mc{G} \rangle}$  (where $\langle \mc{G} \rangle$ is the Tannakian category generated by the image of $\Rep\+ G$ in $\AdmPair^\circ(Y)$ or, equivalently, by $\mc{G}(V)$ for $V$ any faithful representation of $G$). It is a linear algebraic group over $\mathbb{Q}_p$, and if we fix a trivialization $\omega_{y}\circ \mc{G} \simeq \omega_\std$ then, since there is a canonical identification of $G$ with $\ul{\Aut}^\otimes(\omega_\std)$, we obtain an embedding of $\MG(\mc{G},y)$ into $G$ as a closed subgroup. 

If $\mc{G}$ is a neutral $G$-admissible pair on $Y$, then may choose an isomorphism $\omega_\Isoc \circ \mc{G} \cong \omega_\std \otimes_{\QQ_p} \breve{\QQ}_p$ as in \Iref{sss.isocrystals-G-structure}. In particular, this allows us to view $\omega_{\mc{L}_\et} \circ \mc{G}$ as a $\mbb{B}_\dR^+$-lattice on the trivial $G(\mbb{B}_\dR)$-local system and gives a classifying map $Y \to \Gr_{G, \breve{\QQ}_p}$. 

\begin{definition}
For $[\mu]$ a conjugacy class of cocharacters of $G_{\overline{\mathbb{Q}}_p}$, 
 $Y/\Spd \Qpbreve([\mu])$ a locally spatial diamond, and $\mc{G}$ a neutral $G$-admissible pair on $Y$, we say that $\mc{G}$ has type $[\mu]$ if the image of the classifying map $Y\rightarrow \Gr_{G, \Qpbreve([\mu])}$ factors through $\Gr_{[\mu]}$.
\end{definition}

While the classifying map $Y \to \Gr_{G, \breve{\QQ}_p}$ depends on the choice of isomorphism $\omega_\Isoc \circ \mc{G} \cong \omega_\std \otimes \breve{\QQ}_p$, the definition above does not: any other isomorphism differs by an element of $G(\breve{\QQ}_p)$, and $\Gr_{[\mu]}$ is stable under multiplication by elements of $G(\breve{\QQ}_p)$.

Recall the conventions on types of lattices versus filtrations (\cref{s.filtrations-and-lattices}): if $\mc{G}$ has type $[\mu]$ then the filtration on $(\omega_\Isoc \circ \mc{G})\otimes_{\Qpbreve} \mc{O}$ (this is the Hodge filtration) has type $[\mu^{-1}]$. Similarly, the filtration on $\omega_\et \otimes_{\ul{\mathbb{Q}_p}} \mc{O}$ associated to the lattice $\omega_{\mc{L}_\dR} \circ \mc{G}$ (the Hodge-Tate filtration) has type $[\mu]$.

\begin{theorem}\label{theorem.rigid-an-has-type} If $L/\breve{\mbb{Q}}_p$ is a $p$-adic field and $S/\Spa\+ L$ is a geometrically connected seminormal rigid analytic variety, then any neutral $G$-admissible pair on $S^\diamond$ has a type. 
\end{theorem}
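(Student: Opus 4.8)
The plan is to reduce the statement to a question about the de Rham lattice functor and then apply the rigid-analytic equivalence of \cref{theorem.smooth-scholze-filtrations-lattices}. Recall that a neutral $G$-admissible pair $\mc{G}$ on $Y$ has a type precisely when the \'etale lattice $\omega_{\mc{L}_\et} \circ \mc{G}$ on the isocrystal $\omega_\Isoc \circ \mc{G}$ (equivalently the latticed $G(\mbb{B}^+_\dR)$-torsor it defines, after forgetting the Frobenius structure) has constant type, which by \cref{theorem.filtered-functor-iff-lattice-type} is equivalent to the associated trace (Hodge) filtration being by local direct summands. So the claim is: for $Y/\Spd L$ a connected rigid analytic diamond, the Hodge filtration on $(\omega_\Isoc\circ\mc{G})\otimes\mc{O}$ is by local direct summands.

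First I would note that, after choosing a faithful representation $V$ of $G$, it suffices to treat the single admissible pair $A=\mc{G}(V)=(W,\mc{L}_\et)$: a latticed torsor has constant type iff the underlying latticed vector bundle does, since type is detected by $\mathrm{GL}$-representations and $V$ is faithful (the type multiset for a general representation is determined by that for $V$). So we are reduced to showing that for a neutral admissible pair $(W,\mc{L}_\et)$ on a connected rigid analytic diamond $Y$, the \'etale lattice has constant type. Now the key structural input is the de Rham comparison \eqref{eqn:AP-de-Rham-comparison}: the bi-latticed $\mbb{B}_\dR$-local system $(\omega_\et\otimes\mbb{B}_\dR,\ \omega_{\mc{L}_\et},\ \mc{L}_\dR)$ has, by definition of an admissible pair, the property that $\mc{L}_\dR = c_\dR^{-1}(W\otimes\mbb{B}^+_\dR)$ is (the extension of scalars of) the lattice coming from the isocrystal, hence it descends to $\Spd L$ — it is the image under Scholze's functor $\mbb{M}_0$ of the vector bundle with connection associated to the isocrystal $W$ over $L$ (regarded as a filtered vector bundle with connection on $Y$ with trivial filtration), so $\mc{L}_\dR$ lands in the essential image of $\mbb{M}_0$ as in \cref{theorem.smooth-scholze-filtrations-lattices}(1).

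The main obstacle is that \cref{theorem.smooth-scholze-filtrations-lattices} is stated for \emph{smooth} rigid analytic diamonds, while $Y$ is only assumed connected. This is exactly the situation already handled in the proof of \cref{theorem.bdr-aff-grass-properties}(4): choose a resolution of singularities $\tilde Y\to Y$ (\cite[Theorem 5.3.2]{temkin:functorial-nonembedded-case}), which is a $v$-cover with $\tilde Y$ smooth, pull $A$ back to $\tilde Y$, and apply \cref{theorem.smooth-scholze-filtrations-lattices}(3) there to conclude that the trace filtration of $\mc{L}_\et$ on $\tilde Y$ is by local direct summands — equivalently that the pulled-back latticed $\mbb{B}^+_\dR$-local system has constant type on each component of $\tilde Y$. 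Since the type is the pullback of a function on $|Y|$ which is upper semicontinuous (types can only jump up under specialization, by \Iref{thm.specialization-goes-up-when-not-good}, reflecting the closure relations among Schubert cells recalled in \cref{s.aff-grass}) and constant on the dense smooth locus $Y^{\mathrm{sm}}$ of each component, and since $Y$ is connected so $|Y|$ is connected, the type is globally constant on $Y$: indeed it is locally constant on the dense open $Y^{\mathrm{sm}}$, hence constant on $Y^{\mathrm{sm}}$ by connectedness of $Y$ (a connected rigid analytic variety has connected smooth locus, or one reduces to components of the normalization), and upper semicontinuity forces the (a priori larger) type on the nowhere-dense complement to agree with this constant value since a non-good lattice produces a Hodge filtration that is not by direct summands and would propagate to an open neighborhood by the constancy of total dimension — contradicting constancy on $Y^{\mathrm{sm}}$. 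Thus $(W,\mc{L}_\et)$ has constant type, so $\mc{G}$ has a type.
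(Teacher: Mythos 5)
There is a genuine gap in your argument, and it hides exactly the deep input that the paper's proof relies on. The step you describe as ``apply \cref{theorem.smooth-scholze-filtrations-lattices}(3) there to conclude that the trace filtration of $\mc{L}_\et$ on $\tilde Y$ is by local direct summands'' misreads that statement: part (3) is a biconditional characterizing the essential image of the functor $\mbb{M}$, namely that $(M,\mc{L})$ is in the essential image if and only if both $M$ is in the image of $\mbb{M}_0$ and the trace filtration of $\mc{L}$ is by local direct summands. You have established the first of the two conditions (that the de Rham lattice $M=\mc{L}_\dR$ is $\mbb{M}_0$ of the constant bundle), but that does not give you the second condition; to deduce it from (3) you would first need to know that $(M,\mc{L}_\et)$ lies in the essential image of $\mbb{M}$, which is precisely what the theorem is being asked to prove. \cref{example:non-constant-type-family} shows the type is not automatically constant on a smooth rigid analytic base, so some actual input about admissibility is required, and your argument so far has used none.

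The missing ingredient is the one the paper supplies: the \'etale realization $\omega_\et\circ\mc{G}$, which is a $\mbb{Q}_p$-local system because of admissibility, is a \emph{de Rham} torsor, and by the rigidity theorem of Liu--Zhu \cite{liu-zhu:riemann-hilbert} de Rham-ness of a $\mbb{Q}_p$-local system on a geometrically connected rigid analytic variety can be checked at a single classical point. At a classical point the associated Galois representation is crystalline, hence de Rham, which supplies the needed membership in the essential image of $\mbb{M}$ and thereby constancy of type, via \cref{theorem.bdr-aff-grass-properties}. Nothing in your proposal substitutes for this step.

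There is also a secondary problem in the semicontinuity paragraph: the closure relations for Schubert cells give $\overline{\Gr_{[\mu]}}\subseteq\Gr_{\leq[\mu]}$, so the type of a latticed local system can only drop under specialization (as \cref{example:non-constant-type-family} illustrates at $z=0$), not jump up. The statement \Iref{thm.specialization-goes-up-when-not-good} concerns the type of the pushforward under a faithful representation when the original $G$-lattice fails to be good, and does not give the upper semicontinuity you invoke. Even granting constancy of type on $\tilde Y$, the patch you propose to cross the exceptional locus does not work as stated. The cleanest route is the paper's: reduce to de Rham-ness, invoke Liu--Zhu, and check at a classical point.
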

\begin{proof}

First note that, for each $V \in \Rep\+ G$, the $\QQ_p$-local system $\omega_\et \circ \mc{G}(V)$ on $S^\diamond$ descends uniquely to a $\QQ_p$-local system on $S_\et$ or a $\widehat{\QQ}_p$-local system on $S_\proet$ (in the notation of \cite{scholze:p-adic-ht}, \cite{liu-zhu:riemann-hilbert}). For ease of notation, we let $\omega_\et \circ \mc{G}(V)$ be any of these. 

We give a more precise argument for this descent. By \cite[Proposition 3.10]{mann-werner}, we may pass to an \'etale cover $U$ of $S$ so that there is a $\ZZ_p$-lattice $\Lambda \subseteq \omega_\et \circ \mc{G}(V)$. The $v$-sheaf $\mc{I}_V$ of isomorphisms $\ul{\ZZ}_p^n \simeq \Lambda$ (for $n = \dim\+ V$) is a $\ul{\GL_n(\ZZ_p)}$-torsor on $U^\diamond$. It follows from from \cite[Lemma 10.13]{scholze:etale-cohomology-of-diamonds} that, for each $r$, $\mc{I}_V \times_{\ul{\GL_n(\ZZ_p)}} (\ul{\ZZ/p^r})^n$  finite \'etale over $U^\diamond$, hence pullback along $U_\et \simeq U^\diamond_\et$, yields a $\ZZ_p$-local system on $U_\et$ (i.e.\ a lisse $\ZZ_p$-sheaf as in \cite[Definition 8.1]{scholze:p-adic-ht}). Inverting $p$ gives a $\QQ_p$-local system on $U_\et$ and by \'etale descent a $\QQ_p$-local system on $S_\et$. 

Now, it suffices to show that each $\omega_\et \circ \mc{G}(V)$ is a de Rham $\QQ_p$-local system on $S_\et$ in the sense of \cite[Definition 8.4]{scholze:p-adic-ht}. Indeed, in this case there is by definition an associated filtered vector bundle with integrable connection satisfying Griffiths transversality $(\mc{E}(V), \Fil^\bullet, \nabla)$ on $S_\et$ which satisfies, for $\nu \colon S_\proet \to S_\et$
    \[ \mc{E}(V) = \nu_\ast\left( \omega_\et \circ \mc{G}(V) \otimes_{\ul{\QQ_p}} \mc{O}\mbb{B}_\dR\right) = \nu_\ast\left( \omega_\Isoc \circ \mc{G}(V) \otimes_{{\breve{\QQ}_p}} \mc{O}\mbb{B}_\dR\right) = V \otimes_{\QQ_p} \mc{O}_{S_\et}. \]
(The first equality and last equalities by  \cite[Theorem 7.6-(i), Corollary 6.19]{scholze:p-adic-ht} respectively, and the middle equality from \cref{eqn:AP-de-Rham-comparison}.) Consequently $V \mapsto \Fil^\bullet \mc{E}(V)$ is a filtration on the trivial $G$-bundle on $\mc{O}_{S_\et}$. Moreover $\omega_{\mc{L}_{\et}} \circ \mc{G}$ is the associated lattice on the trivial $G(\mbb{B}_\dR)$-local system from \cref{theorem.bdr-aff-grass-properties} --- this can be verified, e.g., by comparing at all classical points. In particular, since $S$ is geometrically connected, the classifying map for the filtration factors through a connected component of the flag variety corresponding to some $[\mu]$, and we conclude that $\omega_{\mc{L}_\et \circ \mc{G}}$ has a type. 

To verify $\omega_\et \circ \mc{G}(V)$ on $S_\et$ is de Rham, by \cite[Theorem 3.9-(iv)]{liu-zhu:riemann-hilbert}, it suffices to check at a single classical point. But it is immediate from the definitions that the Galois representation attached to the rigid analytic admissible pair at a classical point is de Rham as it is crystalline by \Iref{theorem.full-faithful-essential-image}. 

\end{proof}

\subsection{The $b$-admissible locus} 

\begin{definition}
Let $G/\mbb{Q}_p$ be a connected linear algebraic group, let $\mc{G}$ be a neutral $G$-admissible pair on $Y$ and let $b \in G(\breve{\QQ}_p)$. A $b$-rigidification of $\mc{G}$ is an isomorphism $\rho_\Isoc \colon W_{b} \xrightarrow{\sim} \omega_\Isoc \circ \mc{G}$ with $W_{b}$ the $G$-isocrystal associated to $b$ (that is, the tensor functor $V \mapsto (V_{\breve{\QQ}_p}, b\varphi_{\breve{\QQ}_p})$ for $V \in \Rep\+ G$). 
\end{definition}
Note that a $b$-rigidification $\rho_{\Isoc}$ also gives an isomorphism of $\omega_{\std} \otimes_{\QQ_p} \breve{\QQ}_p$ with $\omega_\Isoc \circ \mc{G}$ (viewed as a functor to $\breve{\mbb{Q}}_p$-vector spaces instead of isocrystals). 

 We want to construct moduli of $b$-rigidified $G$-admissible pairs. To that end, we have the $G$-bundle $\mc{E}_b$ and the universal lattice $\mc{L}_\univ$ on the trivial $G$-bundle over $\Gr_{G,\breve{\mbb{Q}}_p}$, and we may form the modification $(\mc{E}_b)_{\mc{L}_\univ}$ to obtain a map $\Gr_{G,\breve{\mbb{Q}}_p} \rightarrow \Bun_G$. We define $\Gr_G^{b-\adm}$ to be the pre-image of the open substack (by \cref{theorem.bun-g-b-structure}) $\Bun_G^{[1]}$, i.e.\ the locus whose geometric points are those where the modification is the trivial $G$-bundle. It is immediate from the definitions that the restriction of $(b, \mc{L}_\univ)$ to $\Gr_G^{b-\adm}$ is the universal $b$-rigidified $G$-admissible pair $\mc{G}_\univ$:
 
\begin{proposition}Pullback of $\mc{G}_\univ$ induces 
\begin{align*}\Gr_{G}^{b-\adm}(S) &= \{ (\mc{G}, \rho_\Isoc)\, |\, \mc{G} \in G\text{-}\AdmPair^\circ(S), \rho_{\Isoc}: W_b \cong \omega_\Isoc \circ \mc{G} \} / \sim.
\end{align*}
\end{proposition}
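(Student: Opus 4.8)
The plan is to construct an explicit two-sided inverse to the ``pull back $\mc{G}_\univ$'' map on $S$-points and to match up the conditions on the two sides, working throughout via the Tannakian interpretation of the modification construction recalled in \cref{s.G-bundles-ff}. The starting observation is that, because $b \in G(\breve{\mbb{Q}}_p)$, the $G$-isocrystal $W_b\colon V \mapsto (V_{\breve{\mbb{Q}}_p}, b\varphi_{\breve{\mbb{Q}}_p})$ has underlying $\breve{\mbb{Q}}_p$-vector space functorially $V_{\breve{\mbb{Q}}_p} = \omega_\std(V)\otimes_{\mbb{Q}_p}\breve{\mbb{Q}}_p$; hence, after using a $b$-rigidification $\rho_\Isoc$ to identify $\omega_\Isoc\circ\mc{G}$ with $W_b$, the remaining data in a neutral $G$-admissible pair $\mc{G}$ on $S$ is exactly a functorial, tensor-compatible assignment $V \mapsto \omega_{\mc{L}_\et}(\mc{G}(V))$ of a $\mbb{B}_\dR^+$-lattice in $\omega_\std(V)\otimes\mbb{B}_\dR$ --- that is, by the Tannakian description of $\Gr_G$, a point $\mc{L}\in\Gr_G(S)$ --- subject to the conditions that (i) each $(W_b(V),\mc{L}(V))$ is a neutral admissible pair and (ii) $\omega_y\circ\mc{G}\cong\omega_\std$ at every geometric point $y$. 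Conversely, any $\mc{L}\in\Gr_G(S)$ yields a functor $\mc{G}\colon V\mapsto(W_b(V),\mc{L}(V))$ to latticed isocrystals, exact and tensor-compatible by construction, which lands in $\AdmPair^\circ(S)$ and defines a neutral $G$-admissible pair exactly when (i) and (ii) hold, and then carries a tautological $b$-rigidification. So the entire content of the proposition is that (i) and (ii) together are equivalent to $\mc{L}\in\Gr_G^{b-\adm}(S)$, i.e.\ to $(\mc{E}_b)_{\mc{L}}$ factoring through $\Bun_G^{[1]}$.

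To prove this equivalence I would use that, Tannakianly, $(\mc{E}_b)_{\mc{L}}(V) = \mc{E}(W_b(V))_{\mc{L}(V)}$ for every $V$, and that $\omega_\et\circ\mc{G}$ (via \eqref{eqn:AP-etale-realization}) is the functor $V\mapsto H^0(\FF_S, (\mc{E}_b)_{\mc{L}}(V))$. Since $\Bun_G^{[1]}$ is open in $\Bun_G$ by \cref{theorem.bun-g-ss-decomp} and is identified with $[\ast/G(\mbb{Q}_p)]$ by \cref{theorem.classifying-space}, one has $\mc{L}\in\Gr_G^{b-\adm}(S)$ iff at every geometric point $(\mc{E}_b)_{\mc{L}}$ is the trivial $G$-bundle, iff $V\mapsto H^0(\FF_S,(\mc{E}_b)_{\mc{L}}(V))$ is a neutral $G(\mbb{Q}_p)$-local system on $S$. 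In the forward direction, if $\mc{L}\in\Gr_G^{b-\adm}(S)$ then each $\mc{E}(W_b(V))_{\mc{L}(V)}$ is trivial of rank $\dim V$ at every geometric point, hence semistable of slope $0$, so each $(W_b(V),\mc{L}(V))$ is a neutral admissible pair, giving (i); and $\omega_y\circ\mc{G} = H^0(\FF_C, \text{trivial }G\text{-bundle})$ is isomorphic to $\omega_\std$, giving (ii). In the reverse direction, (i) forces each $\mc{E}(W_b(V))_{\mc{L}(V)}$ to be semistable of slope $0$, hence trivial at geometric points, so $(\mc{E}_b)_{\mc{L}}\in\Bun_G^{\mr{ss}}(S)$, and (ii) upgrades the $H^0$-functor to a neutral $G(\mbb{Q}_p)$-local system, forcing $(\mc{E}_b)_{\mc{L}}$ into the $[1]$-component. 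Finally, tracing the identifications, the composite ``$\mc{L}\mapsto\mc{G}\mapsto$ pull back $\mc{G}_\univ$'' is the identity, and the other composite is the identity up to the unique isomorphism respecting the rigidification (unique because $\omega_\Isoc$ is faithful on $\AdmPair^\circ$), so pullback of $\mc{G}_\univ$ is the claimed bijection, functorially in $S$.

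The step I expect to be the real obstacle is the implication (i)$+$(ii) $\Rightarrow$ $(\mc{E}_b)_{\mc{L}}\in\Bun_G^{[1]}$, i.e.\ distinguishing the $[1]$-component inside $\Bun_G^{\mr{ss}}$. For $G$ reductive but not simply connected, and in general for non-reductive $G$, there can be basic classes $[b']\ne[1]$ whose slope morphism vanishes, for which every associated vector bundle $\mc{E}_{b'}(V)$ is already trivial; thus condition (i) alone --- ``all associated bundles semistable of slope $0$'' --- does not pin down the trivial $G$-bundle. It is exactly the neutrality condition (ii), which records that the \emph{fiber functor} $V\mapsto H^0(\FF_C,(\mc{E}_b)_{\mc{L}}|_y(V))$ on $\Rep G$ is isomorphic to $\omega_\std$ rather than to the fiber functor of some nontrivial $G$-torsor over $\mbb{Q}_p$, that cuts out the $[1]$-component; making this rigorous is precisely what forces one to invoke \cref{theorem.classifying-space} and Tannakian reconstruction of neutral $G(\mbb{Q}_p)$-local systems, rather than any slope or dimension count.
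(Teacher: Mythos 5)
Your proof is correct and takes essentially the same approach as the paper, which simply declares the proposition ``immediate from the definitions'' once $\Gr_G^{b-\adm}$ is defined as the pre-image of the component $\Bun_G^{[1]}$ and the neutrality condition $\omega_y \circ \mc{G} \cong \omega_\std$ is built into the definition of a neutral $G$-admissible pair. Your final paragraph correctly identifies the one genuine subtlety the paper leaves implicit --- that triviality of all push-out bundles at geometric points only puts $(\mc{E}_b)_{\mc{L}}$ in $\Bun_G^{\mr{ss}}$, and it is precisely the Tannakian condition $\omega_y \circ \mc{G} \cong \omega_\std$ that singles out $\Bun_G^{[1]}$ among the semistable components indexed by $H^1(\mbb{Q}_p, G)$.
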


For $[\mu]$ a conjugacy class of cocharacters of $G_{\overline{\mbb{Q}}_p}$, we write 
\[ \Gr_{[\mu]}^{b-\adm} := \Gr_{[\mu],\breve{\mbb{Q}}_p([\mu])} \times_{\Gr_{G,\breve{\mbb{Q}}_p([\mu])}} \Gr_{G,\breve{\mbb{Q}}_p([\mu])}^{b-\adm} \] 
In the moduli interpretation above, it is clear that $\Gr_{[\mu]}^{b-\adm}$ parameterizes $b$-rigidified admissible pairs of type $[\mu]$.

\begin{proposition}\label{prop.b-adm-locus-open}
$\Gr_{[\mu]}^{b-\adm}$ is open in $\Gr_{[\mu],\breve{\mbb{Q}}_p([\mu])}$ and non-empty if and only if $[b] \in B(G,[\mu^{-1}])$. Moreover, when it is non-empty, $\Gr_{[\mu],\breve{\mbb{Q}}_p([\mu])} / \Spd\+ \breve{\mbb{Q}}_p([\mu])$ admits a rigid analytic point (see \cref{defn:rigid-analytic-diamond}).  
\end{proposition}
\begin{proof}
Openness follows from the description $\Gr_{[\mu]}^{b-\adm}$ as the preimage under the uniformization map $\Gr_{[\mu],\breve{\mbb{Q}}_p([\mu])} \to \Bun_G, \mc{L} \mapsto (\mc{E}_b)_{\mc{L}}$ of the substack $\Bun^{[1]}_G \subseteq \Bun_G$, an open substack by \cref{theorem.bun-g-b-structure}. 

We now establish the characterization of non-emptiness. To that end, choose a Levi decomposition $G = MU$ and let $[\mu_M]$ be the $M$-conjugacy class of cocharacters induced by $[\mu]$. We are free to replace $b$ by another element in $[b]$ and can thus assume $b \in M(\breve{\QQ}_p)$ since $B(M) \cong B(G)$ (\cref{remark.Levi-decomp-BG-BM-and-basic}). The composition $\Gr_{[\mu_M]}^{b_M-\adm} \to \Gr_{[\mu]}^{b-\adm} \to \Gr_{[\mu_M]}^{b_M-\adm}$ of the functoriality maps induced by $M \to G \to M$ (\ref{eq.funct-map-adm-loc}) is the identity, which reduces the claim to the reductive case. In this case, non-emptiness implies $[b] \in B(G,[\mu^{-1}])$ by \cite[Proposition 3.5.3]{caraiani-scholze:cohomology-compact-shimura} and, in the other direction, if $[b] \in B(G,[\mu^{-1}]$), then the existence of a rigid analytic point follows from \cite[Proposition 3.1]{rapoport-viehmann} (see also \Iref{example.existence-of-G-admissible-pairs}). 

\end{proof}

The results of \cite{gleason-lourenco:connectedness} also imply
\begin{proposition}\label{prop.adm-connected}
$\Gr_{[\mu]}^{b-\adm} / \Spd \Qpbreve([\mu])$ is geometrically connected.

\end{proposition}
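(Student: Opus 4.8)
The plan is to reduce both statements to the case of $G$ reductive, where they are (special cases of) the main results of \cite{gleason-lourenco:connectedness} --- in fact Gleason--Lourenço establish geometric connectedness of the $b$-admissible locus for arbitrary (not necessarily basic) $b$, and for the Schubert varieties $\Gr_{\leq[\mu]}$ as well as the open cells $\Gr_{[\mu]}$. Granting this, all the work is the extension to non-reductive $G$, which I would carry out by descent along a Levi decomposition $G=M\ltimes U$, exactly as in \cref{s.G-bundles-ff} and \cref{s.aff-grass}.

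First I would set up the reduction. Write $\pi\colon G\to M$ and $\iota\colon M\hookrightarrow G$ for the quotient and inclusion, and $\ol{b}:=\pi(b)\in M(\breve{\mbb{Q}}_p)$, which is again basic since $b$ is. Because modifications commute with pushout along a homomorphism (the construction is Tannakian), the composite of the uniformization map $\mc{L}\mapsto(\mc{E}_b)_{\mc{L}}\colon\Gr_G\to\Bun_G$ with $\pi_*\colon\Bun_G\to\Bun_M$ equals $\Gr_G\xrightarrow{\pi_*}\Gr_M\xrightarrow{\mc{L}\mapsto(\mc{E}_{\ol{b}})_{\mc{L}}}\Bun_M$; combining this with $\Bun_G^{[1]}=\pi_*^{-1}(\Bun_M^{[1]})$ (\cref{theorem.bun-g-ss-decomp}) gives
\[ \Gr_G^{b-\adm}=\Gr_G\times_{\Gr_M}\Gr_M^{\ol{b}-\adm}, \]
and, since a type-$[\mu]$ lattice pushes forward to a type-$[\mu]_M$ lattice so that $\Gr_{[\mu]}$ maps to $\Gr_{[\mu]_M}$, also
\[ \Gr_{[\mu]}^{b-\adm}=\Gr_{[\mu]}\times_{\Gr_{[\mu]_M}}\Gr_{[\mu]_M}^{\ol{b}-\adm}. \]
The inclusion $\iota$ induces sections $\iota_*$ of the two projections onto $\Gr_M^{\ol{b}-\adm}$ and $\Gr_{[\mu]_M}^{\ol{b}-\adm}$: that $\iota_*$ lands in the admissible locus is again \cref{theorem.bun-g-ss-decomp} (its $M$-pushout is the admissible point one started from), and that it lands in the right Schubert cell uses that the bijection between conjugacy classes of cocharacters of $G_{\overline{L}}$ and of $M_{\overline{L}}$ is realized by $\iota$.

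Next I would invoke the elementary fact that a surjective map $f\colon X\to Y$ of locally spatial diamonds admitting a section, with $Y$ geometrically connected and every fibre over a geometric point geometrically connected, has geometrically connected source (the section makes $\pi_0(X)\to\pi_0(Y)$ split surjective and connectedness of the fibres makes it injective, so both $\pi_0$'s are trivial after base change to an algebraically closed $C$). For $\pi_*\colon\Gr_G^{b-\adm}\to\Gr_M^{\ol{b}-\adm}$ the base is geometrically connected by the reductive case, and the fibre over a geometric point $\ol{y}\colon\Spa(C_y,C_y^+)\to\Gr_M$ is, after translating a chosen lift of $\ol{y}$ to $M(\mbb{B}_\dR(C_y))$ (using \cref{prop.alternative-characterizations}), isomorphic to $\Gr_U\times_{\Spd\breve{\mbb{Q}}_p}\Spa(C_y,C_y^+)$, where $\Gr_U:=U(\mbb{B}_\dR)/U(\mbb{B}^+_\dR)$. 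Since $U$ is an iterated extension of copies of $\mbb{G}_a$, $\Gr_U$ is an iterated fibration with fibres $\Gr_{\mbb{G}_a}=\mbb{B}_\dR/\mbb{B}^+_\dR$ (\cref{example.schubert-cells-additive-group}), and the latter is the increasing union over $n$ of the positive Banach--Colmez spaces $\mbb{B}^+_\dR/t^n\mbb{B}^+_\dR$, each an iterated extension of copies of $\mbb{G}_a^\diamond$ (compare the vanishing behind \cref{lemma.pro-etale-loc-split}) and hence geometrically connected; so $\Gr_U$ and every geometric fibre of $\pi_*$ are geometrically connected, which settles the first space. For $\Gr_{[\mu]}^{b-\adm}$ the base is again geometrically connected and $\iota_*$ a section, so one is reduced to the geometric fibres of $\Gr_{[\mu]}\to\Gr_{[\mu]_M}$: these are the subloci of the $\Gr_U$-torsor fibres of $\Gr_G\times_{\Gr_M}\Gr_{[\mu]_M}\to\Gr_{[\mu]_M}$ cut out by the ``type exactly $[\mu]$'' condition, i.e.\ by the specialization condition $\leq[\rho\circ\mu]$ of \cref{prop.closed-fiber-specialization} for a faithful $\rho$. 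Each such fibre contains the section point $\iota_*(y)$, and I would finish by identifying it, $v$-locally via Bialynicki--Birula coordinates as in \cref{s.aff-grass}, with a positive Banach--Colmez affine bundle over a point, hence geometrically connected, and then applying the same splitting argument.

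The main obstacle is that the substantive input --- geometric connectedness in the reductive case --- is the theorem of Gleason--Lourenço, which I would use as a black box. Within the reduction itself the one step that is not purely formal is the last one: unlike the fibres of $\Gr_G\to\Gr_M$, the fibres of the Schubert cell $\Gr_{[\mu]}$ over $\Gr_{[\mu]_M}$ are genuinely smaller than $\Gr_U$-torsors (the unipotent radical distorts Schubert cells, as \cref{example.schubert-cells-additive-group} already shows), so establishing their connectedness requires the structural analysis of $\Gr_{[\mu]}$ for non-reductive $G$ from \cref{s.aff-grass} rather than a soft argument.
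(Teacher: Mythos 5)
Your proof is correct and takes essentially the same approach as the paper's, which after citing Gleason--Lourenço for the reductive case disposes of the non-reductive case in a single sentence (``the admissible locus is an iterated affine torsor over the admissible locus for the Levi quotient''). Your write-up is a careful unwinding of exactly that sentence — the identifications $\Gr_G^{b-\adm}=\Gr_G\times_{\Gr_M}\Gr_M^{\ol{b}-\adm}$ and $\Gr_{[\mu]}^{b-\adm}=\Gr_{[\mu]}\times_{\Gr_{[\mu]_M}}\Gr_{[\mu]_M}^{\ol{b}-\adm}$ (valid because admissibility is detected on the Levi pushout by \cref{theorem.bun-g-ss-decomp}), the section $\iota_*$, and the $\pi_0$-splitting — and you correctly flag that the only non-formal point is that the fibres of $\Gr_{[\mu]}\to\Gr_{[\mu]_M}$ are iterated positive Banach--Colmez pieces rather than full $\Gr_U$-translates, which is precisely where the paper's phrase ``iterated affine torsor'' is doing genuine work.
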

\begin{proof}
For $G$ reductive, the statement that $\Gr_{[\mu]}^{b-\adm}$ is geometrically connected is established in the proof of \cite[Proof of Theorem 3.2]{gleason-lourenco:connectedness}: the argument given there first establishes that the open cells $\Gr_{[\mu]}^{b-\adm}$ are connected then deduces it for the closed cells $\Gr_{\leq [\mu]}^{b-\adm}$ by dimension considerations --- note that, in \cite[Proof of Theorem 3.2]{gleason-lourenco:connectedness}, our $\Gr_{[\mu]}$ is denoted $\Gr_{G,\mu}^\circ$ and our $\Gr_{\leq[\mu]}$ is denoted $\Gr_{G,\mu}$. In the general case, we choose a Levi decomposition $G = MU$ and we will show that $\Gr_{[\mu], C}^{b-\adm}$ (for $C$ be an algebraically closed non-archimedean extension of $\breve{\QQ}_p([\mu])$) is connected by showing that it admits a map from  the connected diamond 
\[ U(\mbb{B}_\dR^+/\Fil^m\mbb{B}_\dR) \times_{\Spd\+ C} \Gr_{[\mu_M],C}^{b-\adm} \]
that is a surjection on underlying topological spaces. 

Let $s \colon M \to G$ be the inclusion and $\pi \colon G \to G/U \cong M$ the quotient. Since $B(M) \cong B(G)$, we can (and do) assume that $b \in M(\breve{\QQ}_p)$. We first establish 
\begin{enumerate}
    \item The action of $U(\mbb{B}_\dR^+)$ on $\Gr_{[\mu]}$ preserves $\Gr_{[\mu]}^{b-\adm}$; and 
    \item The action factors through $U(\mbb{B}_\dR^+/\Fil^m\mbb{B}_\dR)$ for $m$ sufficiently large. 
\end{enumerate}
Claim (1) follows from \cref{eq.bun-G-over-Levi-fiber-prod} of \cref{theorem.bun-g-b-structure}, which shows $\Bun_G^{[1]}=\Bun_M^{[1]} \times_{\Bun_M} \Bun_G$. Indeed, the action of $U(\mbb{B}_\dR^+)$ does not change the lattice after push-out to $M$, so it does not change the modified $M$-bundle. 

For (2), let $m$ be any positive integer such that the weights of $\mu$ on $\Lie\+ U$ are all less than $m$ for some (or equivalently any) cocharacter $\mu \in [\mu]$. It is enough to show that for any perfectoid space $S$ over $\Spd\, C$, 
\[ u \in U(\Fil^m\mbb{B}_\dR^+(S)):=\ker U(\mbb{B}_\dR^+(S)) \rightarrow U(\mbb{B}_\dR^+(S)/\Fil^m \mbb{B}_\dR^+(S)),\]
and $\mc{L} \in \Gr_{[\mu]}(S)$, that $u\mc{L} = \mc{L}$. As two $\mbb{B}_\dR^+$-lattices are equal if only if they are equal after restricting to all geometric points, we can assume $S = \Spd\+ C$. In this case $\mc{L} = g \xi^\mu \mc{L}_0$ for $\mc{L}_0 = \omega_\std \otimes_{\QQ_p} \mbb{B}_\dR^+$ the trivial $\mbb{B}_\dR^+$-lattice, $\xi$ a generator of $\Fil^1 \mbb{B}_\dR^+(C)$, some $\mu \in [\mu]$, and $g \in G(\mathbb{B}^+_\dR(C))$. Since $G(\mathbb{B}^+_\dR(C))$ normalizes $U(\Fil^m\mbb{B}_\dR^+(C))$, we may assume $g=1$. Then, 
    \[ u \xi^\mu \mc{L}_0 = \xi^\mu \mc{L}_0 \iff u \in U(\mbb{B}_\dR^+(C)) \cap \xi^{\mu}U(\mbb{B}_\dR^+(C))\xi^{-\mu}, \]
but the condition on $m$ ensures that $U(\Fil^m \mbb{B}_\dR^+(C)) \subseteq U(\mbb{B}_\dR^+(C)) \cap \xi^{\mu}U(\mbb{B}_\dR^+(C))\xi^{-\mu}$. 

From (1) and (2), the action of $U(\mbb{B}_\dR^+)$ and the inclusion $\Gr_{[\mu_M]}^{b-\adm} \to \Gr_{[\mu]}^{b-\adm}$ induces a map of diamonds over $\Spd\+ C$
    \[ f \colon U(\mbb{B}_\dR^+/\Fil^m \mbb{B}_\dR^+) \times_{\Spd\+ C} \Gr_{[\mu_M],C}^{b-\adm} \to \Gr_{[\mu],C}^{b-\adm}. \]
We can check $|f|$ is surjective by checking on geometric points (e.g.\ using the description of $|\Gr_{[\mu]}^{b-\adm}|$ as equivalence classes of geometric points \cite[Proposition 12.7]{scholze:etale-cohomology-of-diamonds}); surjectivity on geometric points is clear since any element of $\Gr_{[\mu]}^{b-\adm}(C)$ is 
    \[ g\xi^{\mu_M} \mc{L}_0 = u (m\xi^{\mu_M} \mc{L}_0) \]
for some $\mu_M \in [\mu_M]$ and some $g \in G(\mbb{B}_\dR^+(C))$ with Levi decomposition $g = um, u \in U(\mbb{B}_\dR^+(C)), m \in M(\mbb{B}_\dR^+(C))$.

Each of $U(\mbb{B}_\dR^+/\Fil^m \mbb{B}_\dR^+)$ and $\Gr_{[\mu_M], C}^{b-\adm}$ is connected and cohomologically smooth over $\Spd\+ C$, and so the product is connected by \cref{lemma:product-connected-diamonds}.

\end{proof}

\begin{lemma}\label{lemma:product-connected-diamonds}
Let $C$ be an algebraically closed non-archimedean extension of $\QQ_p$ and $X,Y$ locally spatial diamonds that are $\ell$-cohomologically smooth over $\Spd\+ C$ and connected. Then the product $X \times_{\Spd\+C} Y$  is connected. 
\end{lemma}
\begin{proof}
We first note that, if $f:S \rightarrow \Spd\+ C$ is cohomologically smooth, then, since $Rf^! \mathbb{F}_\ell$ is invertible, 
\[ \Hom(Rf^! \mathbb{F}_\ell, Rf^! \mathbb{F}_\ell)=\Hom(\mathbb{F}_{\ell,S}, \mathbb{F}_{\ell,S}) =  \Cont(\pi_0(S), \mathbb{F}_\ell). \]
as in the proof of \cite[Corollary 4.11]{hansen:Harris-conjecture}. On the other hand, by the defining adjunction,
\[ \Hom(Rf^! \mathbb{F}_{\ell}, Rf^! \mathbb{F}_{\ell}) = \Hom (Rf_! Rf^! \mathbb{F}_\ell, \mathbb{F}_\ell). \]
Note that $Rf_! Rf^! \mathbb{F}_\ell$ is concentrated in degree $\leq 0$ since, for any $K$ in degree $>0$, 
\begin{align*} \Hom (Rf_! Rf^! \mathbb{F}_\ell, K)=\Hom (Rf^! \mathbb{F}_\ell, Rf^!K)&=\Hom(Rf^!\mathbb{F}_\ell, Rf^!\mathbb{F}_\ell \otimes f^*K)\\
&=\Hom(\mathbb{F}_{\ell,S}, f^*K)=0 \end{align*}
where the second equality is by \cite[Proposition 23.12-(i)]{scholze:etale-cohomology-of-diamonds}, the third equality is because $Rf^!\mathbb{F}_\ell$ is invertible, and the final equality with zero is because $f^* K$ is concentrated in degree $>0$. Thus we find 
\[ \Cont(\pi_0(S), \mathbb{F}_\ell) =\Hom(Rf_! Rf^! \mathbb{F}_\ell, \mathbb{F}_\ell)= \Hom(R^0f_! Rf^! \mathbb{F}_\ell, \mathbb{F}_\ell).\]
It follows that $S$ is connected if and only if $R^0f_! Rf^! \mathbb{F}_\ell=\mathbb{F}_\ell$.

We now consider the cartesian diagram:
\[\begin{tikzcd}
	{X\times_{\Spd\+C} Y} & X \\
	Y & {\Spd\+ C}
	\arrow["p", from=1-1, to=1-2]
	\arrow["q"', from=1-1, to=2-1]
	\arrow["f"{description}, from=1-1, to=2-2]
	\arrow["a", from=1-2, to=2-2]
	\arrow["b"', from=2-1, to=2-2]
\end{tikzcd}\]

We first note that 
\[ Rf^!\mbb{F}_\ell = Rp^!Ra^!\mbb{F}_\ell = p^\ast Ra^!\mbb{F}_\ell \otimes Rp^!\mbb{F}_\ell = p^\ast Ra^!\mbb{F}_\ell \otimes  q^\ast Rb^!\mbb{F}_\ell\]
Indeed, the second equality follows from \cite[Proposition 23.12-(i)]{scholze:etale-cohomology-of-diamonds} while the third follows from  \cite[Proposition 23.12-(iii)]{scholze:etale-cohomology-of-diamonds}. 

Applying this computation and the K\"unneth formula \cite[Proposition 4.2]{hansen:Harris-conjecture}, we find
\[ Rf_! Rf^! \mathbb{F}_\ell = (Ra_! Ra^! \mathbb{F}_\ell) \otimes (Rb_! Rb^! \mathbb{F}_\ell). \]
Since, as argued above,  all of these complexes are concentrated in degree $\leq 0$, passing to $H^0$ gives 
\[ R^0 f_! Rf^! \mathbb{F}_\ell = (R^0a_! Ra^! \mathbb{F}_\ell) \otimes (R^0b_! Rb^! \mathbb{F}_\ell). \]
Since $X$ and $Y$ are connected, the right-hand side is $\mathbb{F}_\ell \otimes \mathbb{F}_\ell=\mathbb{F}_\ell$, thus we conclude that $X \times_{\Spd\+ C} Y$ is also connected. 
\end{proof}

\subsection{The Hodge locus of a tensor}\label{ss.hodge-locus}

\begin{definition}\label{def.Hodge-locus}
Let $b \in G(\breve{\mbb{Q}}_p)$ and $\mc{G} = (W_b, \mc{L})$ be a $b$-rigidified $G$-admissible pair on a locally spatial diamond $S$ over $\breve{\QQ}_p$. Suppose that $V$ is a representation of $G$ and that $\ell \subseteq V_{\breve{\mbb{Q}}_p}$ underlies a one-dimensional isocrystal $\ell\cong \Qpbreve(n)=(\breve{\mbb{Q}}_p, p^{-n} \varphi_{\breve{\QQ}_p})$ of $W_b(V)$. Then, the \emph{Hodge locus of $\ell$}, written $\Hdg(\ell)$, is the locus in $S$ where $\ell$ underlies a sub-admissible pair. More precisely, if $Y/\Qpbreve$ is a perfectoid space, then 
    \[ \Hdg(\ell)(Y) = \{ f \colon Y \to S \colon \ell \otimes_{\breve{\QQ}_p} \Fil^n \mbb{B}_\dR|_{Y_v} \subseteq f^\ast\mc{L}(V) \}, \]
where the comparison in brackets is inside of $V \otimes \mbb{B}_\dR(Y) = f^\ast\mc{L}(V) \otimes \mbb{B}_\dR(Y)$. 
\end{definition}

The condition on $\ell$ above is equivalent to requiring that $(\ell, \ell \otimes_{\breve{\QQ}_p} \Fil^n \mbb{B}_\dR|_{Y_v}) \cong (\breve{\QQ}_p(n), \Fil^n\mbb{B}_{\dR,Y_v}) \to (W_b(V), f^\ast\mc{L}(V))$ is a morphism in $\AdmPair^\circ(Y)$, and since such morphisms are strict in the category of $\mbb{B}_\dR^+$-latticed vector spaces (by the proof of \cref{theorem.neutral-adm-pair-tannakian}), we could equally well describe the Hodge locus as 
    \[ \Hdg(\ell)(Y) = \{ f\colon Y \to S \colon \left(\ell, (\ell \otimes_{\breve{\QQ}_p} \mbb{B}_\dR|_{Y_v}) \cap f^\ast\mc{L}(V)\right) \in \AdmPair^\circ(Y) \}.\]

\begin{remark}
The Hodge locus of $\ell$ is independent of the representative of $b$ of $[b]$ in the following sense: if $b' = g b \sigma(g)^{-1}$ for $g \in G(\breve{\QQ}_p)$, the isomorphism $\Gr_{G}^{b-\adm} \xrightarrow{\sim} \Gr_G^{b'-\adm}, \mc{L} \mapsto g\mc{L}$ restricts to an isomorphism $\Hdg(\ell) \xrightarrow{\sim} \Hdg(g \cdot \ell)$. 
\end{remark}

The proofs of the following two statements are almost identical to the proofs of the corresponding statements for Hodge-Tate loci in \cref{ss.hodge-tate-locus-hs}.
\begin{lemma}\label{lemma:ap-hodge-locus-closed}
The Hodge locus $\Hdg(\ell) \subseteq S$ is a closed subdiamond. 
\end{lemma}

\begin{lemma}\label{lemma:ap-rigid-analytic-Hodge-locus}
Let $[\mu]$ be a conjugacy class of cocharacters of $G_{\overline{\mathbb{Q}}_p}$, let $L/\breve{\mathbb{Q}}_p([\mu])$ be a $p$-adic field, let $S/\Spa\+ L$ be a seminormal rigid analytic variety, and let $\mc{G}$ be a $b$-rigidified admissible pair of type $[\mu]$ on $S^\diamond$. Then, for $\ell$ as in \cref{def.Hodge-locus}, $\Hdg(\ell) \subseteq S^\diamond$, which is closed by \cref{lemma:ap-hodge-locus-closed}, is also rigid analytic (equivalently, by \cref{lemma.closed-rigid-analytic-subdiamonds-zariski-closed}, it corresponds to a Zariski closed subset of $S$). 
\end{lemma}

\begin{remark}
The notions introduced in this section are modeled on analogous ideas from complex geometry: given a pure variation of $\QQ$-Hodge structures $V$ of weight $2n$ and a global section $v$ of $V$, the Hodge locus $\Hdg(v)$ is the locus where $v$ underlies a one dimensional sub-variation of Hodge structures of type $(n,n)$ (equivalently, where $v$ is in the $n$-th step of the Hodge filtration of $V$), i.e. the locus where $v$ is a Hodge tensor. The importance of Hodge tensors in this classical context is that, for the Hodge structure on the cohomology of a smooth proper variety over $\mathbb{C}$, they are expected to correspond to algebraic cycles. 
\end{remark}

\subsection{Special subdiamonds and subvarieties}

Suppose $H/\mbb{Q}_p$ and $G/\mbb{Q}_p$
are connected linear algebraic groups, $b_H \in H(\breve{\mbb{Q}}_p)$, and $b_G \in G(\breve{\mbb{Q}}_p)$. Suppose given $f: H \rightarrow G$ and $g \in G(\breve{\mbb{Q}}_p)$ such that $g f(b_H) \sigma(g^{-1})=b_G.$ Composing push-out by $f$ with the Hecke action of $g$, we obtain induced functoriality maps
\begin{equation} \label{eq.funct-map-adm-loc}\Gr_{H}^{b_H-\adm} \rightarrow \Gr_{G}^{b_G-\adm}, \qquad  \mc{L} \mapsto g\cdot(\mc{L} \times^H G). \end{equation}
Explicitly, if $\mc{L} \in \Gr_H^{b_H-\adm}(Y)$ for $Y \in \Perfd_{\breve{\QQ}_p}$ then the value of $g\cdot(\mc{L} \times^H G)$ at $V \in \Rep\+ G$ is the $\mbb{B}_\dR^+$-lattice $g\mc{L}(f^\ast V)$, i.e.\ the image of $\mc{L}(f^\ast V)$ under the action of $g \otimes 1$ on $V_{\breve{\QQ}_p} \otimes_{\breve{\QQ}_p} \mbb{B}_\dR$.

We are particularly interested in the case when $H$ is a closed subgroup of $G$. This is motivated by consideration of reduction of structure group to the motivic Galois group of an admissible pair as in \cref{ss.G-structure-ap}: Let $Y / \Spd\+ \breve{\mbb{Q}}_p$ be a locally spatial diamond, and suppose we are given a neutral $G$-admissible pair $\mc{G}$ on $Y$.  Let $y$ be a geometric point of $Y$, and fix a trivialization of the \'{e}tale fiber functor $\omega_{y}\circ \mc{G}$ on $\Rep\+ G$. This identifies $\MG(\mc{G},y)$ with a closed subgroup of $H\leq G$. The canonical (in the sense of \Iref{ss.canonical-G-structure}) reduction of structure group of $\mc{G}$ is then to $H$, and we write the resulting neutral $H$-admissible pair as $\mc{H}$. After we choose a $b$-rigidification of $\mc{G}$, the reduction $\mc{H}$ induces a factorization of the classifying map: 

\begin{lemma}\label{lemma.ap-red-sg}Notation as above, suppose $\mc{G}$ is $b$-rigidified, i.e.\ induced by a map $f:Y \rightarrow \Gr_G^{b-\adm}$. Then
\begin{enumerate}
    \item There is an an element $b_H \in H(\breve{\mbb{Q}}_p)$ such that $[b_H] \subseteq [b]$ and $f$ factors through a functoriality map $f': Y \rightarrow \Gr_H^{b_H-\adm}$ as above.
    \item If $Y$ is furthermore a geometrically connected rigid analytic diamond over a $p$-adic field $\Spd\+ L$ then any $f'$ as in (1) factors through an open Schubert cell $\mr{Gr}^{b_H-\adm}_{[\mu_H]}$ with $\mu_H$ a cocharacter of $H$ contained in $[\mu]$.
\end{enumerate}
The pull-back of the universal neutral $H$-admissible pair by such an $f'$ is $\mc{H}$. 
\end{lemma}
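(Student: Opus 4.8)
The plan is to reduce everything to the canonical reduction of structure group of $\mc{G}$ to $H = \MG(\mc{G}, y)$. Recall from the Tannakian formalism (\Iref{ss.tannakian}) that the chosen trivialization of $\omega_y \circ \mc{G}$ identifies $\langle \mc{G}\rangle$ with $\Rep H$ and produces a canonical neutral $H$-admissible pair $\mc{H}$ on $Y$ with $\mc{G} = \mc{H} \circ \mr{res}^G_H$, where $\mr{res}^G_H \colon \Rep G \to \Rep H$ is restriction along $H \hookrightarrow G$. For part (1), I would then consider the isocrystalline realization $\omega_\Isoc \circ \mc{H}$, an $H$-isocrystal over $\breve{\mbb{Q}}_p$. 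Because $H$ is connected and $\breve{\mbb{Q}}_p$ has cohomological dimension $\le 1$ (so $H^1(\breve{\mbb{Q}}_p, H) = 1$), every $H$-isocrystal over $\breve{\mbb{Q}}_p$ is classified by $B(H)$, hence $\omega_\Isoc \circ \mc{H} \cong W_{b_H}$ for some $b_H \in H(\breve{\mbb{Q}}_p)$; fix such an isomorphism $\rho_{\Isoc, H}$. Restricting $\rho_{\Isoc, H}$ along $\mr{res}^G_H$ and comparing with $\rho_\Isoc$ gives an isomorphism $W_{f(b_H)} \cong W_b$ of $G$-isocrystals, hence $[f(b_H)] = [b]$ in $B(G)$ (equivalently $[b_H] \subseteq [b]$) and an element $g \in G(\breve{\mbb{Q}}_p)$ with $g\, f(b_H)\, \sigma(g^{-1}) = b$. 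Now $(\mc{H}, \rho_{\Isoc, H})$ is a $b_H$-rigidified $H$-admissible pair — the $b_H$-admissibility being exactly the condition that each $\mc{H}(V)$ is an admissible pair, which holds since $\mc{H}$ factors through $\AdmPair^\circ(Y)$ — so it is classified by a map $f' \colon Y \to \Gr_H^{b_H-\adm}$, and by construction the functoriality map \eqref{eq.funct-map-adm-loc} attached to $(H \hookrightarrow G, g)$ carries $(\mc{H}, \rho_{\Isoc, H})$ to $(\mc{G}, \rho_\Isoc)$; that is, $f$ factors as this functoriality map composed with $f'$.

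For part (2), with $Y$ now a connected rigid analytic diamond over a $p$-adic field $L$, I would apply \cref{theorem.rigid-an-has-type} to the neutral $H$-admissible pair $\mc{H}$ to obtain that $\mc{H}$ has a type $[\mu_H]$, a conjugacy class of cocharacters of $H_{\overline{\mbb{Q}}_p}$; since the lattice underlying $f'$ is $\omega_{\mc{L}_\et} \circ \mc{H}$, which therefore has constant type $[\mu_H]$, the map $f'$ factors through $\Gr^{b_H-\adm}_{[\mu_H]} = \Gr_{[\mu_H]} \times_{\Gr_H} \Gr_H^{b_H-\adm}$. The type of $\mc{G} = \mc{H} \circ \mr{res}^G_H$ is the image of $[\mu_H]$ under the map on conjugacy classes induced by $H \hookrightarrow G$, which equals the type $[\mu]$ of $\mc{G}$, so a representative $\mu_H$ of $[\mu_H]$ lies, viewed as a cocharacter of $G$, in $[\mu]$. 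For the last assertion: since $H \hookrightarrow G$ is a closed immersion the induced map $\Gr_H \to \Gr_G$ is injective (\cref{lemma.quasi-affine}) and the Hecke twist by $g$ is an isomorphism, so the functoriality map is injective and $f'$ is uniquely determined; by the moduli interpretation of $\Gr_H^{b_H-\adm}$, pulling back the universal $b_H$-rigidified $H$-admissible pair along $f'$ returns the object classified by $f'$, namely $(\mc{H}, \rho_{\Isoc, H})$, whose underlying neutral $H$-admissible pair is $\mc{H}$.

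I expect the main obstacle to be the step in part (1) showing that $\omega_\Isoc \circ \mc{H}$ is a \emph{neutral} $H$-isocrystal, i.e.\ that $b_H$ exists at all: this is where connectedness of the motivic Galois group $H$ and the $C_1$-property of $\breve{\mbb{Q}}_p$ are used, and one must also keep careful track of the Hecke parameter $g$ so that the factorization of $f$ is literally through a functoriality map of the form \eqref{eq.funct-map-adm-loc}. Part (2) is a direct application of \cref{theorem.rigid-an-has-type} together with unwinding the definitions of type and of the open Schubert cell, and the final assertion is formal once injectivity of the functoriality map is noted.
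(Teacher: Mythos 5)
Your proposal is correct and follows essentially the same line as the paper's (brief) proof; the paper simply asserts that $\omega_\Isoc\circ\mc{H}$ can be $b_H$-rigidified and that part (2) follows from \cref{theorem.rigid-an-has-type}, while you supply the omitted justification (Lang/Steinberg giving $H^1(\breve{\mbb{Q}}_p,H)=1$ for $H$ connected, the comparison with $\rho_\Isoc$ to produce $g$, and the compatibility of the types of $\mc{H}$ and $\mc{G}=\mc{H}\circ\mr{res}^G_H$). The only cosmetic issue is the clash between $f$ as the classifying map $Y\to\Gr_G^{b-\adm}$ and $f$ as the homomorphism $H\hookrightarrow G$; write $\iota(b_H)$ rather than $f(b_H)$ to avoid it.
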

\begin{proof}
The isomorphism class of $\omega_{\Isoc} \circ \mc{H}$ can be $b_H$-rigidifed for some $b_H$.  Since the push-out to $G$ is isomorphic to the $G$-isocrystal $b$, we deduce that $[b_H] \subseteq [b]$ and we may choose a $g$ that $\sigma$-conjugates $b_H$ to $b$ to obtain the factorization $f'$ in (1). The factorization through a Schubert cell in the geometrically connected rigid analytic case (2) follows from \cref{theorem.rigid-an-has-type}. 
\end{proof}

\begin{proposition}\label{prop:funct-closed-subdiamond}
Let $H \leq G$ be a closed subgroup, $b_H$, $b_G$, and $g$ as at the start of the subsection. Then the functoriality map identifies $\Gr_{H}^{b_H-\adm}$ with a closed subdiamond of $\Gr_{G}^{b_G-\adm}$. 
\end{proposition}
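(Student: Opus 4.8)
The plan is to mirror the proof of \cref{prop.Xmu-funct-closed} for $p$-adic Hodge structures, with the Hodge locus of a tensor (\cref{lemma:ap-ht-locus-closed}) playing the role of the Hodge--Tate locus. First I would reduce to the case $g=1$, i.e.\ $b_G=f(b_H)$: by construction the functoriality map \eqref{eq.funct-map-adm-loc} is the composite of push-out along $f$ into $\Gr_{G,\breve{\QQ}_p}^{f(b_H)-\adm}$ with the Hecke action of $g$, and the latter is an isomorphism $\Gr_{G,\breve{\QQ}_p}^{f(b_H)-\adm}\xrightarrow{\sim}\Gr_{G,\breve{\QQ}_p}^{b_G-\adm}$, so it suffices to treat the push-out map $j\colon \Gr_H^{b_H-\adm}\to \Gr_G^{f(b_H)-\adm}$, which is injective by \cref{lemma.quasi-affine} (applied to $\Gr_H\to\Gr_G$). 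If $\Gr_H^{b_H-\adm}$ is empty there is nothing to prove, so I may assume it is non-empty. By Chevalley's theorem I would then choose a representation $V$ of $G$ over $\QQ_p$ and a line $\ell\subseteq V$ with $H=\mr{Stab}_G(\ell)$, let $\chi\colon H\to\mbb{G}_m$ be the character by which $H$ acts on $\ell$, and set $n:=-v_p(\chi(b_H))\in\mbb{Z}$, so that $\ell_{\breve{\QQ}_p}$ underlies a sub-isocrystal of $W_{f(b_H)}(V)$ isomorphic to $\breve{\QQ}_p(n)$.

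Pulling back the universal $f(b_H)$-rigidified $G$-admissible pair $\mc{G}_\univ$ along $j$ yields the push-out under $f$ of the universal $b_H$-rigidified $H$-admissible pair; since $\ell\subseteq V$ is an $H$-subrepresentation, the line $\ell$ underlies the rank-one sub-admissible pair $\mc{H}_\univ(\ell)\cong\mc{H}_\univ(\chi)$ of $j^\ast\mc{G}_\univ(V)$, whose lattice is forced by admissibility (rank one, isocrystal of slope $-n$) to be $\ell\otimes\Fil^n\mbb{B}_\dR^+$. Hence $j$ factors through the closed subdiamond $\Hdg(\ell)\subseteq\Gr_G^{f(b_H)-\adm}$ (\cref{lemma:ap-ht-locus-closed}). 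I would next let $Z'$ be the connected component of $\Hdg(\ell)$ containing the image of $j$, which is connected because $\Gr_H^{b_H-\adm}$ is geometrically connected (\cref{prop.adm-connected}); $Z'$ is again a closed subdiamond of $\Gr_G^{f(b_H)-\adm}$, as in \cref{prop.Xmu-funct-closed}.

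Over $Z'$ the line $\ell$ underlies a sub-admissible pair of $\mc{G}_\univ(V)|_{Z'}$, so --- since $H$ is exactly $\mr{Stab}_G(\ell)$ --- the Tannakian formalism should produce a canonical reduction of $\mc{G}_\univ|_{Z'}$ to a neutral $H$-admissible pair $\mc{H}$ on $Z'$ (frames preserving $\ell$) refining $\mc{G}_\univ|_{Z'}$. Its underlying $H$-isocrystal $\omega_\Isoc\circ\mc{H}$ is then a single $H$-isocrystal over $\breve{\QQ}_p$, and since it agrees with $W_{b_H}$ together with its rigidification at any point in the image of $j$ (where the unique $H$-reduction preserving $\ell$ recovers $\mc{H}_\univ$), it is $b_H$-rigidified; because $\mc{H}$ is a neutral $H$-admissible pair all of its associated modified bundles are semistable of slope zero, so the classifying map of the \'etale lattice $\omega_{\mc{L}_\et}\circ\mc{H}$ defines a map $Z'\to\Gr_H^{b_H-\adm}$. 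By construction this is a section of $j$ over $Z'$, so, $j$ being injective, $j$ identifies $\Gr_H^{b_H-\adm}$ with the closed subdiamond $Z'$ --- hence with a closed subdiamond of $\Gr_G^{b_G-\adm}$ after the Hecke isomorphism.

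The step I expect to be the main obstacle is constructing the reduction $\mc{H}$ globally over $Z'$ and identifying its isocrystal with the constant, $b_H$-rigidified $W_{b_H}$ --- that is, upgrading the pointwise containment of motivic Galois groups to an honest factorization of tensor functors over the base $Z'$ and pinning the $H$-isocrystal on the nose. This is the exact analog of the step in \cref{prop.Xmu-funct-closed} passing from ``the Mumford--Tate group lies in $H$ on $Z'$'' to ``$\mc{G}_\univ|_{Z'}$ comes from a neutral $H$-$p$-adic Hodge structure'', and I would handle it the same way, using crucially that $H=\mr{Stab}_G(\ell)$ is cut out by a single $\QQ_p$-rational line and that over the connected $Z'$ a reduction to $H$ of the constant isocrystal $W_{f(b_H)}$ is determined by its value at one point. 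The remaining verifications --- that push-out by $f$ indeed lands in the admissible locus, and that the section so produced is compatible with the $f(b_H)$-rigidification of $\mc{G}_\univ|_{Z'}$ --- are routine bookkeeping with modifications of bundles and rigidifications, parallel to \cref{lemma.ap-red-sg}.
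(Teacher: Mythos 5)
Your proposal is correct and follows essentially the same route as the paper's proof: factor through the Hodge locus of the line defining $H$ as a stabilizer (paper uses $\Hdg(g\cdot\ell_{\breve{\QQ}_p})$ directly where you first reduce to $g=1$ via the Hecke isomorphism, an immaterial difference), pass to the connected component containing the image using \cref{prop.adm-connected}, and use \cref{lemma.ap-red-sg} and the Tannakian reduction over the connected base to produce a section of the injective functoriality map. The step you flag as the main obstacle --- pinning the $H$-isocrystal of the reduction to $W_{b_H}$ with its rigidification --- is handled in the paper exactly as you suggest, by \cref{lemma.ap-red-sg}(1), which a priori gives some $b_H'$ with $[b_H'] \subseteq [b_{H}]$, together with the observation that since $Z$ meets the image of $\Gr_{H}^{b_H-\adm}$ one can take $b_H'=b_H$.
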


\begin{remark}
\newcommand{\diag}{\mr{diag}}
The functoriality map is induced by a map 
\[ \Gr_{H, \breve{\mbb{Q}}_p} \rightarrow \Gr_{G, \breve{\mbb{Q}}_p}. \]
When $H$ is a subgroup, this map is an injection and, taking a faithful representation of $G$, we find that the pre-image of the $b_G$-admissible locus is the $b_H$-admissible locus. If $H$ is reductive, then the map on affine Grassmannians is a closed immersion, so that one obtains \cref{prop:funct-closed-subdiamond} immediately in this case. The most interesting case is thus when $H$ is not reductive. A good example to keep in mind is  $G=\GL_2$, $H$ the upper-triangular Borel, $b_H=b_G=\diag(p,1)$, and $g=1$. If we restrict to the closed Schubert cell for $\mu=\diag(t^{-1},1)$ in $\Gr_G$ and its preimage in $\Gr_H$, then the map on Grassmannians is identified with the open inclusion $\mbb{A}^1 \rightarrow \mbb{P}^1$, but the restriction to the admissible loci is the isomorphism $\mbb{A}^1 \rightarrow \mbb{A}^1$. 
\end{remark}
\begin{proof}
It suffices to prove the proposition when $g = 1$. Fix a line $\ell$ in a representation $W$ of $G$ such that $H=\Stab(\ell)$, and consider $\ell_{\breve{\QQ}_p}$ as a one dimensional isocrystal via $ b_H$. Consider the commutative diagram
\[\begin{tikzcd}
	{\Gr_{H}^{b_H-\adm}} && {\Gr_{G}^{b_G-\adm}} \\
	& {\Hdg(\ell_{\breve{\QQ}_p})}
	\arrow[hook, from=1-1, to=1-3]
	\arrow["f"', from=1-1, to=2-2]
	\arrow[hook, from=2-2, to=1-3]
\end{tikzcd}\]
The top horizontal map is the functoriality map; this factors over the locus of $\Gr_{G}^{b_G-\adm}$ where $\ell_{\breve{\QQ}_p}$ underlies a sub-neutral admissible pair of $\mc{G}(W)$, which is precisely $\Hdg(\ell_{\breve{\mbb{Q}}_p})$. We claim there is a section $s \colon \Hdg(\ell_{\breve{\QQ}_p}) \to \Gr_{H}^{b_H-\adm}$ of $f$ in the category of diamonds over $\Gr_{G}^{b_G-\adm}$. Given this claim, the map $s$ is a surjection (since it is a section of an injection) and it is an injection since the composition to $\Gr_{G}^{b_G-\adm}$ is. This finishes the proof since then $\Gr_H^{b_H-\adm}$ is identified with the subdiamond $\Hdg(\ell_{\breve{\QQ}_p})$, which is closed by \cref{lemma:ap-hodge-locus-closed}.

To construct $s$, observe that the neutral $G$-admissible pair $\mc{G}|_{\Hdg(\ell_{\breve{\QQ}_p})}$ admits a reduction of structure group from $G$ to $H$ which agrees with the canonical reduction of structure of $\mc{G}|_{\Gr_{H}^{b_H-\adm}} = \mc{H} \times^H G$; the classifying map $s \colon \Hdg(\ell_{\breve{\QQ}_p}) \to \Gr_H$  for the neutral $H$-admissible pair induced by this reduction of structure group has image in $\Gr_{H}^{b_H-\adm}$ and gives the section $s$. 
\end{proof}

\begin{definition}\label{defn.special-subdiamond} A \emph{special subdiamond} of $\Gr_{G}^{b_G-\adm}$ is a closed subdiamond in $\Gr_{G}^{b_G-\adm}$ identified with $\Gr_{H}^{b_H-\adm}$ as above. 
\end{definition}

Within certain special subdiamonds, we can isolate a natural rigid analytic part. 

\begin{lemma}\label{lem.special-subvariety-closed} Suppose $[\mu_H]$ is a conjugacy class of cocharacters of $H$ such that the weights of $[\mu_H]$ on $\Lie\+ H$ are $\leq 1$. Then $\Gr_{[\mu_H]}^{b_H-\adm}$ is a geometrically connected rigid analytic diamond over $\breve{\mbb{Q}}_p([\mu_H])$, closed in $\Gr_{H,\breve{\mbb{Q}}_p([\mu_H])}^{b_H-\adm}$. \end{lemma}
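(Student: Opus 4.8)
The plan is to deduce \cref{lem.special-subvariety-closed} as a corollary of \cref{theorem.bdr-aff-grass-properties}, \cref{prop.b-adm-locus-open}, and \cref{prop.adm-connected}; the only genuine input is an elementary upgrade of the weight hypothesis on $\Lie H$ to a minusculity statement for the reductive quotient $M_H$.

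First I would settle rigid analyticity. Since every weight of a representative of $[\mu_H]$ for the adjoint action on $\Lie H$ is $\leq 1$, part (5) of \cref{theorem.bdr-aff-grass-properties} applies and the Bialynicki--Birula map gives an isomorphism $\BB\colon\Gr_{[\mu_H]}\xrightarrow{\ \sim\ }\Fl_{[\mu_H^{-1}]}$ over $\Spd\breve{\mbb{Q}}_p([\mu_H])$. By parts (4) and (5) of \cref{prop.alternative-characterizations}, $\Fl_{[\mu_H^{-1}]}$ is a clopen subdiamond of the diamondification of a smooth (hence seminormal) rigid analytic variety, so $\Fl_{[\mu_H^{-1}]}$, and therefore $\Gr_{[\mu_H]}$, is a rigid analytic diamond over $\breve{\mbb{Q}}_p([\mu_H])$. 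Then \cref{prop.b-adm-locus-open} exhibits $\Gr_{[\mu_H]}^{b_H-\adm}$ as an open subdiamond of $\Gr_{[\mu_H],\breve{\mbb{Q}}_p([\mu_H])}$, and an open subdiamond of a rigid analytic diamond (being the diamondification of the corresponding open adic subspace) is again rigid analytic; this yields the first assertion. Geometric connectedness is then immediate from \cref{prop.adm-connected}, which is stated for an arbitrary (possibly non-reductive) connected linear algebraic group.

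For closedness I would fix a Levi decomposition $H=M_H\ltimes U_H$ and let $[\mu_H]_{M_H}$ be the induced conjugacy class of cocharacters of $M_H$. The key point is that the multiset of weights of $[\mu_H]$ on $\Lie M_H$ is symmetric about $0$, the nonzero ones being the values $\pm\langle\alpha,\mu_H\rangle$ over the roots $\alpha$ of $M_H$; together with the hypothesis that all weights on $\Lie H\supseteq\Lie M_H$ are $\leq 1$, this forces the weights on $\Lie M_H$ to lie in $\{-1,0,1\}$, i.e. $[\mu_H]_{M_H}$ is minuscule. Part (2) of \cref{theorem.bdr-aff-grass-properties} then shows $\Gr_{[\mu_H]}$ is a closed subdiamond of $\Gr_{H,\breve{\mbb{Q}}_p([\mu_H])}$. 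Since, by definition, $\Gr_{[\mu_H]}^{b_H-\adm}=\Gr_{[\mu_H],\breve{\mbb{Q}}_p([\mu_H])}\times_{\Gr_{H,\breve{\mbb{Q}}_p([\mu_H])}}\Gr_{H,\breve{\mbb{Q}}_p([\mu_H])}^{b_H-\adm}$, base-changing this closed immersion along the open immersion $\Gr_{H,\breve{\mbb{Q}}_p([\mu_H])}^{b_H-\adm}\hookrightarrow\Gr_{H,\breve{\mbb{Q}}_p([\mu_H])}$ shows that $\Gr_{[\mu_H]}^{b_H-\adm}$ is closed in $\Gr_{H,\breve{\mbb{Q}}_p([\mu_H])}^{b_H-\adm}$, completing the proof.

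I do not expect a substantive obstacle here: granting \cref{theorem.bdr-aff-grass-properties}, the argument is essentially formal. The only delicate point is the elementary observation that the weight-$\leq 1$ condition on $\Lie H$ implies minusculity of $[\mu_H]_{M_H}$, which is precisely what simultaneously unlocks part (5) (hence rigid analyticity) and part (2) (hence closedness) of \cref{theorem.bdr-aff-grass-properties}; beyond that one need only keep track of reflex fields in the base-change identity defining $\Gr_{[\mu_H]}^{b_H-\adm}$.
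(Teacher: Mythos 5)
Your proposal is correct and follows essentially the same route as the paper's own proof: part (5) of \cref{theorem.bdr-aff-grass-properties} for rigid analyticity via $\BB$, \cref{prop.adm-connected} for geometric connectedness, and part (2) of \cref{theorem.bdr-aff-grass-properties} for closedness, with the paper likewise relying (implicitly) on the observation that weights $\leq 1$ on $\Lie H$ force $[\mu_H]_{M_H}$ to be minuscule. You simply spell out that last elementary root-symmetry argument and the base-change step, both of which the paper leaves tacit.
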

\begin{proof}
    $\Gr_{H,[\mu_H]}^{b_H-\adm}$ is rigid analytic as an open subdiamond of the rigid analytic diamond $\Gr_{[\mu_H]}=\Fl_{[\mu_H^{-1}]}^\diamond$ (the equality by \cref{theorem.bdr-aff-grass-properties}-(5)). It is geometrically connected by \cref{prop.adm-connected} and closed by \cref{theorem.bdr-aff-grass-properties}-(2) since the image of $[\mu_H]$ in the quotient $M_H$ of $H$ by its unipotent radical is minuscule.  
\end{proof}

In the setting of \cref{lem.special-subvariety-closed}, the functoriality map \cref{eq.funct-map-adm-loc} allows us to view $\Gr_{[\mu_H]}^{b_H-\adm}$ as a closed subdiamond of $\Gr_{G, \breve{\mbb{Q}}_p([\mu_H])}^{b_G-\adm}$ --- indeed, it is closed in $\Gr_{H,  \breve{\mbb{Q}}_p([\mu_H])}^{b_H-\adm}$, which is closed in $\Gr_{G,  \breve{\mbb{Q}}_p([\mu_H])}^{b_G-\adm}$ by \cref{prop:funct-closed-subdiamond}. It is thus a closed geometrically connected rigid analytic subdiamond of $\Gr_{G,  \breve{\mbb{Q}}_p([\mu_H])}^{b_G-\adm}$, which even factors through $\Gr_{[\mu_G],\breve{\mbb{Q}}_p([\mu_H])}^{b-\adm}$ where $[\mu_G]$ denotes the induced conjugacy class of cocharacters of $G$. 

\begin{definition}\label{def.special-subvariety-ap} A \emph{special subvariety} of $\Gr_{G, C}^{b_G-\adm}$ or $\Gr_{[\mu_G], C}^{b_G-\adm}$ (where $C$ is an algebraically closed non-archimedean extension of $\breve{\QQ}_p$) is a closed connected rigid analytic subdiamond identified as above with some $\Gr_{[\mu_H], C}^{b_H-\adm}$ for $[\mu_H]$ such that the weights of the adjoint representation on $\Lie\+ H$ are $\leq 1$.
\end{definition}

Each special subvariety is naturally a closed subset of a special subdiamond, but let us emphasize that a special subvariety is typically much smaller than this special subdiamond except when $H$ is a split torus 
(see \cref{example.torus-special-sub}).

\begin{example}\label{example.torus-special-sub}
If $T$ is a torus then everything can be described explicitly in terms of the cocharacter group $X_\ast(T)$. Kottwitz constructs an isomorphism $\kappa_T \colon B(T) \to X_\ast(T)_{\mr{Gal}(\ol{\QQ}_p/\QQ_p)}$ \cite[\S 2.5]{kottwitz:isocrystals}. If $\mu \in X_\ast(T)$ has image $\mu^\# \in X_\ast(T)_{\mr{Gal}(\ol{\QQ}_p/\QQ_p)}$ then $B(T,[\mu])$ consists of the single class represented by any $b_\mu$ with $\kappa_T([b_{\mu}]) = \mu^\#$. Thus 
    \[ \Gr_{[\mu]} = \Gr_{[\mu]}^{b_{\mu^{-1}}-\adm} \cong \Spd\, \QQ_p([\mu]). \]
It follows that, for $b \in T(\breve{\QQ}_p)$,
    \[ \Gr_T^{b-\adm} = \bigsqcup_{\{\mu \colon \mu^\# = \kappa_T([b])\}} \Gr_{[\mu^{-1}]}. \]
For a general $G$, the special subvarieties of $\Gr_{G,C}^{b_G-\adm}$ arising from a torus $T \leq G$ are called special points. 
\end{example}

\begin{remark}
The collection of special subvarieties attached to a fixed pair $(H,[\mu_H])$ only depends on the conjugacy class of this data under $G(\mbb{Q}_p)$. However, the special subvarieties attached to a fixed $(H,[\mu_H])$ typically vary non-trivially in locally profinite families by varying the element $g$ that $\sigma$-conjugates $b_H$ to $b_G$, i.e.\ by the action of $G_b(\mbb{Q}_p)$. For example, in the height two Lubin-Tate case, where the admissible locus is all of $\mathbb{P}^1_{\Qpbreve}$, and for $H=F^\times$ for $F/\mathbb{Q}_p$ a degree two extension, the associated special subvarieties are special points corresponding to admissible pairs (or, in this case, height two formal groups up to isogeny) with endomorphisms by $F$. The set of all such can be identified with $\mathbb{P}^1(F)$ by taking the orbit of any one under $G_b(\mathbb{Q}_p)$, the units in the non-split quaternion algebra over $\mathbb{Q}_p$. 
\end{remark}

\begin{remark}
It is natural to consider, for $[\mu]$ any conjugacy class of cocharacters of $G_{\overline{\mbb{Q}}_p}$, the intersection of $\Gr_H^{b_H-\adm}$ (via the functoriality map) with $\Gr_{[\mu]}^{b_G-\adm}$. The special subvarieties of $\Gr_{[\mu]}^{b_G-\adm}$ are precisely the connected components of such an intersection which are rigid analytic. In general, if $H$ is reductive, then the intersection is simple: the finitely many conjugacy classes of cocharacters of $H_{\overline{\mbb{Q}}_p}$ refining $[\mu]$ are not comparable in the Bruhat order, so we have (after a finite extension so the right hand side makes sense)
\[ \Gr_{H,\Qpbreve([\mu])}^{b_H-\adm} \cap \Gr_{[\mu]}^{b_G-\adm} = \bigsqcup_{[\mu_H] \subseteq [\mu]} \Gr_{[\mu_H]}^{b_H-\adm} \]
However, when $H$ is not reductive, the situation is not so nice. In particular, it can happen that $\Gr_{[\mu_H]}^{b_H-\adm}$ is not closed in $\Gr_H^{b_H-\adm} \cap \Gr_{[\mu_G]}^{b_G-\adm}$ (see \cref{example.non-closed-special-in-schubert-cell}). This explains why we do not define a special subdiamond within an open cell outside of the minuscule case giving special subvarieties. 
\end{remark}

\begin{example}\label{example.non-closed-special-in-schubert-cell} 
Let $G=\GL_3$, let $H$ be the standard parabolic subgroup preserving the subspace $\langle e_1, e_2 \rangle$, and let $M=\GL_2 \times \GL_1$ be its standard Levi subgroup. Consider the following lattices in $(\mbb{B}_\dR)^3$ parameterized by $\mbb{D}_C = \Spa\, C\langle z \rangle$ (with $C$ an algebraically closed non-archimedean field) 
\[ \mc{L}_z = \langle e_1, e_2 + \frac{z}{\xi}e_1, e_3 + \frac{1}{\xi}e_1 \rangle=\langle \xi e_3, e_2 - z e_3, \frac{1}{\xi}(\xi e_3 + e_1) \rangle\]
where $\xi$ is a generator of $\Fil^1 \mbb{B}^+_\dR(C)$ and we construct the lattice as in \cref{example:non-constant-type-family}. First, we observe that $\mc{L}_z \in \Gr_{G}(\mbb{A}^1_C)$ is admissible for $b=1$, i.e.\ that $(\breve{\QQ}_p^3, \mc{L}_z)$ is a neutral admissible pair on $\mbb{A}^1_C$. Equivalently, we need to check that after pulling back to a geometric point $\Spa\+ C \to \mbb{D}_C$, i.e.\ specializing to $z = \alpha \in C$, the modification $(\mc{O}^3)_{\mc{L}_\alpha}$ of the trivial rank 3 vector bundle on $\FF_{C^\flat}$ by $\mc{L}_\alpha$ is again trivial. But the first presentation of $\mc{L}_\alpha$ shows that the standard flag on $\breve{\QQ}_p^3$ induces a filtration on the $\mbb{B}_\dR^+(C)$-latticed isocrystal $(\breve{\QQ}_p^3,\mc{L}_\alpha)$ whose graded pieces (isomorphic to $(\breve{\QQ}_p, \mbb{B}_\dR^+(C))$) are admissible; thus $(\mc{O}^3)_{\mc{L}_\alpha}$ is trivial as it is an iterated extension of trivial vector bundles on $\FF_{C^\flat}$. Second, observe that $\mc{L}_z \in \Gr_{[\mu]}(C)$ for $\mu=(t, 1, t^{-1})$ as is immediate from the second presentation. 

Now the admissible pair $(\breve{\QQ}_p^3, \mc{L}_\alpha)$ has motivic Galois group contained in $H \leq G$ (as $\breve{\QQ}_p^2 \subseteq \breve{\QQ}_p^3$ induces a rank 2 sub-admissible pair) and so $\mc{L}_\alpha \in \Gr_{H}^{1-\adm}(\mbb{D}_C)$.  When $\alpha \neq 0$, we also have 
\[  \mathcal{L}_\alpha = \langle \xi e_2, \frac{1}{\xi}(\xi e_2 + \alpha e_1), e_3 - \alpha^{-1}e_2 \rangle, \]
which shows $\mc{L}_\alpha \in \Gr_{[\mu_H],H}^{1-\adm}(C)$ for $\mu_H=(t, t^{-1}) \times 1$; while for $\alpha=0$ the image of $\mc{L}_0$ under $\Gr_H(C) \to \Gr_M(C)$ lands in the Schubert cell of the trivial cocharacter. In particular, $\mc{L}_0 \not\in \Gr_{[\mu_H],H}(C)$, and thus $\Gr_{[\mu_H]}^{1-\adm}$ is not closed in $\Gr_H^{1-\adm} \cap \Gr_{[\mu]}^{1-\adm}.$

\end{example}

\section{Infinite level moduli spaces}\label{s.moduli-infinite-level}

In this section we define the infinite level moduli spaces appearing in our Ax-Lindemann theorem, and discuss their basic properties as well as their relation to the moduli spaces of admissible pairs and $p$-adic Hodge structures we have introduced already via period maps. In \cref{ss.inf-level-special-sv}, we define the two notions of special subvarieties that arise in our Ax-Lindemann theorem and, in \cref{ss.duality}, we give the duality theorem for infinite level spaces in the basic case (the novelty in the latter, if any, is the extension to the non-reductive case).

\subsection{The infinite level moduli problem}\label{ss.infinite-level-moduli}
Let $G$ be a connected linear algebraic group over $\mbb{Q}_p$, and let $b \in G(\breve{\mbb{Q}}_p)$. We consider the infinite level moduli problem $\mc{M}_b/\Spd\+ \breve{\mbb{Q}}_p$ classifying $b$-rigidified admissible pairs equipped also with a trivialization of the induced $G(\QQ_p)$-local system:

\[ \mc{M}_b(S)  = \left\{ (\mc{G}, \rho_\Isoc, \rho_\et) \ \middle\vert \begin{array}{c} \mc{G}\in G\text{-}\AdmPair^\circ(S),\, \rho_{\Isoc}: W_b \cong \omega_\Isoc \circ \mc{G}, \\ \textrm{ and } \rho_\et: \omega_\std \otimes \ul{\QQ_p} \cong \omega_\et \circ \mc{G} \end{array} \right\} / \sim.  \]

The natural forgetful map $\pi_{\mc{L}_\et}: \mc{M}_b \rightarrow \Gr_G^{b-\adm}$ is canonically identified with the $G(\mbb{Q}_p)$-torsor $\ul{\Isom}(\omega_\std \otimes \ul{\QQ_p}, \omega_\et \circ \mc{G})$. In particular, $\mc{M}_b$ is a locally spatial diamond. For $[\mu]$ a conjugacy class of cocharacters of $G_{\overline{\mbb{Q}}_p}$, we write 
\[ \mc{M}_{b,[\mu]} := \mc{M}_{b} \times_{\Gr_{G, \breve{\mbb{Q}}_p}^{b-\adm}} \Gr_{[\mu]}^{b-\adm}, \]
a locally closed subdiamond of $\mc{M}_{b, \Qpbreve([\mu])}$.

\subsection{Interpretation in terms of modifications}\label{ss.modification-moduli}
The space $\mc{M}_b$ also has an interpretation in terms of $G$-bundles on the Fargues-Fontaine curve. If $S$ is a locally spatial diamond over $\QQ_p$, and $\mc{E}, \mc{E}' \in \Bun_G(S)$, a modification (at $\infty$) $\alpha \colon \mc{E} \dashrightarrow \mc{E}'$ is an isomorphism $\alpha \colon \mc{E}_{\FF\setminus \infty} \xrightarrow{\sim} \mc{E}'_{\FF\setminus \infty}$ with locally bounded poles. Now suppose $\alpha \colon \mc{E}_b \dashrightarrow \mc{E}_\triv$ is a modification. Restriction of $\alpha$ to the `punctured formal neighborhood' of $\infty \in \FF$ gives an isomorphism $\alpha_{\mbb{B}_\dR} \colon \mc{E}_b |_{\mbb{B}_\dR} \cong \mc{E}_\triv|_{\mbb{B}_\dR}$. We define a lattice 
    \[ \pi_{\mc{L}_{\et}}(\alpha) := \alpha_{\mbb{B}_\dR}^{-1}(\mc{E}_\triv|_{\mbb{B}_\dR^+}) \subseteq \mc{E}_{b}|_{\mbb{B}_\dR} = W_b \otimes_{\breve{\mbb{Q}}_p} \mbb{B}_\dR. \]
The pair $\mc{G}_\alpha = (W_b, \pi_{\mc{L}_{\et}}(\alpha))$ is then a $b$-rigidified admissible pair on $S$. Further, $\mc{G}_\alpha$ has canonical \'etale and isocrystalline trivializations since $\omega_\Isoc \circ \mc{G}_\alpha = W_b$, and $\omega_\et \circ \mc{G}_\alpha = H^0(\mc{E}_\triv) = \omega_\std  \otimes \ul{\QQ_p}$. Thus we see that 
    \[ \mc{M}_b(S) = \{\text{Modifications }\mc{E}_b \dashrightarrow \mc{E}_\triv \text{ on } \FF \textrm{ (over $S$)}\}. \]

\subsection{Period maps}\label{ss.period-maps}
We have a natural diagram of period maps
\begin{equation}\label{eq.first-period-diagram}\begin{tikzcd}
	& {\mc{M}_{b}} \\
	\\
	{\Gr_{G}} && {\Gr_{G}}
	\arrow["{\pi_{\mc{L}_\dR}}"{description}, from=1-2, to=3-3]
	\arrow["{\pi_{\mc{L}_\et}}"{description}, from=1-2, to=3-1]
\end{tikzcd}\end{equation}
where, in the moduli interpretation in terms of admissible pairs,
\begin{itemize}
    \item $\pi_{\mc{L}_\et}$ sends $(\mc{G}, \rho_\et, \rho_\Isoc)$ to the $\mbb{B}^+_\dR$-lattice on $\omega_\std$
    \[ \rho_\Isoc^* \mc{L}_\et = \rho_\Isoc^* ((\omega_\et \circ \mc{G}) \otimes_{\ul{\mbb{Q}_p}} \mbb{B}^+_\dR) \subseteq W_b \otimes_{\Qpbreve} \mbb{B}_\dR = \omega_\std \otimes_{\mbb{Q}_p} \mbb{B}_\dR.\]
    \item $\pi_{\mc{L}_\dR}$ sends $(\mc{G}, \rho_\et, \rho_\Isoc)$ to the $\mbb{B}^+_\dR$-lattice on $\omega_\std$
    \[ \rho_\et^* \mc{L}_\dR=\rho_\et^* ((\omega_\Isoc \circ \mc{G}) \otimes_{\Qpbreve} \mbb{B}^+_\dR) \subseteq \omega_\std \otimes_{\mbb{Q}_p} \mbb{B}_\dR. \]
\end{itemize}
As we have indicated in \cref{ss.infinite-level-moduli}, $\pi_{\mc{L}_\et}$ is a $G(\mbb{Q}_p)$-torsor over $\Gr_G^{b-\adm}$. 

If we fix a conjugacy class of cocharacters $[\mu]$ and restrict to $\mc{M}_{b,[\mu]}$ then there are Hodge and Hodge-Tate filtrations (of type $[\mu^{-1}]$ and $[\mu]$, respectively) induced by the Bialynicki-Birula map, and the period diagram can be refined and extended to:

\begin{equation}\label{eq.second-period-diagram}\begin{tikzcd}
	& {\mc{M}_{b,[\mu]}} \\
	\\
	{\Gr_{[\mu]}} && {\Gr_{[\mu^{-1}]}} \\
	\\
	{\Fl_{[\mu^{-1}]}} && {\Fl_{[\mu]}}
	\arrow["{\pi_{\mc{L}_\dR}}"{description}, from=1-2, to=3-3]
	\arrow["{\pi_{\mc{L}_\et}}"{description}, from=1-2, to=3-1]
	\arrow["\BB"{description}, from=3-1, to=5-1]
	\arrow["{\pi_\Hdg}"{description}, curve={height=-12pt}, from=1-2, to=5-1]
	\arrow["{\pi_\HT}"{description}, curve={height=12pt}, from=1-2, to=5-3]
	\arrow["\BB"{description}, from=3-3, to=5-3]
\end{tikzcd}\end{equation}

Of course $\pi_{\mc{L}_\et}$ factors through its image $\Gr_{[\mu]}^{b-\adm}$. In the basic case it is useful to keep track of an extended diagram accounting for the image also of $\pi_{\mc{L}_\dR}$: indeed, in this case the definitions and equivalence of basic admissible pairs and $p$-adic Hodge structures over a geometric point (see \Iref{s.admissible-pairs}) imply that $\rho_\et^*\mc{L}_\dR$ is a $p$-adic Hodge structure on the trivial $G(\QQ_p)$-local system over $\mc{M}_{b,[\mu]}$ (with weight grading determined by the slope morphism as usual), and then $\pi_{\mc{L}_\dR}$ is the classifying map so factors through $X_{[\mu]}$. Thus in the basic case the diagram can be refined to 

\[\begin{tikzcd}
	&& {\mc{M}_{b,[\mu]}} \\
	\\
	{\Gr_{[\mu]}} & {\Gr_{[\mu]}^{b-\adm}} && {X_{[\mu]}} & {\Gr_{[\mu^{-1}]}} \\
	&&&&& {} \\
	& {\Fl_{[\mu^{-1}]}} && {\Fl_{[\mu]}}
	\arrow["{\pi_{\mc{L}_\dR}}"{description}, from=1-3, to=3-5]
	\arrow["{\pi_{\mc{L}_\et}}"{description}, from=1-3, to=3-1]
	\arrow["\BB"{description}, from=3-1, to=5-2]
	\arrow["{\pi_\Hdg}"{description}, curve={height=-12pt}, from=1-3, to=5-2]
	\arrow["{\pi_\HT}"{description}, curve={height=12pt}, from=1-3, to=5-4]
	\arrow["\BB"{description}, from=3-5, to=5-4]
	\arrow[from=1-3, to=3-2]
	\arrow[hook', from=3-2, to=3-1]
	\arrow[from=1-3, to=3-4]
	\arrow[hook, from=3-4, to=3-5]
	\arrow[from=3-4, to=5-4]
	\arrow[from=3-2, to=5-2]
\end{tikzcd}\]

\begin{lemma}
    For $b$ basic, $\pi_{\mc{L}_\dR}$ is a $G_b(\mbb{Q}_p)$-torsor over $X_{[\mu]}$.
\end{lemma}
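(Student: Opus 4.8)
The plan is to deduce this from the fact, recorded at the start of \cref{ss.infinite-level-moduli}, that $\pi_{\mc{L}_\et}$ is a $G(\mbb{Q}_p)$-torsor over the admissible locus, applied with $G$ replaced by $G_b$ and $b$ by $b^{-1}$ --- in effect running the argument of the duality theorem in the special case of a single basic moduli space, which is legitimate here because the twisting input \cref{lemma.BunG-twisting} and the modification interpretation of \cref{ss.modification-moduli} are already available.

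Concretely, I would first rewrite $\mc{M}_b(S)$ as the set of modifications $\alpha\colon \mc{E}_b \dashrightarrow \mc{E}_\triv$ on $\FF_S$ (as in \cref{ss.modification-moduli}). Under this description $\pi_{\mc{L}_\et}$ records the lattice carried by the source $\mc{E}_b$ referred to the trivialization of the target, and $\pi_{\mc{L}_\dR}$ the lattice carried by the target $\mc{E}_\triv$ referred to the canonical trivialization $W_b$ of the source; meanwhile $G_b(\mbb{Q}_p)=\ul{\Aut}_{\FF_S}(\mc{E}_b)$ acts by precomposition (this is the $\rho_\Isoc$-action) and $G(\mbb{Q}_p)=\ul{\Aut}_{\FF_S}(\mc{E}_\triv)$ by postcomposition (the $\rho_\et$-action, i.e.\ the structural action of the $\pi_{\mc{L}_\et}$-torsor). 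Inverting $\alpha$ interchanges source and target, hence interchanges the two period maps and the two group actions; applying the equivalence $F_b\colon \Bun_G \xrightarrow{\sim} \Bun_{G_b}$ together with $F_b(\mc{E}_b)\cong \mc{E}_\triv$ and $F_b(\mc{E}_\triv)\cong \mc{E}_{b^{-1}}$ from \cref{lemma.BunG-twisting} then turns this into a modification $\mc{E}_{b^{-1}}\dashrightarrow \mc{E}_\triv$ of $G_b$-bundles, i.e.\ a point of $\mc{M}_{G_b, b^{-1}}$. Tracking through the canonical identifications $G_b(\breve{\mbb{Q}}_p)=G(\breve{\mbb{Q}}_p)$ and $(G_b)_{b^{-1}}\cong G$, this yields an isomorphism $\mc{M}_{G,b}\cong \mc{M}_{G_b,b^{-1}}$ carrying $\pi_{\mc{L}_\dR}$ to $\pi_{\mc{L}_\et}$ and the $\rho_\Isoc$-action of $G_b(\mbb{Q}_p)$ to the structural torsor action. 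Since the latter is a $G_b(\mbb{Q}_p)$-torsor over $\Gr_{G_b}^{b^{-1}-\adm}$, so is $\pi_{\mc{L}_\dR}$ over its image.

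It then remains to identify that image (after restricting to $\mc{M}_{[\mu],b}$) with $X_{[\mu]}$. On one side, $\pi_{\mc{L}_\dR}|_{\mc{M}_{[\mu],b}}$ was already seen to factor through $X_{[\mu]}\subseteq \Gr_{[\mu^{-1}]}$; on the other, for $\mc{L}\in X_{[\mu]}$ the bundle $(\mc{E}_\triv)_{\mc{L}}$ is semistable and, being of the relevant Kottwitz class, is isomorphic to $\mc{E}_b$ (using \cite[Proposition 3.5.3]{caraiani-scholze:cohomology-compact-shimura} and that $[b]$ is the unique basic class in $B(G,[\mu])$), so by \cref{theorem.classifying-space} the $G_b(\mbb{Q}_p)$-torsor of such isomorphisms is pro-\'etale-locally trivial, and inverting the resulting trivialized modification $\mc{E}_\triv\dashrightarrow \mc{E}_b$ exhibits $\mc{L}$ as lying in the image; alternatively one uses that $X_{[\mu]}$ is geometrically connected (\cref{prop.moduli-pahs-X}), that the image is open and nonempty in it (\cref{prop.b-adm-locus-open}), and that $X_{[\mu]}$ is open in $\Gr_{[\mu^{-1}]}$ (\cref{lemma.hodge-tate-domain-non-empty}).

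The step I expect to require the most care is the bookkeeping through the two operations (inverting the modification and transporting along $F_b$): keeping straight which lattice becomes which period map and which automorphism group acts on which side, including the Kottwitz-group identifications, and --- in the non-reductive case --- making the final image identification with $X_{[\mu]}$ genuinely precise, since the $b$-admissible locus is only open and the relation between the type $[\mu^{-1}]$ of the de Rham lattice, the basic class $[b]\in B(G,[\mu])$, and semistability must be pinned down. None of this is deep; it is essentially the content of \cref{lemma.BunG-twisting} and \cref{ss.modification-moduli} read carefully.
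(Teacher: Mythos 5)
Your proposal is correct, but it takes a genuinely different and more roundabout route than the paper's. The paper's proof is essentially a one-liner: over $X_{[\mu]}$ the modification $(\mc{E}_\triv)_{\mc{L}_\univ}$ gives a map $X_{[\mu]} \to \Bun_G^{[b]}$, and by \cref{theorem.classifying-space} this target is $[\ast/G_b(\mbb{Q}_p)]$, so the datum of the $b$-rigidification $\rho_\Isoc$ --- which is exactly what $\pi_{\mc{L}_\dR}$ forgets --- is precisely a trivialization of a $G_b(\mbb{Q}_p)$-torsor, namely $\ul{\Isom}(\mc{E}_b, (\mc{E}_\triv)_{\mc{L}_\univ})$. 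This is symmetric to how the torsor property of $\pi_{\mc{L}_\et}$ is established (that case is the same principle with $b$ replaced by $1$).

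You instead reduce to the $\pi_{\mc{L}_\et}$ torsor fact by running the duality argument of \cref{theorem.duality} ahead of schedule. This is legitimate: the duality proof does not rely on the present lemma, so there is no circularity, and the bookkeeping you indicate (inverting the modification, twisting by $F_b$ via \cref{lemma.BunG-twisting}, identifying $(G_b)_{b^{-1}}$ with $G$) does go through. But it is doing more work than necessary, because the step you are reducing to is itself an instance of \cref{theorem.classifying-space}; and your second-to-last paragraph, where you identify the image with $X_{[\mu]}$ by trivializing $\ul{\Isom}(\mc{E}_b,(\mc{E}_\triv)_\mc{L})$ via \cref{theorem.classifying-space}, already contains the direct argument the paper uses, making the duality detour superfluous. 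What you buy with the longer route is a concrete preview of the duality theorem in this special case; what you lose is economy and the clean symmetry between the two period maps. One small remark: in \cref{lemma.BunG-twisting} the paper's displayed identity $F_b(1_G)\cong F_b(b^{-1}_{G_b})$ is a typographical slip for $F_b(1_G)\cong (b^{-1})_{G_b}$; you quote the intended statement $F_b(\mc{E}_\triv)\cong\mc{E}_{b^{-1}}$, which is the right one.
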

\begin{remark}
Implicit in this claim is the statement that the action of the group $G_b(\mbb{Q}_p)$ on $\mc{M}_{b,[\mu]}$ by change of trivialization of $\rho_\Isoc$ extends to an action of the topological constant sheaf $\ul{G_b(\mbb{Q}_p)}$. This is not completely clear via the moduli interpretation we have used via neutral admissible pairs, but follows immediately from the interpretation in terms of modifications of $G$-bundles on the Fargues-Fontaine curve in \cref{ss.modification-moduli} or from the formalism we will develop in Part III.  
\end{remark}
\begin{proof}
This follows from the definitions and \cref{eq.isomorphism-b-loc-classifying-stack}.
\end{proof}

\begin{remark} On geometric points, we have described all of these period maps explicitly in \Iref{ss.periods}. 
\end{remark}

\subsection{Special subvarieties}\label{ss.inf-level-special-sv}
Let $G/\mbb{Q}_p$ be a connected linear algebraic group, let $[\mu]$ be a conjugacy class of cocharacters of $G_{\overline{\mbb{Q}}_p}$ admitting a basic class $[b] \in B(G,[\mu^{-1}])$, and fix $b \in G(\breve{\mbb{Q}}_p)$ representing this class. Let $L/\breve{\mbb{Q}}_p([\mu])$ be a $p$-adic field, and let $C=\overline{L}^\wedge.$

For $\mc{M}:=\mc{M}_{b,[\mu]}$, we will now define some notions of special subvarieties of $\mc{M}_C$. Suppose $\iota: H \hookrightarrow G$ is a closed connected subgroup, $[\mu_H]$ is a conjugacy class of cocharacters of $H_{\overline{\mbb{Q}}_p}$ contained in $[\mu]$, $b_H \in H(\breve{\mbb{Q}}_p)$ represents the basic class in $B(H,[\mu_H^{-1}])$, and $g \in \breve{\mbb{Q}}_p$ is such that $g^{-1} b \sigma(g)=b_H$. Then push-out combined with the isomorphism $g: \mc{E}_{\iota(b_H)} \rightarrow \mc{E}_b$ induces functoriality maps
\[ \mc{M}_{b_H, [\mu_H]} \rightarrow \mc{M}_{b,[\mu], \breve{\mbb{Q}}_p([\mu_H])} \textrm{ and } \Gr_{[\mu_H]}^{b_H-\adm} \rightarrow \Gr_{[\mu], \breve{\mbb{Q}}_p([\mu_H])}^{b-\adm}, \]
and, if $b$ is basic,
\[  X_{[\mu_H]}\rightarrow X_{[\mu], \mbb{Q}_p([\mu_H])}. \]
These are compatible with all group actions and period maps. 

\begin{definition}[Special subvarieties at infinite level]\label{defn.special-subvarieties-inf-level} Let $\mc{M}=\mc{M}_{b,[\mu]}$.
\begin{enumerate}
\item A \emph{Hodge special subvariety} of $\mc{M}_{C}$ is a connected component of a closed subdiamond $\mc{M}_{b_H, [\mu_H], C}$ for $[\mu_H]$ such that $[\mu_H]$ has weights $\leq 1$ on $\Lie\+ H$. Equivalently, it a connected component of the preimage under $\pi_{\mc{L}_\et}$ of a special subvariety  $\Gr_{[\mu_H],C}^{b_H-\adm}$ of $\Gr_{[\mu],C}^{b-\adm}$ (see \cref{def.special-subvariety-ap}).
\item If $b$ is basic, a \emph{Hodge-Tate special subvariety} of $\mc{M}_{C}$ is a connected component  of a closed subdiamond $\mc{M}_{b_H, [\mu_H], C}$ for $[\mu_H]$ such that $[\mu_H]$ has weights $\geq -1$ on $\Lie\+ H$. Equivalently, it a connected component of the preimage of a special subvariety (see \cref{def.special-subvariety-pahs}) $X_{[\mu_H],C}$ of $X_{[\mu], C}$ under $\pi_{\mc{L}_\dR}$. 
\item A \emph{special subvariety} of $\mc{M}_{C}$ is a connected component of a closed subdiamond $\mc{M}_{b_H, [\mu_H], C}$ for $[\mu_H]$ minuscule (i.e.\ with weights in $[-1, 1]$ on $\Lie\+ H$). 
\end{enumerate}
\end{definition}

\begin{lemma}
If $b$ is basic, a closed connected subdiamond of $\mc{M}_C$ is a special subvariety if and only if it is both a Hodge special subvariety and Hodge-Tate special subvariety.  
\end{lemma}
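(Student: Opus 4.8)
\emph{Forward direction.} If $Z$ is a connected component of $\mc{M}_{b_H,[\mu_H],C}$ with $[\mu_H]$ minuscule, then the adjoint weights of $[\mu_H]$ on $\Lie H$ all lie in $[-1,1]$, hence are $\leq 1$ and (since $b$ is basic) $\geq -1$; so the very same datum $(H,[\mu_H],b_H,g)$ exhibits $Z$ both as a Hodge-special and as a Hodge-Tate special subvariety.

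\emph{Converse: reduction to the motivic Galois group.} Suppose $Z\subseteq \mc{M}_C=\mc{M}_{b,[\mu],C}$ is closed and connected, is a connected component of $\mc{M}_{b_{H_1},[\mu_{H_1}],C}$ with $[\mu_{H_1}]$ of adjoint weights $\leq 1$ on $\Lie H_1$, and is a connected component of $\mc{M}_{b_{H_2},[\mu_{H_2}],C}$ with $[\mu_{H_2}]$ of adjoint weights $\geq -1$ on $\Lie H_2$. The plan is to manufacture a single minuscule datum by reducing to the motivic Galois group. Restrict the universal neutral $G$-admissible pair $\mc{G}_\univ$ to $Z$; using the canonical \'etale trivialization on $\mc{M}_{b,[\mu],C}$, the group $H:=\MG(\mc{G}_\univ|_Z,y)$ is a genuine closed subgroup of $G=\Aut(\omega_\std)$, independent of the geometric point $y$, and \cref{lemma.ap-red-sg} furnishes a $b_H\in H(\breve{\mbb{Q}}_p)$ with $[b_H]\subseteq[b]$ together with a canonical reduction $\mc{H}$ of $\mc{G}_\univ|_Z$ to a $b_H$-rigidified $H$-admissible pair with \'etale trivialization, whose pushout along $H\hookrightarrow G$ recovers $\mc{G}_\univ|_Z$ with its rigidifications. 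Because the inclusions $Z\hookrightarrow \mc{M}_{b_{H_i},[\mu_{H_i}],C}$ and the \'etale trivializations are compatible with those on $\mc{M}_{b,[\mu],C}$, after conjugating each datum $(H_i,[\mu_{H_i}],b_{H_i},g_i)$ (which does not change $Z$) we may assume $H\leq H_1$ and $H\leq H_2$; choosing a representative $\mu_H$ of the generic type of $\mc{H}$ valued in $H$ and using that $\Lie H\subseteq \Lie H_i$ is stable under $\mr{Ad}(\mu_H(-))$, the adjoint weights of $[\mu_H]$ on $\Lie H$ form a sub-multiset of those on $\Lie H_i$, hence are $\leq 1$ (from $i=1$) and $\geq -1$ (from $i=2$): thus $[\mu_H]$ is minuscule. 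Finally $b_H$ is the (unique) basic class of $B(H,[\mu_H])$: its slope morphism is the composite of the central slope morphism of $b_{H_1}$ with $H\hookrightarrow H_1$, hence is central, so $[b_H]$ is basic, and $[b_H]\in B(H,[\mu_H])$ since $\Gr^{b_H-\adm}_{[\mu_H]}$ is non-empty (it receives a non-empty subdiamond of $Z$), so \cref{prop.b-adm-locus-open} applies; hence $\mc{M}_{b_H,[\mu_H],C}$ is defined, and it is a closed subdiamond of $\mc{M}_{b,[\mu],C}$ by \cref{defn.special-subvarieties-inf-level}.

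\emph{Converse: identifying $Z$.} On the dense open locus $Z^\circ\subseteq Z$ where $\mc{H}$ is good of type $[\mu_H]$, the classifying map of $\mc{H}$ factors through $\mc{M}_{b_H,[\mu_H],C}$, giving $j\colon Z^\circ\to \mc{M}_{b_H,[\mu_H],C}$; since $H\leq H_1$ we have $\mc{M}_{b_H,[\mu_H],C}\subseteq \mc{M}_{b_{H_1},[\mu_{H_1}],C}$ as subdiamonds of $\mc{M}_{b,[\mu],C}$ (all functoriality maps being monomorphisms), and the composite $Z^\circ\to \mc{M}_{b_H,[\mu_H],C}\hookrightarrow \mc{M}_{b_{H_1},[\mu_{H_1}],C}$ is the inclusion of $Z^\circ$. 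Hence $j(Z^\circ)=Z^\circ$ as subsets, and taking closures inside the closed subdiamond $\mc{M}_{b_H,[\mu_H],C}$ gives $Z=\overline{Z^\circ}\subseteq \mc{M}_{b_H,[\mu_H],C}$. Now let $\mc{N}$ be the connected component of $\mc{M}_{b_H,[\mu_H],C}$ containing $Z^\circ$. Then $\mc{N}$ is connected and meets $Z$, and $Z$ is a connected component of the ambient $\mc{M}_{b_{H_1},[\mu_{H_1}],C}\supseteq \mc{N}$, so $\mc{N}\subseteq Z$; conversely $Z=\overline{Z^\circ}\subseteq \overline{\mc{N}}=\mc{N}$. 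Thus $Z=\mc{N}$ is a connected component of $\mc{M}_{b_H,[\mu_H],C}$ with $[\mu_H]$ minuscule, i.e.\ a special subvariety.

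\emph{Main obstacle.} The crux is the reduction step of the converse for non-reductive $H$: one must know that the relative Tannakian formalism genuinely produces a well-defined motivic Galois group and canonical reduction $\mc{H}$ over the connected --- but in general not rigid analytic --- diamond $Z$, and one must control the type of $\mc{H}$, which need \emph{not} be constant on all of $Z$ when $H$ is non-reductive (this is precisely the phenomenon that makes the three notions of special subvariety distinct a priori; compare \cref{example:non-constant-type-family} and \cref{example.non-closed-special-in-schubert-cell}) --- hence the passage to the dense open good locus $Z^\circ$, whose openness and density should be extracted from the (near-)irreducibility of admissible loci. The remaining ingredients --- that the infinite-level functoriality maps are monomorphisms with closed image (the infinite-level analogue of \cref{prop:funct-closed-subdiamond}) and that the $\mc{M}_{b_H,[\mu_H],C}$ are closed subdiamonds --- are formal given the machinery already set up.
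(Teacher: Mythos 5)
Your overall strategy---reduce to the motivic Galois group $H=\MG(\mc{G}_\univ|_Z,y)$, observe $H\leq H_1\cap H_2$, and bound the adjoint weights of the canonical type $[\mu_H]$ by comparing $\Lie H$ with $\Lie H_i$---is the right one and, I believe, the intended one. The forward direction, the weight-bound computation (using that $\mu_H$ is $H_i$-conjugate to $\mu_{H_i}$, which is correct since the type of $\mc{H}$ pushed to $H_i$ must be $[\mu_{H_i}]$), the verification that $b_H\in B(H,[\mu_H])$ is basic, and the final topological identification of $Z$ with a connected component of $\mc{M}_{b_H,[\mu_H],C}$ are all sound. Two comments. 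First, the conjugation step is unnecessary: because the functoriality maps $\mc{M}_{b_{H_i},[\mu_{H_i}],C}\to\mc{M}_{b,[\mu],C}$ are compatible with the canonical \'etale trivializations, $H$ is automatically a closed subgroup of the images of both $H_1$ and $H_2$; conjugating $(H_i,\dots,g_i)$ would in fact move the subdiamond under the $G(\mbb{Q}_p)\times G_b(\mbb{Q}_p)$-action, so your parenthetical ``which does not change $Z$'' is misleading.

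Second, and more seriously: the gap you flag (density of $Z^\circ$) is real as the proof is written, but the fix you suggest (``near-irreducibility of admissible loci'') is not the right mechanism, and the worry you raise (that the type of $\mc{H}$ might genuinely be non-constant on $Z$, citing \cref{example:non-constant-type-family} and \cref{example.non-closed-special-in-schubert-cell}) is actually unfounded---those examples live over $C$, which is not discretely valued, so they do not fall under \cref{theorem.rigid-an-has-type}. The clean resolution is: the underlying $H$-admissible pair $\mc{H}\colon\Rep H\to\AdmPair^\circ(Z)$ (forgetting the \'etale trivialization) descends to a finite level. Indeed, the Hodge tensor $\ell$ cutting out $H$ is a condition on the lattice alone, and the reduction of $\mc{L}$ to an $H(\mbb{B}^+_\dR)$-torsor and the sub-isocrystals defining $\mc{H}(V)$ for $V\in\Rep H$ are intrinsic to the lattice and the fixed isocrystal $W_b$; all of this already lives over (a closed rigid analytic Hodge locus inside) the finite-level rigid analytic variety $\Gr^{b_{H_1}\text{-}\adm}_{[\mu_{H_1}],L'}$, by \cref{lemma:ap-rigid-analytic-Hodge-locus}, where $L'/\breve{\mbb{Q}}_p([\mu])$ is a sufficiently large finite extension. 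Then \cref{theorem.rigid-an-has-type} (or equivalently \cref{lemma.ap-red-sg}-(2)) applied to the connected component of that Hodge locus meeting the image of $Z$ shows the $H$-type of $\mc{H}$ is constant, so in fact $Z^\circ=Z$ and the passage-to-dense-open step in your proof degenerates. With that substitution, the argument closes.
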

\begin{proof}
The only subtlety is that a priori a subdiamond $Y$ could be a Hodge/Hodge-Tate special subvariety for two different $H$. However, for any fixed geometric point $y$ of $Y$, we can always take $H$ to be the motivic Galois group (corresponding to $\omega_y$) of the universal neutral $G$-admissible pair restricted to $Y$, or equivalently the Mumford-Tate group of the universal neutral $G$-$p$-adic Hodge structure restricted to $Y$ (both of which are identified with the same closed subgroup of $G$ using $\rho_\et$). 
\end{proof}

\begin{remark}If $[\mu]$ is minuscule (the local Shimura case as in the introduction), then Hodge special subvarieties and special subvarieties are the same; if $b$ is furthermore basic then these also agree with Hodge-Tate special subvarieties. Moreover, in this case it is not necessary to impose any condition on the weights of $[\mu_H]$ on $\Lie\+ H$ in the definitions since they hold already for the weights on $\Lie\+ G \supseteq \Lie\+ H$. The three definitions also agree when the subgroup $H$ appearing is reductive since then the three conditions on the weights of the adjoint representation are equivalent. In general, however, there are Hodge special subvarieties which are not Hodge-Tate special and, in this case, only one of the lattice period domains for $H$ is a rigid analytic variety instead of both --- see \cref{example.special-distinction} for an explicit example, which also illustrates how this can occur even when the larger group $G$ is reductive. 
\end{remark}

\begin{example}\label{example.special-distinction}
Let $E/\mbb{Q}_p$ be a degree 3 field extension, let $G$ be the restriction of scalars $\mr{Res}_{E/\mbb{Q}_p} \mr{GL}_{2}$, and let $H \leq G$ be the mirabolic subgroup, i.e.\ the subgroup consisting of matrices of the form 
\[ \begin{bmatrix} * & * \\ 0 & 1 \end{bmatrix}. \]
Abusing notation by writing $E^\times$ for the restriction of scalars of $\mbb{G}_{m,E}$ and $E$ for the additive group $E\otimes_{\mbb{Q}_p} \mbb{G}_{a,\mbb{Q}_p}$, we have $H=E^\times \ltimes E$. 
Fixing a trivialization $E^\times_{\overline{\mbb{Q}}_p}=\mbb{G}_m^3$, we take $\mu_H=\mu$ to be the cocharacter $t \mapsto (t, t, t^{-2})$. The basic elements are then $b_H=b=1$, and it is immediate that the associated Hodge special subvariety is not Hodge-Tate special.
\end{example}

\subsection{Basic duality}\label{ss.duality}
Let $G$ be a connected linear algebraic group over $\QQ_p$ and let $b \in G(\breve{\mbb{Q}}_p)$ be basic. Recall from \cref{ss.G-isocrystals} that we have the group $G_b$ of automorphisms of the associated $G$-isocrystal, and recall from the end of \cref{ss.BunG} the twisting isomorphism $F_b: \Bun_G \rightarrow \Bun_{G_b}$ sending $\mc{E} \in \Bun_G(S)$ to $\ul{\Isom}_{\FF_S}(\mc{E}_b,\mc{E})$. Given a modification $\mc{E}_\triv \dashrightarrow \mc{E}_b$ of $G$-bundles on $\FF_S$, we may apply this twisting isomorphism to obtain a modification of $G_b$-bundles on $\FF_S$, $\ul{\Isom}_{\FF_S}(\mc{E}_b, \mc{E}_\triv) \dashrightarrow \ul{\Isom}_{\FF_S}(\mc{E}_b, \mc{E}_b)$. Using the isomorphisms of \cref{lemma.BunG-twisting}, this is a modification of $G_b$-bundles 
\[ \mc{E}_{b_{G_b}^{-1}} \dashrightarrow \mc{E}_{\triv} \]
where here we write $b_{G_b}$ as in \cref{lemma.BunG-twisting} when we are viewing $b$ as an element of $G_b(\Qpbreve)$ via \cref{eq.Qpbreve-points-of-G-and-twist}. Using the moduli interpretation in \cref{ss.modification-moduli}, we find the above construction induces a map $\mc{M}_{b} \rightarrow \mc{M}_{b^{-1}_{G_b}}$. 

\begin{theorem}\label{theorem.duality} With notation as above, $b_{G_b}^{-1}$ is basic in $G_b(\Qpbreve)$, and the twisting construction is a $G(\mbb{Q}_p) \times G_b(\mbb{Q}_p)$-equivariant isomorphism $\mc{M}_{b}\cong \mc{M}_{b_{G_b}^{-1}}$ such that the de Rham lattice and \'{e}tale lattice  period maps are exchanged. Moreover, for each $[\mu]$ such that $[b]$ is the basic element in $B(G,[\mu^{-1}])$, this restricts to an isomorphism $\mc{M}_{b,[\mu]}\cong\mc{M}_{b_{G_b}^{-1},[\mu^{-1}]}$ that exchanges the Hodge and Hodge-Tate filtration period maps and exchanges Hodge and Hodge-Tate special subvarieties. 
\end{theorem}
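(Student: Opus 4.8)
The plan is to verify the claimed properties of the twisting construction $\mc{M}_b \to \mc{M}_{b^{-1}}$ described just above the statement, using the moduli interpretation in terms of modifications of $G$-bundles on $\FF_S$ from \cref{ss.modification-moduli}. First I would record that $b^{-1}$ is basic: since $b$ is basic, the slope morphism of $b$ is central in $G$, hence its negative — which is the slope morphism of $b^{-1}$ computed in $G_b$, using that $G_b$ is an inner form of $G$ (the centralizer of the slope) via the canonical identification $G(\breve{\mbb{Q}}_p) = G_b(\breve{\mbb{Q}}_p)$ — is central in $G_b$. So $\mc{M}_{b^{-1}}$, formed with $b^{-1} \in G_b(\breve{\mbb{Q}}_p)$ and structure group $G_b$, makes sense and $G_{b^{-1}}$ is again (canonically identified with) $G$.

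Next I would show the map is an isomorphism. The twisting functor $F_b \colon \Bun_G \to \Bun_{G_b}$, $\mc{E} \mapsto \ul{\Isom}_{\FF_S}(\mc{E}, \mc{E}_b)$, is an equivalence (this is recalled before \cref{lemma.BunG-twisting}), and by \cref{lemma.BunG-twisting} we have canonical isomorphisms $F_b(b_G) \cong 1_{G_b}$ and $F_b(1_G) \cong (b^{-1})_{G_b}$ in $\Bun_{G_b}(\ast)$. Applying $F_b$ to a modification $\alpha \colon \mc{E}_\triv \dashrightarrow \mc{E}_b$ on $\FF_S$ — which makes sense because $F_b$ is defined by an $\Isom$-sheaf that is compatible with restriction to $\FF_S \setminus \infty$ and preserves the ``locally bounded poles'' condition — produces a modification $F_b(\mc{E}_\triv) \dashrightarrow F_b(\mc{E}_b)$, and the two isomorphisms above rewrite this as a modification $\mc{E}_\triv \dashrightarrow \mc{E}_{b^{-1}}$ of $G_b$-bundles, i.e. an $S$-point of $\mc{M}_{b^{-1}}$ by \cref{ss.modification-moduli}. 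Since $F_b$ is an equivalence with inverse $F_{b^{-1}}$ (using $F_b(\mc{E}_b) \cong 1$, so the roles of the two base points are symmetric), this construction is an isomorphism $\mc{M}_b \cong \mc{M}_{b^{-1}}$ of $v$-sheaves, functorial in $S$.

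For equivariance and the period maps: the residual $G(\mbb{Q}_p)$-action on $\mc{M}_b$ comes from automorphisms of $\mc{E}_\triv$ (equivalently, changing $\rho_\et$), while the $G_b(\mbb{Q}_p)$-action comes from automorphisms of $\mc{E}_b$ (changing $\rho_\Isoc$); cf. the identification $\ul{\Aut}_{\FF_S}(\mc{E}_b) = G_b$. Under $F_b$ the bundle $\mc{E}_b$ becomes the trivial bundle and $\mc{E}_\triv$ becomes $\mc{E}_{b^{-1}}$, so these two actions are literally exchanged, giving the asserted $G(\mbb{Q}_p) \times G_b(\mbb{Q}_p)$-equivariance. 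Likewise, the two period maps $\pi_{\mc{L}_\et}, \pi_{\mc{L}_\dR}$ on $\mc{M}_b$ are, in the modification picture, extracted from restricting $\alpha$ and $\alpha^{-1}$ to the punctured formal neighborhood of $\infty$ — reading off the lattice $\alpha_{\mbb{B}_\dR}^{-1}(\mc{E}_b|_{\mbb{B}_\dR^+})$ versus $\alpha_{\mbb{B}_\dR}(\mc{E}_\triv|_{\mbb{B}_\dR^+})$ — and applying $F_b$ (which acts on $\mbb{B}_\dR$-lattices compatibly) swaps the source and target of $\alpha$, hence swaps these two lattices. This gives the statement that $\pi_{\mc{L}_\dR}$ and $\pi_{\mc{L}_\et}$ are exchanged; on the level of the targets $\Gr_G$ it is the self-map induced by $F_b$ together with the identifications of \cref{lemma.BunG-twisting}, and since types are preserved under such twisting up to the inversion $[\mu] \leftrightarrow [\mu^{-1}]$ coming from the orientation reversal of the modification, the restriction to fixed $[\mu]$ gives $\mc{M}_{b,[\mu]} \cong \mc{M}'_{b^{-1},[\mu^{-1}]}$ exchanging the Bialynicki--Birula (Hodge / Hodge-Tate) filtration period maps via \cref{theorem.filtered-functor-iff-lattice-type}. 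Finally, the functoriality maps defining special subvarieties (\cref{defn.special-subvarieties-inf-level}) are themselves induced by push-out along $H \hookrightarrow G$ together with twisting by an element $g$ with $g^{-1}bg^\sigma = b_H$; applying $F_b$ intertwines push-out along $H \hookrightarrow G$ for $b$ with push-out along $H_{b_H} \hookrightarrow G_b$ for $b^{-1}$ (this is where one uses naturality of $F_b$ in the group, and the compatibility of \cref{lemma.BunG-twisting} with restriction of structure group), and it exchanges the condition ``weights of $[\mu_H]$ on $\Lie H$ are $\leq 1$'' with ``weights of $[\mu_H^{-1}]$ on $\Lie H$ are $\geq -1$'', which is precisely the swap between Hodge-special and Hodge-Tate special in \cref{defn.special-subvarieties-inf-level}; since $F_b$ preserves connected components and closed subdiamonds, special subvarieties go to special subvarieties of the appropriate flavor.

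The main obstacle I anticipate is the bookkeeping around the identification $G(\breve{\mbb{Q}}_p) = G_b(\breve{\mbb{Q}}_p)$ and the induced $\Gr_G \leftrightarrow \Gr_{G_b}$: one has to be careful that the twisting isomorphism, the inversion $b \mapsto b^{-1}$, and the orientation reversal of the modification $\mc{E}_\triv \dashrightarrow \mc{E}_b$ versus $\mc{E}_b \dashrightarrow \mc{E}_\triv$ all combine to give exactly the inversion $[\mu] \mapsto [\mu^{-1}]$ on types and not some twisted variant, and to pin down the precise $g$ (equivalently the precise basic representative $b_H$) that makes the special-subvariety functoriality squares commute on the nose. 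This is the content the paper flags as ``well-known to experts but not in the literature'', and is essentially a careful unwinding of definitions rather than a substantive new difficulty; the non-reductive case adds nothing here because \cref{lemma.pro-etale-loc-split}, \cref{lemma.BunG-twisting}, and \cref{theorem.classifying-space} have already been established in that generality.
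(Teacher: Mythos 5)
Your proposal is correct and follows essentially the same route as the paper's proof: both proceed by applying the twisting equivalence $F_b$ together with \cref{lemma.BunG-twisting} in the modification picture, observe that this amounts to inverting the element of $G(\mbb{B}_\dR)$ encoding the modification (hence swaps the two lattice period maps and inverts the type $[\mu]$), and conclude the swap of special subvarieties from compatibility with functoriality maps. Your write-up is somewhat more explicit — in particular you spell out why $b^{-1}$ is basic, why the two group actions are exchanged, and why the weight conditions on $\Lie H$ for Hodge- and Hodge-Tate special subvarieties are swapped under $[\mu_H] \mapsto [\mu_H^{-1}]$ — but these are all points the paper's terser argument (``the inverse is the opposite twisting map,'' ``this replaces an element of $G(\mbb{B}_\dR)$ with its inverse,'' ``everything is naturally compatible with functoriality maps'') leaves to the reader, not genuine divergences in method.
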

\begin{proof}
It is more or less immediate that this is an isomorphism (the inverse is the opposite twisting map!). At the level of periods, this replaces an element of $G(\mbb{B}_\dR)$ with its inverse and thus exchanges the two lattice period maps (cf. \Iref{ss.periods}). This also explains why it exchanges the Schubert cell $[\mu]$ with $[\mu^{-1}]$, and the exchange of the Hodge and Hodge-Tate period maps is a consequence of the exchange of the lattice period maps. Finally, everything is naturally compatible with functoriality maps, thus the isomorphism exchanges Hodge and Hodge-Tate special subvarieties. 
\end{proof}

\begin{remark}
In the PEL Rapoport-Zink case as in, e.g., \cite{fargues:two-towers}, the isomorphism at the level of periods matrices is typically given as a transpose, i.e.\ $g \mapsto g^t$, instead of $g \mapsto g^{-1}$ as above. This is obtained by composing with the dual of the standard representation (so $g\mapsto g^{-1} \mapsto ((g^{-1})^t)^{-1}=g^t$), which is necessary in order to compare the homological normalization on both sides arising from the interpetation via the moduli of $p$-divisible groups.  
\end{remark}

\section{The bi-analytic Ax-Lindemann theorem}\label{s.bi-an-ax--lind-theorem}

In this section we state and prove the bi-analytic Ax-Lindemann theorem in full generality (i.e.\ without the minuscule assumption in \cref{main.ax-lindemann}). In \cref{ss.ax-lindemann-full-statement}, we introduce the necessary terminology for Zariski closures and analytic sets, then state the result (\cref{theorem.ax-lindemann-general}). The main subtlety compared to the minuscule case stated as \cref{main.ax-lindemann} in the introduction is that we need to allow for the two different notions of special subvariety introduced in \cref{ss.inf-level-special-sv} --- both appear because, for $H$ a non-reductive group and $[\mu_H]$ a conjugacy class of characters of $H_{\overline{\mbb{Q}}_p}$ admitting a basic element $b_H \in B(H,[\mu_H^{-1}])$, it is possible that only one of the two period domains $X_{[\mu_H]}$ and $\Gr_{[\mu_H]}^{b_H-\adm}$ is rigid analytic. 

Once the definitions are in place, much of the proof is straightforward using the Tannakian formalism we have set up and the results of Part I. However, there is a key technical ingredient we have not yet established: \cref{lemma:Hodge-generic-locus-for-rigid-analytic}, which gives the existence of a dense set of classical points in the Hodge generic locus for an admissible pair on the trivial $G$-isocrystal over a rigid analytic variety. 

In \cref{ss.proof-main-results}, we assume \cref{lemma:Hodge-generic-locus-for-rigid-analytic} and use it to prove the bi-analytic Ax-Lindemann theorem (\cref{theorem.ax-lindemann-general}) and \cref{main.hodge-tate-image} (the argument for the latter is essentially a subset of the argument for the former). We conclude in \cref{ss.generic-lemma} with the proof of \cref{lemma:Hodge-generic-locus-for-rigid-analytic}.

\subsection{Statement of the bi-analytic Ax-Lindemann theorem}\label{ss.ax-lindemann-full-statement}
Let $G/\mbb{Q}_p$ be a connected linear algebraic group, let $[\mu]$ be a conjugacy class of cocharacters of $G_{\overline{\mbb{Q}}_p}$ admitting a basic class $[b] \in B(G,[\mu^{-1}])$, and fix $b \in G(\breve{\mbb{Q}}_p)$ representing this class. Let $\mc{M}=\mc{M}_{b,[\mu]}$, the infinite level moduli space defined in \cref{ss.infinite-level-moduli}. Let $L/\breve{\mbb{Q}}_p([\mu])$ be a $p$-adic field, $C=\overline{L}^\wedge$, and $\mf{I} = \mr{Gal}(\ol{L}/L) = \mr{Aut}_{\mr{cont}}(C/L)$. 

We will use the notions of special subvarieties given in \cref{defn.special-subvarieties-inf-level}. We also need the following definition of Zariski closures and analytic sets.

\begin{definition}[Hodge/Hodge-Tate analytic Zariski closures and irreducible sets]\hfill
\begin{itemize}
    \item for $A \subset |\mc{M}_{C}|$, the \emph{Hodge (resp. Hodge-Tate) analytic Zariski closure} of $A$ is the intersection of all pre-images of closed rigid analytic subdiamonds\footnote{In everything that follows, a rigid analytic subdiamond of $\mc{M}_C/K$ refers to a rigid analytic subdiamond over $\Spd\+ C^{K}$, in the sense of \cref{defn:rigid-analytic-diamond}, and closed refers to the diamond topology (note that, since $C^K/L([\mu])$ is a finite extension, a diamond is rigid analytic over $C^K$ if and only it is rigid analytic over $L([\mu])$). In the minuscule case, $\mc{M}_{C}/K$ is the diamond associated to a smooth rigid analytic variety, and \cref{lemma.closed-rigid-analytic-subdiamonds-zariski-closed} says that a closed rigid analytic subdiamond of $\mc{M}_C/K$ is the same thing as a Zariski closed set in the traditional sense of rigid analytic geometry.} of $\mc{M}_{C}/K$, for any $K \leq G(\mbb{Q}_p) \times \mf{I}$ (resp. $K \leq G_b(\mbb{Q}_p) \times \mf{I}$) compact open, that contain $A$.  
    \item an \emph{irreducible Hodge (resp. Hodge-Tate) analytic set} is a connected component of the preimage in $\mc{M}_{C}$ of an irreducible locally closed rigid analytic subdiamond of $\mc{M}_{C}/K$ for $K \leq G(\mbb{Q}_p) \times \mf{I}$ (resp. $K \leq G_b(\mbb{Q}_p) \times \mf{I}$) compact open. 
    \item A locally closed subdiamond of $\mc{M}_{C}$ is \emph{bi-analytic} if it is both an irreducible Hodge analytic set and an irreducible Hodge-Tate analytic set.    
\end{itemize}
\end{definition}

\begin{remark} In the minuscule case, the Hodge/Hodge-Tate Zariski closures are equivalent to the topological closures in the inverse limit of the Zariski topologies in the Hodge/Hodge-Tate towers used in the introduction. In the non-minuscule case, already at finite level it is not clear to us whether a finite union of closed rigid analytic subdiamonds is rigid analytic. Thus, this definition may not correspond to the closure in the inverse limit of the topologies at finite level whose closed sets are generated by closed rigid analytic subdiamonds. However, the direct definition given above captures the key property we are interested in, and the question of whether it corresponds to a genuine topological closure will not arise in the proof of our main results. 
\end{remark}

We can now state the bi-analytic Ax-Lindemann theorem in full generality.
\begin{theorem}[Bi-analytic Ax-Lindemann]\label{theorem.ax-lindemann-general} Let $G/\mbb{Q}_p$ be a connected linear algebraic group,  let $[\mu]$ be a conjugacy class of cocharacters of $G_{\overline{\mbb{Q}}_p}$ admitting a basic class $[b] \in B(G,[\mu^{-1}])$, and fix $b \in G(\breve{\mbb{Q}}_p)$ representing this class. Let $\mc{M}=\mc{M}_{b,[\mu]}$. 

\begin{enumerate}
    \item Hodge (resp. Hodge-Tate) special subvarieties are Hodge (resp. Hodge-Tate) Zariski closed irreducible Hodge (resp. Hodge-Tate) analytic sets in $\mc{M}_{C}$.
    \item The Hodge-Tate (resp. Hodge) analytic Zariski closure of an irreducible Hodge (resp. Hodge-Tate) analytic set is a Hodge-Tate (resp. Hodge) special subvariety unless the defining intersection is indexed by the empty set (in which case it is trivially equal to all of $\mc{M}_{C}$).
\end{enumerate}
In particular, special subvarieties are the only closed bi-analytic sets in $\mc{M}_{C}$. 
\end{theorem}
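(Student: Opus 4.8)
The plan is to derive the concluding equivalence from parts (1) and (2), to obtain part (2) from its ``Hodge $\Rightarrow$ Hodge-Tate'' direction via the duality theorem \cref{theorem.duality}, and to prove that direction by a Tannakian reduction to the transcendence statement for a single classical admissible pair, the density of classical points in the Hodge-generic locus (\cref{lemma:Hodge-generic-locus-for-rigid-analytic}) being the one genuinely new input.

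\textbf{From (1) and (2) to the final statement.} Let $Y \subseteq \mc{M}_{C}$ be a closed bi-analytic set. Being closed and an irreducible Hodge (resp.\ Hodge-Tate) analytic set, $Y$ coincides with its own Hodge (resp.\ Hodge-Tate) analytic Zariski closure. Applying part (2) to $Y$ viewed as an irreducible Hodge analytic set, its Hodge-Tate analytic Zariski closure, which is $Y$ itself, is a Hodge-Tate special subvariety (the exceptional empty-index case is excluded because $Y$, being closed, lies inside a proper closed rigid analytic subdiamond of some $\mc{M}_C/K'$). Symmetrically $Y$ is a Hodge special subvariety, so by the lemma of \cref{ss.inf-level-special-sv} characterizing special subvarieties, $Y$ is a special subvariety. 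Conversely a special subvariety is simultaneously a Hodge-special and a Hodge-Tate special subvariety, so by part (1) it is both a Hodge-Zariski-closed irreducible Hodge analytic set and a Hodge-Tate-Zariski-closed irreducible Hodge-Tate analytic set, i.e.\ a closed bi-analytic set.

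\textbf{Part (1) and reduction of (2).} A Hodge-special subvariety is a connected component of $\pi_{\mc{L}_\et}^{-1}(\Gr_{[\mu_H],C}^{b_H-\adm})$, and $\Gr_{[\mu_H],C}^{b_H-\adm}$ is a closed rigid analytic subdiamond of $\Gr_{[\mu],C}^{b-\adm}$ by \cref{prop:funct-closed-subdiamond} and \cref{lem.special-subvariety-closed}; since $\mc{M}_C/K \to \Gr_{[\mu],C}^{b-\adm}$ is finite \'etale at each finite level, its preimage there is again closed rigid analytic, and a connected component descends to a closed rigid analytic subvariety at a suitable finite level --- this gives (1) on the Hodge side, and the Hodge-Tate side is identical using \cref{prop.Xmu-funct-closed} and \cref{def.special-subvariety-pahs}. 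For (2): the isomorphism $\mc{M}_{b,[\mu]}\cong\mc{M}_{b^{-1},[\mu^{-1}]}$ of \cref{theorem.duality} exchanges the two period maps, the two finite-level presentations, and Hodge with Hodge-Tate special subvarieties, so it suffices to show that the Hodge-Tate analytic Zariski closure of an irreducible Hodge analytic set $\tilde Z$ is a Hodge-Tate special subvariety (or all of $\mc{M}_C$ in the exceptional case).

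\textbf{Tannakian reduction and the transcendence input.} Write $\tilde Z$ as a connected component of the preimage in $\mc{M}_C$ of an irreducible rigid analytic subvariety $Z \subseteq \mc{M}_C/K$, $K \leq G(\mbb{Q}_p)\times\mf{I}$, and pull the universal $b$-rigidified neutral $G$-admissible pair back to $Z$, a rigid analytic variety over a finite extension $E$ of $L$; call the result $\mc{G}_Z$. By \cref{theorem.rigid-an-has-type} it has a type, and by \cref{lemma.ap-red-sg} the canonical reduction of $\mc{G}_Z$ to its generic motivic Galois group $H \leq G$ realizes $\tilde Z$ inside the Hodge-special subdiamond $\mc{M}_{b_H,[\mu_H],C}$ with $b_H$ basic in $B(H,[\mu_H])$ and $[\mu_H]\subseteq[\mu]$; replacing $(G,b,[\mu])$ by $(H,b_H,[\mu_H])$ we may assume the generic motivic Galois group of $\mc{G}_Z$ is $G$. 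Now the geometric heart: by \cref{lemma:Hodge-generic-locus-for-rigid-analytic}, the Hodge-generic locus --- the complement in $Z$ of the countable union of the proper Zariski-closed Hodge loci of \cref{lemma:ap-rigid-analytic-Hodge-locus} --- contains a dense set of classical points; choose one, $z_0 \in Z(E')$ with $[E':E]<\infty$. Then $\mc{G}_{z_0}$ is an admissible pair with good reduction over the $p$-adic field $E'$ whose motivic Galois group is all of $G$, so \Iref{corollary.galois-rep-dense-open} (equivalently \Iref{main.G-structure-period-maps}) shows that the Hodge-Tate period map has Zariski-dense image in $\Fl_{[\mu],E'}$, hence in $X_{[\mu],C}$, already on the orbit of a compact open subgroup of $G(\mbb{Q}_p)$ through a lift of $z_0$, and this orbit is contained in $\tilde Z$ (after shrinking $K$, which does not change $\tilde Z$). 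Consequently, if $X_{[\mu]}$ is rigid analytic --- equivalently the weights of $[\mu]$ on $\Lie G$ are $\geq -1$ --- then every closed rigid analytic subvariety of every $\mc{M}_C/K'$, $K'\leq G_b(\mbb{Q}_p)\times\mf{I}$, containing $\tilde Z$ surjects via $\pi_{\mc{L}_\dR}$ onto a closed rigid analytic subvariety of $X_{[\mu],C}$ containing a Zariski-dense subset, hence onto all of $X_{[\mu],C}$; so the Hodge-Tate analytic Zariski closure of $\tilde Z$ is a connected component of $\mc{M}_{b,[\mu],C}$, a Hodge-Tate special subvariety. If $X_{[\mu]}$ is not rigid analytic, then no $\mc{M}_C/K'$ admits a proper closed rigid analytic subvariety through $\tilde Z$, so the defining intersection is indexed by the empty set and the closure is all of $\mc{M}_C$, exactly as the statement allows.

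\textbf{Main obstacle.} The deepest point is \cref{lemma:Hodge-generic-locus-for-rigid-analytic}: producing a \emph{classical} point in the Hodge-generic locus. Unlike in the complex setting there is no reductivity to control the Hodge loci, and a priori the non-Hodge-generic locus is only a countable union of proper Zariski-closed subsets; the Baire category theorem, applied on the adic/Berkovich space where the weakly Shilov points are dense, produces a weakly Shilov point off all of them, and a further delicate argument --- carried out in \cref{ss.generic-lemma} --- is needed to descend such a point to a classical one. A second, more bookkeeping-level, difficulty is the non-minuscule distinction between Hodge and Hodge-Tate special subvarieties used above: one must check that the reduction to $H=G$ is compatible with Hodge-Tate closures and that the empty-index clause is triggered precisely when the relevant lattice period domain for $H$ fails to be rigid analytic.
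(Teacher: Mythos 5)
Your skeleton matches the paper's: reduce to the Hodge$\Rightarrow$Hodge-Tate direction by \cref{theorem.duality}, prove (1) via \cref{prop:funct-closed-subdiamond}, \cref{lem.special-subvariety-closed}, \cref{prop.Xmu-funct-closed}, do the Tannakian reduction via \cref{lemma.ap-red-sg} to the case where the generic motivic Galois group is $G$ itself, produce a Hodge-generic classical point via \cref{lemma:Hodge-generic-locus-for-rigid-analytic}, and feed it to \Iref{corollary.galois-rep-dense-open}. But the final step --- going from that transcendence input to the conclusion --- has a genuine gap, and it is exactly the part of the argument that makes the non-minuscule case work.

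You case-split on whether $X_{[\mu]}$ is rigid analytic and, in the good case, claim that any closed rigid analytic $Y\subseteq\mc{M}_C/K'$ containing $\pi_{K'}(\tilde Z)$ ``surjects via $\pi_{\mc{L}_\dR}$ onto a closed rigid analytic subvariety of $X_{[\mu],C}$''; in the bad case you simply assert there is no such $Y$. Both assertions are unproven. The map $\mc{M}_C/K'\to X_{[\mu],C}$ is \'etale but \emph{not} finite or proper (the fibers are $G_b(\mbb{Q}_p)/K'$-torsors), so the image of a closed subvariety need not be closed, and even if the image were all of $X_{[\mu],C}$, that alone does not force $Y$ to be a connected component. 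And the bad-case assertion is precisely the hard direction of the dichotomy, not an input. The paper's proof replaces this with a local computation: pick a smooth $C$-point $y\in Y(C)$ lying under the Hodge-generic classical lift $\tilde z$, use the $\mf{I}$-orbit of $y$ (which lies in $Y$) together with \Iref{corollary.galois-rep-dense-open} and \cref{lemma:hodge-generic-MGG} to realize it as the orbit of an open subgroup of $G(\mbb{Q}_p)$, observe that $Y\to\Fl_{[\mu]}$ is unramified so a small ball $B(y)\subseteq Y$ maps isomorphically onto a Zariski locally closed subset of $\Fl_{[\mu]}$, and deduce from the $G(\mbb{Q}_p)$-orbit seeing all tangent directions that this image is actually \emph{open} in $\Fl_{[\mu]}$. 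This has two consequences at once: $Y$ is open at $y$ in $\mc{M}_C/K_b$ (closed + open + connected $\Rightarrow$ connected component, containing $\mc{M}^\circ$), and the universal filtration on that open subset of $\Fl_{[\mu]}$ satisfies Griffiths transversality because it is pulled back from the rigid analytic $Y$, which by \cref{theorem.bdr-aff-grass-properties}-(5) forces the weights of $[\mu]$ on $\Lie G$ to be $\geq -1$. So the weight condition and the openness are \emph{derived} from the existence of a single $Y$ in the intersection, rather than assumed in a case split. You have also omitted the justification that the reduction to $H$ is harmless for the Hodge-Tate closure --- the paper shows the locus where the Mumford--Tate group of the $p$-adic Hodge structure on $Y$ lands in $H$ is a closed rigid analytic subset by \cref{lemma:pahs-rigid-ananlytic-HT-locus}, so one may restrict the intersection to those $Y$ factoring through $\mc{M}_{b_H,[\mu_H]}/K_b'$ without changing the result. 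In short: \cref{lemma:Hodge-generic-locus-for-rigid-analytic} is one of the key inputs, but the Griffiths-transversality openness argument is what it feeds into, and it is missing from your proposal.
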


\begin{remark} Without the basic assumption, special subvarieties may not be Hodge-Tate analytically Zariski closed in any reasonable sense. Indeed, this can be seen already in the simplest case of the generic fiber of the Serre-Tate moduli of deformations of $\mu_{p^\infty} \times \mbb{Q}_p/\mbb{Z}_p$ (corresponding to $G$ the group of invertible upper triangular matrices, $\mu=\mr{diag}(z^{-1},1)$ and $b=\mr{diag}(p,1)\sigma$). Indeed, in this case the image of the Hodge-Tate period map is a single point, and because of this there is no way to cut out the special point corresponding to the diagonal torus given by the canonical lift using only conditions on the Hodge-Tate filtration.  
\end{remark}

\subsection{Proof of \cref{theorem.ax-lindemann-general} and \cref{main.hodge-tate-image}}\label{ss.proof-main-results}

\begin{proof}[Proof of \cref{theorem.ax-lindemann-general}]
We first verify part (1) of \cref{theorem.ax-lindemann-general}. A Hodge special subvariety $\tilde{Z}$ of $\mc{M}_C:=\mc{M}_{b,[\mu],C}$ is a Hodge irreducible analytic set: by definition, $\tilde{Z}$ is a connected component of the pre-image in $\mc{M}_C$ of a special subvariety $\Gr_{[\mu_H], C}^{b_H-\adm}$ of $\Gr_{[\mu],C}^{b-\adm}$ (for $[\mu_H]$ having weights $\leq 1$ on $\Lie H$), and $\Gr_{[\mu_H]}^{b_H-\adm}$ is identified with an open (by \cref{prop.b-adm-locus-open}) subset of $\Gr_{[\mu_H]} \cong \Fl_{[\mu_H^{-1}]}^\diamond$. Thus $\Gr_{[\mu_H]}^{b_H-\adm}$ is associated to a smooth rigid analytic variety, so any connected component $T$ of its preimage in $\mc{M}_C/K$ for any compact open $K\leq G(\mathbb{Q}_p)\times \mf{I}$ is the diamond associated to a smooth and connected and thus irreducible rigid analytic variety. Since $\tilde{Z}$ is also equal to a connected component of the preimage of such a $T$, we find it is a Hodge irreducible analytic set. 

That $\tilde{Z}$ is Hodge Zariski closed follows from the description (denoting by $\pi_K \colon \mc{M}_C \to \mc{M}_C/K$ the quotient by a compact open subgroup $K \leq G(\QQ_p) \times \mf{I}$)
    \[ \tilde{Z} = \bigcap_{K \leq G(\QQ_p) \times \mf{I}} \pi_K^{-1}(\pi_K(\tilde{Z})), \]
once we show that $\pi_K(\tilde{Z}) \subseteq \mc{M}_C/K$ is closed and rigid analytic. We may assume $K \leq G(\QQ_p) \times \Gal(\overline{L}/L([\mu_H])$. Then that $\pi_K(\tilde{Z})$ is closed and rigid analytic follows as $\pi_K(\tilde{Z}) \subseteq \mc{M}_C/K$ is a connected component  of the preimage (under the \'{e}tale map $\mc{M}_C/K \to \Gr_{[\mu],L([\mu_H])}^{b-\adm}$) of the rigid analytic subdiamond $\Gr_{[\mu_H]}^{b_H-\adm}$ of $\Gr_{[\mu],L([\mu_H])}^{b-\adm}$, which is closed by \cref{prop:funct-closed-subdiamond} and \cref{lem.special-subvariety-closed}. We see similarly that a Hodge-Tate special subvariety is a Hodge-Tate Zariski closed irreducible Hodge-Tate analytic set (using either the combination of \cref{prop.moduli-pahs-X} and \cref{prop.Xmu-funct-closed} to prove it directly in the same way, or appealing to duality in \cref{theorem.duality} to reduce to the statement for Hodge special subvarieties). 

We now verify part (2) of \cref{theorem.ax-lindemann-general} (the statement following it at the end of the theorem about bi-analytic sets being an immediate consequence). By the duality of \cref{theorem.duality}, it suffices to verify only the statement about the Hodge-Tate analytic Zariski closure of an irreducible Hodge analytic set. Suppose $\tilde{Z}$ is an irreducible Hodge analytic set, i.e.\  $\tilde{Z}$ is a connected component of the pre-image in $\mc{M}_C$ of an irreducible rigid analytic subdiamond $Z$ in $\mc{M}_C/K$ for a compact open $K\leq G(\mbb{Q}_p) \times \mf{I}$. Since $\tilde{Z}$ is connected, by shrinking the Galois part of $K$ we may assume that $Z$ is geometrically connected (as a rigid analytic diamond over $C^K$).

Writing $\pi_{K_b} \colon \mc{M}_C \to \mc{M}_C/{K_b}$ for the quotient by a compact open $K_b \leq G_b(\QQ_p) \times \mf{I}$,  the Hodge-Tate Zariski  closure can be written as
\begin{equation}\label{eqn:HT-Zar-clos-intersection}
\ol{\tilde{Z}}^{\HT} = \bigcap_{ \pi_{K_b}(\tilde{Z}) \subseteq Y \subseteq \mc{M}_C/K_b} \tilde{Y}
\end{equation}
where $Y$ ranges over the closed rigid analytic subdiamonds of $\mc{M}_C/K_b$ containing $\pi_{K_b}(\tilde{Z})$ (for varying $K_b$), and $\tilde{Y} \subseteq \pi_{K_b}^{-1}(Y)$ is the connected component containing $\tilde{Z}$. It is possible that $\pi_{K_b}(\tilde{Z})$ is not contained in a closed rigid analytic subdiamond of $\mc{M}_{C}/K_b$ for any $K_b$ compact open in $G_b(\mbb{Q}_p) \times \mf{I}$: in this case, by definition the Hodge-Tate Zariski closure is an empty intersection, so equal to $\mc{M}_{C}$. 

Otherwise, fix a geometric point $x$ of $Z$ and a lift $\tilde{x}$ to $\tilde{Z}$, and let $\mc{G}$ denote the restriction of the universal $G$-admissible pair to $Z$. The choice of $\tilde{x}$ induces an isomorphism $\omega_{x}\circ \mc{G}\cong\omega_\std$; let $H =\MG(\mc{G}, x) \leq G$ denote the corresponding motivic Galois group of $\mc{G}$. We have the canonical reduction of structure group $\mc{H}$ of $\mc{G}$ to $H$ that, by \cref{lemma.ap-red-sg}, is pulled back via a factorization of $Z$ through some $\Gr_{[\mu_H]}^{b_H-\adm}$. By construction, $\tilde{Z}$ then factors through $\mc{M}_{b_H,[\mu_H]}$; let $\mc{M}^\circ$ be the connected component of $\mc{M}_{b_H,[\mu_H],C}$ containing $\tilde{Z}$. 

We will show that, if the intersection in \cref{eqn:HT-Zar-clos-intersection} defining the closure is not indexed by the empty set, then in fact the weights of $[\mu_H]$ on $\Lie\+ H$ are $\geq -1$ and any $Y$ appearing in the intersection contains the image of $\mc{M}^\circ$. It then follows from part (1) that in fact this intersection is equal to $\mc{M}^\circ$ and is Hodge-Tate special. 

To this end, we change views slightly and now suppose $Y$ is a seminormal rigid analytic variety over $\Spa\+ C^{K_b}$ such that $Y^\diamond$ appears in the index set of the intersection in \cref{eqn:HT-Zar-clos-intersection}, i.e.\ $Y^\diamond$ is a closed rigid analytic subdiamond of $\mc{M}_{b,[\mu]}/ K_b$ containing the image of $\tilde{Z}$. Since $\tilde{Z}$ is connected, we may assume that $Y^\diamond$ (and hence $Y$) is connected. The locus where the Mumford-Tate group of the canonical neutral $p$-adic
Hodge structure over $Y^\diamond$ (from the map $Y^\diamond \to X_{[\mu]}$) factors through $H$ is a closed rigid analytic subdiamond of $Y^\diamond$: writing $H$ as the stabilizer of a line $\ell$ in a representation of $G$, then this locus is the Hodge-Tate locus of $\ell$, which is Zariski closed by \cref{lemma:pahs-rigid-ananlytic-HT-locus}. Therefore restricting the intersection \eqref{eqn:HT-Zar-clos-intersection} to only include the $Y^\diamond$ that factor through $\mc{M}_{b_H, [\mu_H]}/K_b'$, where $K'_b=H(\mbb{Q}_p) \times \mf{I} \cap K_b$, does not change the Hodge-Tate Zariski closure. Thus from here on out we may simply assume $G=H$ is the Mumford-Tate group of the canonical $p$-adic Hodge structure on $Y^\diamond$.


By \cref{lemma:Hodge-generic-locus-for-rigid-analytic}, we can choose a Hodge generic classical point $z \in Z(C)$ and a lift $\tilde{z}$ in $\tilde{Z}(C)$. Let $y\in Y(C)=Y^\diamond(C)$ denote the image of $\tilde{z}$. Note $Y(C) = Y^\diamond(C)$ also contains the orbit of $y$ under a sufficiently small open subgroup of $\mf{I}$, which, because $z$ is Hodge generic, by \Iref{corollary.galois-rep-dense-open} and \cref{lemma:hodge-generic-MGG}, is equal to the orbit of $y$ by a compact open subgroup $K_0 \leq G(\mbb{Q}_p)$. Note that we may stratify $Y$ by finitely many smooth locally closed rigid analytic subvarieties. One of these, call it $V$, satisfies $V(C) \cap (K_0 \cdot y) \subseteq K_0 \cdot y$ is a non-empty open subset of $K_0 \cdot y$. Consequently $V(C)$ contains $K_0' \cdot y'$ for some $y'=k \cdot y$, $k \in K_0$, and $K_0'$ an open subgroup of $K_0$. Replacing $\tilde{z}$ with $k \cdot \tilde{z}$ and replacing $K_0$ with $K_0'$, we may assume $y=y'$ and $K_0'=K_0$, so that $K_0 \cdot y \subseteq V(C) \subseteq Y(C)$. With $E = C^{K_b}$ (a $p$-adic subfield of $C$), let $f \colon V \rightarrow \Fl_{[\mu],E}$ be the unique map of smooth rigid analytic varieties over $E$ such that $f^\diamond$ is the composition 
\[ V^\diamond \rightarrow Y^\diamond \rightarrow \mathcal{M}_{C}/K_b \rightarrow \Gr_{[\mu^{-1}],E} \xrightarrow{\BB} \Fl_{[\mu],E}^\diamond. \]
Note that $V$ contains a non-empty Zariski open subvariety where the rank of $df$ is constant equal to its maximum. If the intersection of this open set with $K_0 \cdot y$ is non-empty, we can replace $V$ with this open subvariety and again change $\tilde{z}$ and shrink $K_0$ to assume $K_0 \cdot y \subseteq V(C)$ and the rank of $df$ is constant on $V$. Otherwise $K_0 \cdot y$ is contained in the Zariski closed complement, and we may stratify this complement by smooth subvarieties and possibly change $\tilde{z}, y$, and $K_0$ to obtain a new $V$ of smaller dimension containing $K_0 \cdot y$. Repeating this process, we eventually obtain a smooth subvariety $V \subseteq Y$, a compact open subgroup $K_0 \leq G(\mathbb{Q}_p)$, and a choice of lift $\tilde{z}$ of $z$ with image $y$ in $Y(C)$ such that $K_0 \cdot y \in V(C)$ and such that, for $f: V \rightarrow \Fl_{[\mu],E}$, the rank of $df$ is constant equal to an integer $k$. 

Consider now the induced map $f_C: V(C) \rightarrow \Fl_{[\mu]}(C)$ of $p$-adic analytic manifolds (in the sense of \cite[Chapter III of Part II]{serre.lie-algebras-and-lie-groups}). We have $df_C=df|_{T_{V(C)}}$, thus we can apply the constant rank theorem \cite[Theorem on subimmersions on p.86]{serre.lie-algebras-and-lie-groups} to find that there are local charts around $y$ and $f(y)$ such that $f_C$ can be written as $(x_1,\ldots,x_m) \rightarrow (x_1, \ldots, x_k, 0, \ldots ,0)$, where $m$ is the dimension of $V$, and there are $n-k$ zeroes for $n$ the dimension of $\Fl_{[\mu]}$. Shrinking $K_0$, we may assume $K_0 \cdot y$ is contained in the domain of the chart around $y$. 

We now show $n=k$, arguing by contradiction. Otherwise, $n>k$, and taking one of the last $n-k$ coordinates in the local chart around $f(y)$ gives a non-constant function on a neighborhood of $f(y)$ whose pullback to a small neighborhood of the identity in $G(C)$ along the orbit map $G(C) \rightarrow \Fl_{[\mu]}(C)$ for $G(C)$ acting on $f(y)$ is identically zero on $K_0 \subseteq G(C)$. But, since any function which is zero on $K_0$ is zero on a neighborhood of the identity in $G(C)$ and the orbit map is a submersion, this coordinate function is constant on an open neighborhood of $f(y)$, a contradiction. 

It follows, in particular, that $df_y$ is surjective. Since Griffiths transversality for the trivial connection holds over $V$ by \cref{theorem.bdr-aff-grass-properties}-(4), we conclude that Griffiths transversality holds for the trivial $G$-bundle in all of the directions in $T_{\Fl_{[\mu]}, f(y)}$. As in the proof of \cref{theorem.bdr-aff-grass-properties}-(5), by a standard computation this implies the weights of $[\mu]$ for the adjoint action on $\Lie\+ G$ are $\geq -1$ and thus, by \cref{theorem.bdr-aff-grass-properties}-(5), that $\Gr_{[\mu^{-1}]}=\Fl_{[\mu]}^\diamond$. In particular, since $\mathcal{M}_{C}/K_b$ is \'etale over $\Fl_{[\mu],E}^\diamond$, it is a rigid analytic diamond over $E$. In a slight abuse of notation, we consider $\mc{M}_C/K_b$ as a smooth rigid analytic variety over $E$ in what follows. By \cref{lemma.closed-rigid-analytic-subdiamonds-zariski-closed}, $Y$ corresponds to a Zariski closed subvariety of $\mc{M}_C/K_b$. We have the following comparison of dimensions of tangent spaces
\[ \dim_y Y \geq \dim_y V \geq \dim_{f(y)}\Fl_{[\mu],E} = \dim_y \mathcal{M}_{C}/K_b.\]
Thus $Y$ corresponds to a connected Zariski closed subvariety of $\mathcal{M}_{C}/K_b$ with dimension at $y$ equal to the dimension of $\mathcal{M}_{C}/K_b$, and thus $Y$ is the connected component of $\mathcal{M}_{C}/K_b$ containing $y$. In particular, its preimage in $\mathcal{M}_{C}$ contains $\mathcal{M}^\circ$, since the image of $\mathcal{M}^\circ$ is a connected set containing $y$. 

\end{proof}

\begin{proof}[Proof of \cref{main.hodge-tate-image}]
\cref{theorem.rigid-an-has-type} implies the existence of the lattice invariant $[\mu]$. Then, the argument given above for \cref{theorem.ax-lindemann-general}-(ii) establishes the remainder of \cref{main.hodge-tate-image} (we have not used the basic hypothesis except to apply the duality of \cref{theorem.duality}). 
\end{proof}

\subsection{Proof of \cref{lemma:Hodge-generic-locus-for-rigid-analytic}}\label{ss.generic-lemma}

It remains to establish: 

\begin{lemma}\label{lemma:Hodge-generic-locus-for-rigid-analytic} 
Let $L/\breve{\mbb{Q}}_p$ be a $p$-adic field, $Z/\Spa\+ L$ a geometrically irreducible rigid analytic variety, and $\mc{G}$ a neutral $G$-admissible pair over $Z^\diamond$. Then the Hodge generic locus of $\mc{G}$ (see \cref{defn.Hodge-generic}) in $|Z|=|Z^\diamond|$ contains a dense set of classical points. 
\end{lemma}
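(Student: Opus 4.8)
The strategy is to reduce to a statement about the behavior of Hodge-Tate loci on the target of a period map, using the Baire category theorem to pass from a dense set of ``weakly Shilov'' points (where the argument is more transparent) to a dense set of classical points. I would proceed in roughly three stages.

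\emph{Stage 1: Setting up the obstruction locus.} By definition the Hodge-generic locus is the complement of the union, over all non-special tensors, of their Hodge loci; more precisely, after passing to a connected trivializing cover $\tilde Y \to Y$ of the $G(\mbb{Q}_p)$-local system $\omega_\et \circ \mc{G}$, the failure of Hodge-genericity at a point means that the motivic Galois group there drops into some proper closed subgroup $H \leq G$. By the Tannakian formalism, each such $H$ is the stabilizer of a line $\ell$ in a representation of $G$, and the locus where $\mc{G}$ admits that sub-object is $\Hdg(\ell)$, which by \cref{lemma:ap-rigid-analytic-Hodge-locus} is a \emph{Zariski-closed} rigid analytic subvariety of $\tilde Y$ (hence of $Y$ after descent). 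So the non-Hodge-generic locus is a countable union of proper Zariski-closed rigid analytic subvarieties (countable because there are only countably many isomorphism classes of relevant tensors over the fixed $p$-adic field $L$; one must check that only countably many $H$ can actually occur, or alternatively stratify by the algebraic subgroup $H$ and use that for fixed $H$ the relevant data is a countable union). The Hodge-generic locus is therefore the complement in $Y$ of a countable union of nowhere-dense Zariski-closed analytic subsets.

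\emph{Stage 2: Baire category and the weak Shilov boundary.} The point is now to show that this complement still contains classical (i.e. $\overline L$-rigid) points. Here I would invoke a Baire-category argument on an appropriate subset of $|Y|$: on a connected affinoid piece of $Y$ the set of classical points is dense, but a countable union of nowhere-dense closed sets cannot cover a Baire space, so \emph{some} points survive — the subtlety is that the classical points themselves form a meager subset of $|Y|$, so one cannot directly conclude that a classical point survives. This is exactly the ``delicate application of the Baire category theorem to move between density of weakly Shilov points and classical points'' flagged in the introduction. The fix is to run Baire category on the set of weakly Shilov points (type-2 points / Shilov boundary points of affinoid subdomains), which \emph{is} a Baire space in the subspace topology and on which each $\Hdg(\ell)$ still cuts out a nowhere-dense closed subset (since $\Hdg(\ell)$ is a proper Zariski-closed subvariety, its Shilov points are nowhere dense among Shilov points of $Y$). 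This produces a weakly Shilov point $\eta$ that is Hodge-generic. Then one uses a specialization/approximation argument: a classical point sufficiently close to a Hodge-generic weakly Shilov point is itself Hodge-generic, because each $\Hdg(\ell)$ is closed and does not contain $\eta$, so a small enough neighborhood of $\eta$ avoids all of the finitely many $\Hdg(\ell)$ that could cause trouble at a given ``level'' — and one organizes the countably many conditions so that classical points arbitrarily close to $\eta$ avoid all of them. (One should be careful: one wants density of such classical points, not just existence, so the argument should be run over every non-empty affinoid open, or one should note that the Hodge-generic locus is stable under the relevant symmetries and invoke density of classical points in each piece.)

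\emph{Stage 3: Conclusion.} Combining the above, in every non-empty admissible open $U \subseteq Y$ one finds a Hodge-generic classical point, giving density of Hodge-generic classical points, which is the claim. The main obstacle is precisely Stage 2: the classical points are topologically meager in $|Y|$, so naive Baire category fails, and one genuinely needs the interplay between the Shilov/weakly-Shilov points (where Baire category works) and classical points (which are what the statement requires), together with a quantitative ``closeness implies Hodge-generic'' statement that lets one transfer Hodge-genericity from a weakly Shilov point to nearby classical points. A secondary technical point is ensuring the indexing set of obstructing subgroups $H$ (and tensors $\ell$) is countable, which requires a mild finiteness input — e.g. that over a fixed $p$-adic field there are only countably many isomorphism classes of admissible pairs appearing as sub-objects of tensor constructions on a fixed $\mc{G}$, or a Noetherian stratification of the possible $H$.
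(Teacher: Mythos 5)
Your proposal correctly identifies the two key ideas at a high level (countable union of nowhere-dense closed sets; Baire category to bridge Shilov and classical points), but there are two genuine gaps that the paper's actual argument is designed to close, and your Stage 2 is structured differently in a way that does not quite work.

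\textbf{The countability claim in Stage 1 is not justified, and as stated it is false.} You assert that only ``countably many isomorphism classes of relevant tensors over the fixed $p$-adic field $L$'' can occur. But the rank-one sub-isocrystals $\ell$ of $\omega_\Isoc \circ \mc{G}(V)$ are parameterized (for each weight $k$) by the profinite set $\mbb{P}_{V_k}(\QQ_p)$ where $V_k = W_b(V)^{\varphi = p^{-k}}$, and this set has the cardinality of the continuum. So the Hodge exceptional locus is a priori an \emph{uncountable} union of the individual $\Hdg(\ell)$'s, and your plan to use Baire category collapses at the starting gate. The paper's fix (\cref{prop:hodge-exceptional-countable-union}) is an incidence-variety argument: form the incidence diamond $I_{V_k} \subseteq \mbb{P}_{V_k}(\QQ_p) \times S$, observe that the Hodge-exceptional lines form an open subset $U \subseteq \mbb{P}_{V_k}(\QQ_p)$, write the profinite open $U$ as a countable union of compact opens $K_i$, and then take the images $\pi(|I_{K_i}|) \subseteq |S|$, which are closed by properness of the projection. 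This packages the uncountably many $\Hdg(\ell)$'s into countably many closed sets, which is exactly what Baire needs. You need this or something equivalent; appealing to ``countably many tensors'' does not get you there.

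\textbf{Stage 2 has the Baire argument running on the wrong space, and the final transfer step has a hole.} You propose to run Baire category on the set of weakly Shilov points (whose Baire property you assert but do not substantiate), extract a single Hodge-generic weakly Shilov point $\eta$, and then approximate $\eta$ by classical points, waving at ``organizing the countably many conditions.'' That last step is precisely where the difficulty lives: a neighborhood of $\eta$ only avoids finitely many of the countably many closed sets, so you are implicitly re-running a Baire argument on the classical points themselves, which you have not set up. The paper's argument is cleaner and avoids both issues. First, Baire category is \emph{not needed} on Shilov points: since $Y$ is geometrically irreducible, each $\Hdg(\ell) \neq S$ is a nowhere-dense Zariski-closed rigid analytic subvariety, and by \cite[Corollary 2.10]{bhatt-hansen:six-functors-Zariski-constructible} such a set contains \emph{no} weakly Shilov points at all; so every weakly Shilov point is Hodge generic, for free. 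Second, the Baire argument is run on the classical points directly: fix a classical point $x \in S(L')$, take an $n$-dimensional disk $D_n/L'$ around it, and observe that $D_n(L') \cong \mc{O}_{L'}^n$ is a complete metric space, hence a Baire space. The hypothesis that each closed piece $Z$ of the Hodge exceptional locus meets $D_n(L')$ in a set with empty interior is verified by the Gauss-point trick: if $Z$ contained a subdisk's worth of $L'$-points, it would contain that subdisk's Gauss point (using that $\kappa$ is algebraically closed), but Gauss points are weakly Shilov, contradiction. Baire then produces Hodge-generic $L'$-points in every $D_n$, giving density. I'd also note that the paper first reduces to $S$ smooth by resolution of singularities (geometric irreducibility of $Y$ ensures the resolution stays connected), which is needed for the Bhatt-Hansen input; your proposal omits this.
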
 

We return first to the setting where $\mc{G}$ is a neutral admissible pair on an arbitrary locally spatial diamond $S$ over $\Spd\+ \breve{\QQ}_p$. 

\begin{definition}\label{defn.Hodge-generic}
The Hodge exceptional locus of $\mc{G}$ is the union of all strictly contained Hodge loci $|\Hdg(\ell)|\subsetneq |S|$ as $\ell$ runs over rank one sub-isocrystals of $\omega_\Isoc(V)$ for all representations $V$ of $G$. The Hodge generic locus is the complement of the Hodge exceptional locus. 
\end{definition}

The following lemma gives a description of the Hodge generic locus in terms of the motivic Galois group, and is also used in the proof of \cref{theorem.ax-lindemann-general} (but not in the proof of \cref{lemma:Hodge-generic-locus-for-rigid-analytic}). 

\begin{lemma}\label{lemma:hodge-generic-MGG}
A subdiamond $T \subseteq S$ intersects the Hodge generic locus of $\mc{G}$ if and only if $\MG(\mc{G}|_T, y) = \MG(\mc{G}, y)$ for some (equivalently any) geometric point $y$ of $T$. 
\end{lemma}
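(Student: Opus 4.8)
The plan is to prove Lemma~\ref{lemma:hodge-generic-MGG} by unwinding the Tannakian definition of the motivic Galois group and relating it to the Hodge loci $\Hdg(\ell)$.

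First I would recall the key dictionary from the Tannakian formalism: for a neutral admissible pair $\mc{G}$ on a connected locally spatial diamond $S$ with geometric point $y$, the group $\MG(\mc{G},y) = \ul{\Aut}^\otimes(\omega_y|_{\langle\mc{G}\rangle})$ is a subgroup of $\GL(\omega_y(\mc{G}))$, and by Chevalley's theorem every such closed subgroup is the stabilizer of a line $\ell$ in some representation built by tensor/dual/subquotient operations from $\omega_y(\mc{G})$ (equivalently, from a faithful representation of $G$). The point is that a line $\ell$ in $\omega_\Isoc(V)$ underlies a sub-admissible pair over all of $S$ precisely when $\Hdg(\ell) = S$, and for $T \subseteq S$ the restriction of the sub-admissible-pair structure to $T$ is recorded by $\MG(\mc{G}|_T,y)$ being contained in $\Stab(\ell)$ --- this is exactly the Mumford-Tate/motivic-Galois characterization of Hodge loci proved in Lemma~\ref{lemma:ap-ht-locus-closed} (and in the analogous statement \cref{lem:HT-locus-closed} for $p$-adic Hodge structures), namely that $\Hdg(\ell)$ is the locus where $\MG \subseteq \Stab_G(\ell)$.

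Now for the forward direction: suppose $T$ lies in the Hodge generic locus, so no proper Hodge locus $\Hdg(\ell) \subsetneq S$ meets $T$. Pick any line $\ell$ (in any representation) that is fixed by $\MG(\mc{G}|_T,y)$; by the characterization above, $T \subseteq \Hdg(\ell)$, and since $T$ meets no proper Hodge locus this forces $\Hdg(\ell) = S$, i.e. $\ell$ is fixed by $\MG(\mc{G},y)$ as well. Since $\MG(\mc{G}|_T,y)$ is determined by the collection of lines it fixes (again Chevalley, applied inside the Tannakian category $\langle\mc{G}\rangle$), and the reverse inclusion $\MG(\mc{G}|_T,y) \subseteq \MG(\mc{G},y)$ is automatic (restriction is a tensor functor, so any automorphism of $\omega_y|_{\langle\mc{G}\rangle}$ restricts to one of $\omega_y|_{\langle\mc{G}|_T\rangle}$ --- more precisely the restriction map on fiber functors is induced by the inclusion $\langle\mc{G}|_T\rangle \hookrightarrow$ the image of $\langle\mc{G}\rangle$ in $\AdmPair^\circ(T)$, giving a closed immersion of motivic Galois groups after trivialization), we conclude equality. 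Conversely, if $\MG(\mc{G}|_T,y) = \MG(\mc{G},y)$, then for any $\ell$ with $\Hdg(\ell)\cap T \neq \emptyset$ we have $\MG(\mc{G}|_T,y)\subseteq \Stab(\ell)$ at the relevant geometric point; but $\MG(\mc{G}|_T,y) = \MG(\mc{G},y)$ acts through the same group everywhere on $S$ (the motivic Galois group at a point of $T$ agrees with the one at any point, since $S$ is connected and $\MG$ is locally constant up to conjugacy --- here one uses that $\langle\mc{G}\rangle$ is the full Tannakian category and the trivialization propagates), so $\Hdg(\ell) = S$ and $\ell$ imposes no proper condition. The ``some equivalently any geometric point'' clause follows because over a connected $S$ the motivic Galois groups at two geometric points are (non-canonically) isomorphic via a tensor isomorphism of fiber functors, compatibly with the restriction to $T$ once $T$ is nonempty.

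The main obstacle I anticipate is being careful about base points and connectedness: the motivic Galois group depends on a choice of geometric point, and one needs the restriction functor $\langle\mc{G}\rangle \to \AdmPair^\circ(T)$ to behave well --- in particular that its ``Tannakian image'' has motivic Galois group $\MG(\mc{G}|_T,y)$ realized as a closed subgroup of $\MG(\mc{G},y)$ after fixing compatible trivializations of $\omega_y$ on both sides. This is standard Tannakian bookkeeping (the subcategory generated by the restriction of a generator is the image of the restriction functor), but it must be stated cleanly so that the Chevalley-line argument can be applied symmetrically on $S$ and on $T$. Once that is in place, the equivalence is a formal consequence of the fact, already established in Lemmas~\ref{lem:HT-locus-closed} and~\ref{lemma:ap-ht-locus-closed}, that Hodge (and Hodge-Tate) loci of lines are exactly the loci cutting down the motivic Galois (resp. Mumford-Tate) group to the stabilizer of that line.
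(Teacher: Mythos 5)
Your approach matches the paper's: reduce to $\MG(\mc{G},y)=G$ by fixing a trivialization, apply Chevalley's theorem to separate the two groups by a line in a representation, and use the Tannakian correspondence between lines fixed by the motivic Galois group and rank-one sub-admissible pairs. The forward direction (Hodge generic implies $\MG(\mc{G}|_T,y)=\MG(\mc{G},y)$) is correct, if a bit more roundabout than the paper's direct contrapositive argument (the paper simply produces, from properness of $H=\MG(\mc{G}|_T,y)$, a single Chevalley line $\ell_\et$ stabilized by $H$ but not $G$, and deduces $T\subseteq\Hdg(\ell)\subsetneq S$).

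There is however a genuine gap in your converse direction. You write that from $\Hdg(\ell)\cap T\neq\emptyset$ it follows that $\MG(\mc{G}|_T,y)\subseteq\Stab(\ell)$ ``at the relevant geometric point.'' This inference is not valid: the inclusion $\MG(\mc{G}|_T,y)\subseteq\Stab(\ell)$ holds if and only if $\ell$ underlies a rank-one sub-admissible pair of the \emph{entire} restriction $\mc{G}|_T$, i.e.\ if and only if $T\subseteq\Hdg(\ell)$, not merely $T\cap\Hdg(\ell)\neq\emptyset$. If $T$ only meets $\Hdg(\ell)$ partially, then $\ell$ gives a subobject of $\mc{G}|_{T\cap\Hdg(\ell)}$ but not of $\mc{G}|_T$, and one obtains an inclusion $\MG(\mc{G}|_{T\cap\Hdg(\ell)},y')\subseteq\Stab(\ell)$ rather than the one you need; passing to the containing group $\MG(\mc{G}|_T,y')$ goes the wrong way. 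The subsequent appeal to the motivic Galois group ``acting through the same group everywhere on $S$'' does not repair this, because the obstruction is not where the group sits but whether $\ell$ is a subobject over all of $T$.

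What the paper's proof actually establishes is precisely the biconditional ``$\MG(\mc{G}|_T,y)\subsetneq\MG(\mc{G},y)$ if and only if there exists $\ell$ with $T\subseteq\Hdg(\ell)\subsetneq S$.'' For $T$ a single (geometric or classical) point --- which is the case invoked in the proofs of \cref{theorem.ax-lindemann-general} and \cref{lemma:Hodge-generic-locus-for-rigid-analytic} --- the condition $T\cap\Hdg(\ell)\neq\emptyset$ is equivalent to $T\subseteq\Hdg(\ell)$, so the two formulations coincide and your gap disappears; but for a general subdiamond $T$ they do not, and the converse you are trying to prove (that $\MG$-equality forces $T$ to be \emph{disjoint} from every proper Hodge locus) is actually stronger than what the Chevalley/Tannakian argument delivers. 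If you want the statement in that generality, you should restrict attention to $T$ for which ``meets $\Hdg(\ell)$'' implies ``contained in $\Hdg(\ell)$,'' or else content yourself with the weaker (and sufficient for the applications) conclusion that no proper Hodge locus contains $T$.
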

\begin{proof}
Without loss of generality, we may assume $\MG(\mc{G},y) = G$ (fixing $\omega_y \circ \mc{G} \cong \omega_\std$). Now $H := \MG(\mc{G}|_T,y) \leq G$ is a proper closed subgroup if and only if there is a representation $V \in \Rep\+ G$ and a line $\ell_\et \subseteq V \cong \omega_y \circ \mc{G}(V)$ that is stabilized by $H$ but not by $G$. By the Tannakian formalism, this happens if and only if there is a rank one isocrystal $\ell \subseteq \omega_\Isoc \circ \mc{G}(V)$ that underlies a sub-admissible pair of $\mc{G}|_T$ but not of $\mc{G}$. Equivalently, $T \subseteq \Hdg(\ell) \subsetneq S$.
\end{proof}

Note that, without imposing further conditions, the Hodge generic locus may be empty. Still, we have the following useful statement, which we will invoke later to apply the Baire category theorem:

\begin{proposition}\label{prop:hodge-exceptional-countable-union}
Let $S$ be a locally spatial diamond over $\Spd\+ \breve{\QQ}_p$, and let $\mc{G}$ be a $b$-rigidified neutral $G$-admissible pair on $S$. The Hodge exceptional locus of $\mc{G}$ is a countable union of closed subsets of $|S|$.
\end{proposition}
\begin{proof}
Consider a geometric point $x \in S(C, C^+)$. By replacing $\mc{G}$ with its canonical reduction of structure group, we may assume $G=\MG(\mc{G}, x)$. Then $x$ lies in the Hodge exceptional locus if and only if the motivic Galois group $\MG(x^*\mc{G}, x)$ of $x^\ast \mc{G}$ is a proper subgroup of $G$. It thus suffices to show the following two statements:
\begin{enumerate}
    \item There exists a countable collection $\{V_\alpha\}$ of representations of $G$ such that any closed subgroup $H \leq G$ is the stabilizer of a line in some $V_\alpha$; and
    \item For any representation $V$ of $G$, let $\mc{I}$ be the set of all lines $\ell_{\breve{\QQ}_p}$ in $V \otimes_{\QQ_p} {\breve{\QQ}_p}$ such that $\ell_{\breve{\QQ}_p}$ underlies a rank one sub-isocrystal such that $\Hdg(\ell_{\breve{\QQ}_p}) \subsetneq |S|$ is a proper subset. Then $\mr{Exc}(V) := \bigcup_{\ell \in \mc{I}} \Hdg(\ell_{\breve{\QQ}_p})$ is a countable union of closed subsets of $|S|$. 
\end{enumerate}
Indeed, if these hold, then by (1) the Hodge exceptional locus is $\bigcup_{\alpha} \mr{Exc}(V_\alpha)$, and this can be rewritten as a countable union of closed subsets of $|S|$ since the index set is countable and, by (2), each $\mr{Exc}(V_\alpha)$ is a countable union of closed subsets of $|S|$. Assertion (1) follows \cite[Proof of Theorem 5.1]{borel:linear-algebraic-groups} whose argument we recall for convenience. Write the regular representation $\QQ_p[G] = \bigcup_i V_i$ as a countable filtered union of finite dimensional subrepresentations $V_i \subseteq \QQ_p[G]$ and take the collection $\{\wedge^j V_i\}$ of all exterior powers of the $V_i$. Now let $H \leq G$ be a closed subgroup. The ideal sheaf $I \subseteq \QQ_p[G]$ of $H$ is finitely generated so there exists some $V_i$ containing a set of generators of $I$. Then $W = V_i \cap I$ is a subspace of $V_i$, say of dimension $d$, and $H$ is the stabilizer of the line $\wedge^d W \subseteq \wedge^d V_i$.  

We now prove (2). Let $V \in \Rep\+ G$. For $k \in \ZZ$ define $V_k := W_b(V)^{\varphi = p^{-k}} \subseteq V_{\breve{\QQ}_p}$, a finite dimensional $\QQ_p$-vector space. Given a line $\ell \subseteq V_k$, the extension $\ell_{\breve{\QQ}_p} \subseteq V \otimes_{\QQ_p} \breve{\QQ}_p$ underlies a rank one sub-isocrystal, and any rank one sub-isocrystal of $W_b(V)$ arises from such a line in $V_k$ for some $k \in \ZZ$. Consider the incidence subdiamond
\[ I_{V_k} \subset {\mbb{P}_{V_k}(\QQ_p)} \times S \]
consisting of pairs $(\ell, s)$ such that $\ell \in \mbb{P}_{V_k}(\QQ_p)$ and $s \in \Hdg(\ell) := \Hdg(\ell_{\breve{\QQ}_p})$. This a closed subdiamond of ${\mbb{P}_{V_k}(\QQ_p)} \times S$ by essentially the same argument as in the proof of \cref{lem:HT-locus-closed}. Define $\mr{Exc}(V,k)$ to be the union of $\Hdg(\ell)$ for lines $\ell \subseteq V_k$ such that $\Hdg(\ell) \subsetneq |S|$. Then clearly $\mr{Exc}(V) = \bigcup_{k\in \ZZ} \mr{Exc}(V,k)$ and it suffices to show $\mr{Exc}(V,k)$ for a fixed $k$ is a countable union of closed subsets of $|S|$. 

The collection of lines $\ell \subseteq V_k$ such that $\Hdg(\ell) = S$ is the intersection in $\mbb{P}_{V_k}(\QQ_p)$ of all fibers of $|I_{V_k}|$ over $|S|$. Each fiber is closed in $\mbb{P}_{V_k}(\QQ_p)$ since $|I_{V_k}|$ is closed in $|{\mbb{P}_{V_k}(\QQ_p)} \times S|={\mbb{P}_{V_k}(\QQ_p)} \times |S|,$ and thus the collection of lines $\ell \subseteq V_k$ with $\Hdg(\ell) = S$ forms a closed subset of $\mbb{P}_{V_k}(\QQ_p)$. The open complement $U \subseteq \mbb{P}_{V_k}(\QQ_p)$ consists of the lines $\ell \subseteq V_k$ with $\Hdg(\ell) \subsetneq S$. Because $\mbb{P}_{V_k}(\QQ_p)$ admits a countable neighborhood basis consisting of compact open subsets, $U$ can be written as a countable union of compact (thus closed) open subsets $K_i$. Let $I_{K_i}:=I_{V_k} \cap K_i \times S$. Then $\mr{Exc}(V,k)$ is the countable union of the images of $|I_{K_i}|$ in $|S|$. As the projection map is proper, these are closed sets.
\end{proof}

The following implies \cref{lemma:Hodge-generic-locus-for-rigid-analytic} by taking $S$ to be a desingularization of $Z$ (which will be connected since $Z$ is irreducible). 

\begin{lemma}
Suppose $S$ is a smooth connected rigid analytic variety over $L$, and $\mc{G} = (W_b, \mc{L})$ is a $b$-rigidified neutral $G$-admissible pair on $S^\diamond$. Then the Hodge generic locus of $\mc{G}$ is dense in $|S| = |S^\diamond|$; in fact, it contains all weakly Shilov points (c.f.\ \cite[Definition 2.4]{bhatt-hansen:six-functors-Zariski-constructible}) and a dense set of classical points. 
\end{lemma}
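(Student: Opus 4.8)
The plan is to combine the countable decomposition of the Hodge exceptional locus from \cref{prop:hodge-exceptional-countable-union} with a dimension count (to handle the weakly Shilov points) and a delicate Baire category argument (to handle the classical points). Throughout one uses that $S$, being smooth and connected, is irreducible, of some dimension $d$, and that by \cref{theorem.rigid-an-has-type} the pair $\mc{G}$ has a type, so that for each line $\ell$ the Hodge locus $\Hdg(\ell)$ is a genuine Zariski-closed analytic subdiamond of $S$ by \cref{lemma:ap-rigid-analytic-Hodge-locus}. First I would invoke \cref{prop:hodge-exceptional-countable-union}: the Hodge exceptional locus $E\subseteq|S|$ is a countable union $\bigcup_i Z_i$ of closed subsets, and from the proof of that proposition each $Z_i$ is the image of an incidence diamond, so $Z_i=\bigcup_{\ell\in K_i}|\Hdg(\ell)|$ where $K_i$ ranges over compact sets of Hodge-exceptional lines. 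As each such $\ell$ is Hodge-exceptional, $\Hdg(\ell)\subsetneq S$ is a proper Zariski-closed subset of the irreducible variety $S$, hence $\dim\Hdg(\ell)\le d-1$.

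For the weakly Shilov points: a weakly Shilov point $\eta$ of $S$ has residue field of (Abhyankar) transcendence degree $d$ over $L$, so $\eta$ cannot lie on any analytic subset of $S$ of dimension $<d$; in particular $\eta\notin\Hdg(\ell)$ for every Hodge-exceptional $\ell$, so $\eta\notin E$ and $\eta$ is Hodge-generic. Since the weakly Shilov points are dense in $S$, this already shows the Hodge-generic locus is dense and contains every weakly Shilov point. It also shows each $Z_i$ has empty interior in $|S|$ --- an open subset of $Z_i$ would contain a weakly Shilov point --- so $E$ is a countable union of nowhere-dense closed subsets of $|S|$.

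For the classical points I would fix a nonempty open $V\subseteq S$ and, after shrinking, assume $V$ is isomorphic to an open polydisc, so that the classical points of $V$ form a dense subset of $\mc{O}_C^{\,d}$. For each $i$ the closed set $Z_i\cap V$ has empty interior in $|V|$ by the previous paragraph, hence meets the classical points of $V$ in a set containing no valuation-ball (a valuation-ball inside $Z_i$ would have adic closure contained in $Z_i$ with nonempty interior in $|V|$). One now wants a single classical point of $V$ avoiding all $Z_i$. The subtlety is that the classical points are themselves meager in $\mc{O}_C^{\,d}$ (being the countable union of the closed nowhere-dense subsets $\mc{O}_{L'}^{\,d}$ over finite $L'/L$), so one cannot apply the Baire category theorem to them directly; instead I would slice by an $\ol{L}$-rational curve $C\subseteq V$ chosen --- possible since $\ol{L}$ is uncountable and there are only countably many $Z_i$ --- so that $C\not\subseteq Z_i$ for every $i$. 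Then each $C\cap Z_i$ is a discrete closed subset of the uncountable set of classical points of $C$, and removing these countably many discrete sets leaves a dense set of classical points of $C$, hence of $V$, in the Hodge-generic locus.

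The hard part is exactly this passage from weakly Shilov points to classical points. Because $|S|$ is not a Baire space and the classical points form a meager subset of the polydisc, the argument must use irreducibility of $S$ (so that proper Hodge loci are nowhere dense), the uncountability of the residue field of $L$, and the explicit incidence-variety structure of the decomposition $E=\bigcup_i Z_i$ in order to produce the curves avoiding every $Z_i$; checking that such curves exist and that $C\cap Z_i$ is discrete unless $C\subseteq Z_i$ is where the countability of the decomposition in \cref{prop:hodge-exceptional-countable-union} is essential.
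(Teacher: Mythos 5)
Your decomposition of the Hodge exceptional locus and your treatment of weakly Shilov points is correct and essentially matches the paper (the paper cites Bhatt--Hansen instead of the dimension count, but the content is the same). However, your treatment of the classical points has a genuine gap, and moreover your diagnosis of where the difficulty lies is off.

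You worry that the classical points are meager in $\mc{O}_C^d$ and therefore Baire cannot be applied ``directly,'' and you try to fix this by slicing with curves. But this meagerness is a red herring: the resolution, as in the paper, is simply to \emph{fix} a finite extension $L'/L$ containing the residue field of a given classical point $x$, and then work inside $D_n(L')\cong \mc{O}_{L'}^n$, which \emph{is} a complete metric space. One does not need a Hodge-generic classical point defined over the ``right'' extension in one stroke; it suffices, since classical points are dense, to show each classical $x\in S(L')$ lies in the closure of the Hodge-generic classical points, and for that one applies Baire to $D_n(L')$ directly. The nowhere-density of each closed piece $Z_i\cap D_n(L')$ follows by the Gauss-point argument you sketch (this is where the algebraically closed residue field of $L$ is needed, via Raynaud's description, so the Gauss point is actually in the closure of the $L'$-ball --- you should flag this hypothesis).

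The curve-slicing replacement you propose does not work as stated. The crucial claim --- that if an $\ol{L}$-rational curve $C$ is not contained in $Z_i$, then $C\cap Z_i$ is discrete --- fails, because the sets $Z_i$ are \emph{not} rigid analytic subvarieties of $S$. They are images under proper projection of incidence diamonds, i.e.\ uncountable unions $\bigcup_{\ell\in K_i}|\Hdg(\ell)|$ over compact sets of lines, and such a union has no reason to meet a curve in a discrete set: for instance, if each $\Hdg(\ell)$ is a point varying continuously with $\ell$, the slice $C\cap Z_i$ can be a Cantor-type profinite subset of the curve, proper and closed but not discrete. So removing these countably many sets from $C$ need not leave anything, and you would need a further Baire-type argument on the curve --- at which point you have recreated the original problem rather than solved it. The paper's route avoids this entirely by applying Baire to $\mc{O}_{L'}^n$ rather than trying to reduce to dimension one.
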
 
\begin{proof}

Suppose $\ell \subseteq \omega_\Isoc\circ \mc{G}(V)$ is a rank one sub isocrystal with $\Hdg(\ell) \neq S^\diamond$. Then $|\Hdg(\ell)| \subseteq |S^\diamond| = |S|$ is Zariski closed by \cref{lemma:ap-rigid-analytic-Hodge-locus} and nowhere dense by \cite[Lemma 2.1.4]{conrad:irreducible-components}. By \cite[Corollary 2.10]{bhatt-hansen:six-functors-Zariski-constructible}, we conclude that $|\Hdg(\ell)|$ does not contain any weakly Shilov points. As a consequence, the Hodge generic locus contains all weakly Shilov points of $S$ so is dense in $S$. It remains to establish the existence of a dense set of classical points. 

To that end, we first observe that the classical points in $S$ are themselves dense; thus it suffices to show that every classical point is in the closure of the set of Hodge generic classical points. Thus suppose given a point $x \in S(L')$ for some finite extension $L'/L$. To show $x$ is in the closure, it suffices to show this after replacing $S$ with $S_{L'}$. Then, there is a neighborhood basis for $x$ consisting of $n$-dimensional disks over $L'$. Thus it will suffice to show that any $n$-dimensional disk $D_n/{L'}$ in $S_{L'}$ contains a Hodge generic ${L'}$-point. Since $D_n(L') \subseteq |S_{L'}|$ with its subspace topology is naturally homeomorphic to $\mc{O}_{L'}^n$ (with the $p$-adic topology), it has the topology of a complete metric space. By the Baire category theorem it will thus suffice to show that the Hodge exceptional locus intersected with $D_n(L')$ is a countable union of closed subsets with no interior. To that end, we first observe that for any closed subset $Z$ of the Hodge exceptional locus, $Z \cap D_n(L')$ has no interior; indeed, if $Z$ contained a disk in $D_n(L')$, then it would also contain the corresponding Gauss point (e.g.\ by Raynaud's description; here we use that the residue field is algebraically closed!), but we have already established that the Hodge exceptional locus contains no weakly Shilov points. We then conclude by \cref{prop:hodge-exceptional-countable-union}. 
\end{proof}

\section{Other Ax-Lindemann theorems}\label{s.other-ax-lindemann}

The minuscule case of our main result, \cref{main.ax-lindemann}, is a bi-analytic Ax-Lindemann theorem for infinite level basic local Shimura varieties. In this setting, one analytic structure comes from the finite level local Shimura varieties, while the other analytic structure comes from the Hodge-Tate period map. In \cref{main.hodge-tate-image}, we have also given a weaker result for more general rigid analytic varieties carrying a neutral admissible pair that applies, in particular, to \emph{any} local Shimura variety (i.e., not just the \emph{basic} local Shimura varieties treated by \cref{main.ax-lindemann}). Note that, outside of the basic case, the Hodge-Tate period map is no longer a pro-\'{e}tale covering space of an open subset of the flag variety, so that it provides a much weaker notion of an analytic structure than in the basic case treated by \cref{main.ax-lindemann}. 
 
It is natural to ask whether analogous results hold in more general settings where one has a Hodge-Tate period map defined on a profinite \'{e}tale cover of a smooth rigid analytic variety. For example, one can consider infinite level global Shimura varieties, or even more general trivializing torsors for de Rham local systems. In this context, we expect our methods to be most relevant in the case of \emph{variations of $p$-adic Hodge structure}, which are a subclass of de Rham local systems. We give a preliminary discussion of this notion in \cref{ss.vpahs}, and plan to return to it in more detail in Part III. In the case of a Hodge-type global Shimura variety, the locus where the infinite level global Shimura variety is a variation of $p$-adic Hodge structure over the finite level global Shimura variety is precisely the basic locus. The basic locus can be related to a local Shimura variety by a uniformization theorem of Cerednik \cite{cerednik.algberaic-curves-that-can-be-uniformized}/Drinfeld \cite{drinfeld.Coverings-of-p-adic-symmetric-domains}/Rapoport-Zink \cite{rapoport-zink:period-spaces}/Caraiani-Scholze \cite{caraiani-scholze:cohomology-compact-shimura}/Daniels-van Hoften-Kim-Zhang \cite{daniels-van-hoften-kim-zhang.igusastackscohomologyshimura}, and thus this case is essentially covered by \cref{main.ax-lindemann}. 

We note that, over the basic locus of a Hodge-type global Shimura variety, the $\mathbb{Q}$-rational Tannakian structure group of the abelian variety\footnote{Here the Tannakian structure group of an abelian variety over a field of characteristic zero can be defined by choosing an embedding of its field of definition, which is finitely generated over $\mathbb{Q}$, into $\mathbb{C}$ and then taking the Mumford-Tate group of the associated Hodge structure.} parameterized by a geometric point is invisible to our theory. Instead, we are only seeing the $\mathbb{Q}_p$-rational Tannakian structure group of its $p$-divisible group (or rather, the associated admissible pair, in order to obtain an object living in a Tannakian category). In particular, the special loci that appear are much more abundant than the usual special loci of the theory of global Shimura varieties --- for example, it is much weaker to require that the $p$-divisible group of an abelian variety has complex multiplication than it is to require that the abelian variety itself has complex multiplication! 

In principle, to obtain a $p$-adic theory that can see the Tannakian structure group of the universal abelian variety over a global Hodge-type Shimura variety, one should develop a theory where the underlying isocrystal coefficients are global, i.e.\ live in a (necessarily non-neutral) Tannakian category over $\mathbb{Q}$ (analogous to the $\mathbb{Q}$-coefficients of singular cohomology in classical Hodge theory); this is precisely what is proposed by Scholze \cite[Conjecture 9.5]{scholze:icm2018} via a conjectural cohomology theory for varieties over $\overline{\mathbb{F}}_p$ valued in Kottwitz's \cite{kottwitz:bg} category of global isocrystals. This underlying coefficient system should exist already over the Igusa stack, and be pulled back along the morphism from the Shimura variety to the Igusa stack of \cite{zhang.peltypeigusastackpadic, daniels-van-hoften-kim-zhang.igusastackscohomologyshimura} (at least over the good reduction locus). The map to the Igusa stack alone, however, is not sufficient to develop a theory from this perspective since abelian varieties up to isogeny in characteristic $p$ (the objects parameterized by the Igusa stack) do not yet have a motivic or cohomological realization in a suitable Tannakian category over $\mathbb{Q}$.

However, even if these difficulties can be sorted, {\bf there is no reason to expect the Hodge-Tate period map to play any role in such a theory with global coefficients}. The reason is that the the underlying rational structure of a global isocrystal is completely invisible to the $p$-adic \'{e}tale cohomology, so that one cannot use the Hodge-Tate filtration to detect when a map from the global crystalline Tate object to the global coefficient object (in the category of Kottwitz's global isocrystals) extends, after specializing to the $p$-adic isocrystal, to a map from the Tate admissible pair. Instead, this is detectable only using the Hodge filtration, thus, just as in the complex case, it is only the Hodge period map that is relevant. 

Now, working over the basic locus of a finite level global Hodge-type Shimura variety, uniformization shows that the Hodge period map is defined on a natural rigid analytic covering space given by a disjoint union of basic local Shimura varieties of finite level (the story is not as simple over other Newton strata where the associated Igusa variety has a more complicated structure). Because both the Hodge period map and the uniformization map are rigid analytic, here there is only a single analytic structure in play. The Ax-Lindemann theorem one expects in this case is rather a bi-\emph{algebraic} Ax-Lindemann theorem comparing the \emph{algebraic} structure of the global Shimura variety to the \emph{algebraic} structure of the Hodge flag variety and realizing Zariski closures as (weakly) special subvarieties. In particular, one expects to be able to use this structure to characterize special subvarieties in the traditional sense of the global Shimura variety as the bi-algebraic subvarieties. This mirrors the bi-algebraic theory that one sees in the classical archimedean setting over $\mathbb{C}$. 

It would involve a considerable notational detour in order to make the ideas of the previous paragraph precise at this level of generality. Since the motivation and potential results are also largely conjectural in this generality,  we focus instead in \cref{ss.products-of-Shimura-curves} on developing in detail the concrete case of a product of Shimura curves over $\mathbb{Q}$. In this case, the basic locus is the entire Shimura variety, and a bi-algebraic Ax-Lindemann theorem has already been established by Chambert-Loir and Loeser \cite[Theorem 3.4]{chambert-loir-loeser:nonarchimedean-ax-lindemann} (as a special case of a more general result on products of Mumford curves, \cite[Theorem 2.7]{chambert-loir-loeser:nonarchimedean-ax-lindemann}). Thus we can give, in this setting, an unconditional comparison of three Ax-Lindemann theorems: the bi-analytic local Shimura Ax-Lindemann-treated in \cref{main.ax-lindemann}, an equivalent (by uniformization) bi-analytic global Shimura Ax-Lindemann, and the (very much non-equivalent) bi-algebraic global Ax-Lindemann of \cite{chambert-loir-loeser:nonarchimedean-ax-lindemann}. This discussion illustrates how the bi-analytic theory of the present work and the bi-algebraic theory of \cite{chambert-loir-loeser:nonarchimedean-ax-lindemann} are essentially disjoint. 

\subsection{Variations of $p$-adic Hodge structure}\label{ss.vpahs}

Let $L$ be a $p$-adic field and let $S/L$ be a smooth rigid analytic variety. We say a $\mathbb{Q}_p$-local system $\mathbb{L}$ on $S$ is de Rham if $L \otimes_{\ul{\mathbb{Q}_p}} \mbb{B}^+_\dR$ is in the essential image of the functor $\mathbb{M}$ from filtered vector bundles with integrable connections satisfying Griffiths transversality on $S$ to $\mbb{B}^+_\dR$-local systems on $S$ described in \cref{ss.filtered-vector-bundles-scholzes-functor-M}. By the discussion in \cref{ss.filtered-vector-bundles-scholzes-functor-M}, this is equivalent to the definition given in \cite{scholze:p-adic-ht} and the associated filtered vector bundle with integrable connection satisfying Griffiths transversality can be recovered from $\mathbb{L}$. We recall from \cite{scholze:p-adic-ht} that the cohomology local systems of smooth proper families are de Rham, so that this is a very natural condition on a local system. 

As in \cref{ss.bilatticed-equivalence-rigid-analytic}, any de Rham local system $\mathbb{L}$ is automatically equipped with a second $\mathbb{B}^+_\dR$-lattice $\mathbb{M}_0 \subseteq \mathbb{L} \otimes_{\ul{\mathbb{Q}_p}} \mathbb{B}_\dR$, coming from the trivial filtration on the associated vector bundle with integrable connection. 

\begin{definition} A
\emph{variation of $p$-adic Hodge structure} over $S$ is a $\mathbb{Q}$-graded de Rham local system $\mathbb{L}=\bigoplus_{\lambda} \mathbb{L}_\lambda$ over $S$ such that, for each $\lambda$, the restriction of $(\mathbb{L}_{\lambda}, \mathbb{M}_{0,\lambda})$ to any geometric point is a $p$-adic Hodge structure of weight $\lambda$ (in the sense of \Iref{ss.p-adic-hodge-definition}). 
\end{definition}

We note that these are not neutral $p$-adic Hodge structures over $S$ except in the case $\mathbb{L}$ is concentrated in weight $0$ and equal to a trivial local system; more general families where we allow a nontrivial $\mathbb{Q}_p$-local system will be considered in Part III. 

For $G/\mathbb{Q}_p$ a connected linear algebraic group, a variation of $p$-adic Hodge structure with $G$-structure on $S$ is an exact tensor functor from $\Rep\+ G$ to the category of variations of $p$-adic Hodge structure over $S$. These arise naturally in the following way: suppose given an open subgroup $K \leq G(\mathbb{Q}_p)$. If $\tilde{S}/S$ is a pro-\'{e}tale $K$-torsor, then, we obtain by push-out a $G(\mathbb{Q}_p)$-local system on $S$, $V \mapsto \tilde{S} \times^K V$. We say $\tilde{S}$ is de Rham if this factors through de Rham local systems, or equivalently if there is a faithful representation $V$ of $G$ such that this push-out is de Rham. 

In particular, in this case we can view the $G(\mathbb{Q}_p)$-local system as a map $\mc{E}_{\tilde{S}}: S^\diamond \rightarrow \Bun_G^{[1]}$ (by \cref{eq.isomorphism-b-loc-classifying-stack}) and then use the lattice $\mathbb{M}_0$ to make a modified bundle $\mc{E}_{\tilde{S}}^{\dR}: S^\diamond \rightarrow \Bun_G$ as in \cref{ss.modifications}. If $\mathcal{E}_{\tilde{S}}^{\dR}$ factors through $\Bun_{G}^{[b]}$ for $b$ basic, then the slope morphism for $b$ induces a grading  that allows us to upgrade the $G(\mathbb{Q}_p)$-local system to a variation of $p$-adic Hodge structure with $G$-structure over $S$ (this is essentially due to a relative version of the equivalence between basic admissible pairs and $p$-adic Hodge structure treated in \Iref{theorem.adm-pair-propeties} and will be discussed further in Part III). 

Pulling back the universal $G_b(\mathbb{Q}_p)$-torsor of \cref{eq.isomorphism-b-loc-classifying-stack} from $\Bun_G^{[b]}$, we obtain a $G_b(\mathbb{Q}_p)$-torsor $\tilde{S}^\cris/S$. By construction, it carries a canonically neutralized good $b$-admissible pair with $G$-structure; in particular, it admits a classifying \'{e}tale lattice period map $\pi_{\mathcal{L}_\et}: \tilde{S}^\cris \rightarrow \Gr_{[\mu]}^{b-\adm}$ (and associated Hodge period map by composing with the Bialynicki-Birula map) for a conjugacy class of cocharacters $[\mu]$ of $G_{\overline{\mathbb{Q}}_p}$ (the Hodge cocharacter of the associated filtered vector bundle with $G$-structure on $S$). The pullback of the universal $G(\mathbb{Q}_p)$-torsor from $\Gr_{[\mu]}^{b-\adm}$ is naturally identified with $\tilde{S} \times^K G(\mathbb{Q}_p)$, so that the reduction of structure group from $G(\mathbb{Q}_p)$ to $K$ furnished by $\tilde{S}$ can be viewed as a lift of $\pi_{\mc{L}_\et}$ to a map
\begin{equation}\label{eq.variation-of-p-adic-hs-shtuka-period-map} \tilde{S}^\cris \rightarrow \mathcal{M}_{b,[\mu]}/K. \end{equation}

We \emph{expect} that the cover $\tilde{S}^\cris / S$ is potentially unramified (we will make a precise conjecture in Part III). This is suggested, for example, by the fact that it is de Rham with Hodge-Tate weights zero. In particular, we expect that $\tilde{S}^\cris$ is represented by an adic space over $L$ that behaves, at least for differential purposes, as nicely as a smooth rigid analytic variety over $L$. Thus we hope that results analogous to \cref{theorem.bdr-aff-grass-properties}-(4) and \cref{lemma:Hodge-generic-locus-for-rigid-analytic} hold in this setting, so that the methods applied to prove \cref{main.ax-lindemann} and \cref{main.hodge-tate-image} can also be applied on $\tilde{S}^\cris$. 

In this generality, one cannot expect to do better than a potentially unramified cover. In particular, one cannot expect in general to refine the map in \cref{eq.variation-of-p-adic-hs-shtuka-period-map} by replacing $\tilde{S}^\cris$ with a rigid analytic variety that is, e.g., an \'{e}tale de Jong covering space of $S$. For example, let $S$ be the ordinary locus of a connected finite level $p$-adic modular curve with no level structure at $p$, and let $\tilde{S}/S$ to be the Katz-Igusa tower, i.e.\ the $\mathbb{Z}_p^\times$ local system trivializing the canonical subgroup of the Tate module of the universal ordinary elliptic curve over $S$. In this case, we have $\tilde{S}^\cris=\tilde{S}$ (e.g., because trivializing the canonical subgroup is the same as trivializing the unit root isocrystal). Now, $\tilde{S}$ is represented by the generic fiber of a tower of formal schemes finite \'{e}tale over a formal model of $S$, thus $\tilde{S}$ is represented by a very nice adic space, but it is not a rigid analytic space over $L$ because it is not of finite type. Moreover, because the monodromy representation is surjective onto $\mathbb{Z}_p^\times$ (i.e. the Katz-Igusa tower is connected), it is not possible to reduce the cover to a rigid analytic variety! 

However, there is one important case where $\tilde{S}^\cris$ can be replaced with a rigid analytic variety in a very satisfactory way: Suppose given a Hodge-type Shimura datum $(G,X)$ and a neat level subgroup 
\[ K=K_pK^p \subseteq G(\mathbb{A}_f)=G(\mathbb{Q}_p) \times G(\mathbb{A}_f^{(p)}). \]
Let $L$ be a $p$-adic completion of the reflex field, let $S$ be the basic locus of the rigid generic fiber of the associated Shimura variety over $L$, and let $\tilde{S}/S$ be the restriction of the the universal torsor for $K_p \leq G(\mathbb{Q}_p)$ (i.e.\ $\tilde{S}$ is the restriction of the associated infinite level Shimura variety to the basic locus). Then, the uniformization of the basic locus\footnote{In this generality, the necessary result can be deduced from the version of the Igusa stacks fiber product conjecture established in \cite{daniels-van-hoften-kim-zhang.igusastackscohomologyshimura}.} implies that $S$ can be written as a disjoint union of components of the form $\mathcal{M}_{b,[\mu]}/K_p \times \Gamma$ where $\Gamma$ is a discrete subgroup of $G_b(\mathbb{Q}_p)$ acting on the finite level local Shimura variety $\mathcal{M}_{b,[\mu]}/K_p$. In other words, $S$ can be covered by connected components where $\tilde{S}^\cris$ admits a reduction of structure group to an \'{e}tale covering space for a discrete group, and that \'{e}tale covering space is the local Shimura variety $\mathcal{M}_{b,[\mu]}/K_p$. In particular, the diagram 
\[\begin{tikzcd}
	& {\tilde{S}} \\
	S && {\Fl_{[\mu]}}
	\arrow[from=1-2, to=2-1]
	\arrow["{\pi_{\HT}}"{description}, from=1-2, to=2-3]
\end{tikzcd}\]
is, locally in the \'{e}tale topology on $S$, isomorphic to a diagram 
\[\begin{tikzcd}
	& {\mathcal{M}_{b,[\mu]}} \\
	{\mathcal{M}_{b,[\mu]}/K_p} && {\Fl_{[\mu]}}
	\arrow[from=1-2, to=2-1]
	\arrow["{\pi_{\HT}}"{description}, from=1-2, to=2-3]
\end{tikzcd}\]
In particular, \cref{main.ax-lindemann} can be applied \'{e}tale locally to $\tilde{S}$ and thus it can be viewed as a bi-analytic Ax-Lindemann theorem also for $\tilde{S}$. In the next section, we discuss this more explicitly for products of Shimura curves over $\mathbb{Q}$.

\subsection{Products of Shimura curves over $\mathbb{Q}$}\label{ss.products-of-Shimura-curves}

We now consider the theory of the previous section in the explicit example of products of Shimura curves over $\mathbb{Q}$, and also compare our bi-analytic Ax-Lindemann theorem to the bi-algebraic Ax-Lindemann theorem of \cite{chambert-loir-loeser:nonarchimedean-ax-lindemann}, which applies in this setting. 

Let $D/\mathbb{Q}$ be a quaternion algebra unramified at $\infty$ and ramified at $p$. Let $K^p$ be a neat compact open subgroup of $D^\times(\mathbb{A}_f^{(p)})$ and let $\mathcal{O}_p$ be the unique maximal order in $D \otimes \mathbb{Q}_p$ so that $\mathcal{O}_p^\times$ is a maximal compact open subgroup of $D^\times(\mathbb{Q}_p)$. Attached to this data there is an associated Shimura curve $\Sh_{\mathcal{O}_p^\times K^p}$, a smooth proper algebraic curve over $\mathbb{Q}$. 

Let $S$ denote the rigid analytic generic fiber of the base change of $\Sh_{\mathcal{O}_p^\times K^p}$ to $\breve{\mathbb{Q}}_p$.  In this case, the basic locus is all of $S$, and thus we have a $p$-adic uniformization of $S$. Explicitly, for $B$ a quaternion algebra that is ramified at $\infty$ and split at $p$ (and otherwise ramified at the same places as $D$), there is a Cerednik-Drinfeld uniformization
\[ B^\times(\mathbb{Q}) \backslash B^\times(\mathbb{A}_f) \times_{\GL_2(\mathbb{Q}_p)} \mathcal{M}_{b,[\mu]} / \mathcal{O}_{p}^\times K^p \xrightarrow{\sim} S. \]
For details, we refer the reader to \cite{boutot-carayol.UniformisationpAdiqueDesCourbesDeShimura}. 
Note that we can identify $\mathcal{M}_{b,[\mu]}/\mathcal{O}_p^\times= \bigsqcup_{p^{\mathbb{Z}}} \Omega$; the action of $\GL_2(\mathbb{Q}_p)$ on components is through the valuation of the determinant, so it is transitive on the components and the stabilizer of the component $\Omega$ corresponding to $1=p^0$ is the subgroup $\GL_2(\mathbb{Q}_p)^\circ \leq \GL_2(\mathbb{Q}_p)$ of elements of determinant in $\mathbb{Z}_p^\times$. We thus can rewrite the uniformization as 
\[ B^\times(\mathbb{Q}) \backslash B^\times(\mathbb{A}_f) \times_{\GL_2(\mathbb{Q}_p)^\circ} \Omega / \mathcal{O}_{p}^\times K^p \xrightarrow{\sim} S. \]

It follows that $S$ is a finite disjoint union of spaces $\Omega/\Gamma$ indexed by the finite set $B^\times(\mathbb{Q})\backslash p^\mathbb{Z} \times B^\times(\mathbb{A}_f^{(p)})/K^p$, and where the discrete subgroup $\Gamma$ can be chosen as the subgroup of $B^\times(\mathbb{Q})$ stabilizing a representative in $p^\mathbb{Z} \times B^\times(\mathbb{A}_f^{(p)})/K^p.$ Moreover, in this case $\Gamma$ acts properly discontinuously on $\Omega$, so that the map $\Omega \rightarrow \Omega/\Gamma$ is a covering space already in the analytic topology; in particular, it is an isomorphism locally in the analytic topology.

More generally, we can take $S=\prod S_i$ to be a product of Shimura curves as above. In this setting, we write $\pi_{\mathcal{O}_{p}^\times, \mathrm{glob}}:\tilde{S} \rightarrow S$ for the associated for the infinite level Shimura variety over $S$. We fix a connected component $S^\circ$ of $S$ that, by applying the Cerednik-Drinfeld uniformization described above to each term of the product, can be written as $S^\circ=\Omega^n/\Gamma$ for $\Gamma=\prod_{i=1}^n \Gamma_i$. We write $\mathcal{M}_{b,[\mu]}$ for the associated infinite level local Shimura variety, which in this case is the product of $n$ copies of the infinite level Drinfeld space, or, equivalently by the basic duality in this case, the infinite level Lubin-Tate space. In particular, $\mathcal{M}_{b,[\mu]}$ is a perfectoid space by results of \cite{scholze-weinstein:moduli-of-p-divisible-groups}. Similarly, the infinite level Shimura variety $\tilde{S}/S$ is also a perfectoid space by results of \cite{scholze:torsion}. Both $\tilde{S}$ and $\mathcal{M}_{b,[\mu]}$ have Hodge-Tate period maps, which we denote by $\pi_{\HT,\mr{glob}}$ and $\pi_{\HT,\mr{loc}}$, and these are compatible, as described in \cite{caraiani-scholze:cohomology-compact-shimura}. We thus have the following commutative diagram of adic spaces over $\breve{\mathbb{Q}}_p$: 
\[\begin{tikzcd}
	&&& {\mathcal{M}_{b,[\mu]}} \\
	{(\mathbb{P}^1)^n} & {(\Omega)^n} & {\left(\bigsqcup_{p^\mathbb{Z}} \Omega\right)^n} & {\tilde{S}|_{S^\circ}=\mathcal{M}_{b,[\mu]}/\Gamma'} & {(\mathbb{P}^1)^n} \\
	& {\Omega^n/\Gamma} & {S^\circ=\left(\bigsqcup_{p^\mathbb{Z}} \Omega\right)^n/\Gamma'} && {(\mathbb{P}^1)^n}
	\arrow["{\pi_{\mathcal{O}_p^\times,\mr{loc}}}"{description}, color={rgb,255:red,92;green,92;blue,214}, from=1-4, to=2-3]
	\arrow[from=1-4, to=2-4]
	\arrow["{\pi_{\HT,\mr{loc}}}"{description}, color={rgb,255:red,92;green,92;blue,214}, from=1-4, to=2-5]
	\arrow[draw={rgb,255:red,214;green,92;blue,92}, hook', from=2-2, to=2-1]
	\arrow[hook, from=2-2, to=2-3]
	\arrow[color={rgb,255:red,214;green,92;blue,92}, from=2-2, to=3-2]
	\arrow[from=2-3, to=3-3]
	\arrow["{\pi_{\mathcal{O}_p^\times,\mr{glob}}}"{description}, color={rgb,255:red,214;green,92;blue,214}, from=2-4, to=3-3]
	\arrow["{\pi_{\HT,\mathrm{glob}}}"{description}, color={rgb,255:red,214;green,92;blue,214}, from=2-4, to=3-5]
	\arrow["{=}"{description}, no head, from=2-5, to=3-5]
	\arrow[from=3-2, to=3-3]
\end{tikzcd}\]
where $\Gamma'$ is the variant of $\Gamma$ where we have not fixed the determinants to have $p$-adic valuation $0$. The top triangle on the right-hand side, whose arrows are dark purple, is the realm of \cref{main.ax-lindemann}. The triangle below it, whose arrows are pink, is the realm of a global bi-analytic Ax-Lindemann deduced from \cref{main.ax-lindemann} as described in \cref{ss.vpahs} (and here we are in the even nicer situation where the diagrams are isomorphic locally already in the analytic topology on $S^\circ$). Finally, the left-most triangle, whose arrows are red, is the realm of a global bi-algebraic Ax-Lindemann theorem as in \cite[Theorem 2.7]{chambert-loir-loeser:nonarchimedean-ax-lindemann}. In particular, note that the two different \emph{algebraic} structures appearing in the latter result (the one coming from $(\mathbb{P}^1)^n$ and the one coming from $S^\circ$) both underlie the same \emph{analytic} structure on $\Omega^n$, and that the infinite level Shimura variety and its second analytic structure corresponding to the Hodge-Tate period map play no role whatsoever in the bi-algebraic theory.

\bibliographystyle{plain}
\bibliography{refs}

\end{document}